\documentclass[reqno, 11pt]{amsart}
\usepackage{amsmath}
\usepackage{amssymb}
\usepackage[dvipsnames]{xcolor}
\usepackage{amsthm}
\usepackage[normalem]{ulem}
\usepackage[utf8]{inputenc}
\usepackage{indentfirst}
\usepackage{mathtools}
\usepackage{esint}
\usepackage{pdfpages}
\usepackage[hidelinks]{hyperref}
\usepackage{enumitem}
\usepackage{pgf, tikz}
\usetikzlibrary{arrows.meta}
\usepackage{caption}
\usepackage{subcaption}
\usepackage{pgfplots}
\pgfplotsset{compat=1.18}
\usepackage{adjustbox}
\usepackage{verbatim}
\usepackage{hyperref}
\def\hmath$#1${\texorpdfstring{{\rmfamily\textit{#1}}}{#1}}
\usepackage{bbm}
\usepackage{dsfont}
\usepackage{cancel}
\usepackage{setspace}
\usepackage[margin=2.4cm]{geometry}

\usepackage{graphicx}
\emergencystretch=1em

\setlist{nosep}
\hypersetup{
    colorlinks,
    citecolor=MidnightBlue,
    filecolor=black,
    linkcolor=MidnightBlue,
    urlcolor=black,
    linktoc=page
}
\mathtoolsset{showonlyrefs}

\newtheorem{theorem}{Theorem}[section]

\newtheorem{lemma}[theorem]{Lemma}
\newtheorem{proposition}[theorem]{Proposition}
\newtheorem{corollary}[theorem]{Corollary}
\newtheorem{remark}[theorem]{Remark}

\theoremstyle{definition}

\newcommand{\R}{\mathbb{R}}
\newcommand{\sphere}{\mathbb{S}}
\newcommand{\N}{\mathbb{N}}
\newcommand{\Z}{\mathbb{Z}}

\newcommand{\m}{\mathfrak{m}}
\newcommand{\n}{\mathfrak{n}}

\newcommand{\F}{\mathcal{F}}

\newcommand{\B}{\mathcal{B}}

\newcommand{\norm}[1]{\left\lVert#1\right\rVert}
\newcommand{\snorm}[1]{\lVert#1\rVert}
\newcommand{\inftynorm}[1]{\left\lVert#1\right\rVert_{L^\infty}}
\newcommand{\sinftynorm}[1]{\lVert#1\rVert_{L^\infty}}
\newcommand{\ltwonorm}[1]{\left\lVert#1\right\rVert_{L^2}}
\newcommand{\sltwonorm}[1]{\lVert#1\rVert_{L^2}}
\newcommand{\lpnorm}[2]{\left\lVert#1\right\rVert_{L^{#2}}}

\newcommand{\abs}[1]{\left\lvert#1\right\rvert}
\newcommand{\sabs}[1]{\lvert#1\rvert}
\newcommand{\set}[2]{\left \{ #1  \mid #2 \right \}}

\DeclareMathOperator\supp{supp}
\newcommand{\phikp}{\varphi_{k,p}}

\newcommand{\ff}{\hat{f}}
\newcommand{\absxi}{\abs{\xi}}
\newcommand{\abseta}{\abs{\eta}}
\newcommand{\absxieta}{\abs{\xi-\eta}}

\newcommand{\ltwoscalarproduct}[2]{\langle #1, #2\rangle_{L^2}}

\newcommand{\twopone}{2^{p_1}}
\newcommand{\twoptwo}{2^{p_2}}
\newcommand{\eps}{\varepsilon}

\newcommand{\twoktwo}{2^{k_2}}

\newcommand{\sinlambda}{\sqrt{1-\Lambda^2}}
\newcommand{\Setaphi}{S_\eta\Phi}

\newcommand{\absnabla}{\abs{\nabla}}

\newcommand{\zp}{\mathcal{Z}_+}
\newcommand{\zm}{\mathcal{Z}_-}
\newcommand{\zz}{\mathcal{Z}}
\newcommand{\tchi}{\widetilde{\chi}}
\newcommand{\tl}{\tilde{L}}

\newcommand{\red}[1]{\textcolor{red}{#1}}
 \newcommand{\blue}[1]{#1}
\newtagform{blue}[\textcolor{MidnightBlue}]{\color{black}(}{)}

\let\oldtocsection=\tocsection
\let\oldtocsubsection=\tocsubsection

\renewcommand{\tocsection}[2]{\hspace{0em}\oldtocsection{#1}{#2}}
\renewcommand{\tocsubsection}[2]{\hspace{1em}\oldtocsubsection{#1}{#2}}
\usepackage{xpatch}
\makeatletter   
\xpatchcmd{\@tocline}
{\hfil\hbox to\@pnumwidth{\@tocpagenum{#7}}\par}
{\ifnum#1<0\hfill\else\dotfill\fi\hbox to\@pnumwidth{\@tocpagenum{#7}}\par}
{}{}
\makeatother    
\makeatletter
 \def\l@subsection{\@tocline{2}{0pt}{4pc}{6pc}{}}
\def\l@subsubsection{\@tocline{3}{0pt}{8pc}{8pc}{}}
 \makeatother

\allowdisplaybreaks

\numberwithin{equation}{section}

\title[Long-time stability in the inviscid 2D Boussinesq equation]{Long-time stability of a stably stratified rest state in the inviscid 2D Boussinesq equation}
\author[C. Jurja]{Catalina Jurja}
\address{Institute of Mathematics, University of Zurich}
\email{catalina.jurja@math.uzh.ch}
\author[K. Widmayer]{Klaus Widmayer}
\address{Faculty of Mathematics, University of Vienna \& Institute of Mathematics, University of Zurich}
\email{klaus.widmayer@univie.ac.at \& klaus.widmayer@math.uzh.ch}

\subjclass[2020]{35Q35, 35Q86, 35B35, 76B55, 76B15, 76E20}

\keywords{Boussinesq equations, nonlinear stability, surface quasi-geostrophic equation, stratified flow, dispersion, internal gravity waves}

\begin{document}
\begin{abstract}
    We establish the nonlinear stability on a timescale $O(\eps^{-2})$ of a linearly, stably stratified rest state in the inviscid Boussinesq system on $\R^2$. Here $\eps>0$ denotes the size of an initially sufficiently small, Sobolev regular and localized perturbation. A similar statement also holds for the related dispersive SQG equation.

    At the core of this result is a dispersive effect due to anisotropic internal gravity waves. At the linearized level, this gives rise to amplitude decay at a rate of $t^{-1/2}$, as observed in \cite{Elgindi_2015}. We establish a refined version of this, and propagate nonlinear control via a detailed analysis of nonlinear interactions using the method of partial symmetries developed in \cite{EC2022}.
\end{abstract}

\vspace*{-11pt}
\maketitle
\setcounter{tocdepth}{1}
\vspace{-1cm}
\tableofcontents   
\usetagform{blue}
\section{Introduction}
The main focus of this work is the study of stability of certain steady states of the 2D inviscid Boussinesq system
\begin{equation}\label{eqn: BQ}
\left\{
      \begin{aligned}
 &\partial_t v +v\cdot \nabla v=-\nabla p -\varrho\vec{e_2},\\
 &\partial_t\varrho+v\cdot \nabla\varrho=0, \\
    &\mathrm{div}v=0,
\end{aligned}
    \right.
\end{equation}
which models the dynamics of an incompressible fluid $v:\R^+\times \R^2\to \R^2$ with pressure $p:\R^+\times \R^2\to \R$ and scalar density $\varrho:\R^+\times \R^2\to \R$ under the influence of gravity. 
This is a widely used simplified model for geophysical flow: the system \eqref{eqn: BQ} arises from the \emph{Boussinesq approximation} of the inhomogeneous Euler system (see \cite[\textcolor{MidnightBlue}{\S 2.4}]{vallis2017atmospheric}), in which the variation of density is assumed to be small compared to the effects of gravity (described by the buoyant force term $-\varrho \vec{e}_2$). 

\par Due to parallels with the 3D axisymmetric Euler equations (see e.g.\ \cite[\textcolor{MidnightBlue}{\S 5.4.1}]{majda2002vorticity}, \cite{EJ2019,Elgindi-Jeong-SingFormBQ,Chen-Hou-Blowup2DBQ}), the system \eqref{eqn: BQ} has seen a lot of attention in recent years: while local well-posedness and blow-up criteria of Beale-Kato-Majda type for initial data in $H^s$, $s>2$, have been shown via classical methods e.g.\ in \cite{LWP_Chae}, the long-time dynamics of solutions to this system are in general not understood, and may include rapid growth or even blow-up scenarios (see e.g.\ \cite{CCW14,kiselev2022smallscaleformation2d} and \cite{Elgindi-Jeong-SingFormBQ,EP23,RksOnSmoothness2dBQ3dE-Chen,CH22,CH24}).
In view of this, the study of stable dynamics is a natural step towards a fuller understanding of the behavior of solutions to \eqref{eqn: BQ}. 

\par In this work, we focus on dynamics near the stratified steady state
\begin{equation}\label{eq:steady}
    (v_s,\varrho_s)=(0,-x_2),\qquad p_s(x_1,x_2)=x_2^2/2.
\end{equation}
That is, for $v=v_s+u=u,\varrho=\varrho_s+\rho=-x_2+\rho$ we consider solutions to
\begin{equation}\label{eqn: perturbed BQ}
\left\{
      \begin{aligned}
 &\partial_t u +u\cdot \nabla u=-\nabla p -\rho\vec{e_2},\\
 &\partial_t\rho+u\cdot \nabla\rho=u_2, \\
    &\mathrm{div}u=0.
\end{aligned}
    \right.
\end{equation}
The setting of the steady state \eqref{eq:steady} is a prototypical setting of a \emph{stably stratified}\footnote{In particular, such configurations are \emph{spectrally stable}, as the eigenvalues of the linearized operator of \eqref{eqn: perturbed BQ} around zero are purely imaginary \cite{gallay2019stabilityvorticesidealfluids}.} fluid, where the density of the fluid increases in the direction of gravity (i.e.\ $\blue{\partial_{x_2}}\varrho_s(x)<0$).
This is a natural setting for many atmospheric and oceanic flows (under appropriate averaging, see e.g.\ \cite[\textcolor{MidnightBlue}{Ch.\ III}]{dauxois-ChallaengesInEnvironmentalFM}, \cite[\textcolor{MidnightBlue}{Ch.\ II}]{vallis2017atmospheric}).
In particular, here buoyant forces give rise to internal gravity waves, which act as a restoring mechanism. More precisely, as shown in \cite{Elgindi_2015}, the linear dynamics in \eqref{eqn: perturbed BQ} are waves with dispersion relation given by the symbol of a Riesz transform, and feature dispersive amplitude decay at a rate of $t^{-1/2}$. Together with a basic blow-up criterion for the energy, this allowed the authors of \cite{Elgindi_2015} to show that the time of existence of solutions extends from the trivial local-wellposedness time scale $O(\eps^{-1})$ to $O(\eps^{-4/3})$, where $\eps\ll 1$ denotes the size of the initial data (see also \cite{Wan2020} for a lower regularity setting).

\par In this article, we use a refined analysis of nonlinear interactions to show that stability (and thus also existence) of solutions to \eqref{eqn: perturbed BQ} in fact holds on the longer timescale $O(\eps^{-2})$. As discussed further below, this is the natural timescale of energy estimates given the rate of amplitude decay, and corresponds to that of a cubic nonlinearity. We summarize our main result as follows:

\begin{theorem}\label{thm: main thm BQ}
    There exist a norm $Y$, $N_0\in \N$ and an $\eps_0>0$ such that if for some $0<\eps<\eps_0$
    \begin{align*}
       & \norm{u_0}_{H^{N_0}}+\norm{\rho_0}_{H^{N_0}}\leq \eps, && \norm{u_{0}}_Y+\norm{\rho_0}_Y\leq \eps,
    \end{align*}
     then there exist $T\gtrsim \eps^{-2}$ and a unique solution $(u,\rho)\in C([0,T],H^{N_0}(\R^2,\R^2))\times C([0,T],H^{N_0}(\R^2))$ of \eqref{eqn: perturbed BQ} with initial data $(u_0,\rho_0)$. \blue{Moreover, for $t\in [0,T]$ this solution remains small in the above norms and decays in amplitude:
     \begin{align}
\norm{u(t)}_{H^{N_0}}+\norm{\rho(t)}_{H^{N_0}}\lesssim \eps, \quad \norm{\nabla u(t)}_{L^\infty}+\norm{\nabla \rho(t)}_{L^\infty}\lesssim t^{-\frac{1}{2}}\eps.
     \end{align}} 
     In particular, the corresponding unique solution of \eqref{eqn: BQ} with initial data $(u_0,-x_2+\rho_0)$ exists on the same timescale.
\end{theorem}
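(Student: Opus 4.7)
The plan is to recast \eqref{eqn: perturbed BQ} as a pair of scalar half-wave equations, work with a profile that freezes the linear evolution, and close a bootstrap combining a standard high-regularity energy bound with a weighted (dispersive) bound on the profile. Taking the curl of the momentum equation yields $\partial_t\omega+u\cdot\nabla\omega=-\partial_1\rho$ with $\omega=\partial_1 u_2-\partial_2 u_1$, while the density equation reads $\partial_t\rho+u\cdot\nabla\rho=-\partial_1(-\Delta)^{-1}\omega$. The linear part diagonalizes via the unknowns $Z_\pm:=|\nabla|^{1/2}\rho\pm|\nabla|^{-1/2}\omega$, which satisfy $\partial_t Z_\pm \mp \Omega Z_\pm = \mathcal{N}_\pm(Z,Z)$, where $\Omega$ is a skew-adjoint Fourier multiplier whose symbol is that of a Riesz transform, recovering the dispersion relation $\pm \xi_1/|\xi|$ of \cite{Elgindi_2015}. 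Introducing profiles $f_\pm(t):=e^{\mp t\Omega}Z_\pm(t)$, the Duhamel formula becomes a sum of bilinear oscillatory integrals in Fourier with phases $\Phi_{\pm,\iota_1,\iota_2}(\xi,\eta)=\pm\xi_1/|\xi|\mp\iota_1\eta_1/|\eta|\mp\iota_2(\xi-\eta)_1/|\xi-\eta|$ and symbols inherited from the nonlinearity. The bootstrap ansatz I would use is $\norm{f_\pm(t)}_{H^{N_0}}\le C_1\eps$ together with $\norm{f_\pm(t)}_Y\le C_1\eps(1+t)^\delta$ for some small $\delta>0$, with the norm $Y$ chosen so that a refinement of the $t^{-1/2}$ linear decay of \cite{Elgindi_2015} yields $\norm{\nabla u(t)}_{L^\infty}+\norm{\rho(t)}_{L^\infty}\lesssim \eps(1+t)^{-1/2+\delta}$.

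The energy bound is the easier half. Commutator estimates together with Biot--Savart give $\ddt(\norm{u}_{H^{N_0}}^2+\norm{\rho}_{H^{N_0}}^2)\lesssim \norm{\nabla u}_{L^\infty}(\norm{u}_{H^{N_0}}^2+\norm{\rho}_{H^{N_0}}^2)$, and Grönwall combined with the dispersive input $\int_0^t \eps(1+s)^{-1/2+\delta}\,ds \lesssim \eps\, t^{1/2+\delta}$ keeps the $H^{N_0}$ norm of size $O(\eps)$ on $[0,\eps^{-2}]$ for $\delta>0$ small enough. The anisotropic degeneracy of the symbol $\xi_1/|\xi|$ does not obstruct this step, provided the $L^\infty$ decay of $\nabla u$ holds; and the $\eps^{-2}$ time-scale is precisely the one at which the integrated dispersive cost saturates the energy budget, consistent with the effective cubic behavior produced by removing the quadratic part via a normal form.

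The core of the proof is then to propagate the weighted bound on $f_\pm$, carried out through a careful case analysis of the Duhamel oscillatory integrals based on the space-time resonance structure of the phases $\Phi_{\pm,\iota_1,\iota_2}$. On the non-time-resonant region $\{\Phi\neq 0\}$ one integrates by parts in $s$, replacing quadratic by effectively cubic contributions (a normal-form step), while on the non-space-resonant region $\{\nabla_\eta\Phi\neq 0\}$ integration by parts in $\eta$ extracts additional dispersive decay. The main difficulty---and the principal obstacle in the proof---is that the phase is highly degenerate: the symbol $\xi_1/|\xi|$ is homogeneous of degree zero and vanishes on $\{\xi_1=0\}$, its $\eta$-derivatives blow up at the origin, and both time- and space-resonant sets are large, so that standard resonance decompositions leave substantial residual regions where neither integration by parts is available. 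This is where I would apply the method of partial symmetries of \cite{EC2022}: exploiting discrete symmetries of $\Phi$ together with the hidden null-structure of the bilinear multipliers (inherited from $\mathrm{div}\,u=0$ and the antisymmetry of the $\pm$ decomposition), one extracts cancellations precisely on these residual resonant sets. Combined with an anisotropic frequency localization tailored to the degeneracy of $\xi_1/|\xi|$, this should produce the required slow $(1+t)^\delta$ growth of $\norm{f_\pm}_Y$, closing the bootstrap on $[0,T]$ with $T\gtrsim \eps^{-2}$ and yielding, via standard persistence of regularity, the unique $H^{N_0}$ solution claimed in Theorem~\ref{thm: main thm BQ}.
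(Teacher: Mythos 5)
Your overall architecture --- scalar diagonalizing unknowns, profiles filtering the half-wave evolution, a bootstrap pairing a high-regularity energy bound with a weighted dispersive bound, and a space-time-resonance/normal-form analysis of the bilinear Duhamel terms organized around the degeneracy of $\xi_1/|\xi|$ --- is essentially the strategy the paper carries out. There is, however, a genuine gap at the level of the bootstrap ansatz which, as stated, prevents the argument from closing at $T\sim\eps^{-2}$.

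You allow $\norm{f_\pm(t)}_Y\lesssim\eps(1+t)^\delta$ and correspondingly only $\norm{\nabla u(t)}_{L^\infty}\lesssim\eps(1+t)^{-1/2+\delta}$. Feeding this into Gr\"onwall gives a factor $\exp\bigl(C\int_0^T\eps(1+s)^{-1/2+\delta}\,ds\bigr)\sim\exp(C\eps\,T^{1/2+\delta})$, and at $T\sim\eps^{-2}$ this is $\exp(C\eps^{-2\delta})$, which diverges as $\eps\to 0$ for every fixed $\delta>0$. So ``for $\delta>0$ small enough'' does not save the energy estimate; you need the sharp $t^{-1/2}$ decay \emph{without} the $\delta$ loss. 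This is precisely what the paper arranges: the bootstrap quantities $\norm{S^b\zz_\pm}_B+\norm{S^b\zz_\pm}_X$ are kept bounded by $100\eps$ with no time growth, Corollary~\ref{cor: linear decay semigroup} then delivers decay exactly at rate $t^{-1/2}$, so the Gr\"onwall factor is $\exp(C\eps\sqrt{T})\lesssim 1$ for $T\lesssim\eps^{-2}$, and the small parameter $\delta$ enters instead as a \emph{gain} in the time-localized bilinear bounds (for instance the $2^{(\frac12-\delta/8)m}\eps^2$ estimate feeding Proposition~\ref{prop: bootstrap argument}), not as a loss in the linear decay. Reversing the role of $\delta$ is the structural adjustment your sketch is missing, and it is a hard obstruction to the stated $\eps^{-2}$ timescale rather than a bookkeeping issue.

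Two smaller remarks. Your unknowns $|\nabla|^{1/2}\rho\pm|\nabla|^{-1/2}\omega$ diagonalize the linearization but do not preserve the $L^2$ energy; the paper's $Z_\pm=|\nabla|^{-1}\omega\pm\rho$ satisfies $\frac12\norm{Z_+}_{L^2}^2+\frac12\norm{Z_-}_{L^2}^2=\norm{u}_{L^2}^2+\norm{\rho}_{L^2}^2$, and it is this identity (together with its $S^k$-analogue) that transfers the physical-variable energy estimate cleanly to the profiles. And while ``partial symmetries'' correctly names the mechanism for taming the degenerate resonance sets, the actual engine of the paper is iterated integration by parts along the scaling field $S$ (which commutes with $\Lambda$), the anisotropic $P_{k,p}$ and angular $R_l$ localizations encoded in the $B$- and $X$-norms, a two-component linear decay decomposition, and normal forms combined with set-size bounds on the residual resonant region; your appeal to generic $\partial_\eta$-integration by parts would degenerate exactly where the hard cases live. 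That level of vagueness is acceptable in a proposal, but the $\delta$-loss issue above must be fixed before any of the finer analysis can matter.
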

To the best of our knowledge, this is the longest known timescale of existence for solutions to \eqref{eqn: perturbed BQ}. We give a detailed overview of the proof of Theorem \ref{thm: main thm BQ} in Section \ref{sec: outline of the proof} below, while a more precise version of our result is stated in Theorem \ref{thm: main result}.

We comment on some points of immediate relevance.
\begin{enumerate}
    \item \blue{(Assumptions on the initial data)} Our analysis proceeds in the spirit of quasilinear, dispersive partial differential equations, in particular as developed in the ``method of partial symmetries'' of \cite{EC2022}, and thus relies heavily on the precise structure of nonlinear interactions in \eqref{eqn: perturbed BQ}.

    The norm $Y$ in Theorem \ref{thm: main thm BQ} is a sum of norms $B$ and $X$ defined in \eqref{B norm}-\eqref{X norm} that capture anisotropic localization and regularity in frequency space, see Section \ref{sec: Localizations}.
    Moreover, they include enough regularity in terms of a natural scaling vector field of the system \eqref{eqn: perturbed BQ} and ensure the decay of solutions at the linear rate of $t^{-1/2}$, see also the discussion in Section \ref{sec: outline of the proof}. That a restriction on the class of initial data is necessary for Theorem \ref{thm: main thm BQ} to hold is clear from the work \cite{bianchini2024strongillposednesslinfty2d}, which shows that there exist $L^\infty$-small initial data producing $L^\infty$-norm inflation of $\partial_x\rho$ in arbitrarily short time.

    \item \blue{(Dispersive structure)} The linearization of \eqref{eqn: perturbed BQ} is an anisotropic dispersive system. In  \cite{Elgindi_2015} it is shown that its dispersion relation is given by $\pm i\Lambda(\xi):=\pm i\xi_1\absxi^{-1}$, $\xi\in \R^2$. This is degenerate and leads to the sharp decay rate $t^{-\frac12}$, see \cite{Elgindi_2015}, which together with the energy estimates is the key limiting factor for the timescale in our result. In fact, invoking the standard blow-up criterion shows that $L^2$-based energies can only be expected to remain small on a timescale $O(\eps^{-2})$, whereas our other nonlinear arguments (to bound the norms $B,X$) could go slightly beyond this timescale.

    We remark that anisotropy does not necessarily lead to degeneracy, as witnessed in another classical geophysical model: the $\beta$-plane equations, a tangent plane model for Eulerian flows on the surface of a rotating 2D sphere. Thereby, rotation gives rise to linear waves with an anisotropic dispersion relation $i\xi_1\abs{\xi}^{-2}$, which however leads to decay at the full rate $t^{-1}$. Thanks to the presence of strong cancellations in the nonlinearity (via a ``double null structure''), stability was shown to hold globally in time in this model, see \cite{EW_betaplane,Pusateri_2018}.

    \item \blue{(Good unknowns)} The system nature of \eqref{eqn: perturbed BQ} poses a challenge, and in particular the fluid variables $u,\rho$ are not convenient from a perturbative point of view. Noting that due to incompressibility, the system \eqref{eqn: perturbed BQ} has only two degrees of freedom, we will instead work with two scalar unknowns $Z_\pm$, which diagonalize the linearized evolution. Through a suitable choice the crucial energy structure, symmetry properties and a certain ``null structure'' of the equations can be preserved (see the discussion in Section \ref{sec: outline of the proof}).

    \item \blue{(Strong stratification)} There are many parallels between the effect of constant rotation in homogeneous three-dimensional fluids and that of linear, stable stratification with constant gravity in two- or three-dimensional inhomogeneous fluids. In particular, the dispersion relations in all these cases are zero-homogeneous, anisotropic and degenerate. 
    
    Moreover, similarly as one can investigate the effect of a fast speed of rotation on existence timescales (see e.g.\ \cite[\textcolor{MidnightBlue}{\S 5}]{Math_Geophysics_Gallagher-Chemin} for the 3D Navier-Stokes, or \cite{TAKADA} for the 3D Euler equations), one can also track the strength of the stratification-gravity coupling. This is relevant for (more) steeply stratified versions $(v_s,\varrho_s)=(0,-\alpha x_2)$, $\alpha>0$, of the steady state \eqref{eq:steady}, or when quantifying gravity through a constant $g>0$ in the buoyant force term $-g\varrho \vec{e}_2$ in the momentum equation of \eqref{eqn: BQ}. Taking for simplicity $\alpha=g$, hereby $\alpha^{-1}$ plays the role of a small parameter that can be used to prolong existence times. In close analogy to the aforementioned references, this has been carried out in the context of \eqref{eqn: perturbed BQ} in \cite{wan2016} (see also \cite{3DBQ_Widmayer,TakBQ3d} for the 3D setting): given initial data $(u_0,\rho_0)$ and a time $T>0$, the authors use Strichartz estimates to derive a lower bound for $\alpha$ that guarantees the existence of solutions until at least time $T$.
    Via the time-scaling symmetry\footnote{Observe that if $(u,\rho)$ solve \eqref{eqn: perturbed BQ} on a time interval $[0,T]$, then for $\lambda>0$, the rescaled functions $(\lambda u(\lambda t,x), \lambda \rho(\lambda t,x))$ solve \eqref{eqn: perturbed BQ} on $[0,\lambda^{-1}T]$ with an additional ``strength of gravity'' constant $\lambda$ in front of the linear terms.} of \eqref{eqn: perturbed BQ}, for initial data of size $\eps$ this agrees with the $O(\eps^{-4/3})$ timescale of \cite{Elgindi_2015,Wan2020}, albeit in lower regularity $H^s$, $s>3$. Here, our result should allow to quantitatively improve these arguments, but we do not pursue this here.

    \item \blue{(Related results)} Natural interest also concerns other steady states of \eqref{eqn: BQ}, in particular those including a shearing motion transversal to the direction of gravity and general gravity profiles (i.e.\ steady states of the form $v_s=f(x_2)\vec{e}_1,\varrho_s=g(x_2)\vec{e}_2$), as well as other domain geometries.
    However, in general not even linearized dynamics are fully understood.
    
    The prototypical example in this context is the ``stably stratified Couette flow'', a steady state of \eqref{eqn: BQ} with fluid velocity $v_s=x_2\vec{e}_1$ and stable stratification profile $\varrho_s=-x_2$. Here, linearized dynamics can be understood explicitly. In the case of a channel domain $\mathbb{T}\times \R$, the background shear flow plays a dominant, strongly stabilizing role via inviscid damping, a classical mixing mechanism. As demonstrated in \cite{bedrossian2021nonlinearinvisciddampingshearbuoyancy}, this guarantees the nonlinear stability of stably stratified Couette flow on a timescale $O(\eps^{-2})$, provided the initial perturbations are of size $\eps$ and Gevrey regular. Contrary to our setting without background flow, thereby the oscillatory effects of buoyant forces do not stabilize perturbations and instead lead to a slow growth, suggesting that the aforementioned timescale is optimal for the result in \cite{bedrossian2021nonlinearinvisciddampingshearbuoyancy}. This is also related to echo chains in the linearized equations, see \cite{Zil23-1,Zil23-2}. (It is only in the setting of a 3D channel $\mathbb{T}\times\mathbb{R}\times\mathbb{T}$ that the dispersive effects of internal gravity waves have been shown to improve stability of the stably stratified Couette flow, albeit in the presence of viscosity \cite{CZDZW}.)

    For the analogue of \eqref{eq:steady} on $\R^3$, a related dispersive structure has been uncovered in \cite{3DBQ_Widmayer} and used to establish a layered 2D Euler dynamic in the singular limit of strong gravity (see also \cite{TakBQ3d}), but stability beyond the basic $O(\eps^{-1})$ timescale remains an open problem.

    \item \blue{(Open question) The behavior of solutions beyond the ``cubic'' time scale $T\gtrsim \eps^{-2}$ as given by Theorem \ref{thm: main thm BQ} remains a challenging open problem (as mentioned above, $\eps^{-2}$ is the optimal time scale for bounded energy solutions given the sharp amplitude decay at rate $t^{-1/2}$ and the standard blow-up criterion). While it may seem natural to conjecture that solutions will eventually leave the perturbative realm described here, we are not able to give a concrete description of how this would happen. Similar questions have also been raised and are open for many classical dispersive equations, see e.g.\ \cite{AD15,HIT16,IT17,IP18,Wu20,DM22,IT24}. In particular, it may be that solutions become singular after finite time as e.g.\ in \cite{MP17}, or global regularity may follow from a refined energy estimate relying on new insights into the nonlinear structure as e.g.\ in \cite{DIPP17,DIP25}.}
    
\end{enumerate}

\medskip

\par In fact, our arguments also apply to a simpler, closely related setting, namely that of the dispersive surface quasi-geostrophic (SQG) equation 
\begin{equation}   \label{SQG}
\begin{cases}
   \partial_t \theta +u \cdot \nabla \theta = R_1\theta, \\
   u=\nabla^\perp(-\Delta)^{-1/2}\theta, \\
   \theta(0,x)=\theta_0(x),
\end{cases}\end{equation}
where $\theta:\R_+ \times \R^2\to \R$ is the temperature of the fluid and $\widehat{R_1f}(\xi):=-i{\xi_1}{\abs{\xi}^{-1}}\widehat{f}$ is the Riesz transform in the first coordinate. This model has been suggested for certain wave turbulence interactions \cite{Local_Nonlocal_Dispersive_Turbulence}, and adds to the classical inviscid SQG equation the linear right hand side term $R_1\theta$, which has exactly the same dispersive structure as the Boussinesq system. Due to other structural parallels with the 3D Euler equations (in particular a ``vortex stretching'' dynamic of $\nabla^\perp\theta$, see e.g.\ \cite{P_Constantin_1994}), the dynamics of the inviscid SQG equation are of natural interest, but only understood in few cases (see e.g.\ \cite{Carstro-Cordoba-Gomez-Serrano-VstatesSQG,Carstro-Cordoba-Gomez-Serrano-VstatesSQG,Kiselev-Nazarov-NormInflSQG,Gomez-Serrano-Ionescu-Park-gSQG} and references therein).
Even in the dispersive version \eqref{SQG}, the long-time behavior of initially small solutions remains to be understood. However, close parallels between \eqref{SQG} and \eqref{eqn: perturbed BQ} (in terms of both the dispersive and energy structure) have already been exploited in \cite{Elgindi_2015} to show that the basic existence timescale of solutions extends to $O(\eps^{-4/3})$. 

The structural features used to establish Theorem \ref{thm: main thm BQ} also include the setting of \eqref{SQG} -- more precisely, $\theta$ can be viewed as analogous to one of the Boussinesq unknowns $Z_\pm$, with the additional simplification of having only one single nonlinearity (with a similar null structure). Thus, we can extend the time of existence to the timescale $O(\eps^{-2})$:

\begin{theorem}\label{thm: main thm SQG}
     With $Y$, $N_0\in \N$ and $\eps_0>0$ as in Theorem \ref{thm: main thm BQ}, there holds that if for some $0<\eps<\eps_0$
    \begin{align*}
       & \norm{\theta_0}_{H^{N_0}}+\norm{\theta_0}_Y\leq \eps,
    \end{align*}
    then there exists a unique solution $\theta \in C([0,T],H^{N_0}(\R^2))$ of \eqref{SQG} with $T\gtrsim \eps^{-2}$. \blue{Moreover, for $t\in ]0,T]$ this solution remains small in the above norms and decays in amplitude:
     \begin{align}
\norm{\theta(t)}_{H^{N_0}}\lesssim \eps, \qquad \norm{\nabla\theta(t)}_{L^\infty}\lesssim t^{-\frac{1}{2}}\eps.
     \end{align}} 
\end{theorem}

\subsection{Outline of the proof}\label{sec: outline of the proof}
In the following, we give an overview of the proof of Theorem \ref{thm: main thm BQ} (and consequently also of Theorem \ref{thm: main thm SQG}), highlighting the key features of our approach while referring to the later sections containing the full mathematical details.
\par Our proof relies on and adapts the method of partial symmetries, as developed in \cite{EC2022} (see also \cite{ren2024globalsolutionseulercoriolis}), to the present 2D setting. This in turn builds on a long history of ideas and techniques used in the study of the long-time behaviour of quasilinear dispersive equations with small initial data, in particular as they originate in the method of space-time resonances \cite{germain2009globalsolutionsgravitywater, GP-DefocusingNLS-GNT} and many important further developments, e.g.\ \cite{GP11,Ionescu_Pausader_EP2011, Ionescu_Pausader_KG2012,GM14,IP15,Germain_Masmoudi_Shatah_CWW2015, EulerMaxwell3DIonescuPausader, DIPP17, Deng_2017, Pusateri_2018}, an adequate discussion of which goes beyond the scope of this article.

\subsection*{Structure of the equations} 
We discuss first the features inherent to the system \eqref{eqn: perturbed BQ} and the equation \eqref{SQG} that lay the foundation for our approach.
\par \emph{Dispersive structure.} To start with, we recall from \cite{Elgindi_2015} that \eqref{eqn: perturbed BQ} and \eqref{SQG} exhibit dispersion at the linearized level, the dispersion relation $\pm\Lambda$ of which is the symbol of the Riesz transform $R_1$, i.e.\
\begin{align*}
    \Lambda(\xi) = \frac{\xi_{1}}{\absxi}, \hspace{1.5cm} \xi \in \R^2.
\end{align*}
While for the dispersive SQG equation the dispersive operator is directly apparent through the Riesz transform on the right–hand side of \eqref{SQG}, for the Boussinesq system this requires a short computation. We note that $\Lambda$ is zero-homogeneous, anisotropic and degenerate, in the sense that $\blue{\det}\mathrm{Hess}\Lambda(\xi)=-{\xi_2^2}{\absxi^{-6}}$ vanishes along $\{\xi_2=0\}$, which also leads to the comparatively slow dispersive decay rate $t^{-1/2}$.

In order to facilitate a proper nonlinear analysis also in the Boussinesq system, it is useful to choose suitable dispersive unknowns $Z_\pm$ (see Section \ref{sec: choice of unknowns}). These diagonalize the linearized equation (see Proposition \ref{prop: profiles BQ}), and are moreover chosen such that energy balances remain intact, e.g.\
\begin{equation*}
    \norm{u}_{L^2}^2+\norm{\rho}_{L^2}^2=\frac{1}{2}\norm{Z_+}_{L^2}^2+\frac{1}{2}\norm{Z_-}_{L^2}^2.
\end{equation*}
The nonlinear equations \eqref{eqn: perturbed BQ} can then be recast as 
\begin{align}\label{eqn:generic_nonlin}
   & \partial_t Z_\pm + \mathcal{N}_\pm(Z_+,Z_-)=\pm R_1Z_\pm,
\end{align}
where $\mathcal{N}_\pm(Z_+,Z_-)$ are quadratically nonlinear terms. (This is naturally already the form of the dispersive SQG equation \eqref{SQG}.)

\par \emph{Scaling symmetry and vector fields.} In addition to a time scaling symmetry, the systems \eqref{eqn: perturbed BQ} and \eqref{SQG} have the following spatial scaling symmetry: if $(u,\rho)$ solves \eqref{eqn: perturbed BQ} (resp.\ $\theta$ solves \eqref{SQG}), then so do $(\lambda u(t,\lambda^{-1}x),\lambda \rho(t,\lambda^{-1}x))$ (resp.\ $\lambda\theta(t,\lambda^{-1}x)$) for $\lambda>0$ (see Section \ref{sec: Symmetries and vector fields}). In our approach, we take advantage of the natural derivative $S$ arising from this scaling symmetry,
\begin{align}\label{def:intro-vf}
    Sf(x)=x\cdot\nabla_xf(x).
\end{align} 
The vector field $S$ commutes in a favourable way with the equations, allowing us to propagate ``regularity'' in terms of many of copies $S$, in particular in the form of $L^2$-energies (see Sections \ref{sec: Energy Estimates}, \ref{sec: Energy Estimates SQG}). However, due to the anisotropy, $S$ is the only such natural derivative.

To span the full tangent space at any $x\in\R^2$, we complement $S$, a radial derivative in polar coordinates, with another vector field $W$, which in polar coordinates corresponds to an angular derivative, see \eqref{def: W}. This vector field however, \emph{does not} commute with the equations. As a result, one of the main difficulties of the article is to control sufficient regularity in this angular direction, i.e.\ to propagate certain bounds along $W$, as they are captured in the $X$-norm discussed below.

\par \emph{Null structure of the nonlinearity.} A key ingredient that allows us to control the nonlinear interactions is the presence of a null structure. Concretely, the symbols of the quadratic nonlinearities vanish (in a quantifiable fashion) for frequency configurations for which the dispersion of the output and that of the inputs is degenerate. More precisely, all Fourier symbols of the various quadratic nonlinearities contain a factor $\zeta_2\abs{\zeta}^{-1}$ for some $\zeta \in \{\xi,\xi-\eta,\eta\}$ (see Lemma \ref{lemma: multiplier bound}), which in turn is related to the degeneracy of the dispersion $\Lambda$. This can be seen directly in the case of the SQG nonlinearity, and follows with a short computation also for the Boussinesq system -- see \eqref{multiplier} and \eqref{eqn: bq multipliers}. 
\blue{This null structure  derives from the skew structure of 2D Eulerian nonlinearities of the type $u\cdot \nabla \omega$ with $u=\nabla^\perp\Delta^{-\alpha}\omega$ and $\alpha>0$. In our case, it is relatively weak as it leads to cancellations of nonlinear interactions only if all input and output frequencies are located in a degenerate region.}

\subsection*{Setup of the proof}
By considering the Duhamel formulation of equations of the form \eqref{eqn:generic_nonlin} and filtering out the linear evolution, it suffices to study bilinear terms of the form
\begin{align}\label{def: introduction bilinear term}
    \widehat{\B_\m(f,g)}(t,\xi)=\int_0^t\int_{\R^2}e^{is\Phi(\xi,\eta)}\m(\xi,\eta)\widehat{f}(\xi-\eta)\widehat{g}(\eta)d\eta ds,
\end{align}
where $\Phi=\pm\Lambda(\xi)\pm\Lambda(\xi-\eta)\pm\Lambda(\eta)$ is a phase function, $\m$ a Fourier multiplier that encodes the nonlinearity and $f,\,g$ are either the profiles $\mathcal{Z}_\pm:=e^{\pm i t\Lambda}Z_\pm$ of the dispersive unknowns for the Boussinesq system or the profile $\Theta:=e^{it\Lambda}\theta$ in the setting of the SQG equation -- see Section \ref{sec: choice of unknowns}.

We then prove Theorems \ref{thm: main thm BQ} and \ref{thm: main thm SQG} via a bootstrap argument involving a hierarchy of energy estimates with many ($N_0\gg 1$) derivatives and vector fields $S$ (of order $M\ll N_0$), and $B$- and $X$-norms of aforementioned profiles with fewer derivatives and vector fields (of order $ N\ll M$) -- see Proposition \ref{prop: bootstrap argument}. 

\medskip \par \textbf{Localizations.} Our norms quantify localization and regularity, and are $L^2$-based with suitable weights in terms of frequency localization parameters -- see Section \ref{sec: Localizations}. On one hand, in addition to the standard Littlewood-Paley projectors $P_k$ for the size $\abs{\xi}$ of a frequency $\xi \in \R^2$, we quantify the vertical components $\xi_2\absxi^{-1}$ of the interacting frequencies through Littlewood-Paley projections $P_{k,p}$, $k\in \Z,\; p\in \Z^-$. We highlight that these quantify exactly the degree of degeneracy of the dispersion relations $\Lambda$, as well as the aforementioned null structure. On the other hand, we introduce an \emph{angular Littlewood-Paley decomposition} $R_l, \; l\in \Z^+$, to capture the angular regularity along $W$. In particular, we show in Proposition \ref{prop: angular localization properties} that there holds  $\ltwonorm{WR_lf}\simeq 2^l\ltwonorm{R_lf}$. This approach parallels the setup introduced in \cite{EC2022}, and enables us to control and propagate fractional powers in the angular direction -- see below.

\medskip \par \textbf{Choice of norms.} We define in \eqref{B norm}, \eqref{X norm} the $B$- and $X$-norms for a function $f:\R^2\to\R$ as
\begin{align*}
    &\norm{f}_{B}=\sup_{k\in \Z,\, p\in \Z^-} 2^{4k^+}2^{-\frac{k^-}{2}}2^{-\frac{p}{2}}\ltwonorm{P_{k,p}f},  
    &\norm{f}_{X}=\sup_{\substack{k\in \Z,\, l\in \Z^+\\ p \in \Z^-, l+p\geq 0}}\hspace{-0.3cm}2^{4k^+}2^{(1+\beta)l}2^{(\frac{1}{2}+\beta) p}\ltwonorm{P_{k,p}R_lf},
\end{align*}
where $k^-=\min\{k,0\}$ and $k^+=\max\{k,0\}$. The $B$-norm weighs the parameters $p\in \Z^-$ negatively and scales like the Fourier transform in $L^\infty$, whereas the $X$-norm weighs the parameter $p$ positively and gives control of $(1+\beta)$-derivatives in $W$ (expressed in terms of the angular localization parameter $l$). While propagating higher powers of $W$ nonlinearly is more difficult, it is also clear that a certain minimal power is needed in order to have a chance to obtain optimal decay estimates: In particular, we note that slightly more than one order of $W$ is needed in order to ensure control of the Fourier transform in $L^\infty$, see Lemma \ref{lemma: control of Fourier transform}.

\medskip \par \textbf{Linear decay and choice of norms.}
 A first key step of our proof is a refined \emph{linear} decay estimate for the semigroup $e^{\pm it\Lambda}$ in terms of our norms, see Proposition \ref{proposition: linear decay}. In general, it is known that the sharp $L^\infty$ decay rate is $t^{-1/2}$ (see \cite{Elgindi_2015}, reflecting the degeneracy of the dispersion), and we capture this as
 \begin{align*}
        \sinftynorm{P_ke^{\pm it\Lambda}f}\lesssim t^{-\frac{1}{2}}\sup_{0\leq n\leq 2}{(\norm{S^nf}_B
+\norm{S^nf}_X)},
    \end{align*}
     \blue{where the vector field $S$ is defined in \eqref{def:intro-vf}}. However, here it is important to track more detailed information that in particular allows us to obtain \emph{faster decay away from the degeneracy} of $\Lambda$. More precisely, in Proposition \ref{proposition: linear decay} we split the action of the semigroup in two components: one corresponds to high angular frequencies and decays \emph{in $L^2$}, whereas the other gives an $L^\infty$ decay, both quantified in terms of time and the parameter $p$ relating to the degeneracy. In particular, for $p>-10$ we obtain the almost full $t^{-1}$ decay rate in $L^\infty$, while for small $p$ this degenerates to scale at worst as $t^{-1/2}$.

\medskip \par \textbf{Energy estimates.} In our bootstrap setting, the $L^\infty$ decay of solutions can be directly used to establish $H^{N_0}$ energy estimates as well as $L^2$ estimates for many vector fields $S^n$, $n\leq M$, applied to a solution $(u,\rho)$ of \eqref{eqn: perturbed BQ} (or solution $\theta$ to \eqref{SQG}, respectively), see Section \ref{sec: both energy estimates}. The proof is standard for the $H^{N_0}$ energies, and proceeds through an inductive argument building on the commutator rule $[S,\nabla]=-\nabla$ for the vector fields (see \eqref{eqn: commutator S^n nabla} for an iterated version). The corresponding blow-up criteria show that these energies grow with the exponential of the time integral of amplitudes, which in our bootstrap leads to a growth factor of the form $\exp(\int_0^t \eps (1+s)^{-1/2} ds)$. The natural timescale for this to be uniformly bounded is thus $t\lesssim \eps^{-2}$ (see also Corollaries \ref{cor: energy estimates of Z+-} resp.\ \ref{cor: energy estimates SQG}).

In what follows, the $H^{N_0}$ energy estimates are used chiefly to obtain the desired bounds for high frequencies (called ``simple cases'' below), whereas the $S^n$, $n\leq M$, energy estimates are a key tool for iterated integration by parts along $S$, see below.

\medskip \par \textbf{Oscillatory toolbox: integration by parts along $S$ and normal forms.}
To exploit oscillations in the bilinear terms \eqref{def: introduction bilinear term}, we develop a framework for repeated integration by parts along the vector field $S$. To that end, it is important to understand the iterated action of the vector fields $S,\; W$ on the objects involved, and in particular on the multipliers. To systematically treat these, in Section \ref{sec: multiplier mechanics} we introduce a class of symbols that includes the building blocks involved in the multipliers and the phases, and is closed under the action of the vector fields. For this class, in Lemmas \ref{lemma: S on basic multipliers}-\ref{lemma: S_eta on multiplier m} we establish bounds (in terms of our localization parameters $k_i,p_i,l_i$, $i=1,2$, corresponding to the variables $\xi-\eta$ and $\eta$ involved in \eqref{def: introduction bilinear term}) for the iterated action of the vector fields $S,\; W$.
As a simple yet important observation, we find a suitable algebraic skew structure (see \eqref{eqn: symmetry of sigma}) that shows that whenever there is a ``gap" in the localization parameters $p,p_i$, then $S\Phi$ is bounded from below. Moreover, we encounter a rich structure that links lower bounds of $S\Phi$ with smallness of the phase $\Phi$ itself (see Proposition \ref{prop: lower bound on sigma}). Roughly speaking, this implies -- in quantifiable terms -- that we either have a lower bound for $S\Phi$ and thus iterative integrations by parts, or the phase $\Phi$ is comparatively large -- see Section \ref{sec: vector fields and the phases}. Assuming a lower bound for the action of $S$ on the phases, we collect this information in Lemma \ref{lemma: ibp in bilinear expressions}, where we present bounds for iterated integration by parts along $S$.
A further version of this is presented in the subsequent Lemma \ref{lemma: ibp in D_eta}, and Lemma \ref{lemma: W_xi on bilinear expressions} follows along similar lines.

To complement these arguments, we show in Section \ref{sec: Normal forms} how largeness of the phase function $\Phi$ can be taken advantage of via normal forms (i.e.\ an integration by parts in time), in particular in combination with other restrictions on the frequency configurations (see also Section \ref{sec: set-size estimate}).

\par In the context of this framework, with a proper organization of cases (see also Lemma \ref{lemma: case organisation}), the rough overall structure of the proofs of the various bounds on bilinear terms \eqref{def: introduction bilinear term} can be sketched as follows:
\begin{enumerate}
    \item \emph{Simple cases:} We observe that for very large or very small frequencies, we obtain the desired bounds via energy estimates and set size bounds.
    \item \emph{Gap in $p$:} Here we can integrate by parts according to Lemma \ref{lemma: ibp in bilinear expressions} to obtain the desired bounds for certain ranges of the localization parameters. In the remaining cases, we can use a balance of the $B$- and $X$-norms, depending on the size of the parameters or else normal forms, accompanied by set size estimates.
    \item \emph{No gap:} The refined linear decay estimates allow us to take advantage of the comparability of localization parameters $p$. This already suffices to establish the corresponding $B$ norm bounds, but additional arguments are necessary for the $X$ norm. 
\end{enumerate}

\medskip \par \textbf{Improved decay for the time derivative of  profiles in $L^2$.} The first instance where the aforementioned tools are used is in Section \ref{sec: bounds on dtSf}, where we establish a decay rate of almost $t^{-3/4}$ for the $L^2$ norm of the time derivative of the profiles $F_i\in\{\zz_\pm,\Theta\}$ (contrast this with the simple direct estimate, which only yields a decay at rate $t^{-1/2}$). \blue{The improved decay of the time derivative is particularly useful when employing normal forms in the nonlinear analysis discussed below.} \blue{To prove this result,} we follow the scheme described above, and after dealing with the simple cases we localize the profiles inside the integrals \eqref{def: introduction bilinear term} as $f_i=R_{l_i}P_{k_i,p_i}S^{b_i}F_i$, $i=1,2$, $b_1+b_2\leq N$. In the \emph{gap in $p$} case, we integrate by parts 
when feasible. Otherwise, we are in the setting where angular parameters $-l_i$ yield the decay at the cost of parameters $-p,-p_i$. We note that the parameters $p\in \Z^-$ come with a negative sign and to compensate for these ``losses'' we invoke the null structure of the nonlinearity, see e.g. Lemma \ref{lemma: bounds on dtSf} Case B.1.1. The \emph{no gap} case is easily covered by the refined linear decay from Proposition \ref{proposition: linear decay}, Lemma \ref{lemma: linear decay many vector fields}. 

\medskip\par\blue{  
\textbf{Bounds on the $B$- and $X$-norm.} Finally, in Sections \ref{sec: bounds on B norm} and \ref{sec: bounds on the X-norm}, respectively, we control bilinear terms of the form \eqref{def: introduction bilinear term} in the  $B$- and $X$-norms: we show that
  \begin{align*}
       \norm{\B_\m(F_1,F_2)}_B\lesssim  t^{\frac{1}{6}+\delta}\eps^2,\qquad \norm{\B_\m(F_1,F_2)}_X \lesssim t^{\frac{1}{2}-\delta}\eps^2,
    \end{align*}
   where $F_i=S^{b_i}\zz_\pm$, or $F_i=S^{b_i}\Theta$, for $\delta\ll1, b_1+b_2\leq N$. (In particular, this shows that with the present arguments the $B$-norm bound itself could be propagated on a time interval of almost order $\eps^{-6}\gg \eps^{-2}$, provided suitable improvements for the the energy estimates and the $X$-norm are established.)
The proof of theses estimates follows the strategy outlined above, and refines the techniques employed already to establish the improved decay of the time derivative of profiles. In particular, for the $B$-norm bounds the \textit{gap in $p$} cases become more delicate and we complement integrating by parts along vector fields with a normal form transform in certain configurations (in particular when the phase satisfies $\abs{\Phi}\gtrsim 1$). The latter yields bounds of the form
 \begin{align*}
      \ltwonorm{P_{k,p}\B_{\m}(f_1,f_2)}\lesssim \ltwonorm{P_{k,p}\mathcal{Q}_{\m\Phi^{-1}}(f_1,f_2)}+\ltwonorm{P_{k,p}\B_{\m\Phi^{-1}}(\partial_tf_1,f_2)}+\ltwonorm{P_{k,p}\B_{\m\Phi^{-1}}(f_1,\partial_tf_2)},
    \end{align*}
    where $f_i=P_{k_i,p_i}R_{l_i}F_i$ are the localized profiles and $\mathcal{Q}$ is a bilinear term of the form \eqref{def: introduction bilinear term} but without time integral. The two last terms above are handled using the bound on the time derivative described above and are of cubic order. On the other hand, the first term contains one less time parameter and previous arguments suffice to estimate it. The proof of the $X$-norm bounds is yet more delicate, requiring refinements of the aforementioned tools and we refer to the beginning of Sections \ref{sec:X-norm-1}, \ref{sec:X-norm-2} for more details.}
\subsection{Plan of the article}
In Section \ref{sec: functional framework} we introduce the necessary background to proceed with the proof of Theorems \ref{thm: main thm BQ}, \ref{thm: main thm SQG}. We describe in detail the choice of dispersive unknowns for the Boussinesq system in Section \ref{sec: choice of unknowns} and present the natural vector fields arising from the scaling symmetry of the equations in Section \ref{sec: Symmetries and vector fields}. Moreover, we introduce the necessary localizations in Section \ref{sec: Localizations}. The detailed statements of the main results are presented in Theorems \ref{thm: main result}, \ref{thm: main result detailed SQG} and proven in Proposition \ref{prop: bootstrap argument} using tools from subsequent sections. The linear decay estimate is presented in Section \ref{sec: Linear Decay}. The available energy estimates are discussed in Section \ref{sec: both energy estimates}. 
\par The technical tools involving the vector fields and in particular iterated integration by parts along vector fields, set-size estimates and normal forms are presented in Section \ref{sec: integration by parts}. The improved decay of the time derivative of our unknowns is proved in Section \ref{sec: bounds on dtSf}. Estimates on the $B$- and $X$-norms are shown in Sections \ref{sec: bounds on B norm}, \ref{sec: bounds on the X-norm}. Appendix \ref{appendix} contains auxiliary results such as the control of the Fourier transform in $L^\infty$ and multiplier bounds.

\section{Functional Framework and Main Result}\label{sec: functional framework}
In this section, we introduce the basic framework for our arguments and present the main results Theorems \ref{thm: main thm BQ} and \ref{thm: main thm SQG} in more detail. In particular, with a suitable functional framework and through an adequate choice of scalar dispersive unknowns for the Boussinesq system, we will show that the proof of the main results reduces to the study of a bootstrap argument involving certain bilinear expressions, the essential features of which are common to both the Boussinesq and SQG systems.

\subsection{Choice of scalar unknowns}\label{sec: choice of unknowns}
Consider solutions to the Boussinesq system \eqref{eqn: perturbed BQ} written as a system for the two scalar unknowns of vorticity and density $\omega, \rho:\R^+\times \R^2\to \R$ as
\begin{equation}\label{eqn: bq}
\left\{
      \begin{aligned}
 &\partial_t\omega +u\cdot \nabla \omega=-\partial_{x_1}\rho,\\
 &\partial_t\rho+u\cdot \nabla\rho=\partial_{x_1}\Delta^{-1}\omega, \\
    &u=\nabla^\perp \Delta^{-1}\omega,
\end{aligned}
    \right.
\end{equation}
where by convention $\nabla^\perp=(-\partial_{x_2},\partial_{x_1})$. The following result provides a choice of scalar unknowns that diagonalize the associated linear system:

\begin{proposition}\label{prop: profiles BQ}
Let $(\omega,\rho)\in C([0,T],(H^{-1}\cap H^s)\times H^s)$ solve \eqref{eqn: bq}.
Define the dispersive unknowns $Z_\pm$ and their profiles $\mathcal{Z}_\pm$ by
\begin{equation}\label{def: Profiles BQ}
    Z_{\pm}:=\absnabla^{-1}\omega\pm\rho,\qquad \mathcal{Z}_\pm:=e^{\pm it\Lambda}Z_\pm,
\end{equation}
where the dispersive operator is given by \begin{align*}
    \Lambda(\xi):=\frac{\xi_1}{\abs{\xi}}.
\end{align*}
Then $\mathcal{Z}_\pm$ satisfy 
\begin{align}\label{eqn: bilin_profiles}
    &\zz_\pm(t)=\zz_\pm(0)+\sum_{\mu\in \{+,-\}}\int_0^t\mathcal{Q}_{\m^{\mu\mu}_\pm}(\mathcal{Z}_{\mu},\mathcal{Z}_{\mu})(s)ds+\int_0^t\mathcal{Q}_{\m^{+-}_\pm}(\mathcal{Z}_{+},\mathcal{Z}_{-})(s)ds,
\end{align}
where 
\begin{align}\label{eqn: bilin_form_Q}
    \F(\mathcal{Q}_{\m^{\mu\nu}_\pm}(\mathcal{Z}_{\mu},\mathcal{Z}_{\nu}))(s,\xi)=\int_{\R^2} e^{ is\Phi^{\mu\nu}_\pm(\xi,\eta)}\m^{\mu\nu}_\pm(\xi,\eta) {\widehat{\zz_{\mu}}}(s,\xi-\eta)\widehat{\zz_{\nu}}(s,\eta)d\eta,
\end{align}
with phase functions
\begin{align}\label{eqn: bq phases}
&\Phi^{\mu\nu}_\pm(\xi,\eta)=\pm\Lambda(\xi)-\mu\Lambda(\xi-\eta)-\nu\Lambda(\eta),
\end{align}
and multipliers
\begin{equation}\label{eqn: bq multipliers}
\begin{split}
    &\m_\pm^{\mu\mu}(\xi,\eta)=-\frac{1}{8}\frac{\xi(\xi-\eta)^\perp}{\absxi\absxieta}\Big(\frac{\abseta^2-\absxieta^2}{\abseta}\Big)\mp\mu\frac{1}{8}\frac{(\xi-\eta)^\perp\eta}{\absxieta\abseta}\big(\absxieta-\abseta\big),\quad \mu\in\{-,+\} \\
  &\m^{+-}_\pm(\xi,\eta)=-\frac{1}{4}\frac{\xi(\xi-\eta)^\perp}{\absxi\absxieta}\Big(\frac{\abseta^2-\absxieta^2}{\abseta}\Big)\pm \frac{1}{4}\frac{(\xi-\eta)^\perp\eta}{\absxieta\abseta}\big(\absxieta +\abseta\big).
\end{split}
\end{equation} 
\end{proposition}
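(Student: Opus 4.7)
The plan is to derive the evolution equations for $Z_\pm$ from \eqref{eqn: bq}, check that the linear part decouples in these variables, and then explicitly compute the nonlinearities in Fourier. Applying $\absnabla^{-1}$ to the vorticity equation and adding $\pm$ the density equation, the two linear terms combine through $\partial_{x_1}\Delta^{-1}\omega=-\partial_{x_1}\absnabla^{-1}(\absnabla^{-1}\omega)$ into
\begin{align*}
    \partial_t Z_\pm = \mp\,\partial_{x_1}\absnabla^{-1} Z_\pm + \mathcal{N}_\pm, \qquad \mathcal{N}_\pm := -\absnabla^{-1}(u\cdot\nabla\omega)\mp u\cdot\nabla\rho.
\end{align*}
Since $-\partial_{x_1}\absnabla^{-1}$ has Fourier symbol $-i\Lambda$, the linear part reads $\mp i\Lambda \widehat{Z}_\pm$ in Fourier; this diagonal term is removed precisely by multiplying by $e^{\pm it\Lambda}$. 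Thus the profiles $\mathcal{Z}_\pm=e^{\pm it\Lambda}Z_\pm$ satisfy $\partial_t\widehat{\mathcal{Z}}_\pm(t,\xi)=e^{\pm it\Lambda(\xi)}\widehat{\mathcal{N}}_\pm(t,\xi)$, and integrating in time yields the Duhamel formula \eqref{eqn: bilin_profiles} once the quadratic structure is unpacked.

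To obtain the stated bilinear form I would next expand $e^{\pm it\Lambda(\xi)}\widehat{\mathcal{N}}_\pm(\xi)$ as a sum over $(\mu,\nu)\in\{+,-\}^2$. From $u=\nabla^\perp\Delta^{-1}\omega$ one has $\widehat{u}(\zeta)=-i\zeta^\perp\abs{\zeta}^{-2}\widehat{\omega}(\zeta)$, and writing $\widehat{\omega}=\abs{\cdot}\,\widehat{\absnabla^{-1}\omega}$, the two pieces in $\mathcal{N}_\pm$ become
\begin{align*}
    -\widehat{\absnabla^{-1}(u\cdot\nabla\omega)}(\xi) &= -\int\frac{(\xi-\eta)^\perp\cdot\eta\,\abseta}{\absxi\,\absxieta}\,\widehat{\absnabla^{-1}\omega}(\xi-\eta)\,\widehat{\absnabla^{-1}\omega}(\eta)\,d\eta, \\
    \mp\widehat{u\cdot\nabla\rho}(\xi) &= \mp\int\frac{(\xi-\eta)^\perp\cdot\eta}{\absxieta}\,\widehat{\absnabla^{-1}\omega}(\xi-\eta)\,\widehat{\rho}(\eta)\,d\eta.
\end{align*}
Substituting the identities $\absnabla^{-1}\omega=(Z_++Z_-)/2$ and $\rho=(Z_+-Z_-)/2$, inserting $\widehat{Z}_\mu=e^{-i\mu t\Lambda}\widehat{\mathcal{Z}}_\mu$, and absorbing the phases into the outer factor $e^{\pm it\Lambda(\xi)}$ produces, for each $(\mu,\nu)$, an integrand carrying the phase $\Phi^{\mu\nu}_\pm$ of \eqref{eqn: bq phases} and an explicit (but unsymmetrized) multiplier $\tilde{\m}^{\mu\nu}_\pm$.

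The final step is to bring these symbols into the stated form. For the cross interactions, the $(\mu,\nu)=(+,-)$ and $(-,+)$ integrals can be assembled into a single bilinear form $\mathcal{Q}_{\m^{+-}_\pm}(\mathcal{Z}_+,\mathcal{Z}_-)$ via the change of variables $\eta\mapsto\xi-\eta$ in the second integrand, which preserves the phase ($\Phi^{-+}_\pm(\xi,\xi-\eta)=\Phi^{+-}_\pm(\xi,\eta)$) and flips the sign of $(\xi-\eta)^\perp\cdot\eta$; collecting the resulting pieces over a common denominator and using $\xi\cdot(\xi-\eta)^\perp=(\xi-\eta)^\perp\cdot\eta$ together with $\tfrac{\absxieta}{\abseta}-\tfrac{\abseta}{\absxieta}=\tfrac{\absxieta^2-\abseta^2}{\absxieta\,\abseta}$ delivers the multiplier $\m^{+-}_\pm$ of \eqref{eqn: bq multipliers}. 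For the diagonal pairs $(\mu,\mu)$, symmetrization is available: the antisymmetric part of $\tilde{\m}^{\mu\mu}_\pm$ under $\eta\leftrightarrow\xi-\eta$ integrates to zero against $\widehat{\mathcal{Z}}_\mu(\xi-\eta)\widehat{\mathcal{Z}}_\mu(\eta)$, so one may replace $\tilde{\m}^{\mu\mu}_\pm$ by its symmetric part and repeat the same algebraic manipulations to obtain $\m^{\mu\mu}_\pm$. The main (and essentially only) obstacle is careful bookkeeping of signs across the $\pm$, $\mu$, $\nu$ cases and the $\eta\leftrightarrow\xi-\eta$ swap; otherwise the argument is a direct Fourier calculation.
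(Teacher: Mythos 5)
Your proposal is correct and follows essentially the same route as the paper: both derive the scalar evolution equations for $Z_\pm$, pass to profiles via Duhamel, expand the nonlinearity in Fourier using $\absnabla^{-1}\omega=(Z_++Z_-)/2$, $\rho=(Z_+-Z_-)/2$, and then symmetrize the diagonal $(\mu,\mu)$ contributions and combine the $(+,-)$ and $(-,+)$ ones via the change of variables $\eta\mapsto\xi-\eta$. The only difference is presentational: you carry the nonlinearity around as $\mathcal{N}_\pm=-\absnabla^{-1}(u\cdot\nabla\omega)\mp u\cdot\nabla\rho$ and compute each piece directly, whereas the paper records the intermediate unsymmetrized multipliers $\n^{\mu\nu}_\pm$ in \eqref{eqn: multipliers n} and then reassembles them.
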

Moreover, a direct computation using that 
\begin{equation}\label{eqn: urho}
 u=-\frac{1}{2}\nabla^\perp\absnabla^{-1}(Z_++Z_-),\quad \rho=\frac{1}{2}(Z_+-Z_-),   
\end{equation}
shows that this choice of unknowns preserves the energy structure in the sense that
\begin{align}\label{eqn: energy conservation Z+ Z-}    
\norm{u}_{\dot{H}^k}^2+\norm{\rho}_{\dot{H}^k}^2=\frac{1}{2}\norm{Z_+}_{\dot{H}^k}^2+\frac{1}{2}\norm{Z_-}_{\dot{H}^k}^2=\frac{1}{2}\norm{\zz_+}_{\dot{H}^k}^2+\frac{1}{2}\norm{\zz_-}_{\dot{H}^k}^2,\quad k\in\N_0.
    \end{align}
\begin{proof}
By a direct computation, the system \eqref{eqn: bq} is equivalent to
\begin{equation}\label{eqn: BQ on Z+, Z-}
    \begin{split}
         &\partial_tZ_\pm +\frac{1}{4}\absnabla^{-1}\mathrm{div}\Big(\nabla^\perp \absnabla^{-1}(Z_++Z_-)\cdot \absnabla(Z_++Z_-)\Big)\\
 &\hspace{3.2cm}\pm\frac{1}{4}\nabla^\perp\absnabla^{-1}(Z_++Z_-)\cdot\nabla(Z_+-Z_-)=\pm R_1Z_+.
    \end{split}
\end{equation}
This can be rewritten compactly as follows
\begin{equation}\label{eqn: bq on Z+ Z- compact}
   (\partial_t\mp R_1)Z_\pm = \mathcal{N}_{\n^{++}_\pm}(Z_+,Z_+)+\mathcal{N}_{\n^{+-}_\pm}(Z_+,Z_-)+\mathcal{N}_{\n^{-+}_\pm}(Z_-,Z_+)+\mathcal{N}_{\n^{--}_\pm}(Z_-,Z_-),
\end{equation}
where for $\mu,\nu \in \{+,-\}$
\begin{align*}
    \F(\mathcal{N}_{\n^{\mu\nu}_\pm}(f,g))(\xi):=\int_{\R^2} \n^{\mu\nu}_\pm \hat{f}(\xi-\eta)\hat{g}(\eta)d\eta,
\end{align*}
 with multipliers
\begin{equation}\label{eqn: multipliers n}
\begin{split}
      & \n^{++}_{\pm}=\n^{-+}_\pm=-\frac{1}{4}\frac{\xi(\xi-\eta)^\perp}{\absxi\absxieta}\abseta \mp \frac{1}{4}\frac{(\xi-\eta)^\perp\eta}{\absxieta\abseta}\abseta,\\
    & \n^{--}_{\pm} = \n^{+-}_\pm=-\frac{1}{4}\frac{\xi(\xi-\eta)^\perp}{\absxi\absxieta}\abseta \pm \frac{1}{4}\frac{(\xi-\eta)^\perp\eta}{\absxieta\abseta}\abseta.
\end{split}
\end{equation}
Observe that since $\F(R_1f)(\xi)=-i\Lambda(\xi)\widehat{f}$, $\Lambda(\xi)=\frac{\xi_1}{\absxi}$, for the profiles $\zz_\pm$ there holds
\begin{align*}
   & \widehat{\zp}=e^{it\Lambda}\widehat{Z_+}, && \widehat{\zm}=e^{-it\Lambda}\widehat{Z_-},
\end{align*}
and by the Duhamel formulation we obtain that
\begin{align}\label{eqn: Duhamel BQ Z+ Z-}
    &\zz_\pm(t)=\zz_\pm(0)+\sum_{\mu,\nu \in \{+,-\}}\B_{\n_\pm^{\mu\nu}}(\mathcal{Z_\mu},\mathcal{Z_\nu})(t),
\end{align}
with 
\begin{align*}
  & \F(\B_{\n_\pm^{\mu\nu}}(\mathcal{Z_\mu},\mathcal{Z_\nu}))(t,\xi): =\int_0^t\F(\mathcal{Q}_{\n^{\mu\nu}_\pm}(\mathcal{Z}_{\mu},\mathcal{Z}_{\nu}))(s,\xi)ds, \\
  &\F(\mathcal{Q}_{\n^{\mu\nu}_\pm}(\mathcal{Z}_{\mu},\mathcal{Z}_{\nu}))(s,\xi):=\int_{\R^2} e^{is\Phi_\pm^{\mu\nu}(\xi,\eta)}\n^{\mu\nu}_\pm(\xi,\eta) \widehat{\mathcal{Z}_{\mu}}(s,\xi-\eta)\widehat{\mathcal{Z}_{\nu}}(s,\eta)d\eta
\end{align*}
and phase functions as in \eqref{eqn: bq phases}.
To arrive at the further simplified expression in \eqref{eqn: bilin_profiles} we symmetrize and collect terms: Observe that by symmetry of $\n^{\mu\mu}_\pm$ and $\Phi_{\mu\mu}$ under the change of variables $\eta\leftrightarrow\xi-\eta$ there holds that
    \begin{align*}
        \F(\mathcal{Q}_{\n^{\mu\mu}_\pm}(\mathcal{Z}_{\mu},\mathcal{Z}_{\mu}))(s,\xi)&=\int_{\R^2} e^{ is\Phi_\pm^{\mu\mu}(\xi,\eta)}\n^{\mu\mu}_\pm(\xi,\eta) \widehat{\mathcal{Z}_{\mu}}(s,\xi-\eta)\widehat{\mathcal{Z}_{\mu}}(s,\eta)d\eta\\
        &=\int_{\R^2} e^{ is\Phi_\pm^{\mu\mu}(\xi,\xi-\eta)}\n^{\mu\mu}_\pm(\xi,\xi-\eta) \widehat{\mathcal{Z}_{\mu}}(s,\eta)\widehat{\mathcal{Z}_{\mu}}(s,\xi-\eta)d\eta\\
        &=\F(\mathcal{Q}_{\m^{\mu\mu}_\pm}(\mathcal{Z}_{\mu},\mathcal{Z}_{\mu}))(s,\xi).
    \end{align*}
On the other hand, with the same change of variables and the symmetry $\Phi^{+-}_\pm(\xi,\eta)=\Phi^{-+}_\pm(\xi,\xi-\eta)$ we compute that
    \begin{align*}
        & \F({\mathcal{Q}}_{\n^{+-}_\pm}(\zz_+,\zz_-))(s,\xi)+ \F(\mathcal{Q}_{\n^{-+}_\pm}(\zz_-,\zz_+))(s,\xi)\\
         &=\int_{\R^2} e^{is\Phi^{+-}_\pm(\xi,\eta)}\n^{+-}_\pm(\xi,\eta)\widehat{\zz_+}(s,\xi-\eta)\widehat{\zz_-}(s,\eta)d\eta +\int_{\R^2} e^{is\Phi^{-+}_\pm(\xi,\eta)}\n^{-+}_\pm(\xi,\eta)\widehat{\zz_-}(s,\xi-\eta)\widehat{\zz_+}(s,\eta)d\eta\\
         &=\int_{\R^2} e^{is\Phi^{+-}_\pm(\xi,\eta)}\n^{+-}_\pm(\xi,\eta)\widehat{\zz_+}(s,\xi-\eta)\widehat{\zz_-}(s,\eta)d\eta+\int_{\R^2} e^{is\Phi^{+-}_\pm(\xi,\eta)}\n^{-+}_\pm(\xi,\xi-\eta)\widehat{\zz_-}(s,\eta)\widehat{\zz_+}(s,\xi-\eta)d\eta\\
         &=\F(\mathcal{Q}_{\m^{+-}_\pm}(\mathcal{Z}_{+},\mathcal{Z}_{-}))(s,\xi).
    \end{align*}
\end{proof}
Similarly we can reformulate the problem for the dispersive SQG equation \eqref{SQG}. If $\theta(t)$ solves \eqref{SQG} and $\Theta(t):=e^{it\Lambda}\theta(t)$ is the associated profile, then
\begin{align}\label{eqn: Duhamel for the profile}
    \Theta(t)=\Theta(0)+\B_{\m_0}(\Theta,\Theta)(t),
\end{align}
where
\begin{align}
    \F\B_{\m_0}(\Theta,\Theta)(t,\xi)&=\int_0^t \int_{\R^2}e^{is\Phi_+^{++}(\xi, \eta)}\m_0(\xi,\eta)\widehat{\Theta}(\xi-\eta)\widehat{\Theta}(\eta)d\eta ds,  \label{nonlinearity SQG} \\
    \m_0(\xi,\eta)&:=\frac{1}{2} \frac{(\xi-\eta)\cdot\eta^\perp}{\absxieta \abseta}(\absxieta-\abseta). \label{multiplier}
\end{align}
\begin{proof}[Proof of \eqref{eqn: Duhamel for the profile}-\eqref{multiplier}]
 By Duhamel's formula we have that
 \begin{align*}
    \theta(t)=e^{tR_1}\theta_0+ \int_0^t e^{(t-s)R_1}u\cdot \nabla \theta(s)ds,
\end{align*}
and thus 
\begin{align*}
    \widehat{\Theta}(t,\xi)&=\widehat{\Theta}_0(\xi)+\int_0^t \int_{\R^2}e^{is\Phi^{++}_{+}(\xi, \eta)}\frac{\eta^\perp\cdot(\xi-\eta)}{\abs{\eta}}\widehat{\Theta}(s,\eta)\widehat{\Theta}(s,\xi-\eta)d\eta ds,
\end{align*}
and the change of variables $\eta\leftrightarrow\xi-\eta$ as above gives the claim.
\end{proof}

\begin{subsection}{Scaling symmetry and vector fields}\label{sec: Symmetries and vector fields}
In this section we discuss the presence of natural derivatives arising from a scaling symmetry. Observe that the perturbed Boussinesq system \eqref{eqn: perturbed BQ} (\eqref{eqn: bq} resp.\ ) has the following scaling symmetry for $\lambda>0$:
\begin{align*}
   & u_\lambda(t,x)=\lambda u(t,\lambda^{-1}x), && \rho_\lambda(t,x)=\lambda \rho(t,\lambda^{-1}x), \\
   & \omega_\lambda(t,x)=\omega(t,\lambda^{-1}x), 
     &&p_\lambda(t,x)=\lambda^2p(t,\lambda^{-1}x).
\end{align*}
That is, if $(u,\rho)$ solve \eqref{eqn: perturbed BQ} with pressure $p$, then $(u_\lambda,\rho_\lambda)$ solve \eqref{eqn: perturbed BQ} with pressure $p_\lambda$. Similarly, if $(\omega,\rho)$ solves \eqref{eqn: bq}, then so does $(\omega_\lambda,\rho_\lambda)$. Solutions of the dispersive SQG equation satisfy an analogous scaling: if $\theta$ solves \eqref{SQG}, then so does $\theta_\lambda(t,x)=\lambda\theta(t, \lambda^{-1} x)$ for $\lambda>0$. This symmetry group is generated by the vector field $\mathcal{S}$ acting on functions $f$ as
    \begin{align}\label{def: S W}
       \mathcal{S}f:= - f+ Sf, \hspace{1.5cm} Sf:=x\cdot \nabla_xf.
    \end{align}
In particular, (as can be verified also directly since $S\Lambda=0$) we have that $S$ commutes with the linear semigroup of the Boussinesq resp.\ SQG equations,
\begin{equation}\label{eqn: semigroup_comm}
 [S,e^{it\Lambda}]=0.
\end{equation}
  
In order to span the full tangent space at each point, we complement the natural vector field $S$ with 
    \begin{align}\label{def: W}
        Wf:=x^{\perp}\cdot\nabla_xf.
    \end{align}
In polar coordinates $x \mapsto (r\cos \tau,r\sin\tau)$ these derivatives are given as the radial and angular derivative respectively, $ S=r\partial_r,\; W=\partial_\tau.$ This will be useful in the following sections. 
\par Moreover, we observe that the decomposition \eqref{def: Profiles BQ} of the Boussinesq unknowns $(u,\rho)$ into dispersive unknowns $Z_\pm$ and profiles $\zz_\pm$ interfaces naturally with the vector fields $S$ in $L^2$: By direct computation and using \eqref{eqn: semigroup_comm} we have that
\begin{equation}\label{eqn: S conservation Z+ Z-}
 \snorm{S^ku}_{L^2}^2+\snorm{S^k\rho}_{L^2}^2=\frac12\snorm{S^kZ_+}_{L^2}^2+\frac12\snorm{S^kZ_-}_{L^2}^2=\frac12\snorm{S^k\zz_+}_{L^2}^2+\frac12\snorm{S^k\zz_-}_{L^2}^2,\quad k\in \N.
\end{equation}
 
\end{subsection}
\begin{subsection}{Localizations}\label{sec: Localizations}
In this section we introduce localizations in frequency and angle, which will allow us to quantify the nonlinear interactions.
\par To define the Littlewood-Paley projections, let $\psi \in C^\infty (\R,[0,1])$ a radially symmetric bump function with $\supp \psi \subset [-\frac{8}{5},\frac{8}{5}]$ and $\psi|_{[-\frac{4}{5},\frac{4}{5}]}\equiv 1$. Moreover, we let $\varphi(x):=\psi(x)-\psi(2x)$ and define for $a \in \Z, \; b,\; c \in \Z^-$ and $\Lambda$ as in \eqref{eqn: bq phases}
\begin{align*}
  \varphi_{a,b}(\zeta):=\varphi(2^{-a}\abs{\zeta})\varphi(2^{-b}\sqrt{1-\Lambda^2(\zeta)}), &&    \varphi_{a,b,c}(\zeta):=\varphi(2^{-a}\abs{\zeta})\varphi(2^{-b}\sqrt{1-\Lambda^2(\zeta)})\varphi(2^{-c}\Lambda(\zeta)).
\end{align*}
For $k\in \Z$, $p,q\in \Z^-$ we define the associated Littlewood-Paley projections by
\begin{align*}
    \F(P_{k,p}f)(\xi)=\varphi_{k,p}(\xi)\widehat{f}(\xi), && \F(P_{k,p,q}f)(\xi)=\varphi_{k,p,q}(\xi)\widehat{f}(\xi).
\end{align*}
In later sections, we will use the localization projections simultaneously for the variables $\xi, \, \xi-\eta$ and $\eta$, and thus introduce the following short-hand notation
\begin{align}\label{eqn: chi localizations}
    \chi(\xi,\eta)=\varphi_{k,p}(\xi)\varphi_{k_1,p_1}(\xi-\eta)\varphi_{k_2,p_2}(\eta), && \widetilde{\chi}(\xi,\eta)=\varphi_{k,p,q}(\xi)\varphi_{k_1,p_1,q_1}(\xi-\eta)\varphi_{k_2,p_2,q_2}(\eta).
\end{align}
\begin{remark}\label{rk: notation similar supp properties}
    Throughout this paper, we will denote by $\overline{\chi}$ (resp.\ $\overline{\tchi}, \overline{\varphi}$) a function with similar support properties as $\chi$ (resp.\ $\tchi, \varphi$). For simplicity of notation we do not distinguish the corresponding localization operators $P_{a,b}$, $P_{a,b,c}$ arising from $\varphi$ or $\overline{\varphi}$.
\end{remark}
\par Next we introduce Littlewood-Paley-type localizations in order to quantify regularity in the polar coordinate angle. To that end, let $f\in L^2$ and consider polar coordinates $x\mapsto (r\cos\tau,r\sin\tau)$. Then we can expand
\begin{align}\label{eqn: Fourier expansion}
    f(x)=\sum_{n\in \Z}f_n(r)e^{in\tau},\quad f_n(r)=\frac{1}{2\pi}\int_0^{2\pi} f(r\cos\tau,r\sin\tau)e^{-in\tau}d\tau.
\end{align}
We recall here that by Parseval's theorem there holds
\begin{align}\label{Parseval angular localization}
    \ltwonorm{f}^2=2\pi \sum_{n\in \Z}\norm{f_n}^2_{L^2(\R^+,rdr)}.
\end{align}
Changing back to Cartesian coordinates in \eqref{eqn: Fourier expansion}, for $l\in \Z$ we define angular projections as 
\begin{align*}
(\bar{R}_{\leq l}f)(x)&:=\sum_{n\in \Z} \psi(2^{-l}n) \int_{\sphere^1} f(\abs{x}y)e^{-in \arccos{(y\cdot \frac{x}{\abs{x}})}}d\text{vol}_{\sphere^1}(y),\\
   (\bar{R}_{l}f)(x)&:=\sum_{n\in \Z} \varphi (2^{-l}n) \int_{\sphere^1} f(\abs{x}y)e^{-in \arccos{(y\cdot \frac{x}{\abs{x}})}}d\text{vol}_{\sphere^1}(y). \end{align*}
\begin{proposition}\label{prop: angular localization properties} Let $f\in L^2$, $\bar{R}_{\leq l}$ and $\bar{R}_l$ defined as above with $l\in \Z$, and $W$ as in \eqref{def: W}. Then following properties hold:
    \begin{enumerate}
        \item\label{it1: prop: angular localization properties} $f=\sum_{l\geq 0} \bar{R}_lf$  \quad and \quad $\ltwonorm{f}^2 \sim \sum_{l\geq 0}\ltwonorm{\bar{R}_lf}^2$;
        \item\label{it2: prop: angular localization properties} The operators $\bar{R}_{\leq l}$ and $\bar{R}_l$ are bounded in $L^\ell$ for $1\leq \ell\leq \infty$;
        \item\label{it3: prop: angular localization properties} The Bernstein property reads:
        \[\lpnorm{W \bar{R}_lf}{\ell} \sim 2^l\lpnorm{\bar{R}_lf}{\ell}.\]
    \end{enumerate}
\end{proposition}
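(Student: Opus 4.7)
The plan is to reduce all three items to standard one-dimensional Littlewood--Paley calculus on the angular variable $\tau$ of polar coordinates $(x_1,x_2)=(r\cos\tau,r\sin\tau)$, with the radial variable $r$ playing a purely passive role. In these coordinates the expansion \eqref{eqn: Fourier expansion} represents $\bar R_l$ as a Fourier multiplier in $\tau$,
\[
(\bar R_l f)(r\cos\tau,r\sin\tau)=\sum_{n\in\Z}\varphi(2^{-l}n)\,f_n(r)\,e^{in\tau},
\]
equivalently as convolution in $\tau$ (at each fixed $r$) against the periodic kernel $K_l(\tau):=\sum_{n\in\Z}\varphi(2^{-l}n)e^{in\tau}$. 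All three claims then follow from properties of the symbol $\varphi(2^{-l}\cdot)$ on $\Z$ and of $K_l$.

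For item \eqref{it1: prop: angular localization properties}, the decomposition $f=\sum_{l\geq 0}\bar R_l f$ is the standard dyadic partition of unity on $\Z$ (with $\bar R_0$ interpreted as the low-frequency projection $\bar R_{\leq 0}$ to absorb the zero angular mode), and $L^2$-almost orthogonality is immediate from Parseval \eqref{Parseval angular localization} together with the finite overlap of the supports $\{n:\varphi(2^{-l}n)\neq 0\}$ across $l$. For item \eqref{it2: prop: angular localization properties}, the key input is the uniform bound $\snorm{K_l}_{L^1(\sphere^1)}\lesssim 1$, which I would obtain by Poisson summation: $K_l$ is the $2\pi$-periodization of the Schwartz function $2^l\hat\varphi(2^l\cdot)$ on $\R$, so by the triangle inequality $\snorm{K_l}_{L^1(\sphere^1)}\leq \snorm{\hat\varphi}_{L^1(\R)}$, independent of $l$. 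Young's inequality on $\sphere^1$ then yields $\snorm{\bar R_l f(r\cdot)}_{L^\ell(\sphere^1)}\lesssim \snorm{f(r\cdot)}_{L^\ell(\sphere^1)}$ uniformly in $r$ for $1\leq \ell\leq\infty$, and integrating with weight $r\,dr$ via Minkowski and Fubini yields the claim; the same argument handles $\bar R_{\leq l}$.

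For item \eqref{it3: prop: angular localization properties}, I use that $W=\partial_\tau$ in polar coordinates, so $W$ acts by angular symbol $in$. The upper bound $\lpnorm{W\bar R_l f}{\ell}\lesssim 2^l\lpnorm{\bar R_l f}{\ell}$ follows from \eqref{it2: prop: angular localization properties} applied to the rewriting $in\,\varphi(2^{-l}n)=2^l\,\tilde\varphi(2^{-l}n)$, where $\tilde\varphi(x):=ix\varphi(x)$ is a smooth bump on the same annulus as $\varphi$. For the matching lower bound I would smoothly invert $in$ on $\supp\varphi$: choose $\tilde\psi\in C^\infty_c(\R\setminus\{0\})$ with $\tilde\psi\equiv 1$ on $\supp\varphi$, and let $T_l$ be the angular multiplier with symbol $\tilde\psi(2^{-l}n)/(in)=2^{-l}\,\eta(2^{-l}n)$, $\eta(x):=\tilde\psi(x)/(ix)$. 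Since $\eta\in C^\infty_c(\R\setminus\{0\})$, the argument of \eqref{it2: prop: angular localization properties} gives $\snorm{T_l}_{L^\ell\to L^\ell}\lesssim 2^{-l}$, and by construction $T_l(W\bar R_l f)=\bar R_l f$, which yields the lower bound (with a mild separate treatment of the zero angular mode at $l=0$).

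The only genuine technical ingredient is the uniform $L^1(\sphere^1)$ kernel bound via Poisson summation; beyond that, the proposition is a direct transcription of standard 1D Littlewood--Paley theory to the angular variable, with the radial coordinate passing through by Minkowski/Fubini.
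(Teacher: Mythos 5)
Your proposal is correct and follows the same overall route as the paper: all three items are reduced to one‑dimensional angular Littlewood--Paley calculus, item \eqref{it1: prop: angular localization properties} via Parseval together with the finite overlap of the dyadic pieces, item \eqref{it2: prop: angular localization properties} via a uniform $L^1$ kernel bound for $K_l$ plus Schur/Young, and item \eqref{it3: prop: angular localization properties} via the observation that $W=\partial_\tau$ acts by the angular symbol $in$. Where you add genuine value is in the two places where the paper is terse. For \eqref{it2: prop: angular localization properties} the paper asserts the uniform $L^1(\sphere^1)$ bound with a remark about a ``bounded telescoping sum'' and does not make the cancellation explicit; your Poisson‑summation argument (identifying $K_l$ with the $2\pi$‑periodization of $2^l\check\varphi(2^l\cdot)$, so $\snorm{K_l}_{L^1(\sphere^1)}\le\snorm{\check\varphi}_{L^1(\R)}$) is the standard rigorous version of that step. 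For \eqref{it3: prop: angular localization properties} the paper essentially only records the $L^2$ case, where $\abs{n}\sim 2^l$ on $\supp\varphi(2^{-l}\cdot)$ plus Parseval is enough; your argument with the rescaled symbol $\tilde\varphi(x)=ix\varphi(x)$ for the upper bound and the smooth inverse $\eta(x)=\tilde\psi(x)/(ix)$ for the lower bound supplies the Bernstein‑type estimate on general $L^\ell$, which is what the statement actually asserts. You are also right to flag the need for a separate treatment of the zero angular mode ($n=0$) for the lower bound at $l=0$; this is precisely why the paper defines $R_l^p$ to absorb low angular modes via $\bar R_{\leq l}$ rather than using $\bar R_0$ alone. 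In short: same strategy, but your write‑up is the more complete one on items \eqref{it2: prop: angular localization properties} and \eqref{it3: prop: angular localization properties}.
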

\begin{proof}
The first property in \eqref{it1: prop: angular localization properties} follows from \eqref{eqn: Fourier expansion} and the fact that $\sum_{l\geq 0} \varphi(2^{-l}\cdot)$ is a partition of unity. Moreover, with \eqref{Parseval angular localization} and the fact that $\varphi^2$ has similar support properties as $\varphi$, there holds: 
\begin{align*}
    \ltwonorm{f}^2&=\int_0^\infty\int_0^{2\pi} \abs{f(r\cos\tau,r\sin\tau)}^2d\tau rdr\\
    &=\int_0^\infty 2\pi \sum_{n\in \Z} \abs{f_n(r)}^2rdr\\
    &=2\pi \int_0^\infty \sum_{n\in \Z}\sum_{l\geq 0} \varphi^2(2^{-l}n)\abs{f_n(r)}^2rdr\\
    &\blue{\sim} \sum_{l\geq 0} \sum_{n\in \Z}\int_{\R^2}\varphi^2(2^{-l}n)\abs{\int_{\sphere^1}f(\abs{x}y)e^{-in\arccos{(y\cdot\frac{x}{\abs{x}})}}d\text{vol}_{\sphere^1}(y)}^2dx\\
    &=\sum_{l\geq 0}\ltwonorm{\bar{R}_l f}^2.
\end{align*}
We proceed with the proof of \eqref{it2: prop: angular localization properties} for $\bar{R}_l$ and the result for $R_{\leq l }$ follows similarly. We view the operator $\bar{R}_l$ as a singular integral operator with kernel ${K_l}(x,y)=\sum_{n\in\Z}\varphi(2^{-l}n)e^{-in\arccos{(y\cdot x})}$ as follows:
\begin{align*}
    \bar{R}_lf(x)&=\int_{\sphere^1}f(\abs{x}y)\sum_{n\in\Z}\varphi(2^{-l}n)e^{-in\arccos{(y\cdot\frac{x}{\abs{x}})}}d\text{vol}_{\sphere^1}(y)\\
    &=\int_{\sphere^1}f(\abs{x}y){K_l}\Big(\frac{x}{\abs{x}},y\Big)d\text{vol}_{\sphere^1}(y).
\end{align*}
Since $\sabs{e^{-in\arccos{(y\cdot\frac{x}{\abs{x}})}}}=1$ and the telescoping sum present in $K_l$ is bounded, there holds:
\begin{align*}
\sup_{x}\snorm{{K_l}(x,y)}_{L^1{(\sphere^1,d\text{vol}(y))}}+\sup_{y}\snorm{{K_l}(x,y)}_{L^1{(\sphere^1,d\text{vol}(x))}}\lesssim 1.
\end{align*}
The claim follows then by Young's inequality for integral operators.
\par As for the proof of \eqref{it3: prop: angular localization properties} recall that in polar coordinates $W=\partial_\tau$. Using the properties of the Fourier transform and the equivalent polar coordinate representation above we see
   \begin{align*}
\snorm{W\bar{R}_lf}_{L^\ell_{x}}&=\snorm{W\bar{R}_lf}_{L^\ell( r dr d\tau)}=2\pi \big \lVert{\partial_\tau\sum_{n\in \Z}\varphi(2^{-l}n)\int_{0}^{2\pi}f(r\cos\tau,r\sin\tau)e^{-in\tau}d\tau}\big \rVert_{L^\ell(\R^+, rdr)}\\
       &=2\pi\big \lVert{\sum_{n\in \Z}\varphi(2^{-l}n)in\int_0^{2\pi}f(r\cos\tau,r\sin\tau)e^{-in\tau}d\tau}\big\rVert_{L^\ell}\\
       &\sim 2^l \snorm{\bar{R}_lf}_{L^{\ell}}.
   \end{align*}
\end{proof} 
Throughout the paper we will use polar coordinates in frequency space \begin{align}\label{eqn: polar coordinates}
    \xi\mapsto(\rho \cos\tau,\rho \sin\tau)=(\rho\Lambda, \pm\rho\sinlambda),
\end{align} 
 and without loss of generality we consider the upper hemisphere $(\rho,\tau) \in \R_+\times [0,\pi],$ so that $\xi=(\rho\Lambda,\rho\sinlambda)$.  Then there holds
 \begin{align}\label{eqn: phi_kp in rho lambda}
     &\varphi_{k,p}(\xi)=\varphi_{k,p}(\rho,\tau)=\varphi(2^{-k}\rho)\varphi(2^{-p}\sinlambda),&&\Lambda(\xi)=\cos \tau.
 \end{align}

To understand the interplay of the various projections, we observe that with  $ \xi=\rho \partial_\rho\xi,\; \xi^\perp=-\sinlambda\partial_\Lambda \xi$, there holds 
\begin{align*}
    \lpnorm{[W,P_{k,p}]f}{\ell}&=\lVert \varphi(2^{-k}\rho)\sinlambda 2^{-p}\varphi'(2^{-p}\sinlambda)\partial_\Lambda(\sinlambda)f \\
    &\hspace{0.15cm}+\sinlambda \varphi(2^{-k}\rho)\varphi(2^{-p}\sinlambda)\partial_\Lambda f -\sinlambda \varphi(2^{-k}\rho)\varphi(2^{-p}\sinlambda)\partial_\Lambda f\rVert_{L^\ell}\\
    &\lesssim 2^{-p}.
\end{align*}
In particular, 
\begin{align*}
\snorm{WP_{k,p}\bar{R}_lf}_{L^\ell}&=\snorm{[W,P_{k,p}]\bar{R}_lf+P_{k,p}W\bar{R}_lf}_{L^\ell}\lesssim 2^{-p}\snorm{\bar{R}_lf}_{L^\ell}+2^l\snorm{\bar{R}_lf}_{L^\ell},
\end{align*}
and thus for simultaneous localizations in $k,p,l$ the analogue of the above Bernstein property in \ref{prop: angular localization properties}\eqref{it3: prop: angular localization properties} can only hold if $-p\leq l$. To automatically take this into account we define the operators
\[R_l^p:=\begin{cases}
    0, & p+l<0\\
    \bar{R}_{\leq l}, & p+l=0\\
    \bar{R}_l, & p+l>0.
\end{cases}\]
In the following, we will suppress the superscript $p$ and note that these operators satisfy properties analogous to those in Proposition \ref{prop: angular localization properties}, so that in particular
\begin{align*}
&P_kf=\sum_{\substack{l\in \Z^+, p \in \Z^- \\ l+p\geq 0}}P_{k,p}R_lf, &&P_kf=\sum_{\substack{l\in \Z^+, p \in \Z^-, q\in \Z^- \\ l+p\geq 0}}P_{k,p,q}R_lf.     
\end{align*}
 These projections satisfy favorable commutation relations with the vector field $S$:
\begin{align*}
   &[S,P_k]f=-{P}_kf, && [S,P_{k,p}f]=-{P}_{k,p}f, && [S,P_{k,p,q}f]=-{P}_{k,p,q}f, &&  [S,R_l]f=0.
\end{align*}
To see this we compute that 
\begin{align*}
     \widehat{SP_kf}(\xi)&=(-2-S_\xi)\widehat{P_kf}(\xi)=-2^{-k}\absxi\varphi'(2^{-k}\absxi)\widehat{f}(\xi)+\widehat{P_kSf}(\xi),
\end{align*}
 and upon using that $S\Lambda=0$, the claims for the projections $P_{k,p}$ and $P_{k,p,q}$ also follow. Finally, for the angular projections, the claim follows from the definition of $R_l$ by recalling that in polar coordinates $S=r\partial_r$.

\par To fix notation, in our analysis we make the following notational conventions for the sizes of relevant quantities in terms of the localization parameters:
\begin{align*}
     &\abs{\xi}\sim 2^k, &&\abs{\frac{\xi_2}{\abs{\xi}}}=\sqrt{1-\Lambda^2(\xi)}\sim 2^p, && \abs{\frac{\xi_1}{\absxi}}=\abs{\Lambda(\xi)}\sim 2^q,\\
    &\abs{\xi-\eta}\sim 2^{k_1}, && \abs{\frac{\xi_2-\eta_2}{\abs{\xi-\eta}}}=\sqrt{1-\Lambda^2(\xi-\eta)}\sim \twopone, &&\abs{\frac{\xi_1-\eta_1}{\absxieta}}=\abs{\Lambda(\xi-\eta)}\sim 2^{q_1},\\
     &\abs{\eta}\sim \twoktwo, && \abs{\frac{\eta_2}{\abs{\eta}}}=\sqrt{1-\Lambda^2(\eta)}\sim \twoptwo, &&  \abs{\frac{\eta_1}{\abseta}}=\Lambda(\eta)\sim 2^{q_2}.
\end{align*}
\end{subsection}
\begin{subsection}{Main result}\label{subsec: Main Result detailed}
For $\beta>0$ to be determined (see also Remark \ref{remark: Size of parameters}), we define the following weighted norms using the notation $k^+=\max \{0,k\}$ and $k^-=\min \{0,k\}$:
\begin{align}
    &\norm{f}_{B}:=\sup_{k\in \Z,\, p\in \Z^-} 2^{4k^+}2^{-\frac{k^-}{2}}2^{-\frac{p}{2}}\ltwonorm{P_{k,p}f},  \label{B norm} \\
    &\norm{f}_{X}:=\sup_{\substack{k\in \Z,\, l\in \Z^+,\, p \in \Z^- \\ l+p\geq 0}}2^{4k^+}2^{(1+\beta)l}2^{(\frac{1}{2}+\beta) p}\ltwonorm{P_{k,p}R_lf}.\label{X norm}
\end{align}
The $B$-norm captures the anisotropic localizations (with respect to the degeneracy of the phase, via the parameter $p$) and scales like the Fourier transform in $L^{\infty}$, whereas the $X$-norm accounts for a certain amount of angular regularity in $W$ (measured through the weight in $2^l$).
\par In this framework, Theorem \ref{thm: main thm BQ} for the Boussinesq system \eqref{eqn: perturbed BQ} can be stated for the corresponding dispersive unknowns in detail as follows:
\begin{theorem}\label{thm: main result}
    Let $N>5$. There exist $M, N_0\in \N$, $\beta, \delta>0$ satisfying $N_0\gg M\gg N+\beta^{-2}$, $\delta\ll \beta$ and an $\eps_0>0$ such that if for some $0<\eps<\eps_0$ we have
\begin{equation}\label{eqn: initial data assumption}
\begin{split}
\norm{Z_{\pm,\blue{0}}}_{H^{N_0}}+\norm{S^aZ_{\pm,0}}_{L^2}&\leq \varepsilon,\hspace{1.5cm}  0\leq a \leq M, \\
\lVert{S^bZ_{\pm,\blue{0}}}\rVert_B+\lVert{S^bZ_{\pm,\blue{0}}}\rVert_X&\leq \varepsilon, \hspace{1.5cm} 0\leq b\leq N,
\end{split}
\end{equation}
then there exist $T\gtrsim \eps^{-2}$ and a unique solution $(Z_{+},Z_{-} ) \in (C([0,T],H^{N_0}(\R^2))^2$ of \eqref{eqn: bq on Z+ Z- compact} with initial data $(Z_+(0),Z_-(0))=(Z_{+,\blue{0}},Z_{-,\blue{0}})$, and therefore a unique solution $(u,\rho)\in C([0,T],H^{N_0}(\R^2,\R^2))\times C([0,T],H^{N_0}(\R^2))$ of \eqref{eqn: perturbed BQ} with initial data $(u_0,\rho_0)=\frac{1}{2}(-\nabla^\perp\abs{\nabla}^{-1}(Z_{+,0}+Z_{-,0}), Z_{+,0}-Z_{-,0})$.
\end{theorem}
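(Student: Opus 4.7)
My plan is to deduce Theorem \ref{thm: main result} from a bootstrap argument on the profiles $\zz_\pm$, as anticipated by Proposition \ref{prop: bootstrap argument} and outlined in Section \ref{sec: outline of the proof}. Local well-posedness for the scalarized system \eqref{eqn: bq on Z+ Z- compact} in $H^{N_0}$ is classical (it is equivalent to \eqref{eqn: perturbed BQ} via \eqref{eqn: urho}), so the task is to extend the time of existence to order $\eps^{-2}$. To this end I would introduce the bootstrap quantity
\begin{align*}
    \mathcal{E}(T) := \sup_{t \in [0,T]} \Big\{ &\sum_{\pm,\,a\leq M}(1+t)^{-\delta}\bigl( \norm{S^a \zz_\pm(t)}_{L^2} + \norm{\zz_\pm(t)}_{H^{N_0}}\bigr) \\
    &+ \sum_{\pm,\,b\leq N} \bigl( \norm{S^b \zz_\pm(t)}_B + \norm{S^b \zz_\pm(t)}_X \bigr) \Big\},
\end{align*}
and show that on any interval $[0,T]$ with $T\leq c\eps^{-2}$ on which $\mathcal{E}(T)\leq \eps^{1/2}$, one in fact has $\mathcal{E}(T)\leq C_0\eps$. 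Combined with $\mathcal{E}(0)\leq \eps$ from \eqref{eqn: initial data assumption} and a standard continuity argument, this extends existence up to $T\sim \eps^{-2}$, and the recovery of $(u,\rho)$ from $(Z_+,Z_-)$ via \eqref{eqn: urho} concludes the statement.

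\textbf{Energy estimates driven by linear decay.} The engine for the top-order norms is the linear decay estimate of Proposition \ref{proposition: linear decay}, producing $\sinftynorm{P_k e^{\pm it\Lambda}f}\lesssim t^{-1/2}\norm{f}_D$ together with a refined splitting that gives improved decay away from the degeneracy $\{\xi_2=0\}$. Inserting this into the standard energy method applied to \eqref{eqn: BQ on Z+, Z-}, using the conservation identities \eqref{eqn: energy conservation Z+ Z-}, \eqref{eqn: S conservation Z+ Z-} and the commutator $[S,\nabla]=-\nabla$, I expect to obtain
\begin{align*}
    \tfrac{d}{dt}\bigl(\norm{S^a\zz_\pm(t)}_{L^2}^2 + \norm{\zz_\pm(t)}_{H^{N_0}}^2\bigr) \lesssim \eps(1+t)^{-1/2}\bigl(\norm{S^a\zz_\pm(t)}_{L^2}^2 + \norm{\zz_\pm(t)}_{H^{N_0}}^2\bigr),
\end{align*}
where the factor $\eps(1+t)^{-1/2}$ is exactly the $B$- and $X$-controlled $L^\infty$ amplitude of a top-order derivative of $u$. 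Grönwall then yields growth at most $\exp(C\eps t^{1/2})$, which is bounded by $(1+t)^{\delta}$ on $[0,c\eps^{-2}]$ for $c$ small, precisely closing the top-order part of the bootstrap. This is where the $\eps^{-2}$ timescale of the theorem is pinned.

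\textbf{Bilinear analysis for the $B$-norm.} The main work is to propagate the $B$- and $X$-norms of $S^b\zz_\pm$ for $b\leq N$. Starting from the Duhamel identity \eqref{eqn: bilin_profiles} and distributing vector fields $S^b$ (whose action on the multipliers $\m_\pm^{\mu\nu}$ in \eqref{eqn: bq multipliers} and on the phases \eqref{eqn: bq phases} is tracked by Lemmas \ref{lemma: S on basic multipliers}--\ref{lemma: S_eta on multiplier m}), it suffices to bound bilinear expressions $\B_\m(F_1,F_2)$ of the form \eqref{def: introduction bilinear term} with $F_i\in\{S^{b_i}\zz_\pm\}$, $b_1+b_2\leq N$. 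Following the trichotomy in the outline, I would first dispose of the simple cases (very large or very small $k,k_1,k_2$) by crude $L^2$-energy and set-size bounds; then treat the \emph{gap in $p$} case either by integration by parts along $S$ through Lemma \ref{lemma: ibp in bilinear expressions}, exploiting the lower bound on $S\Phi$ provided by Proposition \ref{prop: lower bound on sigma}, or, when such an integration by parts fails, by a normal form integrating by parts in time against $\Phi^{-1}$, with $\partial_tF_i$ controlled by the improved decay of Section \ref{sec: bounds on dtSf}; finally handle the \emph{no gap} case by the refined linear decay together with the null factor $\zeta_2/\abs{\zeta}$ in \eqref{eqn: bq multipliers} that absorbs the negative powers of $2^p$. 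This yields the $B$-bound $\norm{S^b\zz_\pm}_B\lesssim t^{1/6+\delta}\eps^2$ via Proposition \ref{prop: bounds on B norm}, comfortably sub-critical on $[0,\eps^{-2}]$.

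\textbf{Main obstacle: the $X$-norm.} The most delicate piece is the $X$-norm propagation, since it requires more than one power of the \emph{non-commuting} angular derivative $W$ on the nonlinear evolution, and the weight $2^{(1+\beta)l}2^{(1/2+\beta)p}$ with $l+p\geq 0$ leaves essentially no slack. My plan follows the two-step approach of Propositions \ref{prop: X-norm bounds for l>(1+delta)m} and \ref{prop: X-norm bounds for l<(1+delta)m}: first, in the high angular regime $2^l > t^{1+\delta}$ (an approximate finite-speed-of-propagation statement), I would iterate the action of $W$ on bilinear terms via Lemma \ref{lemma: W_xi on bilinear expressions} to trade the large power of $l$ for other localization parameters, splitting further according to whether $2^{l+p}$ is above or below $t^\delta$ and recycling the $B$-strategy in the latter subregime; second, in the low angular regime $2^l \leq t^{1+\delta}$, the critical subcase is $p_1\leq p_2 \ll p\sim 0$, in which none of the $2^{-p_i}$ slack is available and the frequency geometry forces introducing the finer horizontal $q$-localizations $P_{k,p,q}$ in order to implement both the integration by parts along $S$ and the normal forms with sufficient precision. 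I expect this tight low-angular, no-gap-in-$p_i$ regime to be the main technical obstacle of the proof, since it is where all of the above tools must be combined in a very delicate balance dictated by the concrete frequency geometry.
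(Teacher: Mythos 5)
Your outline mirrors the paper closely in its essentials (linear decay feeding energy Gronwall, trichotomy of cases for the bilinear terms, normal forms in the gap-in-$p$ regime, $W$-iteration and $q$-localizations for the $X$-norm), but the bootstrap setup you propose has a genuine quantitative gap. You posit the threshold $\mathcal{E}(T)\leq \eps^{1/2}$ and aim to recover $\mathcal{E}(T)\leq C_0\eps$. Under that hypothesis, however, the $B$- and $X$-norms that control the dispersive amplitude are only of size $\eps^{1/2}$, so Corollary \ref{cor: linear decay semigroup} gives $\sinftynorm{\nabla u(t)}\lesssim \eps^{1/2}(1+t)^{-1/2}$, \emph{not} $\eps(1+t)^{-1/2}$ as your energy paragraph assumes. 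The Gronwall factor is then $\exp(C\eps^{1/2}T^{1/2})\sim\exp(C\eps^{-1/2})$ for $T\sim\eps^{-2}$, which diverges as $\eps\to 0$, so the energy component of $\mathcal{E}$ does not close. The $X$-norm component fails even more directly: the quadratic bilinear bound $\norm{\B_\m^m}_X\lesssim 2^{(\frac12-\frac{\delta}{8})m}\,(\text{bootstrap size})^2$ becomes $\lesssim T^{\frac12-\frac{\delta}{8}}\eps$ under the $\eps^{1/2}$-hypothesis, and for $T\sim\eps^{-2}$ this is $\sim\eps^{\delta/4}$, which is $O(1)$ rather than $O(\eps)$.

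The paper avoids this by using a same-order bootstrap (Proposition \ref{prop: bootstrap argument}): one assumes $\norm{S^b\zz_\pm}_B+\norm{S^b\zz_\pm}_X\leq 100\eps$ and derives $\leq 10\eps$, deducing the energy bounds $\lesssim\eps$ as a by-product via Corollary \ref{cor: energy estimates of Z+-}. The improvement of the constant does not come from gaining half a power of $\eps$ in one step; it comes from the bilinear estimates being slightly subcritical in time, $\norm{\B_\m^m}_X\lesssim 2^{(\frac12-\frac{\delta}{8})m}\eps^2$, so that over $[0,C\eps^{-2}]$ one picks up $C_1C^{\frac12-\frac{\delta}{8}}\eps^{\delta^2/16}\cdot\eps$, and the extra small power $\eps^{\delta^2/16}$ is absorbed by taking $\eps_0$ small. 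If you replace your $\eps^{1/2}$ threshold by $100\eps$ (and accordingly show the bilinear contribution is $\leq 9\eps$ on $[0,C\eps^{-2}]$ rather than $\leq C_0\eps$ from $\eps^{1/2}$), your remaining outline aligns with the paper's argument.
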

Analogously, Theorem \ref{thm: main thm SQG} for the dispersive SQG equation \eqref{SQG} is stated in detail as follows:
\begin{theorem}\label{thm: main result detailed SQG}
    Let $N>5$. There exist $M, N_0\in \N$, $\beta,\delta>0$ satisfying $N_0\gg M\gg N+\beta^{-2}$, $\delta\ll \beta$ and an $\eps_0>0$ such that if $\theta_0$ satisfies
    \begin{equation}\label{eqn: initial data assumption SQG}
    \begin{split}
\norm{\theta_0}_{H^{N_0}}+\norm{S^a\theta_0}_{L^2}&\leq \varepsilon,\hspace{1.5cm}  0\leq a \leq M, \\
\lVert{S^b\theta_0}\rVert_B+\lVert{S^b\theta_0}\rVert_X&\leq \varepsilon, \hspace{1.5cm} 0\leq b\leq N
\end{split}
\end{equation}
for some $0<\eps<\eps_0$, then there exist $T\gtrsim\eps^{-2}$ and a unique solution  $\theta \in C([0,T],\blue{H^{N_0}}(\R^2))$ of \eqref{SQG}.
\end{theorem}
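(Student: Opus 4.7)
The plan is to prove Theorem \ref{thm: main result detailed SQG} by a continuity/bootstrap argument on the profile $\Theta = e^{it\Lambda}\theta$, running the same machinery developed for the Boussinesq system. Structurally \eqref{SQG} is a simplified incarnation of \eqref{eqn: BQ on Z+, Z-}: it has a single scalar unknown (analogous to one $\zz_\pm$), a single phase $\Phi^{++}_+$ and a single multiplier $\m_0$ in \eqref{multiplier} which carries the same null factor of type $\zeta_2/\abs{\zeta}$ and the same degenerate dispersion $\Lambda$. Consequently the entire framework — localizations $P_{k,p},P_{k,p,q},R_l$, vector fields $S,W$, the refined linear decay of Proposition \ref{proposition: linear decay}, and the oscillatory toolbox — applies with only cosmetic changes, and in fact with less bookkeeping because no index $\pm$ and no mixed interactions appear.

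The bootstrap setup assumes that on $[0,T^*]$, $T^*\le c\eps^{-2}$, there holds
\begin{align*}
    \norm{\theta(t)}_{H^{N_0}} \;+\; \sum_{a\le M}\ltwonorm{S^a\theta(t)} \;+\; \sum_{b\le N}\!\left(\norm{S^b\Theta(t)}_B+\norm{S^b\Theta(t)}_X\right) \le C\eps,
\end{align*}
and aims to improve each constant to $C\eps/2$, which combined with local well-posedness in $H^{N_0}$ closes existence on $[0,c\eps^{-2}]$. For the energy bounds one uses $[S,\nabla]=-\nabla$ and the commutation $[S,e^{it\Lambda}]=0$ from \eqref{eqn: semigroup_comm} to write the equations for $\partial^\alpha \theta$ and $S^a\theta$ in quasilinear form; after integrating the transport nonlinearity against $\mathrm{div}\,u=0$, their growth is governed by $\norm{\nabla u(t)}_{L^\infty}$, and the bootstrapped linear decay yields $\norm{\nabla u(t)}_{L^\infty}\lesssim \eps(1+t)^{-1/2}$. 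Grönwall then gives a growth factor $\exp(C\eps\sqrt{t})$, which is exactly the timescale-defining constraint and remains bounded on $[0,c\eps^{-2}]$ for small $c$.

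For the $B$-norm, applying $S^b$ to the Duhamel identity \eqref{eqn: Duhamel for the profile} distributes over the bilinear operator into a sum $\sum_{b_1+b_2\le b}\mathcal{B}_{\widetilde{\m}}(S^{b_1}\Theta,S^{b_2}\Theta)$ where the modified symbols $\widetilde{\m}$ are controlled using Lemmas \ref{lemma: S on basic multipliers}–\ref{lemma: S_eta on multiplier m}. I then execute the three-way case split outlined in Section \ref{sec: outline of the proof}: at extreme frequencies (both very high or very low) the estimate follows from the $H^{N_0}$ energy plus set-size bounds; when the localization parameters $p,p_1,p_2$ exhibit a gap, Lemma \ref{lemma: ibp in bilinear expressions} permits iterated integration by parts along $S$ gaining powers of $(sS\Phi)^{-1}$, or, if $\max\{p,p_1,p_2\}\ll 0$ so that $\abs{\Phi}\gtrsim 1$, a normal-form identity exchanges the time integral for a boundary term and $\partial_t$ contributions that are absorbed via the improved decay of Lemma \ref{lemma: bounds on dtSf}; in the no-gap regime the refined linear decay from Proposition \ref{proposition: linear decay} is sharp enough to conclude directly. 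The outcome is $\norm{S^b\Theta}_B\lesssim \eps + t^{1/6+\delta}\eps^2$, closing the bootstrap with ample room.

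The \textbf{main obstacle} is the $X$-norm bound, because the weight $2^{(1+\beta)l}$ demands more than one order of angular regularity and $W$ does \emph{not} commute with the equation, so $W$-derivatives cannot be propagated by energy. My plan is to split according to whether $2^l>t^{1+\delta}$ or $2^l\le t^{1+\delta}$. In the high-$l$ regime, Proposition \ref{prop: X-norm bounds for l>(1+delta)m} implements an approximate finite-speed-of-propagation: the iterated action of $W_\xi$ on the symbol and the phase, as quantified by Lemmas \ref{lemma: W_xi on elementary multipliers}, \ref{lemma: W_xi Phi}, \ref{lemma: W_xi on bilinear expressions}, forces $l\lesssim l_1+l_2+O(1)$, reducing the contribution to parameters already accessible through $B$-norm arguments. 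In the low-$l$ regime, Proposition \ref{prop: X-norm bounds for l<(1+delta)m} runs the full oscillatory machinery augmented by the finer horizontal localization $P_{k,p,q}$: the delicate sub-case is the no-gap scenario $p\sim p_1\sim p_2$, where neither $S\Phi$ nor $\Phi$ itself admits a useful lower bound from $p$-asymmetry alone, and one must extract lower bounds on $S\Phi$ from the $q$-parameters and perform iterated integration by parts and normal forms in this enhanced decomposition. Together these give $\norm{S^b\Theta}_X\lesssim t^{1/2-\delta}\eps^2$, so the binding constraint remains the energy estimate, yielding $T\gtrsim \eps^{-2}$ as claimed.
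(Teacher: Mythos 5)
Your proposal is correct and follows the same route as the paper: both reduce the SQG theorem to Proposition \ref{prop: bootstrap argument}, which runs identically for the profile $\Theta$ and for the Boussinesq profiles $\zz_\pm$, using the linear decay (Proposition \ref{proposition: linear decay}, Corollary \ref{cor: linear decay semigroup}), the energy estimates (Corollary \ref{cor: energy estimates SQG}), and the bilinear bounds in Propositions \ref{prop: bounds on B norm}, \ref{prop: X-norm bounds for l>(1+delta)m}, \ref{prop: X-norm bounds for l<(1+delta)m}. One small inaccuracy worth flagging: applying $S^b$ to the Duhamel formula does \emph{not} produce modified symbols $\widetilde{\m}$ — Lemma \ref{lemma: S^N on bilinear expression Qm} gives $S^N\mathcal{Q}_\m(f,g)=\sum_{b_1+b_2\le N}c_{b_1b_2}\mathcal{Q}_\m(S^{b_1}f,S^{b_2}g)$ with the \emph{same} multiplier $\m$, a consequence of the exact scaling symmetries $(S_\xi+S_\eta)\m=0$ and $(S_\xi+S_\eta)\Phi=0$; the modified multipliers controlled via Lemmas \ref{lemma: S on basic multipliers}–\ref{lemma: S_eta on multiplier m} instead arise later, when iterating integration by parts along $S_\eta$ or $S_{\xi-\eta}$ inside the bilinear expressions. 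This is a misstatement of where the multiplier mechanics enter, not a gap in the argument.
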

\begin{remark}\label{remark: Size of parameters}
    \begin{enumerate}
    \item \blue{As part of the proof of Theorem \ref{thm: main result} (Theorem \ref{thm: main result detailed SQG} resp.) via the continuity method based on Proposition \ref{prop: bootstrap argument} below, the solutions to the corresponding problems remain small of order $\eps$ in the considered norms on the interval $[0,T]$ with $T$ as in Proposition \ref{prop: bootstrap argument}:
    \begin{equation}
 \|Z_\pm(t)\|_{H^{N_0}}+\sum_{a=0}^M\|S^aZ_\pm(t)\|_{L^2}+\sum_{b=0}^N\|S^b\zz_\pm(t)\|_{B}+ \|S^b\zz_\pm(t)\|_{X}\lesssim \eps,
\end{equation}
resp.\
\begin{equation}
 \|\theta(t)\|_{H^{N_0}}+\sum_{a=0}^M\|S^a\theta(t)\|_{L^2}+\sum_{b=0}^N\|S^b\Theta(t)\|_{B}+ \|S^b\Theta(t)\|_{X}\lesssim \eps, \quad 0\leq t\leq T.
    \end{equation} 
In particular, for $0<t\leq T$ the solutions decay as follows:
    \begin{equation}
        \|S^bZ_\pm(t)\|_{L^\infty}\lesssim t^{-\frac12}\eps, \quad \text{resp.} \quad \|S^b\theta(t)\|_{L^\infty}\lesssim t^{-\frac12}\eps, \qquad 0\leq b\leq N-2.
    \end{equation}}
        \item We can choose the parameters in the above theorems as $\beta=10^{-2}$, $N_0\sim 10^9$, and  $\delta=2M^{-\frac{1}{2}}$, such that $N_0\gg M\gg M^{\frac{1}{2}}\gg \beta^{-2}$. Moreover, $\delta_0=2N_0^{-1}$ is an useful parameter in subsequent Sections \ref{sec: bounds on dtSf}-\ref{sec: bounds on the X-norm}. These are convenient choices from a technical point of view (see the proofs of Propositions \ref{prop: bounds on B norm}, \ref{prop: X-norm bounds for l>(1+delta)m} and \ref{prop: X-norm bounds for l<(1+delta)m}), but no effort has been made at optimizing them.
    \end{enumerate}
\end{remark}
Theorems \ref{thm: main result}, \ref{thm: main result detailed SQG} follow via a continuity argument using the local well-posedness of the Boussinesq system \eqref{eqn: perturbed BQ} (SQG equation \eqref{SQG} respectively) and the following proposition. We recall that with the scalar unknowns $Z_\pm$ and their respective profiles $\zz_\pm$, the system \eqref{eqn: perturbed BQ} is equivalent to \eqref{eqn: bilin_profiles}, and the SQG equation \eqref{SQG} for $\theta$ is equivalent to \eqref{eqn: Duhamel for the profile} for the SQG profile $\Theta$.
\begin{proposition}\label{prop: bootstrap argument}
    Let $C>0$, and $T\leq C\eps^{-2}$. Assume $\zz_\pm\in C([0,T],H^{N_0}(\R^2))$ solve \eqref{eqn: bilin_profiles} resp.\ $\Theta\in C([0,T],H^{N_0}(\R^2))$ solves \eqref{eqn: Duhamel for the profile}  with initial data satisfying \eqref{eqn: initial data assumption} resp.\ \eqref{eqn: initial data assumption SQG}. If for $t\in [0,T]$ there holds that
    \begin{equation}\label{eqn: bootstrap assumption}
    \lVert{S^b\zz_\pm(t)}\rVert_B+\lVert{S^b\zz_\pm(t)}\rVert_X \leq 100\eps \quad\textnormal{resp.}\quad\lVert{S^b\Theta(t)}\rVert_B+\lVert{S^b\Theta(t)}\rVert_X \leq 100\eps, \quad 0\leq b\leq N,
\end{equation}
 then for $F\in\{\zz_+,\zz_-\}$ resp.\ $F=\Theta$ we have 
     \begin{equation}\label{eqn:bootstrap-prop-bded-energy}
\lVert{F(t)}\rVert_{H^{N_0}}+\sum_{a=0}^M\lVert{S^aF(t)}\rVert_{L^2}\lesssim \eps,
    \end{equation}
and in fact there holds the improved bound
\begin{align}\label{eqn: bootstrap improved bound}
    \lVert{S^bF(t)}\rVert_B+\lVert{S^bF(t)}\rVert_X &\leq 10\eps.
    \end{align}
\end{proposition}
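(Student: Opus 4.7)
The plan is a coupled bootstrap in three steps: first convert the hypothesis \eqref{eqn: bootstrap assumption} into linear $L^\infty$ decay for $F\in\{\zz_+,\zz_-,\Theta\}$ and its $S^b$-derivatives, then use this decay to close the energy bound \eqref{eqn:bootstrap-prop-bded-energy}, and finally feed both into the bilinear analysis of Sections \ref{sec: bounds on B norm}--\ref{sec: bounds on the X-norm} to upgrade \eqref{eqn: bootstrap assumption} to \eqref{eqn: bootstrap improved bound}.

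Step one uses that $S$ commutes with the linear semigroup (see \eqref{eqn: semigroup_comm}), so the bootstrap together with Proposition \ref{proposition: linear decay} yields $\snorm{e^{\mp it\Lambda}S^bF(t)}_{L^\infty}\lesssim \eps(1+t)^{-1/2}$ for all $b\leq N$, with strictly faster decay away from the degeneracy locus $\{\xi_2=0\}$ of $\Lambda$. This is precisely the amplitude input required by the energy estimates. For step two, the identity \eqref{eqn: energy conservation Z+ Z-} (and its SQG analogue) together with $[S,e^{\pm it\Lambda}]=0$ reduce \eqref{eqn:bootstrap-prop-bded-energy} to controlling $\snorm{F(t)}_{H^{N_0}}+\sum_{a\leq M}\snorm{S^aF(t)}_{L^2}$. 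Distributing $S^a$ on the nonlinearity via the iterated commutator relation (cf.\ \eqref{eqn: commutator S^n nabla}) and running the $H^{N_0}$ and $L^2$ energy arguments of Sections \ref{sec: Energy Estimates} and \ref{sec: Energy Estimates SQG} produces a Grönwall factor of the form $\exp(C\int_0^t\eps(1+s)^{-1/2}ds)\lesssim\exp(C\eps\sqrt{t})$; since $T\leq C\eps^{-2}$ this factor is bounded, and \eqref{eqn:bootstrap-prop-bded-energy} follows with an absolute constant.

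For step three, Proposition \ref{prop: bounds on B norm} together with Propositions \ref{prop: X-norm bounds for l>(1+delta)m}--\ref{prop: X-norm bounds for l<(1+delta)m} gives, after distributing $S^b$ over the two factors (which is straightforward since $S$ is a derivation and $S\Lambda=0$, so no extra boundary terms arise on the phases), that for $b\leq N$
\[
    \snorm{S^b\B_\m(F_1,F_2)(t)}_B\lesssim t^{1/6+\delta}\eps^2,\qquad \snorm{S^b\B_\m(F_1,F_2)(t)}_X\lesssim t^{1/2-\delta}\eps^2,
\]
under the $H^{N_0}$ and $S^aL^2$ control produced by step two and the bootstrap hypothesis \eqref{eqn: bootstrap assumption}. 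Inserting these into the Duhamel formulas \eqref{eqn: bilin_profiles} resp.\ \eqref{eqn: Duhamel for the profile} at time $t\leq C\eps^{-2}$ bounds the nonlinear contribution by $C(\eps^{5/3-2\delta}+\eps^{1+2\delta})$, which combined with the $O(\eps)$ contribution from the initial data improves the bootstrap constant from $100$ to $10$, provided $\eps_0$ is sufficiently small.

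The hard part will not be this assembly -- the genuine difficulty lies in the $X$-norm bilinear bound itself, in particular the case $2^l<t^{1+\delta}$ combined with a gap in the $p$-parameter, which is treated here as a black box but requires the full machinery of partial symmetries, integration by parts along $S$, normal forms, and the further $q$-refinements flagged in the outline. The one routine check one must perform is that the bilinear estimates really do ingest only the quantities already controlled at this stage, i.e.\ that the interaction between the $\eps$ from the bootstrap, the $\eps$ from the energy bound \eqref{eqn:bootstrap-prop-bded-energy} and the time-growth $t^{1/2-\delta}$ closes strictly below $\eps$; this is the selection of $\eps_0$.
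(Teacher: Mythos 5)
Your proposal matches the paper's proof essentially step for step: linear decay via Corollary \ref{cor: linear decay semigroup} feeds the Gr\"onwall argument of Corollary \ref{cor: energy estimates of Z+-} (resp.\ Corollary \ref{cor: energy estimates SQG}) to give \eqref{eqn:bootstrap-prop-bded-energy}, then Lemma \ref{lemma: S^N on bilinear expression Qm} distributes $S^b$ and Propositions \ref{prop: bounds on B norm}, \ref{prop: X-norm bounds for l>(1+delta)m}, \ref{prop: X-norm bounds for l<(1+delta)m} (applied after the dyadic time decomposition \eqref{eqn: bootstrap B_m time decomposition}) close the bootstrap on $[0,C\eps^{-2}]$ by choosing $\eps_0$ small. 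One small inaccuracy: the sharp $t^{-1/2}$ decay from Proposition \ref{proposition: linear decay} holds only for $b<N-2$ (two copies of $S$ are consumed by the decay norm \eqref{norm: decay norm}); for $N-2\le b\le N$ one must invoke the interpolated version Lemma \ref{lemma: linear decay many vector fields} with the $t^{\kappa}$ loss — harmless for your argument since the energy estimates only need small $b$, but worth stating precisely.
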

We outline next the proof of Proposition \ref{prop: bootstrap argument} to show how it combines the remaining arguments of the paper. 
\begin{proof}
    Without loss of generality, we consider the setting of the Boussinesq system. Under the bootstrap assumption \eqref{eqn: bootstrap assumption} and by Corollary \ref{cor: linear decay semigroup} there holds
    \begin{align*}
        \sinftynorm{S^b{Z}_{\pm}(t)}\lesssim  t ^{-\frac{1}{2}}\eps, \hspace{1.5cm} 0\leq b<N-2.
    \end{align*}
    Together with the initial data assumption this implies the bound \eqref{eqn:bootstrap-prop-bded-energy} on the energy as shown in Corollary \ref{cor: energy estimates of Z+-}, as long as $T\lesssim \eps^{-2}$. In order to prove \eqref{eqn: bootstrap improved bound}, we note that from the Duhamel formula \eqref{eqn: bilin_profiles} and for $0\leq b\leq N$ we have 
    \begin{align*}
        \lVert{S^b\mathcal{Z}_{\pm}(t)}\rVert_B+\lVert{S^b\mathcal{Z}_{\pm}(t)}\rVert_X&\leq \lVert{S^b\mathcal{Z}_{\pm}(0)}\rVert_B+\lVert{S^b\mathcal{Z}_{\pm}(0)}\rVert_X
        +\lVert{S^b\B_{\m_{\pm}^{+-}}(\mathcal{Z}_{+},\mathcal{Z}_{-})}\rVert_B\\
        &\hspace{-1cm}+\lVert{S^b\B_{\m_{\pm}^{+-}}(\mathcal{Z}_{+},\mathcal{Z}_{-})}\rVert_X
        +\sum_{\mu \in \{+,-\}}\lVert{S^b\B_{\m_{\pm}^{\mu\mu}}(\mathcal{Z}_{\mu},\mathcal{Z}_{\mu})}\rVert_B+\lVert{S^b\B_{\m_{\pm}^{\mu\mu}}(\mathcal{Z}_{\mu},\mathcal{Z}_{\mu})}\rVert_X.
    \end{align*}
    Therefore, to prove \eqref{eqn: bootstrap improved bound} it suffices to show that under the bootstrap assumption \eqref{eqn: bootstrap assumption} and for $\m \in \set{\m^{\mu\nu}_{\pm}}{\mu,\nu\in\{-,+\}}$ there holds
    \begin{align*}
        \lVert{S^b\B_{\m}(\mathcal{Z}_{\mu},\mathcal{Z}_{\nu})}\rVert_B+\lVert{S^b\B_{\m}(\mathcal{Z}_{\mu},\mathcal{Z}_{\nu})}\rVert_X \leq 9\eps, &\hspace{1.5cm} 0\leq b\leq N.
    \end{align*}
    Since $S$ derives from a symmetry of the equation (see the below Lemma \ref{lemma: S^N on bilinear expression Qm} for an explicit computation), it suffices to show that for $b_1,\;b_2\geq 0$ with $b_1+b_2\leq N$ there holds
    \begin{align}\label{eqn: bootstrap bilinear estimate reduced}
        \lVert{\B_{\m}(S^{b_1}\mathcal{Z}_{\mu},S^{b_2}\mathcal{Z}_{\nu})}\rVert_B+\lVert{\B_{\m}(S^{b_1}\mathcal{Z}_{\mu},S^{b_2}\mathcal{Z}_{\nu})}\rVert_X\lesssim 9\eps.
    \end{align}
    To handle such expressions, we also localize the time variable: for $t\in[0,T]$ we decompose the indicator function  $\mathds{1}_{[0,t]}$ in functions $\tau_0,...\tau_{L+1}:\R\to [0,1]$ with $\abs{L-\log_2(2+t)}\leq 2$ such that
    \begin{align*}
      &  \supp\tau_0\subset [0,2],\hspace{0.5cm} \supp \tau_m \subset [2^{m-1},2^{m+1}], \; m\in \{1,..., L\}, \hspace{0.5cm} \supp \tau_{L+1}\subset [t-2,t],\\
     &   \sum_{m=0}^{L+1}\tau_m(s)=\mathds{1}_{[0,t]}, \hspace{0.5cm}\tau_m(s)\in C^1(\R), \hspace{0.5cm}\int_0^t\abs{\tau_m(s)}ds\lesssim 1, \; m\in \{1,..., L\}.
    \end{align*}
    Then for a bilinear expression with multiplier $\m$ as in \eqref{eqn: bq multipliers} there holds
    \begin{align}\label{eqn: bootstrap B_m time decomposition}
        \B_{\m}(f,g)=\int_0^t\mathcal\mathcal{Q}_{\m}(f,g)ds=\sum_m\int_0^t\tau_m(s)\mathcal{Q}_{\m}(f,g)ds=\sum_m\B_{\m}^m(f,g),
    \end{align}
    where $\B_{\m}^m(f,g):=\int_0^t\tau_m(s)\mathcal{Q}_{\m}(f,g)ds$. Bounds on such time-localized bilinear terms are shown in the subsequent Sections \ref{sec: bounds on B norm} and \ref{sec: bounds on the X-norm}: In Proposition \ref{prop: bounds on B norm} we prove
    \begin{align*}
         \snorm{\B_{\m}^m(S^{b_1}\mathcal{Z}_{\mu},S^{b_2}\mathcal{Z}_{\nu})}_B\lesssim 2^{(\frac{1}{6}+\delta) m}\eps^2,
    \end{align*}
    whereas Propositions \ref{prop: X-norm bounds for l>(1+delta)m} and \ref{prop: X-norm bounds for l<(1+delta)m} show that
    \begin{align*}
        \snorm{\B_{\m}^m(S^{b_1}\mathcal{Z}_{\mu},S^{b_2}\mathcal{Z}_{\nu})}_X\lesssim 2^{(\frac{1}{2}-\frac{\delta}{8})m}\eps^2,
    \end{align*}
    where $\delta=2M^{-\frac{1}{2}}$. Therefore, with $C_1>0$ and $t\in [0,T]$ with $T\leq C\eps^{-2}$ we obtain
    \begin{align*}
          \lVert{S^b\B_{\m}(\mathcal{Z}_{\mu},\mathcal{Z}_{\nu})}\rVert_B+\lVert{S^b\B_{\m}(\mathcal{Z}_{\mu},\mathcal{Z}_{\nu})}\rVert_X \leq C_1t^{\frac{1}{2}-\frac{\delta}{8}}\eps^2\leq C_1C^{\frac12-\frac{\delta}{8}}\eps^{{\delta^2}/{16}}\eps.
    \end{align*} 
    Choosing $\eps_0>0$ such that $C_1C^{\frac12-\frac{\delta}{8}}\eps_0^{{\delta^2}/{16}}<9$ yields \eqref{eqn: bootstrap improved bound}.
\end{proof}

We conclude this section with a short lemma that records the interplay of the scaling vector field $S$ and bilinear terms.
\begin{lemma}\label{lemma: S^N on bilinear expression Qm}
    Let $N\in \N$, $S$ be the vector field defined in \eqref{def: S W}, and $\mathcal{Q}_{\m}(f,g)$ a bilinear expression as in \eqref{eqn: bilin_form_Q}, $\m\in\{\m_0,\m_\pm^{\mu\nu}\}$. Then there holds that
    \begin{align*}
        S^N\mathcal{Q}_{\m}(f,g)=\sum_{\substack{b_1,b_2\in\N_0,\\0\leq b_1+b_2\leq N}}c_{b_1b_2}\mathcal{Q}_{\m}(S^{b_1}f,S^{b_2}g),
    \end{align*}
    for universal constants $c_{b_1b_2}\in\Z$.
\end{lemma}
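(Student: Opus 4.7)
The plan is to prove the identity by induction on $N$, reducing everything to a base-case computation at $N=1$. The essential structural input is that, by inspection of \eqref{eqn: bq multipliers}-\eqref{multiplier}, each multiplier $\m\in\{\m_0,\m^{\mu\nu}_\pm\}$ is homogeneous of degree $1$ in $(\xi,\eta)$, while each phase $\Phi^{\mu\nu}_\pm$ in \eqref{eqn: bq phases} is homogeneous of degree $0$, since $\Lambda$ is zero-homogeneous. By Euler's identity this translates into the two key relations
\[
(S_\xi+S_\eta)\Phi(\xi,\eta)=0,\qquad (S_\xi+S_\eta)\m(\xi,\eta)=\m(\xi,\eta),
\]
where $S_\xi=\xi\cdot\nabla_\xi$ and $S_\eta=\eta\cdot\nabla_\eta$.

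For the base case $N=1$, I would use that on the Fourier side $\widehat{Sh}=-(2+S_\xi)\hat h$, so $\F(S\mathcal{Q}_{\m}(f,g))(\xi)=-(2+S_\xi)\F(\mathcal{Q}_{\m}(f,g))(\xi)$. Inside the integrand defining $\F(\mathcal{Q}_{\m}(f,g))(\xi)$ in \eqref{eqn: bilin_form_Q} I split $S_\xi = (S_\xi+S_\eta) - S_\eta$. The diagonal piece $(S_\xi+S_\eta)$ annihilates $e^{is\Phi}$, multiplies $\m$ by $1$ via the Euler identities above, and by the chain rule sends $\hat f(\xi-\eta)$ to $(\xi-\eta)\cdot(\nabla \hat f)(\xi-\eta) = -2\hat f(\xi-\eta)-\widehat{Sf}(\xi-\eta)$, and symmetrically for $\hat g(\eta)$. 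The remaining $S_\eta$-piece is eliminated by integration by parts in $\eta$, using $\int_{\R^2} S_\eta F\,d\eta = -2\int F\,d\eta$ (valid for Schwartz $F$ in $\R^2$). Collecting terms and inverting the Fourier transform yields the base-case identity
\[
S\mathcal{Q}_{\m}(f,g) = -\mathcal{Q}_{\m}(f,g) + \mathcal{Q}_{\m}(Sf,g) + \mathcal{Q}_{\m}(f,Sg),
\]
with integer coefficients $-1, 1, 1$.

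The full statement then follows by induction on $N$: assuming the expansion at level $N-1$, applying $S$ once more and invoking the base case on each term $\mathcal{Q}_{\m}(S^{b_1}f, S^{b_2}g)$ produces a sum of terms of the required form at level $N$, with coefficients that are integer linear combinations of the level $N-1$ coefficients and hence are integers depending only on $N, b_1, b_2$ (and not on $f,g$). I do not anticipate a substantive obstacle; the only point warranting attention is the bookkeeping ensuring that the degree-$1$ homogeneity of $\m$ and the factor $-2$ arising from integration by parts combine to produce integer coefficients throughout. One could alternatively derive the base case by differentiating at $\lambda=1$ the scaling covariance $\mathcal{Q}_{\m}(f_\lambda,g_\lambda)=[\mathcal{Q}_{\m}(f,g)]_\lambda$ for $f_\lambda(x):=\lambda f(\lambda^{-1}x)$, which furnishes a conceptual check that $S$ (the generator of this scaling symmetry, up to a lower-order shift) is compatible with the bilinear structure.
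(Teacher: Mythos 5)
Your proof is correct and follows essentially the same route as the paper: both arguments rest on the zero-homogeneity of $\Phi$ (so $(S_\xi+S_\eta)\Phi=0$), the homogeneity of $\m$, and integration by parts in $S_\eta$, followed by iteration. You are in fact more careful than the paper's displayed computation about the lower-order constants: the paper asserts $(S_\xi+S_\eta)\m=0$ and drops the boundary contribution of the integration by parts, whereas $\m\in\{\m_0,\m_\pm^{\mu\nu}\}$ is homogeneous of degree one so $(S_\xi+S_\eta)\m=\m$, and your explicit use of $\widehat{Sh}=-(2+S_\xi)\widehat{h}$ together with $\int_{\R^2}S_\eta F\,d\eta=-2\int_{\R^2}F\,d\eta$ yields the precise base case $S\mathcal{Q}_\m(f,g)=-\mathcal{Q}_\m(f,g)+\mathcal{Q}_\m(Sf,g)+\mathcal{Q}_\m(f,Sg)$; since the lemma only demands integer coefficients $c_{b_1b_2}$ over all $b_1+b_2\leq N$, these corrections are harmless in either version.
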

\begin{proof}
We begin by observing that $S_\xi\Lambda(\xi)=0$, and since $S_\eta\Lambda(\xi-\eta)=-S_\xi\Lambda(\xi-\eta)$ it follows that $(S_{\xi}+S_\eta)\Phi=0$. Furthermore, by a direct computation we have that $(S_\xi+S_\eta)\m=0$ for $\m\in\{\m_0,\m_\pm^{\mu\nu}\}$. Integration by parts in $S_\eta$ then gives
    \begin{align*}
       S_\xi\F{(\mathcal{Q}_{\m}(f,g))}(\xi)&=\int_{\R^2}e^{it\Phi}(S_\xi+S_\eta)(\m(\xi,\eta))\widehat{f}(\xi-\eta)\widehat{g}(\eta)d\eta\\&\quad +\int_{\R^2}e^{it\Phi}\m(\xi,\eta)(S_\xi+S_\eta)\widehat{f}(\xi-\eta)\widehat{g}(\eta)d\eta+\int_{\R^2}e^{it\Phi}\m(\xi,\eta)\widehat{f}(\xi-\eta)S_\eta\widehat{g}(\eta)d\eta\\
        &=\int_{\R^2}e^{it\Phi}\m(\xi,\eta)(S\widehat{f})(\xi-\eta)\widehat{g}(\eta)d\eta+\int_{\R^2}e^{it\Phi}\m(\xi,\eta)\widehat{f}(\xi-\eta)(S\widehat{g})(\eta)d\eta,
    \end{align*}
    and the claim follows by iteration.
\end{proof}

\end{subsection}

\section{Linear Decay} \label{sec: Linear Decay}
In this section we establish amplitude decay estimates for the semigroup $e^{it\Lambda}$ that build on our choice of norms. In particular, we collect the relevant information in a ``decay norm''\footnote{The relevance of including at least two copies of $S$ in this norm in order to obtain the linear decay can be seen in \textbf{Case B} in the proof of Proposition \ref{proposition: linear decay}, for example.}
\begin{align}\label{norm: decay norm}
    \norm{f}_D:=\sup_{0\leq n\leq 2}{(\norm{S^nf}_B
+\norm{S^nf}_X)}.
\end{align}
As a basic ingredient, this norm allows us to control the $L^\infty$ norm of the Fourier transform of suitably localized versions of $f$, i.e.
\begin{align*}
    \sinftynorm{\widehat{P_{k,p}f}}&\lesssim 2^{-4k^+}2^{-k} \norm{f}_D.
\end{align*}
This can be seen directly from the following lemma:
\begin{lemma}\label{lemma: control of Fourier transform} For any $f\in L^2$ there holds
    \[\inftynorm{\widehat{P_{k,p}f}} \lesssim2^{-4k^+} 2^{-k}\Big[\norm{P_kf}_B+\norm{SP_kf}_B+\norm{P_kf}_X+\norm{SP_kf}_X \Big].\]
\end{lemma}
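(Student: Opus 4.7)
The plan is to reduce the pointwise control of $\widehat{P_{k,p}f}(\xi)$ to $L^2$-based quantities of $f$ and $Sf$, by combining the angular Littlewood--Paley decomposition, a 1D Sobolev-type embedding in the radial direction, and the intertwining relation between the scaling vector field on the physical and Fourier sides. First I would write
\[\widehat{P_{k,p}f}(\xi)=\sum_{l:\,l+p\geq 0}\widehat{P_{k,p}R_l f}(\xi),\]
using that the angular projectors $R_l$ commute with the Fourier transform (since both operations are diagonal in the angular Fourier series $\sum_n(\cdot)_n(r)e^{in\tau}$) and that $P_{k,p}R_l$ is effectively zero for $l+p<0$. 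It then suffices to estimate each $|\widehat{P_{k,p}R_lf}(\xi)|$ pointwise with a weight in $l$ that is summable.

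For each such $(k,p,l)$ I set $g:=P_{k,p}R_l f$ and pass to polar coordinates $\xi=(\rho\cos\theta,\rho\sin\theta)$, expanding $\widehat{g}(\rho,\theta)=\sum_{|n|\sim 2^l}\widehat{g}_n(\rho)e^{in\theta}$. Cauchy--Schwarz in $n$ together with Parseval in $\theta$ yield
\[|\widehat{g}(\rho,\theta)|\lesssim 2^{l/2}\snorm{\widehat{g}(\rho,\cdot)}_{L^2_\theta}.\]
A 1D Sobolev argument applied to the $\rho$-compactly-supported function $F(\rho):=\snorm{\widehat{g}(\rho,\cdot)}_{L^2_\theta}^2$ gives $\sup_\rho F(\rho)\lesssim \snorm{\widehat{g}}_{L^2(d\rho d\theta)}\snorm{\partial_\rho\widehat{g}}_{L^2(d\rho d\theta)}$. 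Since $g$ is radially localized to $\rho\sim 2^k$, comparing the flat measure $d\rho d\theta$ with the polar Lebesgue measure $\rho d\rho d\theta$ yields factors of $2^{-k/2}$; together with $\partial_\rho=\rho^{-1}S_\xi$ and the integration-by-parts identity $\widehat{Sg}=-S_\xi\widehat{g}-2\widehat{g}$, I obtain
\[|\widehat{g}(\xi)|\lesssim 2^{-k}\,2^{l/2}\bigl(\snorm{g}_{L^2}+\snorm{g}_{L^2}^{1/2}\snorm{Sg}_{L^2}^{1/2}\bigr).\]

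To close, I bound the right-hand side by the defining inequalities of the $B$- and $X$-norms, namely
\[\snorm{P_{k,p}R_lf}_{L^2}\leq 2^{-4k^+}2^{-(1+\beta)l}2^{-(\frac12+\beta)p}\snorm{P_kf}_X\]
(together with the analogue for the $B$-norm when summing modes), and pass $S$ past $P_{k,p}R_l$ using the commutation rules $[S,P_{k,p}]=-P_{k,p}$, $[S,R_l]=0$ to exchange $\snorm{Sg}_{L^2}$ for a similar bound involving $\snorm{SP_kf}_X$ and $\snorm{SP_kf}_B$. The geometric factor then becomes $2^{-k}2^{-4k^+}2^{-(\frac12+\beta)(l+p)}$, and summing in $l+p\geq 0$ converges, giving the desired estimate.

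The main obstacle is the bookkeeping: tracking the polar versus Cartesian measures in the Sobolev step, correctly identifying the effective range of the angular parameter $l$ tied to the $\bar R_{\leq l}$ versus $\bar R_l$ distinction at $l+p=0$, and absorbing commutator terms arising from $[S,P_{k,p}]$ and the transfer $S_\xi\widehat{g}=-\widehat{Sg}-2\widehat{g}$. The inclusion of the $B$-norm in the statement, even though the $X$-norm alone suffices for the summation in $l$, offers an alternative route for the borderline mode $l=-p$ and is convenient for later applications. Once these book-keeping issues are handled, the estimate reduces to a chain of Cauchy--Schwarz, Parseval, and 1D Sobolev embedding.
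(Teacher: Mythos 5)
Your proof is correct but proceeds by a genuinely different route from the paper's. The paper works directly in polar frequency coordinates and applies the fundamental theorem of calculus in both $\rho$ and $\tau$ from a reference point $(\rho_0,\tau_0)$, then bounds the resulting double integrals by Cauchy--Schwarz over the support of $\varphi_{k,p}$ (gaining $2^{p/2}$ from the $\tau$-width), and only at the end interprets the $\partial_\tau$-terms via the $X$-norm after expanding in $R_l$; the $B$-norm appears naturally because the zero-derivative term is controlled without any angular decomposition. You instead decompose into angular shells $R_l$ \emph{first}, exploit that the 2D Fourier transform is diagonal in the angular Fourier series (the Hankel decomposition) so that $\widehat{P_{k,p}R_lf}$ has only $O(2^l)$ active modes, then use discrete Cauchy--Schwarz in $n$ (gaining $2^{l/2}$) and a 1D Sobolev embedding in $\rho$, converting $\partial_\rho=\rho^{-1}S_\xi$ to the scaling vector field via $S_\xi\widehat g=-\widehat{Sg}-2\widehat g$. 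Your bookkeeping of measures, exponents, and commutators ($[S,P_{k,p}]=-P_{k,p}$, $[S,R_l]=0$) checks out, and as you observe, the net exponent $2^{-(\frac12+\beta)(l+p)}$ is summable on $l+p\geq 0$, so the $X$-norm alone suffices in your approach; this is an honest simplification compared to the paper's argument, where the $B$-norm is structurally needed for the undifferentiated term. Both proofs ultimately reduce to the same $L^2$ control of $f$, $Sf$ (and, implicitly, $W$-regularity captured by the $X$-norm), but yours isolates the angular-mode count up front and replaces a two-variable FTC computation with two one-variable embeddings, which is somewhat more transparent.
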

The proof of this statement follows from the fundamental theorem of calculus and is detailed in Appendix \ref{ssec:Linftybd}.

The following establishes a decomposition of the action of the semigroup $e^{it\Lambda}$ and gives precise decay estimates in relation to the degeneracy of the corresponding linear phase. 
\begin{proposition}[Linear decay]\label{proposition: linear decay} Let $f:\R^2\to \R$ and consider the decay norm defined as in \eqref{norm: decay norm}. For $0<\beta^\prime<\beta$, we can decompose 
\[P_{k,p}e^{it\Lambda}f= I_{k,p}(f)+II_{k,p}(f)\]
such that the following bounds hold: for $I_{k,p}$ we have
\begin{align}
&p\leq -10:     &&\inftynorm{I_{k,p}(f)}\lesssim 2^{\frac{3}{4}k}2^{-\frac{15}{4}k^+}\min{\{2^p,2^{-p}\abs{t}^{-1}\}}\norm{f}_D, \label{eqn: linfty decay of the profile}\\
&p\geq -10:   &&\inftynorm{I_{k,p}(f)}\lesssim 2^{\frac{3}{4}k}2^{-\frac{15}{4}k^+}\log(\abs{t})\abs{t}^{-1}\norm{f}_D,
\end{align}
while the term $II_{k,p}(f)$ satisfies
\begin{align}
     \ltwonorm{II_{k,p}(f)}&\lesssim 2^{-4k^+}2^{-(\frac{1}{2}+2\beta^\prime)p}\abs{t}^{-(\frac{1}{2}+\beta^\prime)} \mathds{1}_{2^p\gtrsim \abs{t}^{-1/2}}\norm{f}_D. \label{eqn: l2 decay of the profile}
\end{align}
\end{proposition}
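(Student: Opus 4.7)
I would set up the decomposition by introducing an angular frequency threshold $L=L(t,p)$ to be chosen, and define
\[
II_{k,p}(f):=\sum_{l>L}P_{k,p}\,e^{it\Lambda}R_l f,\qquad I_{k,p}(f):=P_{k,p}\,e^{it\Lambda}R_{\leq L}f.
\]
Roughly speaking, I would take $2^{(1+\beta)L}\sim |t|^{1/2+\beta'}$ (so $2^L\sim |t|^{1/2}$ up to a small power), modulated by the degeneracy $2^{(\beta-2\beta')p/(1+\beta)}$. The constraint $l+p\geq 0$ from the definition of $R_l$ means that when $L+p<0$, equivalently $2^p\lesssim |t|^{-1/2}$, the high-angular piece is empty, so $II_{k,p}\equiv 0$ and everything is placed in $I_{k,p}$; this accounts for the indicator in \eqref{eqn: l2 decay of the profile}.

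\textbf{Bound on $II_{k,p}$.} Since $e^{it\Lambda}$ is an $L^2$ isometry and the angular projections $R_l$ are almost orthogonal,
\[
\norm{II_{k,p}(f)}_{L^2}^2\lesssim \sum_{l>L}\norm{P_{k,p}R_l f}_{L^2}^2.
\]
Applying the definition of the $X$-norm to each summand and summing the geometric series in $l$ yields
\[
\norm{II_{k,p}(f)}_{L^2}\lesssim 2^{-4k^+}2^{-(1+\beta)L}2^{-(1/2+\beta)p}\norm{f}_X.
\]
Plugging in the choice of $L$ and using $\beta>2\beta'$ together with $p\leq 0$ recovers exactly the weights of \eqref{eqn: l2 decay of the profile}, under the indicator $2^p\gtrsim|t|^{-1/2}$.

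\textbf{Bound on $I_{k,p}$.} Passing to polar coordinates $\xi=(\rho\cos\tau,\rho\sin\tau)$ on the Fourier side, $I_{k,p}(f)(x)$ becomes an oscillatory integral with phase $\phi(\rho,\tau)=\rho\,x\cdot e(\tau)+t\cos\tau$, where $e(\tau)=(\cos\tau,\sin\tau)$. On the support of $\varphi_{k,p}$ one has $\rho\sim 2^k$ and, for $p\leq -10$, the angular variable is confined to neighborhoods of $\tau=0,\pi$ of width $\sim 2^p$. I would then combine two estimates and take their minimum. The first is trivial: using $|\text{supp}\,\varphi_{k,p}|\sim 2^{2k+p}$ together with Lemma \ref{lemma: control of Fourier transform} to control $\inftynorm{\widehat{R_{\leq L}f}}$ by $\norm{f}_D$, one obtains an $L^\infty$ bound proportional to $2^p$. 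The second is a van der Corput / stationary phase estimate in $\tau$, exploiting
\[
\partial_\tau^2\phi=-(\rho x_1+t)\cos\tau-\rho x_2\sin\tau,
\]
which at the critical point and for $\tau$ near $0$ is of size $|\rho x_1+t|$ and in the generic regime $\sim|t|$. This produces a gain $|t|^{-1/2}$, and converting the width of the $\tau$-support of size $2^p$ back gives an overall factor $2^{-p}|t|^{-1}$. Taking the minimum of these two bounds yields the $\min\{2^p,2^{-p}|t|^{-1}\}$ structure of \eqref{eqn: linfty decay of the profile}. For $-10\leq p\leq 0$ the $\tau$-support is $O(1)$ and an additional logarithm enters from the boundary of the critical set where $\partial_\tau^2\phi$ degenerates.

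\textbf{Main obstacle.} The delicate step is the stationary-phase analysis for $I_{k,p}$: $\partial_\tau^2\phi$ vanishes precisely along the degenerate direction of $\Lambda$, and one has to juggle the angular cut-off from $R_{\leq L}$, the localization $\varphi_{k,p}$, and the action of $\partial_\tau$ so that repeated integration by parts produces gains uniformly in the localization parameters. The factor $2^{3k/4}2^{-15k^+/4}$ is suboptimal (a sharper Bernstein bound would give $2^{3k/2}$ for $k\leq 0$) but is retained as a convenient unified form; it arises from combining Bernstein with Lemma \ref{lemma: control of Fourier transform} and is absorbed by the ample room in $N_0$ elsewhere in the paper.
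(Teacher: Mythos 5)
Your set-up for the decomposition and the treatment of $II_{k,p}$ are essentially the paper's: the cut is made at an angular frequency $l_0$ chosen so that $2^{l_0}\approx t2^p(t2^{2p})^{-\kappa}$, the high part is estimated by summing the $X$-norm over $l>l_0$, and the constraint $l+p\geq 0$ (equivalently the assumption $t^{1/2}2^p>C$ under which the split is made) explains the indicator in \eqref{eqn: l2 decay of the profile}. That part of your plan is correct.

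The bound you propose for $I_{k,p}$, however, contains a genuine gap: a one-dimensional van der Corput estimate based on $|\partial_\tau^2\Psi|\gtrsim|t|$ yields at best $|t|^{-1/2}$, \emph{independently of the length of the $\tau$-support}, and the ``converting the width back'' step does not produce a further factor. But \eqref{eqn: linfty decay of the profile} demands $\min\{2^p,2^{-p}|t|^{-1}\}$, which in the regime $|t|^{-1/2}\lesssim 2^p\ll 1$ is strictly smaller than $|t|^{-1/2}$. So stationary phase cannot close this. The paper obtains \eqref{eqn: linfty decay of the profile} by a \emph{non-stationary phase} argument: writing $\Psi=t\Lambda+x_1\rho\Lambda+x_2\rho\sqrt{1-\Lambda^2}$ in the $(\rho,\Lambda)$ coordinates, it splits physical space according to the sizes of $|x_1|$ against $t2^{2p-k}$ and $|x_2|$ against $t2^{p-k}$. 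When both are small (Case~A), $|\partial_\Lambda\Psi|\gtrsim t$ because of the pure $t$ term in $\partial_\Lambda\Psi=t+x_1\rho-x_2\rho\Lambda/\sqrt{1-\Lambda^2}$; repeated integration by parts against this \emph{first} derivative pays $t^{-1}$ per iteration and costs $2^{-p}$ per iteration from the localization, while the $R_{\leq l_0}$ cut (via \eqref{estimate: mixed derivatives L infinity}) keeps the iterated $\partial_\Lambda$'s on $\widehat{f}$ summable, so after $N\kappa\geq 1$ steps one lands on $2^{-p}t^{-1}$. When one or both components of $x$ are large (Cases B, C), the paper uses $|\partial_\rho\Psi|\gtrsim t2^{2p-k}$ or the combination $|\partial_\rho\Psi|+|\partial_\Lambda\partial_\rho\Psi|\gtrsim 2^{-k}t$, integrating by parts once or twice in $\rho$ (which is clean because $\partial_\rho^2\Psi=0$) and combining with a set-size estimate in $\Lambda$ via the change of variables $y=\partial_\rho\Psi$. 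The $\min\{2^p,2^{-p}|t|^{-1}\}$ structure emerges by \emph{combining} the trivial $2^p$ bound valid for $2^p\lesssim|t|^{-1/2}$ with the $2^{-p}|t|^{-1}$ bound from Cases A--C valid for $2^p\gtrsim|t|^{-1/2}$; neither branch involves a square-root decay as an endpoint. The missing ingredient in your plan is precisely this first-order non-stationarity and the dichotomy in $x$-space that makes it available everywhere.

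Finally, the factor $2^{3k/4}2^{-15k^+/4}$ is not a rough placeholder: it arises from balancing the $n$-summation in Case~C (and in Part~2 for $p\geq -10$) between the regime $2^n\lesssim t2^{-k}$, where $|\partial_\Lambda\partial_\rho\Psi|\gtrsim 2^{-k}t$ and the change of variables gains $2^k t^{-1}$, and the complementary regime, where a pure set-size bound is sharper.
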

In particular, since $\inftynorm{II_{k,p}(f)}\lesssim\ltwonorm{\phikp}\ltwonorm{II_{k,p}(f)}$ the $L^\infty$ bound for $II_{k,p}(f)$ is given by
\begin{align*}
    \inftynorm{II_{k,p}(f)}\lesssim 2^k2^{-4k^+}2^{-2\beta^\prime p}\abs{t}^{-(\frac{1}{2}+\beta^\prime)} \mathds{1}_{2^p\gtrsim \abs{t}^{-1/2}}\norm{f}_D.
\end{align*} 
Before we proceed with the proof, we record the following useful corollary, which shows that Proposition \ref{proposition: linear decay} entails the sharp linear decay rate (see \cite[\textcolor{MidnightBlue}{\S 2.2}]{Elgindi_2015}).
\begin{corollary}\label{cor: linear decay semigroup}
    For $t>0$, the semigroup $e^{\pm it\Lambda}$ satisfies
    \begin{align*}
        \sinftynorm{P_ke^{\pm it\Lambda}f}\lesssim 2^{\frac{3}{4}k}2^{-\frac{15}{4}k^+} t^{-\frac{1}{2}}\norm{f}_D.
    \end{align*}
    In particular, under the bootstrap assumption \eqref{eqn: bootstrap assumption}, for the Boussinesq system \eqref{eqn: perturbed BQ} there holds
    \begin{align*}
        &\inftynorm{\nabla u(t)}+\inftynorm{Su(t)}+\inftynorm{\nabla\rho(t)}\lesssim  t^{-\frac{1}{2}}\eps,
    \end{align*}
    and analogously for the SQG equation \eqref{SQG}:
    \begin{align*}
        &\inftynorm{\nabla \theta(t)}+\inftynorm{u}+\inftynorm{Su(t)}\lesssim  t^{-\frac{1}{2}}\eps.
    \end{align*}
\end{corollary}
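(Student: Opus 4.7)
The plan is to deduce both assertions from Proposition \ref{proposition: linear decay} by summing in the angular parameter $p$, and then to reduce the fluid-level bounds to estimates on the dispersive profiles via the homogeneity of the structural Fourier multipliers.

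For the first assertion, I would decompose $P_k = \sum_{p\in\Z^-}P_{k,p}$ and split each piece via $P_{k,p}e^{\pm it\Lambda}f = I_{k,p}(f)+II_{k,p}(f)$. Summing the $I_{k,p}$ contribution, the range $-10\leq p\leq 0$ consists of finitely many terms each bounded by $2^{3k/4}2^{-15k^+/4}\log(t)/t$, which is dominated by the rate $t^{-1/2}$. For $p\leq -10$, the key computation is
\[
\sum_{p\leq -10}\min\{2^p,\, 2^{-p}|t|^{-1}\}\lesssim |t|^{-1/2},
\]
optimized at $p_\star\sim -\tfrac12\log_2|t|$: in each of the two regimes the sum is geometric and contributes $\sim |t|^{-1/2}$. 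For the $II_{k,p}$ piece, I would use the $L^\infty$ bound stated just after Proposition \ref{proposition: linear decay}; the indicator $\mathds{1}_{2^p\gtrsim |t|^{-1/2}}$ restricts $p\in [-\tfrac12\log_2|t|,0]$, and $2^{-2\beta' p}\lesssim |t|^{\beta'}$ on this range, so the total bound is $\lesssim 2^k 2^{-4k^+}|t|^{-1/2}$, which is dominated by $2^{3k/4}2^{-15k^+/4}|t|^{-1/2}$ (using $2^k\leq 2^{3k/4}$ for $k\leq 0$ and $2^k2^{-4k^+}=2^{3k/4}2^{-15k^+/4}$ for $k\geq 0$).

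For the Boussinesq bounds under the bootstrap, I would use the relations \eqref{eqn: urho}, noting that $\nabla u$ and $\nabla \rho$ are obtained from $Z_+\pm Z_-$ by Fourier multipliers of homogeneous degree $1$. Localizing in $k$ with $P_k$, the multiplier is bounded on $L^\infty$ on frequency-localized pieces with operator norm $\lesssim 2^k$, and $Z_\pm = e^{\mp it\Lambda}\zz_\pm$ allows the first part of the corollary to be applied with $f=\zz_\pm$, giving $\|P_k\nabla u\|_\infty+\|P_k\nabla\rho\|_\infty \lesssim 2^{7k/4}2^{-15k^+/4}|t|^{-1/2}\|\zz_\pm\|_D$, which sums geometrically in $k\in\Z$. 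For $\|Su\|_\infty$, the symbol of $\nabla^\perp|\nabla|^{-1}$ is zero-homogeneous, hence this operator commutes with $S$ (since $S_\xi$ of a $0$-homogeneous symbol vanishes); combined with $[S,e^{it\Lambda}]=0$ from \eqref{eqn: semigroup_comm}, we land on $S\zz_\pm$ and apply the first part with $f=S\zz_\pm$. The SQG case is identical, using $u=\nabla^\perp|\nabla|^{-1}\theta$ and $\theta=e^{-it\Lambda}\Theta$, with $f\in\{\Theta,S\Theta\}$ or a direct application for $\|\nabla\theta\|_\infty$.

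The main technical point is the balancing of the dyadic sums in $p$ for the $I_{k,p}$ piece, where both regimes of $\min\{2^p,2^{-p}/t\}$ conspire to give the sharp rate $|t|^{-1/2}$ without logarithmic losses; a subsidiary point is verifying that the decay norm \eqref{norm: decay norm}, involving at most two copies of $S$, suffices for the applications, since in the worst case one needs $\|S\zz_\pm\|_D$, which involves $S^b\zz_\pm$ for $b\leq 3$, well within the bootstrap range $b\leq N$ given $N>5$. No further cancellation or structural input beyond Proposition \ref{proposition: linear decay} and the commutation of $S$ with zero-homogeneous multipliers is required.
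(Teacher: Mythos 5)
Your proposal is correct and follows essentially the same route as the paper's proof: both reduce the first display to Proposition \ref{proposition: linear decay} and sum in $p$ (the paper treats the range $2^p\lesssim t^{-1/2}$ via the direct set-size bound \eqref{eqn: P_k,p bound for small p} while you absorb it into the $2^p$ branch of the $\min$ in the $I_{k,p}$ estimate, which is equivalent), and both then obtain the fluid-level bounds from \eqref{eqn: urho} by splitting in $k$, using the homogeneities of the structural symbols and $[S,e^{it\Lambda}]=0$, and summing geometrically. Your subsidiary check that at most $\|S\zz_\pm\|_D$ (hence $S^b$ with $b\leq 3\leq N-2$) is needed is a useful and correct observation that the paper leaves implicit.
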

\begin{proof}
A direct set size estimate (see \eqref{eqn: P_k,p bound for small p} below) shows that
\begin{equation}
 \inftynorm{P_{k,p}e^{\pm it\Lambda}f}\lesssim 2^{\frac{3}{4}k}2^{-\frac{15}{4}k^+}2^p\norm{f}_D.
\end{equation}
 Together with Proposition \ref{proposition: linear decay} it follows that
     \begin{align*}
        \inftynorm{P_ke^{\pm it\Lambda}f}&\leq \sum_{p\in \Z^-}\inftynorm{P_{k,p}e^{\pm it\Lambda}f}=\sum_{ 2^{p}\lesssim t^{-1/2}}\inftynorm{P_{k,p}e^{\pm it\Lambda}f}+\sum_{ 2^{p}\gtrsim t^{-1/2}}\inftynorm{P_{k,p}e^{\pm it\Lambda}f}\\
        &\lesssim \sum_{2^{p}\lesssim t^{-1/2}}2^{\frac{3}{4}k}2^{-\frac{15}{4}k^+}2^p\norm{f}_D +\sum_{ 2^{p}\gtrsim t^{-1/2}}(\inftynorm{I_{k,p}}+\inftynorm{II_{k,p}})\\
        &\lesssim 2^{\frac{3}{4}k}2^{-\frac{15}{4}k^+} t^{-\frac{1}{2}}\norm{f}_D+ \sum_{2^{p}\gtrsim t^{-1/2}, p\leq -10}\inftynorm{I_{k,p}} +\sum_{2^{p}\gtrsim t^{-1/2}, p\geq -10}\inftynorm{I_{k,p}} \\
        &\hspace{1.5cm} +\sum_{ 2^{p}\gtrsim t^{-{1}/{2}}}2^k2^{-4k^+}2^{-2\beta'p}t^{-(\frac{1}{2}+\beta')}\norm{f}_D\\
        &\lesssim 2^{k}2^{-4k^+} t^{-\frac{1}{2}}\norm{f}_D+\hspace{-0.4cm}\sum_{p\leq -10, 2^p\gtrsim t^{-1/2}}\hspace{-0.7cm}2^{\frac{3}{4}k-\frac{15}{4}k^+}\min \{2^p,2^{-p}t^{-1}\}\norm{f}_D\\
        &\hspace{1.5cm}+\sum_{p\geq -10}2^{\frac{3}{4}k-\frac{15}{4}k^+}\log(t)t^{-1}\norm{f}_D\\
        &\lesssim 2^{\frac{3}{4}k}2^{-\frac{15}{4}k^+}  t ^{-\frac{1}{2}}\norm{f}_D.
    \end{align*}
    As for the SQG equation, recall that $\theta(t)=e^{-it\Lambda}\Theta(t),u(t)=e^{-it\Lambda}\nabla^\perp(-\Delta)^{-\frac{1}{2}}\Theta(t)=:e^{-it\Lambda}u_\Theta(t)$. Moreover observe that $[S,\nabla^\perp]=-\nabla^\perp$, $[S,\abs{\nabla}^{-1}]=-\abs{\nabla}^{-1}$ and $\sltwonorm{\nabla^\perp(-\Delta)^{-\frac{1}{2}}g}\lesssim \ltwonorm{g}$. Then there holds
    \begin{align*}
        \inftynorm{\nabla \theta(t)}+\inftynorm{u}+\inftynorm{Su(t)}&\lesssim \sum_{k\in\Z}\inftynorm{P_k\nabla e^{-it\Lambda} \Theta}+\inftynorm{P_ke^{-it\Lambda}u_\Theta}+\inftynorm{P_ke^{-it\Lambda}Su_\Theta}\\
        &\lesssim \sum_{k\in \Z} 2^{\frac{3}{4}k}2^{-\frac{15}{4}k^+}  t^{-\frac{1}{2}}(2^k\norm{\Theta}_D+ \norm{\Theta}_D+\norm{S\Theta}_D)\\
        &\lesssim  t^{-\frac{1}{2}}\eps.
    \end{align*}
    The bound for the Boussinesq system follows analogously by recalling the definition of the dispersive unknowns $Z_\pm$ and their respective profiles $\mathcal{Z}_\pm$ in \eqref{def: Profiles BQ}. Indeed, by \eqref{eqn: urho} we have that
    \begin{equation}
     u(t)=-\frac{1}{2}\nabla^
\perp \abs{\nabla}^{-1}(e^{-it\Lambda}\mathcal{Z}_+(t)+e^{it\Lambda}\mathcal{Z}_-(t)),\quad    \rho(t)=\frac{1}{2}(e^{-it\Lambda}\mathcal{Z}_+(t)-e^{it\Lambda}\mathcal{Z}_-(t)),
    \end{equation}
and for
\begin{align*}
    \tilde{A}(t):= \inftynorm{\nabla u(t)} +\inftynorm{Su(t)}+ \inftynorm{\nabla\rho(t)},
\end{align*}
we obtain as above using the commuting properties between derivatives and $S$ that
\begin{align*}
  \tilde{A}(t)& \lesssim \sum_{k\in \Z}\sinftynorm{P_k\nabla \nabla^
\perp \abs{\nabla}^{-1}(e^{-it\Lambda}\mathcal{Z}_+(t)+e^{it\Lambda}\mathcal{Z}_-(t))}+\inftynorm{P_k \nabla(e^{-it\Lambda}\mathcal{Z}_+(t)-e^{it\Lambda}\mathcal{Z}_-(t))}\\
&\qquad+\sinftynorm{P_kS\nabla^
\perp \abs{\nabla}^{-1}(e^{-it\Lambda}\mathcal{Z}_+(t)+e^{it\Lambda}\mathcal{Z}_-(t))}\\
&\lesssim \sum_{k\in \Z} \inftynorm{P_ke^{\mp it \Lambda}\mathcal{Z}_\pm}+2^k\inftynorm{P_ke^{\mp it \Lambda}\mathcal{Z}_\pm}+\inftynorm{P_ke^{\mp it \Lambda}S\mathcal{Z}_\pm}\\
&\lesssim t^{-\frac{1}{2}}\eps.
\end{align*}
\end{proof}

\begin{proof}[Proof of Proposition \ref{proposition: linear decay}]
Without loss of generality let $t>0$, and consider the semigroup given by
 \begin{align}
P_{k,p}e^{it\Lambda}f(x)&=\int_{\R^2}e^{it\Lambda(\xi)+ix\cdot\xi }\widehat{P_{k,p}f}(\xi)d\xi \nonumber\\
    &=\int_0^\infty \int_{-1}^1 e^{i\Psi(\rho,\Lambda)}\varphi(2^{-k}\rho)\varphi(2^{-p}\sinlambda)\hat{f}(\rho,\Lambda)\frac{\rho}{\sinlambda}d\Lambda d\rho,\label{eqn: localized semigroup}\\
    \Psi&:=t\Lambda +x_1\rho\Lambda +x_2\rho\sinlambda,\nonumber
 \end{align}
 where we have used the polar coordinates notation \eqref{eqn: polar coordinates}.
 \par To begin with, assume that for some $C>0$
 \begin{align*}
   t^{\frac{1}{2}}2^p\leq C, \quad \text{ or } \quad t2^{-k}\leq 1.
  \end{align*}
   Observe that if $\sqrt{1-\Lambda^2}\sim 2^p\leq C t^{-\frac{1}{2}}\ll 1$, on the support of $\varphi_{k,p}$ there holds $\abs{\Lambda}\geq \frac{1}{2}$. Letting $\overline{\varphi}_{k,p}$ be a function with similar support properties as $\varphi_{k,p}$, by a change of variables $\Lambda\mapsto 2^{-p}\sinlambda=y$ and Lemma \ref{lemma: control of Fourier transform} we obtain
        \begin{equation}\label{eqn: P_k,p bound for small p}
        \begin{split}
        \abs{P_{k,p}e^{it\Lambda}f} &\lesssim \int_0^\infty \int_{-1}^1 \sabs{\varphi(2^{-k}\rho)\varphi(2^{-p}\sqrt{1-\Lambda^2})\ff}\frac{\rho}{\sqrt{1-\Lambda^2}}d\Lambda d\rho\\
        &\lesssim \int_0^\infty \sabs{\overline{\varphi}(2^{-k}\rho)}\rho d\rho\int_{-1}^1\abs{\overline{\varphi}(y)}2^pdy\sinftynorm{\widehat{P_{k,p}f}}\\
        &\lesssim 2^{2k}2^{p}\sinftynorm{\widehat{P_{k,p}f}}\\
        &\lesssim 2^{k-4k^+}2^p\norm{f}_D.
        \end{split}
    \end{equation}
 From now on we assume 
    \begin{equation}\label{assumption: away form degenracy.}
     t^{\frac{1}{2}}2^p> C \iff 2^{-p}< C^{-1}t^{\frac{1}{2}}, \quad \text{and}\quad t2^{-k}>1.
   \end{equation}
    We decompose
    \[f=R_{\leq l_0}f+(\mathrm{Id}-R_{\leq l_0})f,\]
    where $l_0$ is the largest integer such that the following inequality holds
    \[2^{l_0}\leq t2^p(t2^{2p})^{-\kappa},\hspace{1cm} 0<\kappa<\frac{(\beta-\beta^\prime)}{1+\beta}\]
    and $0<\beta'<\beta$. We then let
     \begin{align*}
         P_{k,p}e^{it\Lambda}f=P_{k,p}R_{\leq l_0}e^{it\Lambda}f+P_{k,p}(\mathrm{Id}-R_{\leq l_0})e^{it\Lambda}f=:I_{k,p}(f)+II_{k,p}(f).
     \end{align*} 
We can estimate the high angular frequencies using the $X$-norm \eqref{X norm} to obtain claim \eqref{eqn: l2 decay of the profile}:
    \begin{align*}
        \ltwonorm{II_{k,p}(f)}&\lesssim \sum_{l> l_0,\; p+l\geq 0}\ltwonorm{P_{k,p}R_lf}\lesssim \sum_{l>l_0, \; p+l\geq 0}2^{-4k^+}2^{-(1+\beta)l}2^{-\frac{p}{2}}2^{-\beta p}\norm{f}_X\\
        &\lesssim 2^{-4k^+}2^{-(1+\beta)(l_0+1)}2^{-\frac{p}{2}}2^{-\beta p}\norm{f}_X\\
        &\lesssim 2^{-4k^+}[t2^p(t2^{2p})^{-\kappa}]^{-(1+\beta)}2^{-\frac{p}{2}}2^{-\beta p}\norm{f}_X\\
        &\lesssim 2^{-4k^+} t^{-(1+\beta)}2^{-(1+\beta)p}(t2^{2p})^{(\beta-\beta')}2^{-\frac{p}{2}-\beta p}\norm{f}_X\\
        &\lesssim 2^{-4k^+}t^{-\frac{1}{2}-\beta'}2^{-\frac{p}{2}-2\beta'p}\norm{f}_X,
    \end{align*}
    where we have used $\beta^\prime<\beta$ and $t^{\frac{1}{2}}2^p\geq C$.

    From now we assume that $f=R_{\leq l_0}f$ and note that by the Bernstein property Proposition \ref{prop: angular localization properties}\eqref{it3: prop: angular localization properties} for any $a,b\in \N_0$ there holds
    \begin{equation}\label{estimate: mixed derivatives L infinity}
        \sinftynorm{S^b\partial^a_\Lambda \hat{f}}\lesssim t^a2^{ap}(t2^{2p})^{-\kappa a}\sinftynorm{S^b\ff}.
    \end{equation}
    In the following we will integrate by parts in the expression \eqref{eqn: localized semigroup} in different directions. To that end, we compute the derivatives
    \begin{align}
         &  \partial_\Lambda \Psi=t+ x_1\rho- x_2\rho\frac{\Lambda}{\sinlambda}, && \partial^2_\Lambda\Psi= -x_2\rho\frac{1}{(1-\Lambda^2)^{\frac{3}{2}}}, \label{eqn: lin decay derivatives of psi} \\
      &\partial_\Lambda\partial_\rho\Psi=x_1-x_2\frac{\Lambda}{\sinlambda},
      &&\partial_\rho\Psi=x_1\Lambda+x_2\sinlambda, &&\partial_\rho^2\Psi=0.   \nonumber 
    \end{align}
    \noindent\textbf{Part 1:} Let $p\leq -10$. In particular, on the support of $\varphi_{k,p}$ there holds $\abs{\Lambda}\geq \frac{1}{2}$.
  \par  \textbf{Case A:} For some $c>2$
    \begin{align*}
       \abs{x_1}<c^{-1}t2^{2p-k}, \hspace{1.5cm} \abs{x_2}\leq c^{-2}t2^{p-k}.
    \end{align*}
    With \eqref{eqn: lin decay derivatives of psi} this implies the following bounds on derivatives of $\Psi$:
    \[\abs{\partial_\Lambda \Psi}\geq \abs{t+x_1\rho}-\abs{x_2}\rho\frac{\Lambda}{\sinlambda}\gtrsim t,\hspace{0.5cm}\abs{\partial_\Lambda^2\Psi} \leq c^{-2}t2^{p-k}2^k2^{-3p}.\]  
    Next we integrate by parts in the expression \eqref{eqn: localized semigroup} $N$ times until $N\kappa\geq 1$. To that end, let  \begin{align*}
        h(\rho,\Lambda):=\varphi(2^{-k}\rho)\varphi(2^{-p}\sinlambda)\frac{\rho}{\sinlambda}
    \end{align*}
    and compute
 \begin{align*}
    &\partial_\Lambda h(\rho,\Lambda)=\varphi(2^{-k}\rho)\rho[2^{-p}\overline{\varphi}(2^{-p}\sinlambda)\frac{-\Lambda}{1-\Lambda^2}+\overline{\varphi}(2^{-p}\sinlambda)\frac{\Lambda}{(1-\Lambda^2)^{3/2}}],
 \end{align*}
  and in particular
 \begin{align}\label{eqn: l1 norm of derivatives of h}
    & \int_0^\infty\int_{-1}^1\abs{h}d\Lambda d\rho\lesssim 2^{2k}2^p, && \int_0^\infty\int_{-1}^1\abs{\partial_\Lambda h}d\Lambda d\rho\lesssim 2^{2k}2^{-p}.
 \end{align}
 Using that $e^{i\Psi}=\frac{1}{i\partial_\Lambda \Psi}\partial_\Lambda e^{i\Psi}$ we integrate by parts repeatedly in \eqref{eqn: localized semigroup} and observe that boundary terms vanish because of the condition \eqref{assumption: away form degenracy.} and $\varphi(0)=0$, we obtain
   \begin{align*}
        I_{k,p}(f)&=\int_0^\infty \int_{-1}^1 e^{i\Psi}h\ff  d\Lambda d\rho=-\int_0^\infty \int_{-1}^1 e^{i\Psi}\partial_\Lambda\left(\frac{1}{i\partial_\Lambda\Psi}h\ff\right)d\Lambda d\rho\\
        &=-\iint e^{i\Psi}\left[ \partial_\Lambda\left(\frac{1}{i\partial_\Lambda \Psi}\right)h-\frac{1}{i\partial_\Lambda \Psi}\partial_\Lambda h\right]\ff d\Lambda d\rho + \iint e^{i\Psi}\frac{1}{i\partial_\Lambda\Psi}h\partial_\Lambda \ff d\Lambda d\rho\\
        &=-\iint e^{i\Psi}\left[\frac{-i\partial_\Lambda^2\Psi}{(i\partial_\Lambda\Psi)^2}h+\frac{1}{i\partial_\Lambda \Psi}\partial_\Lambda h \right]\ff d\Lambda d\rho  - \iint e^{i\Psi}\frac{1}{i\partial_\Lambda\Psi}h\partial_\Lambda \ff d\Lambda d\rho.
\end{align*}
We integrate by parts again in the second integral and obtain
        \begin{align*} 
I_{k,p}(f)&=-\iint e^{i\Psi}\left[\frac{-i\partial_\lambda^2\Psi}{(i\partial_\Lambda\Psi)^2}h+\frac{\partial_\Lambda h}{i\partial_\Lambda \Psi} \right]\ff d\Lambda d\rho+\iint e^{i\Psi}\left[ \frac{-2i\partial_\Lambda^2\Psi}{(i\partial_\Lambda\Psi)^3}h+ \frac{\partial_\Lambda h}{(i\partial_\Lambda\Psi)^2}\right]\partial_\Lambda \hat{f} d\Lambda d\rho\\
& \hspace{3cm} +\iint e^{i\Psi}\frac{h}{(i\partial_\Lambda\Psi)^2}\partial_\Lambda^2\hat{f}d\Lambda d\rho.
    \end{align*}
 Continuing $N$-times this integration by parts in the integral with the highest order derivative of $\hat{f}$ until $N\kappa\geq 1$ yields
    \begin{align*}
        I_{k,p}(f)&=\int_0^\infty \int_{-1}^1 e^{i\Psi}h(\rho,\Lambda)\ff  d\Lambda d\rho\\
        &= \int_0^\infty \int_{-1}^1 e^{i\Psi}\Bigg[\sum_{j=0}^{N-1}\frac{(-1)^j}{(i\partial_\Lambda \Psi)^j}\Bigg( \frac{\partial_\Lambda h}{i\partial_\Lambda \Psi} -\frac{(j+1)ih \partial_\Lambda^2\Psi}{(i\partial_\Lambda\Psi)^2}\Bigg)\partial_\Lambda^j \ff + \frac{(-1)^{j+1}h}{(i\partial_\Lambda\Psi)^N} \partial_\Lambda^N \ff\Bigg] d\Lambda d\rho\\
        &= \sum_{j=0}^{N} I_j.
    \end{align*} 
 For $0\leq j\leq N-1$, with \eqref{assumption: away form degenracy.}, \eqref{estimate: mixed derivatives L infinity} and \eqref{eqn: l1 norm of derivatives of h} we can estimate the terms $I_j$ in the sum above as
    \begin{align*}
        \abs{I_j}&\lesssim \abs{i\partial_\Lambda \Psi}^{-j} \int_0^\infty \int_{-1}^1 \Bigg( \abs{\frac{\partial_\Lambda h}{i\partial_\Lambda \Psi}}+ \abs{\frac{h\partial_\Lambda^2 \Psi}{(i\partial_\Lambda\Psi)^2}} \Bigg)  d\Lambda d\rho \inftynorm{\widehat{P_{k,p}\partial_\Lambda^jf}}\\
        &\lesssim t^{-j}\Bigg[t^{-1}\int_0^\infty \int_{-1}^1 \abs{\partial_\Lambda h}d\Lambda d\rho +t^{-2}c^{-2}t2^{p-k}2^k 2^{-3p} \int_0^\infty \int_{-1}^1\abs{h}d\Lambda d\rho\Bigg] t^{j}2^{jp}(t2^{2p})^{-j\kappa}\sinftynorm{\widehat{P_{k,p}f}}\\
        &\lesssim 2^{2k}t^{-1}2^{-p}\sinftynorm{\widehat{P_{k,p}f}}\\
        &\lesssim 2^{-4k^+}2^{k}t^{-1}2^{-p}\norm{f}_D,
    \end{align*}
    where we have used Lemma \ref{lemma: control of Fourier transform} in the last estimate.
On the other hand, when $j=N$ we obtain
    \begin{align*}
        \abs{I_N}&\lesssim t^{-N}2^{2k}2^pt^N2^{pN}(t2^{2p})^{-N\kappa}\sinftynorm{\widehat{P_{k,p}\partial_\Lambda^Nf}}\lesssim 2^{-4k^+}2^kt^{-1}2^{-p}\norm{f}_D.
    \end{align*}

 \textbf{Case B:} Assume
    \begin{align*}
 \abs{x_1}\geq c^{-1}t2^{2p-k} \textnormal{ and }    \abs{x_2}\leq c^{-2}t2^{p-k}.
    \end{align*} 
    \par 
    We have the following bound on the radial derivative of $\Psi$
    \begin{align}\label{estimate: lower bound on r-derivative of psi}
        \abs{\partial_\rho\Psi}\gtrsim t2^{2p-k},
    \end{align}
    as can be seen from \eqref{eqn: lin decay derivatives of psi}: 
    Using that $p\leq -10$ (and thus $\abs{\Lambda}\geq \frac{1}{2}$) it follows that $\abs{\partial_\rho\Psi}\geq \abs{x_1}\Lambda-\abs{x_2}\sinlambda\gtrsim t2^{2p-k}.$
    This allows us to integrate by parts in $\rho$ with $e^{i\Psi}=\frac{1}{i\partial_\rho\Psi}\partial_\rho(e^{i\Psi})$ and obtain
    \begin{align}\label{eqn: Part 1 Case B decomposition of semigroup}
       P_{k,p}e^{it\Lambda}f(x)&=\int_0^\infty \int_{-1}^1e^{i\Psi}h(\rho,\Lambda)\ff d\Lambda d\rho=-\int_0^\infty \int_{-1}^1e^{i\Psi} \left[\frac{\partial_\rho h}{i\partial_\rho\Psi}\ff+\frac{h}{i\partial_\rho\Psi}\partial_\rho\ff \right] d\Lambda d\rho.
    \end{align}
    Note that 
    \[\partial_\rho h=\varphi(2^{-p}\sinlambda)\frac{1}{\sinlambda}[(\partial_\rho\varphi)(2^{-k}\rho)2^{-k}\rho+\varphi(2^{-k}\rho)],\]
    and therefore 
    \begin{align*}
        \int_0^\infty \int_{-1}^1\abs{\partial_\rho h}d\Lambda d\rho&\lesssim \int_{-1}^1\frac{|\varphi(2^{-p}\sinlambda)|}{\sinlambda}d\Lambda \int_0^\infty \Big[ 2^{-k}|\overline{\varphi}(2^{-k}\rho)\rho|+ |\varphi(2^{-k}\rho)|\Big]d\rho \lesssim 2^{p}2^{k}.
    \end{align*}
    With this estimate, Lemma \ref{lemma: control of Fourier transform} and \eqref{estimate: lower bound on r-derivative of psi}, the first term in \eqref{eqn: Part 1 Case B decomposition of semigroup} can be bounded as 
    \begin{align*}
        \abs{\int_0^\infty \int_{-1}^1e^{i\Psi}\frac{\partial_\rho h}{i\partial_\rho\Psi}\ff d\Lambda d\rho}&\lesssim ct^{-1}2^{-2p+k}\int_0^\infty \int_{-1}^1\abs{\partial_\rho h}d\Lambda d\rho \sinftynorm{\widehat{P_{k,p}f}}\\
        &\lesssim 2^{2k}t^{-1}2^{-p}\sinftynorm{\widehat{P_{k,p}f}}\\
        &\lesssim 2^{-4k^+}2^kt^{-1}2^{-p}\norm{f}_D.
    \end{align*}
    For the second term in \eqref{eqn: Part 1 Case B decomposition of semigroup} we recall that $S=\rho\partial_\rho$ and obtain with Lemma \ref{lemma: control of Fourier transform}
    \begin{align*}
         \abs{\int_0^\infty \int_{-1}^1e^{i\Psi} \frac{h}{i\partial_\rho\Psi}\partial_\rho\ff d\Lambda d\rho} & \lesssim ct^{-1}2^{-2p+k}\int_{-1}^1 \frac{|\varphi(2^{-p}\sinlambda)|}{\sinlambda}d\Lambda\int_0^\infty |\varphi(2^{-k}\rho)|d\rho\sinftynorm{\widehat{P_{k,p}Sf}}\\
         &\lesssim 2^{2k}t^{-1}2^{-p}\sinftynorm{\widehat{P_{k,p}Sf}}\\
         &\lesssim 2^{-4k^+}2^kt^{-1}2^{-p}\norm{f}_D.
    \end{align*}
    Here we note that the decay norm $\norm{\cdot}_D$ also bounds  $\sinftynorm{S\hat{f}}$ by its definition \eqref{norm: decay norm}.
        \par\textbf{Case C:} \blue{Here we treat the remaining case}
   \blue{
\begin{equation}
    \abs{x_2}> c^{-2}t2^{p-k}.
\end{equation}
   }
We have the following lower bound:
    \begin{align}\label{eqn: mixed_derivs_lbd}
    \abs{\partial_\rho\Psi}+ \abs{\partial_\rho\partial_\Lambda\Psi}\gtrsim 2^{-k}t.
    \end{align}
    \blue{Indeed,
     from \eqref{eqn: lin decay derivatives of psi} there holds $\Lambda\partial_\Lambda \partial_\rho \Psi- \partial_\rho\Psi=-\frac{x_2}{\sinlambda}$ and it follows
    \begin{align*}
        \abs{\partial_\rho\Psi}+ \abs{\partial_\rho\partial_\Lambda\Psi}\geq  \abs{\partial_\rho\Psi}+ \abs{\Lambda\partial_\rho\partial_\Lambda\Psi}\geq \abs{\partial_\rho\Psi-\Lambda\partial_\rho\partial_\Lambda\Psi}>c^{-2}t2^{-k}.
    \end{align*}}
    With this we can integrate by parts in $\rho$ or use the set-size gain when integrating in $\Lambda$. To formalize this, we decompose
    \begin{align*}
        P_{k,p}e^{it\Lambda}f=\int_0^\infty \int_{-1}^1e^{i\Psi}\varphi_{k,p}(\rho,\Lambda)\hat{f}\frac{\rho}{\sinlambda}d\Lambda d\rho=\sum_{n\geq 0}I_n,
    \end{align*}
    where $I_n=\int_0^\infty \int_{-1}^1e^{i\Psi}\varphi_{k,p}(\rho,\Lambda)\varphi(2^{-n}\partial_\rho\Psi)\hat{f}\frac{\rho}{\sinlambda}d\Lambda d\rho$. On the support of $I_0$, we have $\abs{\partial_\rho\Psi}\sim 1$, hence $\abs{\partial_\Lambda\partial_\rho\Psi}\gtrsim 2^{-k}t$ and with a change variables $y=\partial_\rho\Psi$ we obtain the decay:
    \begin{align*}
        \abs{I_0}&\lesssim \sinftynorm{\widehat{P_{k,p}f}} \iint\varphi_{k,p}(\rho,\Lambda)\varphi(\partial_\rho\Psi)\frac{\rho}{\sinlambda}d\Lambda d\rho\\
        &\lesssim \sinftynorm{\widehat{P_{k,p}f}}\snorm{\varphi_{p}(\Lambda)\sinlambda^{-1}}_{L^{\infty}_{\Lambda}} \iint \varphi(\partial_\rho\Psi)\varphi(2^{-k}\rho)\rho d\Lambda d\rho\\
        &\lesssim 2^{-p}\sinftynorm{\widehat{P_{k,p}f}} \iint\varphi(y)\abs{\partial_\Lambda \partial_\rho\Psi}^{-1}\varphi(2^{-k}\rho)\rho dy d\rho\\
        &\lesssim 2^{2k-p}t^{-1}\sinftynorm{\widehat{P_{k,p}f}} \\
        &\lesssim 2^{k-4k^+}2^{-p}t^{-1}\norm{f}_D.
    \end{align*}
   The summation for $n\geq 1$ will be split according to \eqref{eqn: mixed_derivs_lbd}.  We observe that $ \partial_\rho^2\Psi=0$. Thus, integrating by parts in $\partial_\rho$ once (respectively twice) gives
    \begin{equation}
        I_n=-I_n^{(1)}=I_n^{(2)},
    \end{equation}
    where
    \begin{align*}
     I_n^{(1)}&=\iint e^{i\Psi}\frac{\varphi(2^{-n}\partial_\rho\Psi)}{i\partial_\rho\Psi}\partial_\rho(\varphi_{k,p}\ff \frac{\rho}{\sinlambda})d\Lambda d\rho,\\
     I_n^{(2)}&=\iint e^{i\Psi}\frac{\varphi(2^{-n}\partial_\rho\Psi)}{(\partial_\rho\Psi)^2}\partial_\rho^2 (\varphi_{k,p}\hat{f}\frac{\rho}{\sinlambda})d\Lambda d\rho.
    \end{align*}
    For a function $\overline{\varphi}_{k,p}$ with similar support properties as $\varphi_{k,p}$ we thus have the bounds
    \begin{align*}
        \sabs{I_n^{(1)}}&\lesssim 2^{-n}2^{-p}(\sinftynorm{\widehat{P_{k,p}f}}+\sinftynorm{\widehat{P_{k,p}Sf}})\iint\varphi(2^{-n}\partial_\rho\Psi)\overline{\varphi}_{k,p}(\rho,\Lambda)d\Lambda d\rho\\
        &\lesssim 2^{-n}2^{-p}2^{-k}2^{-4k^+}\norm{f}_D\iint\varphi(2^{-n}\partial_\rho\Psi)\overline{\varphi}_{k,p}(\rho,\Lambda)d\Lambda d\rho
    \end{align*}
    and
    \begin{equation}\label{eqn: I_n^2 bound}
    \begin{split}
         \sabs{I_n^{(2)}}&\lesssim 2^{-2n}\iint \varphi(2^{-n}\partial_\rho \Psi)\overline{\varphi}_{k,p}[\sabs{\hat{f}}2^{-k}+\sabs{\partial_\rho\hat{f}}+2^{k}\sabs{\partial_\rho^2\hat{f}}]\sinlambda^{-1}d\Lambda d\rho\\
        &\lesssim 2^{-2n}2^{-p}2^{-k}\sum_{a=0}^2\sinftynorm{\widehat{P_{k,p}S^af}} \iint  \varphi(2^{-n}\partial_\rho \Psi)\overline{\varphi}_{k,p}(\rho,\Lambda)d\Lambda d\rho\\
        &\lesssim 2^{-2n}2^{-p}2^{-2k}2^{-4k^+}\norm{f}_D\iint  \varphi(2^{-n}\partial_\rho \Psi)\overline{\varphi}_{k,p}(\rho,\Lambda)d\Lambda d\rho.
    \end{split}
    \end{equation}
    Now note that when $\abs{\partial_\rho\Psi}\sim 2^n\leq t2^{-k}$, by \eqref{eqn: mixed_derivs_lbd} there must hold that $\abs{\partial_\Lambda\partial_\rho\Psi}\gtrsim 2^{-k}t$. By changing variables $y=\partial_\rho\Psi$, the set size of integration in $\Lambda$ gives
    \begin{equation}
    \begin{aligned}
    \iint\varphi(2^{-n}\partial_\rho\Psi)\overline{\varphi}_{k,p}(\rho,\Lambda)d\Lambda d\rho &\lesssim \iint\varphi(2^{-n}\partial_\rho\Psi)\varphi(2^{k}t^{-1}\partial_\Lambda\partial_\rho\Psi)\overline{\varphi}_{k,p}(\rho,\Lambda)d\Lambda d\rho\lesssim 2^{n}2^{2k}t^{-1},
    \end{aligned}
    \end{equation}
    so that
     \begin{equation}\label{eqn: I_n^(1) bound}
          \begin{split}
         \sabs{I_n^{(1)}}&\lesssim 2^{k}2^{-4k^+}2^{-p}t^{-1}\norm{f}_D.
    \end{split}
     \end{equation}
    Similarly, as long as $2^n\leq t2^{-k}$ we obtain from \eqref{eqn: I_n^2 bound} that 
    \begin{align}\label{eqn: I_n^(2) bound}
         \sabs{I_n^{(2)}}&\lesssim 2^{-n}2^{-4k^+}2^{-p}t^{-1}\norm{f}_D.
    \end{align}
On the other hand, a simple set size estimate yields
    \begin{align}\label{eqn: I_n^(2) bound-2}
         \sabs{I_n^{(2)}}&\lesssim 2^{-2n}2^{-p}2^{-2k}2^{-4k^+}\norm{f}_D.
    \end{align} 
    Together, \eqref{eqn: I_n^(1) bound}--\eqref{eqn: I_n^(2) bound-2} show that
    \begin{align*}
        \sum_{n\geq 1}\abs{I_n}&\lesssim \sum_{1\leq n\leq \log(t)-k}\min\{\sabs{I_n^{(1)}},\sabs{I_n^{(2)}}\}+\sum_{n\geq \log(t)-k}\sabs{I_n^{(2)}}\\
        &\lesssim 2^{-4k^+}2^{-p}t^{-1} \norm{f}_D\sum_{n\leq \log(t)-k}\min \{2^{k},2^{-n}\}+2^{-p}2^{-2k}2^{-4k^+}\norm{f}_D\sum_{n\geq \log(t)-k}2^{-2n}\\
        &\lesssim 2^{\frac{3}{4}k} 2^{-4k^+}2^{-p}t^{-1}\norm{f}_D+2^{\frac{3}{4}k}2^{-\frac{15}{4}k^+}2^{-p}t^{-1}\norm{f}_D.
    \end{align*}

\noindent\textbf{Part 2:} Fix $p\geq -10$. \blue{We first observe that \textbf{Cases A} and
and \textbf{C} follow exactly as above, since we don't use the largeness of $\Lambda$.}
\par\textbf{Case B.} First note that in \textbf{Part 1} we explicitly used the smallness of the parameter $p$ and the fact that $\abs{\Lambda}$ was bounded from below. In the current setting, we need to invoke the horizontal localization $\abs{\Lambda}\sim 2^q$, $q\in\Z^-$ introduced in Section \ref{sec: Localizations}. Moreover, we will use the $\varphi_{k,p,q}(\rho,\Lambda)$ functions, as well as the following fact for fixed $k,\,p$:
\begin{align*}
    \abs{P_{k,p}e^{it\Lambda}f}\leq\sum_{q\in \Z^-}\abs{P_{k,p,q}e^{it\Lambda}f}= \sum_{q\leq -\log(t)}\abs{P_{k,p,q}e^{it\Lambda}f}+\sum_{-\log(t)\leq q\leq 0}\abs{P_{k,p,q}e^{it\Lambda}f}
\end{align*}
First of all, we observe that if $q\leq -\log(t)$ (and since $p\geq -10$ is fixed), 
    \begin{align*}
        \abs{P_{k,p,q}e^{it\Lambda}f}&\lesssim \iint \varphi_{k,p,q}(\rho,\Lambda)\frac{\rho}{\sinlambda}\ff d\Lambda d\rho\\
        &\lesssim \sinftynorm{\widehat{P_{k,p}f}}\iint \overline{\varphi}(2^{-k}\rho)\overline{\varphi}(2^{-q}\Lambda)\rho d\Lambda d\rho\\
        &\lesssim  2^{2k}2^q\sinftynorm{\widehat{P_{k,p}f}},
    \end{align*}
     which implies using Lemma \ref{lemma: control of Fourier transform} that
     \begin{align*}
         \sum_{q\leq -\log(t)}\abs{P_{k,p,q}e^{it\Lambda}f}\lesssim 2^{k}2^{-4k^+}t^{-1}\norm{f}_D.
     \end{align*}
    To deal with the summation for $q\geq-\log(t)$, we proceed similarly to \textbf{Case C} above, however with a $q$-dependence. \blue{Observe that in 
    the setting of \textbf{Case B} there holds
      \begin{align*}
    2^q\abs{\partial_\rho\Psi}+ \abs{\partial_\rho\partial_\Lambda\Psi}\gtrsim 2^{-k}t.
    \end{align*}
    This can be seen from \eqref{eqn: lin decay derivatives of psi}, which implies that
\begin{align*}
        \partial_\Lambda \partial_\rho\Psi+\frac{\Lambda}{1-\Lambda^2}\partial_\rho \Psi=\frac{x_1}{1-\Lambda^2} \hspace{0.5cm} \Longrightarrow \hspace{0.5cm} (1-\Lambda^2)\partial_\Lambda \partial_\rho \Psi+\Lambda \partial_\rho\Psi=x_1,
    \end{align*}
    so that in particular
    \begin{align*}
        \abs{\partial_\Lambda \partial_\rho \Psi}+2^q\abs{\partial_\rho\Psi}\geq\abs{ (1-\Lambda^2)\partial_\Lambda \partial_\rho \Psi+\Lambda {\partial_\rho\Psi}}= \abs{x_1}\geq c^{-1}t2^{-k}.
    \end{align*}}
  We decompose the semigroup 
    \begin{align}\label{eqn: P_k,p,q decomposition in I_n}
        P_{k,p,q}e^{it\Lambda}f=\int_0^\infty \int_{-1}^1e^{i\Psi}\varphi_{k,p,q}(\rho,\Lambda)\hat{f}\frac{\rho}{\sinlambda}d\Lambda d\rho=\sum_{n\geq 0}I_n,
    \end{align}
    where $I_n=\int_0^\infty \int_{-1}^1e^{i\Psi}\varphi_{k,p,q}(\rho,\Lambda)\varphi(2^{-n}\partial_\rho\Psi)\hat{f}\frac{\rho}{\sinlambda}d\Lambda d\rho$. Again, we want to either integrate by parts in $\partial_\rho$ and make use of the fact that $\abs{\partial_\Lambda\partial_\rho\Psi}^{-1}\lesssim 2^{k}t^{-1}$, or employ a set size bound. To that end, observe first that on the support of $I_0$ we have $\abs{\partial_\rho\Psi}\sim 1$ and thus
    \begin{align*}
        1+\abs{\partial_\rho\partial_\Lambda\Psi}\gtrsim 2^q\abs{\partial_\rho\Psi}+\abs{\partial_\rho\partial_\Lambda\Psi} \gtrsim 2^{-k}t \hspace{0.5cm} \Longrightarrow \hspace{0.5cm} \abs{\partial_\Lambda\partial_\rho\Psi}\gtrsim 2^{-k}t.
    \end{align*}
    Therefore, by a change of variables $y=\partial_\rho\Psi$, Lemma \ref{lemma: control of Fourier transform} and $2^{-p}\lesssim 2^{10}$, we obtain
    \begin{align*}
        \abs{I_0}&\lesssim \sinftynorm{\widehat{P_{k,p}f}} \iint\overline{\varphi}_{k,p,q}(\rho,\Lambda)\varphi(\partial_\rho\Psi)\frac{\rho}{\sinlambda}d\Lambda d\rho\\
        &\lesssim \sinftynorm{\widehat{P_{k,p}f}}\snorm{\overline{\varphi}_{k,p,q}(\Lambda)\sinlambda^{-1}}_{L^{\infty}_\Lambda}\iint\varphi(\partial_\rho\Psi) \varphi(2^{-k}\rho) \rho d\Lambda d\rho\\
        &\lesssim 2^{2k}\sinftynorm{\widehat{P_{k,p}f}} \int\varphi(y)\abs{\partial_\Lambda \partial_\rho \Psi}^{-1} dy\\
        &\lesssim 2^{k-4k^+}t^{-1}\norm{f}_D.
    \end{align*}
    We highlight here that we use the $P_{k,p}$ projections to bound the Fourier transform with Lemma \ref{lemma: control of Fourier transform}. For $n\geq 1$, we integrate by parts either once or twice in $\partial_\rho$ using $\partial_\rho^2\Psi=0$ and obtain as in \textbf{Part 1}
    \begin{align*}
        \sabs{I_n^{(1)}}&\lesssim 2^{-n}2^{-k}2^{-4k^+}\norm{f}_D\iint   \varphi(2^{-n}\partial_\rho \Psi)\overline{\varphi}_{k,p,q}(\rho,\Lambda)d\Lambda d\rho,\\
        \sabs{I_n^{(2)}}&\lesssim 2^{-2n}2^{-2k}2^{-4k^+}\norm{f}_D\iint   \varphi(2^{-n}\partial_\rho \Psi)\overline{\varphi}_{k,p,q}(\rho,\Lambda)d\Lambda d\rho.
    \end{align*}
Similarly as in \textbf{Part 1}, we decompose the sum over all $n\geq1$ into the part where $q+n\leq \log(t)-k$ and $q+n\geq \log(t)-k$. 
\par If $q+n\leq \log(t)-k$ then $2^q\abs{\partial_\rho\Psi}\sim 2^{q+n}\lesssim 2^{-k}t$ and so necessarily $\abs{\partial_\Lambda\partial_\rho \Psi}\gtrsim 2^{-k}t$. As before we obtain the bounds \eqref{eqn: I_n^(1) bound} and \eqref{eqn: I_n^(2) bound} by the change of variables $y=\partial_\rho\Psi$ and Lemma \ref{lemma: control of Fourier transform}. Summing in $n$ yields
    \begin{align*}
        \sum_{q+n\leq \log(t)-k}\abs{I_n}&\lesssim 2^{-4k^+}t^{-1}\norm{f}_D\sum_{q+n\leq \log(t)-k}\min\{2^k,2^{-n}\}\lesssim 2^{\frac{3}{4}k}2^{-4k^+}t^{-1}\norm{f}_D
    \end{align*}  
    On the other hand, for $q+n\geq \log(t)-k$, we obtain with \eqref{eqn: I_n^(2) bound} that
        \begin{align*}
         \sum_{q+n\geq \log(t)-k}\abs{I_n}& \lesssim \sum_{q+n\geq \log(t)-k}2^{-2n}2^{-2k}2^{-4k^+}\norm{f}_D\iint \overline{\varphi}_{k,p,q}(\rho,\Lambda)d\Lambda d\rho\\
             &\lesssim   2^q 2^{-k}2^{-4k^+}\norm{f}_D \sum_{q+n\geq \log(t)-k}2^{-2n}   \\
             &\lesssim 2^k2^{-4k^+}t^{-2}\norm{f}_D.
        \end{align*}     
Thus from \eqref{eqn: P_k,p,q decomposition in I_n} we have
\begin{align*}
    \abs{P_{k,p,q}e^{it\Lambda}f}\lesssim 2^{\frac{3}{4}k}2^{-4k^+}t^{-1}\norm{f}_D\max \{2^{\frac{k}{4}},1\}\lesssim 2^{\frac{3}{4}k}2^{-\frac{15}{4}k^+}t^{-1}\norm{f}_D,
\end{align*}
    and altogether there holds
    \begin{align*}
       \sum_{-\log(t)\leq q} \hspace{-0.3cm}\abs{P_{k,p,q}e^{it\Lambda}f}&=  \sum_{-\log(t)\leq q}\sum_{n\geq 0}\abs{I_n}\lesssim\hspace{-0.3cm} \sum_{-\log(t)\leq q}\hspace{-0.3cm} 2^{\frac{3}{4}k}2^{-\frac{15}{4}k^+}t^{-1}\norm{f}_D\lesssim  2^{\frac{3}{4}k}2^{-\frac{15}{4}k^+}\log(t)t^{-1}\norm{f}_D.
    \end{align*}
\end{proof}

In the bootstrap setting \eqref{eqn: bootstrap assumption}, the above proposition can be applied directly to $S^b\mathcal \zz_\pm(t)$ and $S^b \Theta(t)$ where $0\leq b\leq N-2$, as is clear from the two copies of $S$ required in the decay norm \eqref{norm: decay norm}. Thanks to interpolation, we can furthermore obtain some decay also for the remaining powers of vector fields on the profiles:
\begin{lemma}\label{lemma: linear decay many vector fields}
    Let $F\in \{\zz_\pm,\Theta\}$ and assume the bootstrap condition \eqref{eqn: bootstrap assumption} holds. Moreover, assume the number of vector fields $M>0$ in \eqref{eqn: initial data assumption} (\eqref{eqn: initial data assumption SQG} resp.) is sufficiently large and let $0<\kappa\ll \beta$. Then the weaker decay holds:
    \begin{align*}
        \sinftynorm{P_ke^{it\Lambda}S^bF}\lesssim 2^{\frac{3}{4}k-3k^+}t^{-\frac{1}{2}+\kappa}\eps, \hspace{1cm} 0\leq b\leq N.
    \end{align*}
\end{lemma}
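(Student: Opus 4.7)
The plan is as follows. For $0 \leq b \leq N-2$ the desired decay (in fact, the sharper $t^{-1/2}$ rate with exponent $2^{3k/4 - 15k^+/4}$) is immediate: Corollary \ref{cor: linear decay semigroup} applied to the profile $S^b F$ suffices, since $b + 2 \leq N$ guarantees $\snorm{S^b F}_D = \sup_{0 \leq n \leq 2}(\snorm{S^{b+n} F}_B + \snorm{S^{b+n} F}_X) \lesssim \eps$ under the bootstrap \eqref{eqn: bootstrap assumption}.

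For the remaining $b \in \{N-1, N\}$, Proposition \ref{proposition: linear decay} does not apply directly, because $\snorm{S^b F}_D$ would require control of $S^{N+1}F$ and $S^{N+2}F$ in the $B$- and $X$-norms, which the bootstrap does not provide. The strategy is to interpolate these missing bounds between the bootstrap control $\snorm{S^N F}_B, \snorm{S^N F}_X \lesssim \eps$ and the energy estimate $\snorm{S^a F}_{L^2} \lesssim \eps$ for $0 \leq a \leq M$ guaranteed by Proposition \ref{prop: bootstrap argument}. Since on the Fourier side $S$ is, up to a bounded shift, the generator of dilations (and hence essentially self-adjoint on $L^2(\R^2)$), the functional calculus yields the log-convexity bound
\begin{equation*}
    \snorm{P_{k,p} R_l S^{N+n} F}_{L^2} \lesssim \snorm{P_{k,p} R_l S^N F}_{L^2}^{1-\theta_n} \snorm{P_{k,p} R_l S^M F}_{L^2}^{\theta_n}, \qquad \theta_n = \frac{n}{M-N}, \quad n \in \{1, 2\}.
\end{equation*}
Bounding the first factor using $\snorm{S^N F}_{B}, \snorm{S^N F}_X \lesssim \eps$ and the second by $\snorm{S^M F}_{L^2} \lesssim \eps$, and taking the supremum over $(k, l, p)$ against the respective weights, one obtains
\begin{equation*}
    \snorm{S^{N+n} F}_B + \snorm{S^{N+n} F}_X \lesssim \eps \cdot \mathcal W^{\theta_n},
\end{equation*}
where $\mathcal W$ is a polynomial combination of the weight factors $2^{4k^+}, 2^{-k^-/2}, 2^{(1+\beta)l}, 2^{-p/2}, 2^{(1/2+\beta)p}$.

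Plugging these estimates into the proof of Proposition \ref{proposition: linear decay} applied to $g = S^b F$, one tracks which ranges of $(k, l, p)$ effectively contribute to the final $L^\infty$ bound: the $H^{N_0}$ energy controls $k^+$, the $L^2$ energy controls $k^-$, and within the decay analysis only $p \gtrsim -\tfrac{1}{2}\log_2 t$ and $l \lesssim t^{1+}$ matter. On this effective range $\mathcal W^{\theta_n}$ contributes at most a factor $t^{C\theta_n}$ for some absolute $C>0$. Taking $M$ large enough that $C\theta_n \leq \kappa$ for $n \leq 2$ then converts the $t^{-1/2}$ rate into the stated $t^{-1/2+\kappa}$ rate; the mild weakening of the $k^+$-exponent from $15/4$ to $3$ absorbs the high-frequency weight loss from the interpolation, namely $(2^{4k^+})^{\theta_n}$.

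The main obstacle is justifying the logarithmic convexity in powers of the vector field $S$. Passing to polar coordinates $(\sigma, \tau) := (\log r, \tau)$ on the Fourier side turns $S$ into $\partial_\sigma$, and the required inequality then reduces to the classical Gagliardo–Nirenberg / Hadamard three-lines inequality for the translation generator on $L^2(\R \times \sphere^1)$, which is standard and interacts well with the commuting localizations $P_{k,p}$ and $R_l$ (thanks to $[S, P_{k,p}] = -P_{k,p}$ and $[S, R_l] = 0$).
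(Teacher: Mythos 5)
Your proof is on the right track and identifies the correct core mechanism — a convexity/interpolation argument leveraging the reserve of $M\gg N$ powers of $S$ controlled in $L^2$ — but it implements it along a genuinely different axis than the paper, and leaves the key technical step asserted rather than carried out. The paper interpolates on the \emph{physical side}: writing $\sinftynorm{P_ke^{it\Lambda}S^bF}\lesssim 2^{k/r}\snorm{P_ke^{it\Lambda}S^bF}_{L^{2r}}$ and then applying the convexity inequality $\snorm{S^{\leq b}g}_{L^{2r}}\lesssim\norm{g}_{L^{2r}}^{1-1/K}\snorm{S^{\leq Kb}g}_{L^{2r}}^{1/K}$ (from \cite[Lemma A.6]{EC2022}) to $g=P_kS^{b-2}F$, then bounding the $L^{2r}$ norm on the left by interpolating between $L^\infty$ (via Proposition~\ref{proposition: linear decay} applied to $S^{b-2}F$, which \emph{is} covered by the bootstrap since $b-2\leq N-2$) and $L^2$ (via the energy bound $\sltwonorm{P_kS^{\leq b+2(K-1)}F}\lesssim\eps$, valid once $b+2(K-1)<M$). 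This keeps Proposition~\ref{proposition: linear decay} as a black box and never needs to revisit its proof. You instead interpolate on the \emph{frequency side}, manufacturing degraded $B$- and $X$-norm bounds on $S^{N+1}F$ and $S^{N+2}F$ and then re-running the proof of Proposition~\ref{proposition: linear decay} with these non-uniform inputs. That can in principle be made to work, but there are two points where your argument is incomplete: (i) the "log-convexity via essential self-adjointness of $S$" step glosses over the fact that $S$ is self-adjoint only up to the shift $S^*=-S-2\,\mathrm{id}$ (or equivalently, $i(\partial_\sigma-1)$ after passing to log-polar coordinates) and over the non-exact commutation $[S,P_{k,p}]=-P_{k,p}$; these are minor but should be addressed since the whole argument hinges on the inequality. (ii) The central claim that, on the effective $(k,l,p)$-range inside Proposition~\ref{proposition: linear decay}, the weight loss $\mathcal W^{\theta_n}$ is bounded by $t^{C\theta_n}$ requires going through \emph{every} place the $D$-norm enters that proof (including inside Lemma~\ref{lemma: control of Fourier transform}) and re-verifying summability in $l$ and $p$ with the degraded inputs; you assert this but do not do it. The paper's physical-side interpolation avoids all of this bookkeeping, which is why it is the preferable route.
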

\begin{proof}
   For $b\leq N-2$, the decay norm $\snorm{S^b F}_D$ is bounded since the bootstrap assumption \eqref{eqn: bootstrap assumption} holds. Therefore by Proposition \ref{proposition: linear decay} there holds:
    \begin{align*}
         \sinftynorm{P_ke^{it\Lambda}S^bF}\lesssim 2^{\frac{3}{4}k-\frac{15}{4}k^+}t^{-\frac{1}{2}}\eps.
    \end{align*}
    For $N-2< b\leq N$ we use interpolation: For integers $r,K\geq 1,$ $a,b\geq 0$ and $ \snorm{S^{\leq b}F}_{L^r}:=\sup_{0\leq \alpha\leq b}\norm{S^\alpha F}_{L^r}$ a standard convexity argument (see e.g.\ \cite[\textcolor{MidnightBlue}{Lemma A.6}]{EC2022}) gives that
    \begin{align*}
    \snorm{S^{\leq b}g}_{L^{2r}}\lesssim_{K,r,b}\norm{g}_{L^{2r}}^{1-\frac{1}{K}}\snorm{S^{\leq Kb}g}_{L^{2r}}^{\frac{1}{K}}.
    \end{align*}
    Applying this with $g=P_kS^{b-2}F$, Proposition \ref{proposition: linear decay} and the energy estimates \eqref{eqn:bootstrap-prop-bded-energy} with $M\gg 1$ sufficiently large (in particular such that $N+2(K-1)<M$), there holds with $r\sim K\gg \kappa^{-1}$ that
    \begin{align*}
        \sinftynorm{P_ke^{it\Lambda}S^bF}&\lesssim 2^{\frac{k}{r}}\snorm{P_ke^{it\Lambda}S^bF}_{L^{2r}}\\
        &\lesssim 2^{\frac{k}{r}}2^{k\frac{r-1}{rK}}\sinftynorm{P_ke^{it\Lambda}S^{b-2}F}^{(1-\frac{1}{r})(1-\frac{1}{K})}\sltwonorm{P_kS^{b-2}F}^{\frac{1}{r}(1-\frac{1}{K})}\sltwonorm{P_kS^{\leq b+2(K-1)}F}^{\frac{1}{K}}\\
        &\lesssim 2^{\frac{k}{r}}2^{k\frac{r-1}{rK}}(2^{\frac{3}{4}k-\frac{15}{4}k^+}t^{-\frac{1}{2}}\eps)^{(1-\frac{1}{r})(1-\frac{1}{K})}\eps^{\frac{1}{r}(1-\frac{1}{K})}\eps^{\frac{1}{K}}\\
        &\lesssim 2^{\frac{3}{4}k-3k^+}t^{-\frac{1}{2}+\frac{1}{K}}\eps.
    \end{align*}
\end{proof}

\section{Energy Estimates}\label{sec: both energy estimates}
In this section we establish energy estimates for the systems \eqref{eqn: perturbed BQ} and
\eqref{SQG}. We give first the full details for the Boussinesq system \eqref{eqn: perturbed BQ}, where also the appropriate choice of scalar unknowns plays an important role, see Corollary \ref{cor: energy estimates of Z+-}. The SQG setting \eqref{SQG} can then be dealt with analogously.

\subsection{Energy estimates Boussinesq system}\label{sec: Energy Estimates}
We establish energy estimates for the Boussinesq system \eqref{eqn: perturbed BQ}. On the one hand we have the classical $H^n$ estimates on $(u,\rho)$. On the other hand, we provide $L^2$ estimates for arbitrarily many vector fields $S$. Recall the scaling vector field $\mathcal{S}=-\mathrm{id}+S$ from \eqref{def: S W}. If $(u,\rho)$ is a solution to \eqref{eqn: perturbed BQ}, then $(\mathcal{S}^n u,\mathcal{S}^n\rho)$ satisfies
\begin{equation}\label{eqn: S^n BQ}
\left\{
      \begin{aligned}
 &\partial_t\mathcal{S}^nu +\mathcal{S}^n(u\cdot\nabla u)=-\mathcal{S}^n\nabla p -\mathcal{S}^n\rho \Vec{e_2}\\ 
 &\partial_t\mathcal{S}^n\rho+ \mathcal{S}^n( u\cdot \nabla\rho)=\mathcal{S}^nu_2 \\
    & \mathrm{div}u=0.
\end{aligned}
    \right.
\end{equation}
\begin{remark}
    \begin{enumerate}
        \item We obtain energy estimates first on the vector field $\mathcal{S}$ using \eqref{eqn: S^n BQ}. These yield estimates on $S=x\cdot\nabla_x$ since  $S^n=\sum_{k=0}^nc_k\mathcal{S}^k$ for binomial constants $c_k>0$.
        \item We note that with a similar proof as below we obtain $\dot{H}^{-1}$ estimates on arbitrarily many vector fields $S$ applied to a solution ($u$,$\rho$) to \eqref{eqn: perturbed BQ}, provided that the initial data $(u_0,\rho_0)\in (H^{-1}\cap H^n)\times (H^{-1}\cap H^n)$.
    \end{enumerate}
\end{remark}
\begin{proposition}\label{prop: energy estimates BQ}
    Let $(u,\rho)\in  C([0,T],H^n(\R^2))\times C([0,T],H^n(\R^2))$ solve \eqref{eqn: perturbed BQ} with initial data $(u_0,\rho_0)\in C([0,T],H^n(\R^2))\times C([0,T],H^n(\R^2))$ for $ T\geq 0$ and some $n\in \N$. Then for $0\leq t\leq T$ there holds
    \begin{align*}
        &\norm{u(t)}_{H^{n}}^2+\norm{\rho(t)}_{H^{n}}^2- \norm{u_0}_{H^{n}}^2-\norm{\rho_0}_{H^{n}}^2\lesssim \int_0^tA(s)(\norm{u(s)}_{H^n}^2+\norm{\rho(s)}_{H^{n}}^2)(1+s)^{-\frac{1}{2}}ds,\\
        &\sltwonorm{{S}^nu(t)}^2+\sltwonorm{{S}^n\rho(t)}^2-\sltwonorm{{S}^nu_0}^2-\sltwonorm{{S}^n\rho_0}^2 \\
        &\hspace{2.4cm}\lesssim\int_0^t A_1(s)\big(\norm{u(s)}_{H^n}^2+\norm{\rho(s)}_{H^n}^2+\sum_{j=0}^n\sltwonorm{{S}^ju(s)}^2+\sltwonorm{{S}^j\rho(s)}^2\big)(1+s)^{-\frac{1}{2}}ds,
    \end{align*}
    where for $0\leq s\leq t$
    \begin{align*}
        &A_0(s):=(\inftynorm{\nabla u}+\inftynorm{\nabla \rho})(1+s)^{\frac{1}{2}}, && A_1(s):=(\inftynorm{\nabla u}+\sinftynorm{{S}u}+\inftynorm{\nabla \rho})(1+s)^{\frac{1}{2}}.
    \end{align*}
\end{proposition}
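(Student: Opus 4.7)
The plan is to derive both estimates via standard $L^2$-energy methods, relying on two structural cancellations that remove the leading terms: incompressibility kills both the pressure contribution and the leading transport term $\int u\cdot\nabla D^n u\cdot D^n u\,dx$ (where $D\in\{\partial,\mathcal S\}$), and the skew coupling $\langle\rho\vec e_2,u\rangle=\langle u_2,\rho\rangle$ between the momentum and density equations eliminates the linear buoyancy interaction when the two $L^2$ energies are summed. The factor $(1+s)^{-1/2}$ in $A_0,A_1$ is not produced by the energy argument itself; I isolate this rate because in the bootstrap setting the $L^\infty$ norms in $A_0,A_1$ are of size $\eps(1+s)^{-1/2}$ by the linear decay of Corollary \ref{cor: linear decay semigroup}.

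For the $H^n$ bound I apply $\partial^\alpha$, $|\alpha|\le n$, to \eqref{eqn: perturbed BQ} and pair with $(\partial^\alpha u,\partial^\alpha\rho)$ in $L^2$. After the cancellations above, only the commutator terms $[\partial^\alpha,u\cdot\nabla]u$ and $[\partial^\alpha,u\cdot\nabla]\rho$ remain, and these are estimated via the standard Kato-Ponce bound
\[
\snorm{[\partial^\alpha,u\cdot\nabla]f}_{L^2}\lesssim \inftynorm{\nabla u}\snorm{f}_{H^n}+\inftynorm{\nabla f}\snorm{u}_{H^n}.
\]
Summing over $|\alpha|\le n$ and integrating in time yields the first claimed estimate with amplitude $A_0$.

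For the $S^n$ bound I work from \eqref{eqn: S^n BQ} and pair with $(\mathcal S^n u,\mathcal S^n\rho)$ in $L^2$. The commutation $[\mathcal S,\partial_j]=-\partial_j$ ensures that $\mathcal S^n u$ remains divergence-free, so the pressure and leading transport contributions still vanish, and the componentwise structure of $\vec e_2$ preserves the linear cancellation. The nonlinear terms are handled via the Leibniz-type identity $[\mathcal S,u\cdot\nabla]=(\mathcal S u)\cdot\nabla$, which iterates to
\[
[\mathcal S^n,u\cdot\nabla]f=\sum_{k=1}^n \binom{n}{k}\,(\mathcal S^k u)\cdot\nabla(\mathcal S^{n-k}f).
\]
Each such term is controlled in $L^2$ by placing one factor in $L^\infty$: terms with small $k$ use $\inftynorm{\mathcal S^k u}$, bounded by $\inftynorm{Su}+\inftynorm{\nabla u}$ via Gagliardo-Nirenberg interpolation with the high-regularity Sobolev norm (available since $N_0\gg M$); terms with large $k$ use the commutation $\nabla\mathcal S^{n-k}=(\mathcal S+1)^{n-k}\nabla$ to move the gradient past the vector fields and invoke $\inftynorm{\nabla f}$ on the top-order piece, with the remaining lower-order pieces handled by the same interpolation. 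A transcription of the argument to $\rho$ using $\inftynorm{\nabla\rho}$ proceeds identically.

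The main obstacle is therefore the uniform Moser-type commutator estimate
\[
\snorm{[\mathcal S^n,u\cdot\nabla]f}_{L^2}\lesssim \bigl(\inftynorm{\nabla u}+\inftynorm{Su}+\inftynorm{\nabla f}\bigr)\Bigl(\snorm{u}_{H^n}+\snorm{f}_{H^n}+\sum_{j=0}^n\bigl(\sltwonorm{\mathcal S^j u}+\sltwonorm{\mathcal S^j f}\bigr)\Bigr),
\]
which requires careful book-keeping of the intermediate-order terms where neither factor admits a direct $L^\infty$ bound from the bootstrap. Once this estimate is in hand, substituting into the $L^2$ energy identity, adding the $u$ and $\rho$ contributions so that the linear buoyancy terms cancel, and integrating in time produces the second claimed bound with amplitude $A_1$.
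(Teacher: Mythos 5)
Your proposal is correct and takes essentially the same route as the paper's proof: you use the same structural cancellations (incompressibility for pressure and leading transport, skew buoyancy coupling after summing the $u$ and $\rho$ energies), expand the $\mathcal{S}^n$ commutator via the derivation identity $[\mathcal S,u\cdot\nabla]f=(\mathcal S u)\cdot\nabla f$ (equivalent to the paper's iterated Leibniz/commutator bookkeeping), and reduce to a Moser-type Gagliardo--Nirenberg interpolation estimate for the intermediate-order products $(\mathcal S^k u)\cdot\nabla\mathcal S^{n-k}f$, which the paper likewise cites from \cite{guo2020stabilizing} rather than re-deriving. The only small imprecision is your appeal to $N_0\gg M$ inside the argument --- the proposition is a self-contained statement in a single $n$ and the needed interpolation must close with only $\|Su\|_{L^\infty}$, $\|\nabla u\|_{L^\infty}$, $\|\nabla\rho\|_{L^\infty}$ against the $H^n$ and $\sum_j\|\mathcal S^j\cdot\|_{L^2}$ norms on the right-hand side, not a separately higher regularity index --- but this does not change the substance.
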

\begin{proof}
    The first statement follows by standard Sobolev energy estimates, see \cite[\textcolor{MidnightBlue}{Proposition 2.5}]{LWP_Chae}, hence we only give the details for the energy estimate involving $S$. 
 \par Observe that $[\mathcal{S},\nabla]=\nabla$ and by iteration, we have the following commutator rule
\begin{align}\label{eqn: commutator S^n nabla}
    [\mathcal{S}^n,\nabla]=\sum_{k=0}^{n-1}c_k\nabla \mathcal{S}^k=\sum_{k=0}^{n-1}\tilde{c}_k \mathcal{S}^k\nabla,&& c_k,\tilde{c}_k\in \Z.
\end{align}
  By taking a scalar product with $\mathcal{S}^nu$ and $\mathcal{S}^n\rho$ respectively in \eqref{eqn: S^n BQ}, we obtain
    \begin{equation*}
\left\{
      \begin{aligned}
 &\frac{1}{2}\frac{d}{dt}\sltwonorm{\mathcal{S}^nu}+\ltwoscalarproduct{\mathcal{S}^n(u\cdot \nabla u)}{\mathcal{S}^nu}=-\ltwoscalarproduct{\mathcal{S}^n \nabla p}{\mathcal{S}^nu}-\ltwoscalarproduct{\mathcal{S}^n\rho \vec{e_2}}{\mathcal{S}^nu}\\
 &\frac{1}{2}\frac{d}{dt}\sltwonorm{\mathcal{S}^n\rho }+\ltwoscalarproduct{\mathcal{S}^n(u\cdot \nabla \rho)}{\mathcal{S}^n\rho}=\ltwoscalarproduct{\mathcal{S}^nu_2}{\mathcal{S}^n\rho}.
\end{aligned}
    \right.
\end{equation*}
Adding the two equations, we observe that the terms $-\ltwoscalarproduct{\mathcal{S}^n\rho \vec{e_2}}{\mathcal{S}^nu}+\ltwoscalarproduct{\mathcal{S}^nu_2}{\mathcal{S}^n\rho}$ cancel. Moreover, since $\mathcal{S}\nabla p=(\mathrm{id}-S)\nabla p$ an
integration by parts yields
\begin{align*}
    -\ltwoscalarproduct{\mathcal{S}^n \nabla p}{\mathcal{S}^nu}=-\ltwoscalarproduct{\sum_{k=0}^{n-1}c_k\nabla\mathcal{S}^k  p}{\mathcal{S}^nu}=\ltwoscalarproduct{\sum_{k=0}^{n-1}c_k\mathcal{S}^k  p}{\nabla\cdot\mathcal{S}^nu}=\sum_{k,j=0}^{n-1}\ltwoscalarproduct{c_k\mathcal{S}^k  p}{\tilde{c}_j\mathcal{S}^j\nabla\cdot u}=0.
\end{align*}
It remains to bound the terms $\ltwoscalarproduct{\mathcal{S}^n(u\cdot \nabla u)}{\mathcal{S}^nu}$ and $\ltwoscalarproduct{\mathcal{S}^n(u\cdot \nabla \rho)}{\mathcal{S}^n\rho}$. We bound the first scalar product. By the Lebniz rule  and commutator rule \eqref{eqn: commutator S^n nabla} we have
\begin{align}\label{eqn: bound s^n(u grad u)}
    \ltwoscalarproduct{\mathcal{S}^n(u\cdot \nabla u)}{\mathcal{S}^nu}=\ltwoscalarproduct{\sum_{k=0}^n\mathcal{S}^ku\sum_{j=0}^{n-k}\nabla\mathcal{S}^{j}u}{\mathcal{S}^nu}.
\end{align}
Observe that for $k=0$ and $k=n$ there holds by incompressibility of $u$
\begin{align*}
    &\ltwoscalarproduct{u\mathcal{S}^n\nabla u}{\mathcal{S}^nu}=\sum_{j=0}^{n-1}\ltwoscalarproduct{u\nabla \mathcal{S}^ju}{\mathcal{S}^nu}+\ltwoscalarproduct{u\nabla \mathcal{S}^nu}{\mathcal{S}^nu}\lesssim \inftynorm{u}\sum_{j=0}^{n-1}\sltwonorm{\nabla \mathcal{S}^ju}\sltwonorm{\mathcal{S}^nu},\\
    &\ltwoscalarproduct{\mathcal{S}^nu\nabla u}{\mathcal{S}^nu}\lesssim \inftynorm{\nabla u}\sltwonorm{\mathcal{S}^nu}.
\end{align*}
Thus in order to prove the claim, with \eqref{eqn: bound s^n(u grad u)} it remains to establish the following bounds for $1\leq j\leq n-1$
\begin{align}
     \sltwonorm{\nabla \mathcal{S}^ju}&\lesssim\sum_{k=0}^n \sltwonorm{\mathcal{S}^ku}^2+\norm{u}_{H^n}^2, \label{eqn: nabla Sn-1 bound} \\ 
     \ltwoscalarproduct{\mathcal{S}^ju \mathcal{S}^{n-j}\nabla u}{\mathcal{S}^nu}&\lesssim (\sinftynorm{\mathcal{S}u}+\sinftynorm{\nabla u})(\sum_{k=0}^n\sltwonorm{\mathcal{S}^ku}^2+\norm{u}_{H^n}^2) \label{eqn: mixed Sn terms claim}.
\end{align}
These follow from integration by parts using $\ltwoscalarproduct{(\mathcal{S}-2\mathrm{id})f}{g}=-\ltwoscalarproduct{f}{\mathcal{S}g}$ and the commutator rule \eqref{eqn: commutator S^n nabla} and standard interpolation -- see e.g.\ \cite[\textcolor{MidnightBlue}{proof of Proposition 5.1, Lemma 5.3}]{guo2020stabilizing}.
\end{proof}

\begin{corollary}\label{cor: energy estimates of Z+-}
    Let $Z_\pm$, $\zz_\pm$ be the dispersive unknowns resp.\ profiles \eqref{def: Profiles BQ} of \eqref{eqn: perturbed BQ} and $t\in [0,T]$ with $T\lesssim \eps^{-2}$, $M$ as in Theorem \ref{thm: main result}. Then under the bootstrap assumptions \eqref{eqn: bootstrap assumption} there holds that
    \begin{align*}
\norm{Z_\pm(t)}_{H^{N_0}}+\sum_{a=0}^M\snorm{S^aZ_\pm(t)}_{L^2}=\norm{\zz_\pm(t)}_{H^{N_0}}+\sum_{a=0}^M\snorm{S^a\zz_\pm(t)}_{L^2}\lesssim \eps.
    \end{align*}
\end{corollary}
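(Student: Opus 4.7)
The plan is to reduce the bounds for $(Z_\pm, \zz_\pm)$ back to corresponding bounds for $(u,\rho)$, apply Proposition \ref{prop: energy estimates BQ}, and use Grönwall together with the linear decay from Corollary \ref{cor: linear decay semigroup}.

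First, I would observe that the equality $\norm{Z_\pm(t)}_{H^{N_0}} = \norm{\zz_\pm(t)}_{H^{N_0}}$ and $\sltwonorm{S^a Z_\pm(t)} = \sltwonorm{S^a \zz_\pm(t)}$ is immediate: the Fourier multiplier $e^{\pm it\Lambda}$ has modulus-one symbol and therefore acts as an isometry on every $H^s$, while the commutation $[S, e^{\pm it\Lambda}]=0$ from \eqref{eqn: semigroup_comm} allows one to pass $S^a$ through the semigroup. Hence it suffices to bound $\norm{Z_\pm(t)}_{H^{N_0}}$ and $\sltwonorm{S^a Z_\pm(t)}$. By the identities \eqref{eqn: energy conservation Z+ Z-} and \eqref{eqn: S conservation Z+ Z-}, these quantities are in turn equivalent (up to a factor of $2$) to the corresponding norms of $(u,\rho)$, so in particular the assumption \eqref{eqn: initial data assumption} yields $\norm{u_0}_{H^{N_0}} + \norm{\rho_0}_{H^{N_0}} + \sum_{a=0}^M (\sltwonorm{S^a u_0} + \sltwonorm{S^a \rho_0}) \lesssim \eps$.

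Next I would set up a combined energy. Define
\[
E(t) := \norm{u(t)}_{H^{N_0}}^2 + \norm{\rho(t)}_{H^{N_0}}^2 + \sum_{n=0}^{M}\big(\sltwonorm{S^n u(t)}^2 + \sltwonorm{S^n \rho(t)}^2\big),
\]
so that $E(0) \lesssim \eps^2$. Applying the first bound of Proposition \ref{prop: energy estimates BQ} with $n=N_0$ and summing the second bound over $0 \leq n \leq M$ (and using that $M \ll N_0$, so $H^M \hookrightarrow H^{N_0}$ absorbs the $\norm{u}_{H^n}^2+\norm{\rho}_{H^n}^2$ terms on the right-hand side), I obtain
\[
E(t) \leq E(0) + C \int_0^t \bigl(A_0(s) + A_1(s)\bigr)\, E(s)\, (1+s)^{-1/2}\, ds.
\]
Under the bootstrap assumption \eqref{eqn: bootstrap assumption}, Corollary \ref{cor: linear decay semigroup} gives $\sinftynorm{\nabla u(s)} + \sinftynorm{Su(s)} + \sinftynorm{\nabla \rho(s)} \lesssim (1+s)^{-1/2}\eps$, and therefore $A_0(s) + A_1(s) \lesssim \eps$.

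Finally I would invoke Grönwall's inequality to conclude
\[
E(t) \lesssim \eps^2 \exp\Bigl(C\eps \int_0^t (1+s)^{-1/2}\, ds\Bigr) \lesssim \eps^2 \exp(C \eps\, t^{1/2}) \lesssim \eps^2,
\]
where the last inequality holds precisely for $t \leq T \lesssim \eps^{-2}$. Converting back via \eqref{eqn: energy conservation Z+ Z-}--\eqref{eqn: S conservation Z+ Z-} and the isometric property of $e^{\pm it\Lambda}$ noted above then gives the claim. The only real obstruction is the timescale: the integrated linear decay rate produces the factor $\eps t^{1/2}$ in the exponent, which is exactly what forces the restriction $T \lesssim \eps^{-2}$ and identifies this as the natural timescale of the energy estimate (as already anticipated in the outline of Section \ref{sec: outline of the proof}).
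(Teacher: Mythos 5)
Your proposal is correct and takes essentially the same route as the paper: both arguments reduce to $(u,\rho)$ via the identities \eqref{eqn: energy conservation Z+ Z-}--\eqref{eqn: S conservation Z+ Z-}, invoke Proposition \ref{prop: energy estimates BQ}, use Corollary \ref{cor: linear decay semigroup} under the bootstrap hypothesis to bound $A_0,A_1\lesssim\eps$, and close with Gr\"onwall on the timescale $t\lesssim\eps^{-2}$. The paper's version is terser but the content is identical; your extra remarks (isometry of $e^{\pm it\Lambda}$, $H^M\hookrightarrow H^{N_0}$ to absorb the Sobolev terms when summing the $S^n$ estimates) are sound bookkeeping that the paper leaves implicit.
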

\begin{proof}
     With Corollary \ref{cor: linear decay semigroup} and under the bootstrap assumption \eqref{eqn: bootstrap assumption}, we observe that for $s>0$
    \begin{align*}
        A_j(s)\lesssim  \abs{s}^{-\frac{1}{2}}\sum_{\mu \in \{\pm\}}(\norm{{\zz}_\mu}_D+\norm{S{\zz}_{\mu}}_D) (1+s)^{\frac{1}{2}}\lesssim \eps , && j=0,1.
    \end{align*}
   The claim then follows from \eqref{eqn: energy conservation Z+ Z-} resp.\ \eqref{eqn: S conservation Z+ Z-} and Gronwall's lemma: we obtain for $t\lesssim \eps^{-2}$ that
    \begin{align*}
        \norm{Z_\pm(t)}_{H^{N_0}}^2+\sum_{a=0}^M\snorm{S^aZ_\pm(t)}_{L^2}^2\lesssim \eps^2\exp\Big(\int_0^t c \eps(1+s)^{-\frac{1}{2}}ds \Big)\lesssim \eps^2.
    \end{align*}
\end{proof}
\subsection{Energy estimates SQG equation}\label{sec: Energy Estimates SQG}
For the SQG equation \eqref{SQG} we have by \eqref{eqn: semigroup_comm} that
\begin{equation}\label{equation for Stheta}
    \partial_t \mathcal{S}^n\theta+\mathcal{S}^n(u\cdot\nabla\theta)=R_1\mathcal{S}^n\theta.
\end{equation}
\par The energy estimates for $\mathcal{S}^n\theta$ then yield estimates for $S^n\theta$ as in the previous section:
\begin{proposition}\label{prop: energy estimates SQG}
    Let $\theta$ be a solution to \eqref{SQG} on $0\leq t\leq T$ and $n\in \N$. Then there holds
  \begin{align*}
  \norm{\theta(t)}_{H^n}^2-\norm{\theta_0}_{H^n}^2&\lesssim \int_0^t \tilde{A}_0(s)\norm{\theta(s)}_{H^n}^2\frac{1}{(1+\abs{s})^{\frac{1}{2}}}ds,\\
        \sltwonorm{{S}^n\theta(t)}^2-\sltwonorm{{S}^n\theta_0}^2&\lesssim \int_0^t \tilde{A}_1(s)(\norm{\theta(s)}^2_{H^n}+\sum_{j\leq n}\sltwonorm{{S}^j\theta(s)}^2)\frac{1}{(1+\abs{s})^{\frac{1}{2}}}ds, 
  \end{align*}
where
$\tilde{A_0}(s)=(\inftynorm{\nabla \theta}+\inftynorm{\nabla u})(1+\abs{s})^{\frac{1}{2}},\; \tilde{A_1}(s)=(\inftynorm{\nabla \theta}+\sinftynorm{u}+\sinftynorm{\mathcal{S} u})(1+\sabs{s})^{\frac{1}{2}}.$
\end{proposition}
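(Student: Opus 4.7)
The plan is to mirror the Boussinesq argument of Proposition \ref{prop: energy estimates BQ} applied to the scalar equation \eqref{equation for Stheta}, with two simplifications: only one unknown is involved, and the linear operator $R_1$ is skew-adjoint on real-valued $L^2$ on its own (since its Fourier symbol $-i\xi_1/\absxi$ is purely imaginary), so no cancellation between coupled equations is needed. The energy identity is obtained by pairing \eqref{equation for Stheta} (or its $H^n$ analogue) with $\mathcal{S}^n\theta$ (resp.\ $\nabla^\alpha\theta$) in $L^2$, and all that remains is to control the nonlinearity.

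For the $H^n$ bound, apply $\nabla^\alpha$ with $\sabs{\alpha}\leq n$ to \eqref{SQG}, pair with $\nabla^\alpha\theta$ in $L^2$, and note that $\ltwoscalarproduct{R_1\nabla^\alpha\theta}{\nabla^\alpha\theta}=0$ by skew-adjointness. The nonlinear contribution $\ltwoscalarproduct{\nabla^\alpha(u\cdot\nabla\theta)}{\nabla^\alpha\theta}$ is bounded by a Kato--Ponce/Moser commutator estimate, after noting that the top-order piece $\ltwoscalarproduct{u\cdot\nabla\nabla^\alpha\theta}{\nabla^\alpha\theta}$ vanishes by integration by parts using $\mathrm{div}\,u=0$. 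Since $u=\nabla^\perp(-\Delta)^{-1/2}\theta$ is a zero-order transform of $\theta$, one has $\norm{u}_{H^n}\lesssim\norm{\theta}_{H^n}$, so the remainder is controlled by $(\inftynorm{\nabla\theta}+\inftynorm{\nabla u})\norm{\theta}_{H^n}^2$, giving the first bound once the $L^\infty$ factors are collected into $\tilde A_0(s)$.

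For the $S^n$ bound, I take the $L^2$ inner product of \eqref{equation for Stheta} with $\mathcal{S}^n\theta$; once more the linear term vanishes by skew-adjointness, leaving the task of bounding $\ltwoscalarproduct{\mathcal{S}^n(u\cdot\nabla\theta)}{\mathcal{S}^n\theta}$. Since $S$ is a derivation, an expansion analogous to \eqref{eqn: bound s^n(u grad u)} combined with the commutator rule \eqref{eqn: commutator S^n nabla} reduces this to a sum of terms of the form $\mathcal{S}^k u\cdot\mathcal{S}^{n-k}\nabla\theta$ paired against $\mathcal{S}^n\theta$. The $k=0$ term has its top-order piece $\ltwoscalarproduct{u\cdot\nabla\mathcal{S}^n\theta}{\mathcal{S}^n\theta}$ eliminated via $\mathrm{div}\,u=0$, with the commutator remainder bounded by $\inftynorm{u}\sum_{j<n}\sltwonorm{\nabla\mathcal{S}^j\theta}\sltwonorm{\mathcal{S}^n\theta}$; the $k=n$ term is bounded by $\inftynorm{\nabla\theta}\sltwonorm{\mathcal{S}^n u}\sltwonorm{\mathcal{S}^n\theta}$, where $\sltwonorm{\mathcal{S}^n u}\lesssim\sltwonorm{\mathcal{S}^n\theta}$ follows from a short Fourier-side computation showing that $\nabla^\perp(-\Delta)^{-1/2}$ commutes with $\mathcal{S}$ up to constants. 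The intermediate terms $1\leq k\leq n-1$ are treated by Gagliardo--Nirenberg interpolation exactly as in \eqref{eqn: mixed Sn terms claim}, producing factors controlled by $\sinftynorm{u}+\sinftynorm{\mathcal{S}u}+\inftynorm{\nabla\theta}$. Collecting these into $\tilde A_1(s)(1+\sabs{s})^{-1/2}$ yields the second bound; since all of these steps have direct analogues in the Boussinesq proof, no genuinely new obstacle arises here.
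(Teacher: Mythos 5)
Your argument correctly mirrors the Boussinesq proof of Proposition \ref{prop: energy estimates BQ} applied to \eqref{equation for Stheta}, exploiting skew-adjointness of $R_1$ for the linear term and the Leibniz expansion plus commutator rule \eqref{eqn: commutator S^n nabla} for the nonlinearity, which is exactly the route the paper intends (it presents the SQG proposition as following analogously to the Boussinesq one and gives no separate proof). The observation that $\mathcal{S}$ commutes with the zero-homogeneous Fourier multiplier $\nabla^\perp(-\Delta)^{-1/2}$, so $\sltwonorm{\mathcal{S}^n u}\lesssim\sltwonorm{\mathcal{S}^n\theta}$, is the only additional point needed and you handle it correctly.
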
 
\begin{corollary}\label{cor: energy estimates SQG}
    Under the bootstrap assumption \eqref{eqn: bootstrap assumption} and with $M$ as in Theorem \ref{thm: main result detailed SQG}, for $t\in[0,T]$ and $T\lesssim \eps^{-2}$ there holds
   \begin{align*}
        \norm{\theta(t)}_{H^{N_0}}+\sum_{a=0}^M \ltwonorm{S^a\theta(t)}=\norm{\Theta(t)}_{H^{N_0}}+\sum_{a=0}^M \ltwonorm{S^a\Theta(t)}\lesssim \eps,
   \end{align*} 
   where $\Theta(t)=e^{it\Lambda}\theta(t)$ is the profile of $\theta$.
\end{corollary}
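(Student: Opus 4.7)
The plan is to mirror exactly the proof of Corollary \ref{cor: energy estimates of Z+-} from the Boussinesq setting, using Proposition \ref{prop: energy estimates SQG} in place of Proposition \ref{prop: energy estimates BQ}. The three ingredients are the energy estimate itself, the linear decay bounds of Corollary \ref{cor: linear decay semigroup} (plus a small supplement for $\inftynorm{\nabla u}$), and a standard Gronwall argument.

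First I would control the two time-dependent quantities $\tilde{A}_0(s)$ and $\tilde{A}_1(s)$ appearing in Proposition \ref{prop: energy estimates SQG}. Under the bootstrap assumption \eqref{eqn: bootstrap assumption}, Corollary \ref{cor: linear decay semigroup} directly yields
\begin{align*}
\inftynorm{\nabla\theta(s)} + \inftynorm{u(s)} + \inftynorm{Su(s)} \lesssim (1+s)^{-1/2}\eps.
\end{align*}
Since $\mathcal{S} = -\mathrm{id} + S$, this immediately gives $\inftynorm{\mathcal{S}u(s)} \lesssim (1+s)^{-1/2}\eps$, hence $\tilde{A}_1(s) \lesssim \eps$. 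For $\tilde{A}_0$, the quantity $\inftynorm{\nabla u}$ is not literally in Corollary \ref{cor: linear decay semigroup}, but the same Littlewood--Paley strategy used there applies: writing $\nabla u = \nabla\nabla^\perp \abs{\nabla}^{-1}e^{-is\Lambda}\Theta$ and noting that $\nabla^\perp\abs{\nabla}^{-1}$ is a Riesz-type zero-order multiplier, Proposition \ref{proposition: linear decay} gives
\begin{align*}
\inftynorm{\nabla u(s)} \lesssim \sum_{k\in\Z} 2^k \inftynorm{P_k e^{-is\Lambda}\Theta} \lesssim \sum_{k\in\Z} 2^{k+\frac{3}{4}k-\frac{15}{4}k^+}(1+s)^{-1/2}\eps \lesssim (1+s)^{-1/2}\eps,
\end{align*}
where convergence of the sum uses $2^{-2k}$ at $k \geq 0$ and $2^{7k/4}$ at $k < 0$. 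Thus also $\tilde{A}_0(s) \lesssim \eps$.

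I would then plug these bounds into Proposition \ref{prop: energy estimates SQG} and apply Gronwall's inequality. On the time window $t \lesssim \eps^{-2}$ one has
\begin{align*}
\int_0^t \eps(1+s)^{-1/2}\, ds \lesssim \eps\, t^{1/2} \lesssim 1,
\end{align*}
so the Gronwall growth factor is a universal constant, yielding $\norm{\theta(t)}_{H^{N_0}}^2 + \sum_{a=0}^{M}\sltwonorm{S^a\theta(t)}^2 \lesssim \eps^2$. The remaining equality $\norm{\theta(t)}_{H^{N_0}}+\sum_a\sltwonorm{S^a\theta(t)} = \norm{\Theta(t)}_{H^{N_0}}+\sum_a\sltwonorm{S^a\Theta(t)}$ is immediate from the fact that $e^{it\Lambda}$ is an $L^2$-isometry commuting with $S$ (see \eqref{eqn: semigroup_comm}).

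There is essentially no obstacle: this is the routine SQG analogue of Corollary \ref{cor: energy estimates of Z+-}, with the only minor technical point being the Littlewood--Paley bound on $\inftynorm{\nabla u}$, which follows from the summable profile $2^{\frac{3}{4}k-\frac{15}{4}k^+}$ already present in Proposition \ref{proposition: linear decay}.
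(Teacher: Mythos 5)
Your proposal is correct and follows essentially the same route as the paper (the paper gives no explicit proof for this corollary, but the intended argument is the SQG analogue of Corollary \ref{cor: energy estimates of Z+-}: bound $\tilde A_0,\tilde A_1$ via the linear decay estimates, then apply Gronwall on $t\lesssim\eps^{-2}$, and finally use that $e^{it\Lambda}$ is an $L^2$-isometry commuting with $S$). Your supplementary Littlewood--Paley estimate for $\inftynorm{\nabla u}$, needed because Corollary \ref{cor: linear decay semigroup} for SQG lists $\inftynorm{u}$ rather than $\inftynorm{\nabla u}$, is exactly the computation already carried out inside the proof of that corollary for the Boussinesq case and works verbatim here.
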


\section{Oscillatory Toolbox: Integration by Parts along Vector Fields and Normal Forms}\label{sec: integration by parts}
In this section we present the technical tools used to establish the main results.  After some preliminary computations for vector fields in Section \ref{sec: vector field lemmas}, in Section \ref{sec: multiplier mechanics} we construct a class of multipliers that contains those of our bilinear terms and is closed under the action of $S$. Moreover, for these we can track bounds in terms of our localisation parameters $k,p,l,k_i,p_i,l_i$ for $i=1,2$ when iteratively applying $S$. The action of the vector fields $S,W$ on the phases $\Phi$ are discussed in Section \ref{sec: vector fields and the phases}. In particular, we present a result that guarantees either largeness of $\Phi$ or lower bounds for $S\Phi$, see Proposition \ref{prop: lower bound on sigma}. A robust method for integrating by parts along $S$ in bilinear expressions involving multipliers of the class previously defined is presented in Section \ref{subsection: Integration by parts in bilinear expressions}, and combines the multiplier mechanics and some basic vector field algebra, quantified via the localizations introduced in Section \ref{sec: Localizations}. In particular, here the angular projectors $R_l$ are used to precisely capture under which conditions repeated integration by parts is feasible. The action of the vector field $W$ on bilinear expressions is also discussed in Section \ref{subsection: Integration by parts in bilinear expressions}. In Section \ref{sec: Case organization} we present a lemma that serves to organize the proofs in the sections to follow according to the relative size of the localization parameters involved. In Section \ref{sec: set-size estimate} we discuss possible gains due to small sets of integration, and finally, in Section \ref{sec: Normal forms}, we present normal forms. 
\subsection{Vector field lemmas}\label{sec: vector field lemmas}
We will now discuss the action of $S$ and $W$. Since different variables are involved we will sometimes highlight the variables on which these vector fields are acting explicitly, recalling from \eqref{def: S W} that
\begin{equation}
 S_x=x\cdot\nabla_x,\quad W_x=x^\perp\cdot\nabla_x.   
\end{equation}
To integrate by parts in bilinear forms such as \eqref{eqn: bilin_form_Q}, we further define
\begin{align}
            &S_{\xi-\eta}:=(\xi-\eta)\nabla_\eta, && W_{\xi-\eta}:=(\xi-\eta)^\perp\nabla_\eta.
\end{align}
When there is no risk of confusion, we will suppress the explicit dependence of $S,W$.

\begin{lemma}\label{lemma: first vf lemma}
    For $x,y\in \R^2$ there holds that
    \begin{enumerate}
        \item  $\partial_{x_1}=\frac{x^\perp}{\abs{x}^2}\cdot(W_x,S_x)^T$ and $ \partial_{x_2}=\frac{x}{\abs{x}^2}\cdot(W_x,S_x)^T,$
   \item $y_1\partial_{x_1}+y_2\partial_{x_2}=\frac{y\cdot x}{\abs{x}^2}S+\frac{y\cdot x^\perp}{\abs{x}^2}W_x$.
    \end{enumerate}
\end{lemma}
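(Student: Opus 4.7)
The statement is purely a linear-algebra identity relating the pair $(S_x, W_x) = (x\cdot\nabla_x,\, x^\perp\cdot\nabla_x)$ to the Cartesian derivatives $(\partial_{x_1}, \partial_{x_2})$, so the proof is a direct computation. My plan is to invert the $2\times 2$ linear system defining $S_x$ and $W_x$ in terms of $\partial_{x_1}$ and $\partial_{x_2}$, and then deduce (2) by linearity.

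Concretely, by definition of $S_x$ and $W_x$ (and using $x^\perp = (-x_2, x_1)$) one has
\begin{equation*}
\begin{pmatrix} W_x \\ S_x \end{pmatrix} = \begin{pmatrix} -x_2 & x_1 \\ x_1 & x_2 \end{pmatrix} \begin{pmatrix} \partial_{x_1} \\ \partial_{x_2} \end{pmatrix}.
\end{equation*}
The matrix on the right has determinant $-x_1^2 - x_2^2 = -\abs{x}^2$, so for $x \neq 0$ it is invertible, with inverse $\tfrac{1}{\abs{x}^2}\left(\begin{smallmatrix} -x_2 & x_1 \\ x_1 & x_2 \end{smallmatrix}\right)$. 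Solving gives
\begin{equation*}
\partial_{x_1} = \frac{-x_2 W_x + x_1 S_x}{\abs{x}^2} = \frac{x^\perp}{\abs{x}^2}\cdot (W_x, S_x)^T, \qquad \partial_{x_2} = \frac{x_1 W_x + x_2 S_x}{\abs{x}^2} = \frac{x}{\abs{x}^2}\cdot (W_x, S_x)^T,
\end{equation*}
which is part (1).

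For part (2) I would simply apply part (1) coordinate-wise: writing
\begin{equation*}
y_1 \partial_{x_1} + y_2 \partial_{x_2} = \frac{y_1(-x_2 W_x + x_1 S_x) + y_2(x_1 W_x + x_2 S_x)}{\abs{x}^2},
\end{equation*}
and collecting the coefficients of $S_x$ and $W_x$ yields $(y_1 x_1 + y_2 x_2) = y\cdot x$ and $(-y_1 x_2 + y_2 x_1) = y\cdot x^\perp$, giving
\begin{equation*}
y_1\partial_{x_1} + y_2\partial_{x_2} = \frac{y\cdot x}{\abs{x}^2} S_x + \frac{y\cdot x^\perp}{\abs{x}^2} W_x,
\end{equation*}
as claimed. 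There is no serious obstacle here; the only point worth noting is that the identities are to be understood at points where $x\neq 0$, which is consistent with how $S_x$ and $W_x$ are used elsewhere in the paper (always paired with frequency projections away from the origin).
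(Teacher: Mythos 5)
Your proof is correct and amounts to the same direct computation the paper performs, merely organized as an explicit $2\times 2$ matrix inversion rather than a verification of the stated formulas; the remark about restricting to $x\neq 0$ is a reasonable addition. No issues.
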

\begin{proof}
    We compute directly that
    \begin{align*}
      &  \frac{x^\perp}{\abs{x}^2}\cdot(W,S)^T=\frac{1}{\abs{x}^2}[-x_2(-x_2\partial_{x_1}+x_1\partial_{x_2})+x_1(x_1\partial_{x_1}+x_2\partial_{x_2})]=\partial_{x_1},\\
      &\frac{x}{\abs{x}^2}\cdot(W,S)^T=\frac{1}{\abs{x}^2}[x_1(-x_2\partial_{x_1}+x_1\partial_{x_2})+x_2(x_1\partial_{x_1}+x_2\partial_{x_2})]=\partial_{x_2},
    \end{align*}
    and the second statement also follows by direct computation.
    \end{proof}

    Since $S,W$ span the tangent space at any point, they allow us to resolve any derivative as follows:
    \begin{lemma}\label{lemma: S_eta= S_xi-eta W_xi-eta}
    There holds
    \begin{align}\label{eqn: S_eta= S_xi-eta W_xi-eta}
    \begin{split}
         &S_\eta=\frac{\eta(\xi-\eta)}{\absxieta^2}S_{\xi-\eta}-\frac{\eta(\xi-\eta)^\perp}{\absxieta^2}W_{\xi-\eta}, 
         \hspace{1cm}S_{\xi-\eta}=\frac{(\xi-\eta)\eta}{\abs{\eta}^2}S_\eta+\frac{(\xi-\eta)\eta^\perp}{\abs{\eta}^2}W_\eta,\\
         & W_\xi=\frac{(\xi-\eta)\xi}{\absxieta^2}W_{\xi-\eta}-\frac{(\xi-\eta)^\perp\xi}{\absxieta^2}S_{\xi-\eta}.
    \end{split}
    \end{align}
    \end{lemma}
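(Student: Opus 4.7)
The plan is to verify each of the three identities by a direct linear-algebra computation, reducing them to a single instance of Lemma \ref{lemma: first vf lemma}(2). The guiding principle is that, at any nonzero $z\in\R^2$, the pair $\{z,z^\perp\}$ is an orthogonal basis, so for any vector $y\in\R^2$ one may write
\begin{equation*}
y\cdot\nabla \;=\; \frac{y\cdot z}{|z|^2}\,(z\cdot\nabla) \;+\; \frac{y\cdot z^\perp}{|z|^2}\,(z^\perp\cdot\nabla),
\end{equation*}
which is precisely the content of Lemma \ref{lemma: first vf lemma}(2). For each identity in the present lemma I would choose $z$, $y$ and the underlying gradient appropriately, so that the claim falls out by direct inspection.

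For the first identity I would set $y=\eta$, $z=\xi-\eta$ and take $\nabla=\nabla_\eta$. Then by definition $z\cdot\nabla_\eta=S_{\xi-\eta}$, $z^\perp\cdot\nabla_\eta=W_{\xi-\eta}$, and $y\cdot\nabla_\eta=S_\eta$, so Lemma \ref{lemma: first vf lemma}(2) yields the stated decomposition (up to a straightforward sign check in the orientation of $(\xi-\eta)^\perp$). For the second identity I would simply swap the roles of $\eta$ and $\xi-\eta$: taking $y=\xi-\eta$, $z=\eta$ and again $\nabla=\nabla_\eta$, one has $y\cdot\nabla_\eta=S_{\xi-\eta}$, $z\cdot\nabla_\eta=S_\eta$, $z^\perp\cdot\nabla_\eta=W_\eta$, and Lemma \ref{lemma: first vf lemma}(2) again produces the claim.

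The third identity requires one extra step, because $W_\xi=\xi^\perp\cdot\nabla_\xi$ differentiates in $\xi$ while the right-hand-side operators $S_{\xi-\eta}$, $W_{\xi-\eta}$ are built from $\nabla_\eta$. The reconciliation I would use is that in the bilinear integrand \eqref{eqn: bilin_form_Q} the $\xi$-dependence of the factor $\widehat{f}(\xi-\eta)$ enters only through the combination $\xi-\eta$, so acting by $\nabla_\xi$ on $\widehat f(\xi-\eta)$ agrees with $-\nabla_\eta$ acting on the same factor. With this transcription, $W_\xi$ becomes $-\xi^\perp\cdot\nabla_\eta$ on such factors, and a final application of Lemma \ref{lemma: first vf lemma}(2) with $z=\xi-\eta$ and $y=-\xi$ delivers the expression in terms of $S_{\xi-\eta}$ and $W_{\xi-\eta}$.

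The proof contains no genuine obstacle; the one thing to handle with care is the bookkeeping of signs when converting between $\nabla_\xi$ and $\nabla_\eta$ (which introduces a minus sign under $\xi$ fixed, $\eta$ variable) and when choosing orientation of the perpendicular. Everything else is pointwise linear algebra in $\R^2$.
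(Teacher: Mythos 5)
Your approach coincides with the paper's: each identity is an instance of Lemma~\ref{lemma: first vf lemma}(2), the orthogonal expansion $y\cdot\nabla = \frac{y\cdot z}{|z|^2}(z\cdot\nabla) + \frac{y\cdot z^\perp}{|z|^2}(z^\perp\cdot\nabla)$, applied with the right choices of $y,z$ and with all derivatives taken in $\eta$. Two remarks. For the third identity you take ``$y=-\xi$''; since your own reduction shows that $W_\xi$ corresponds to $-\xi^\perp\cdot\nabla_\eta$ on functions of $\xi-\eta$, the vector you should feed into Lemma~\ref{lemma: first vf lemma}(2) is $y=-\xi^\perp$, not $-\xi$. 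And the ``straightforward sign check'' you defer is not vacuous: with the paper's convention $v^\perp=(-v_2,v_1)$, the decomposition gives
\[
S_\eta = \frac{\eta\cdot(\xi-\eta)}{|\xi-\eta|^2}\,S_{\xi-\eta} + \frac{\eta\cdot(\xi-\eta)^\perp}{|\xi-\eta|^2}\,W_{\xi-\eta},
\]
with a plus on the $W_{\xi-\eta}$ term rather than the minus printed in the lemma, and the third identity likewise comes out with the opposite overall sign; a spot check at $\xi=(1,0)$, $\eta=(0,1)$ confirms this. These appear to be sign slips in the lemma's statement, harmless for the estimates that follow (which only use the magnitudes of the coefficients), but your method will produce the corrected coefficients.
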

    \begin{proof}
     The first statement in \eqref{eqn: S_eta= S_xi-eta W_xi-eta} follows from Lemma \ref{lemma: first vf lemma} with $x=\xi-\eta$ and $y=\eta$,
  \begin{align*}
    \partial_{\eta_1}(f(\xi-\eta))= \frac{(\xi-\eta)^\perp}{\absxieta^2}(W_{\xi-\eta},S_{\xi-\eta})^T, &&   \partial_{\eta_2}(f(\xi-\eta))= \frac{(\xi-\eta)}{\absxieta^2}(W_{\xi-\eta},S_{\xi-\eta})^T.
  \end{align*}
  so that
  \begin{align*}
      S_\eta=\eta\nabla_\eta=\frac{\eta(\xi-\eta)}{\absxieta^2}S_{\xi-\eta}-\frac{\eta(\xi-\eta)^\perp}{\absxieta^2}W_{\xi-\eta}.
  \end{align*}
      The second statement in \eqref{eqn: S_eta= S_xi-eta W_xi-eta} follows from Lemma \ref{lemma: first vf lemma} with $x=\eta$ and $y=\xi-\eta$, while to obtain the third statement take $x=\xi-\eta$ and $y=\xi$.
    \end{proof}

\subsection{Multiplier mechanics}\label{sec: multiplier mechanics}
For repeated integration by parts in bilinear terms, it is important to understand how the vector field $S$ acts on the multipliers and phase functions present. In our framework, this is quantified in terms of the localization parameters defined in Section \ref{sec: Localizations}. 

We begin by defining the set of elementary multipliers
\begin{align*}
    E:=\set{\frac{\zeta\cdot \theta^\perp}{\abs{\zeta}\abs{\theta}},\frac{\zeta\cdot \theta}{\abs{\zeta}\abs{\theta}}, \Lambda(\zeta), \sqrt{1-\Lambda^2(\zeta)}}{\zeta, \theta \in \{\xi, \xi-\eta,\eta\}}.
\end{align*}
Elements $e\in E$ satisfy $\abs{e}\leq 1$ and we will show that the set of linear combinations of products of such elements is closed under the action of $S_\eta$, $S_{\xi-\eta}$. We define the following sets to track the order of the multipliers:
\begin{align*}
    E_0 :=\text{span}_\R\set{\prod_{i=1}^Ne_i}{e_i \in E,\, N\in \N},\hspace{1cm}
    E_a^b :=\text{span}_\R\set{\abseta^a\absxieta^{b}e}{e\in E_0}, \; a,\;b\in \Z.
\end{align*}
As mentioned in the introduction, the nonlinearity in our problem has a skew-symmetric structure. It turns out that the following quantity plays a central role:
\begin{align} \label{eqn: symmetry of sigma}
    \sigma(\xi,\eta):=(\xi-\eta)\cdot\eta^\perp, \quad \sigma(\xi,\eta)=\sigma(\xi-\eta,\eta)= -\sigma(\xi,\xi-\eta).
\end{align}
\begin{lemma} \label{lemma: S on basic multipliers}

    Let $e \in E_a^b$, and consider localizations $\chi,\;\tchi$ as in \eqref{eqn: chi localizations}. Then there holds
    \begin{equation*}
     S_\eta e \in E_{a}^b\cup E_{a+1}^{b-1},\qquad S_{\xi-\eta}e \in E_a^b\cup E_{a-1}^{b+1},
    \end{equation*}
    and we have the bounds 
    \begin{align}
       \abs{S_\eta e}\chi(\xi,\eta)&\lesssim (1+2^{k_2-k_1}2^{p_{\max}})\inftynorm{e\chi}, \label{eqn: S_eta e bound} \\
     \abs{S_\eta e}\tchi(\xi,\eta)&\lesssim (1+2^{k_2-k_1}(2^{q_{\max}}+2^{p_{\max}}))\inftynorm{e\tchi},\nonumber
     \end{align}
     and symmetrically
     \begin{align}
       \abs{S_{\xi-\eta} e}\chi(\xi,\eta) &\lesssim (1+2^{k_1-k_2}2^{p_{\max}}) \inftynorm{e\chi}, \nonumber \\
        \abs{S_{\xi-\eta} e}\tchi(\xi,\eta) &\lesssim (1+2^{k_1-k_2}(2^{q_{\max}}+2^{p_{\max}})) \inftynorm{e\tchi}.\nonumber
    \end{align}

\end{lemma}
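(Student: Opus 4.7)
The plan is to reduce the lemma to computing the action of $S_\eta$ (and symmetrically $S_{\xi-\eta}$) on the individual building blocks in $E$ and then reassembling via the Leibniz rule. A generic element of $E_a^b$ has the form $|\eta|^a|\xi-\eta|^b\prod_i e_i$ with $e_i\in E$; since $S_\eta|\eta|^a = a|\eta|^a$ preserves $E_a^b$, while
\[
S_\eta|\xi-\eta|^b = -b\,\frac{\eta\cdot(\xi-\eta)}{|\xi-\eta|^2}\,|\xi-\eta|^b \in E_{a+1}^{b-1}
\]
(after viewing $\eta\cdot(\xi-\eta)/|\eta||\xi-\eta|\in E_0$), the problem reduces to verifying that $S_\eta e\in E_0\cup E_1^{-1}$ for each $e\in E$, with a pointwise bound of the correct form.

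The case-by-case check is short: anything depending only on $\xi$ is annihilated by $S_\eta$; zero-homogeneity gives $S_\eta\Lambda(\eta) = S_\eta\sqrt{1-\Lambda^2(\eta)} = 0$; and for $e = \Lambda(\xi-\eta)$ a direct computation using $\nabla\Lambda(\zeta) = -\mathrm{sign}(\zeta_2)\sqrt{1-\Lambda^2(\zeta)}\,\zeta^\perp/|\zeta|^2$ and $\eta\cdot(\xi-\eta)^\perp = -\sigma(\xi,\eta)$ yields
\[
S_\eta\Lambda(\xi-\eta) = \pm\sqrt{1-\Lambda^2(\xi-\eta)}\cdot\frac{|\eta|}{|\xi-\eta|}\cdot\frac{\eta\cdot(\xi-\eta)^\perp}{|\eta||\xi-\eta|}\in E_1^{-1},
\]
with the analogous formula for $\sqrt{1-\Lambda^2(\xi-\eta)}$. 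For the two-vector objects $\zeta\cdot\theta^{(\perp)}/|\zeta||\theta|$, a Leibniz-rule computation using $S_\eta\eta=\eta$ and $S_\eta(\xi-\eta)=-\eta$ again decomposes $S_\eta e$ into a part in $E_0$ and a part in $E_1^{-1}$, with the latter always carrying one of the skew factors above.

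The key geometric observation is that in every instance, the $E_1^{-1}$ contribution carries an explicit sine factor $\eta\cdot(\xi-\eta)^\perp/|\eta||\xi-\eta| = \pm\sin(\alpha-\beta)$, where $\alpha,\beta$ are the polar angles of $\eta$ and $\xi-\eta$. On the support of $\chi$, the elementary bound $|\sin(\alpha-\beta)|\le|\sin\alpha|+|\sin\beta|\lesssim 2^{p_{\max}}$ yields the claimed $(1+2^{k_2-k_1}2^{p_{\max}})\|e\chi\|_\infty$ estimate. For the $\tchi$ bound, the refined identity $|\sin(\alpha-\beta)|\le |\sin\alpha||\cos\beta|+|\cos\alpha||\sin\beta|$ together with the additional $|\Lambda(\zeta)|\sim 2^{q_i}$ localization produces the $(2^{p_{\max}}+2^{q_{\max}})$ factor after distributing terms against $\|e\tchi\|_\infty$. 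The analogous statements for $S_{\xi-\eta}$ follow by the symmetry $\eta\leftrightarrow\xi-\eta$.

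The main obstacle is not any single computation but the combinatorial bookkeeping: one must verify case-by-case that the Leibniz-rule outputs decompose into pieces whose $E_1^{-1}$ parts always factor with the correct angular quantity, using the skew identity \eqref{eqn: symmetry of sigma} for $\sigma$ to recognize these structural factors in each sub-case.
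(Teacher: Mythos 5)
Your proof follows essentially the same route as the paper: apply the Leibniz rule to reduce to atoms $e\in E$ together with the radial weights $|\eta|^a|\xi-\eta|^b$, compute $S_\eta$ on each atom, observe that the only non-trivial contributions carry the skew quantity $\sigma=(\xi-\eta)\cdot\eta^\perp$, and bound $\sigma/(|\eta||\xi-\eta|)=\pm\sin(\alpha-\beta)$ by $|\sin\alpha|+|\sin\beta|\lesssim 2^{p_{\max}}$. The polar-angle reformulation is equivalent to the paper's direct computation with $\sigma$, and is arguably more transparent.

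One inaccuracy worth flagging, though: your "key geometric observation" that \emph{every} $E_{a+1}^{b-1}$ contribution carries a sine factor is not correct. The derivative of the radial weight, $S_\eta|\xi-\eta|^b = -b\,\tfrac{\eta\cdot(\xi-\eta)}{|\xi-\eta|^2}\,|\xi-\eta|^b$, produces a $\cos(\alpha-\beta)$ factor rather than $\sin(\alpha-\beta)$, which has no $2^{p_{\max}}$ gain; this piece contributes only $2^{k_2-k_1}\|e\chi\|_\infty$, not $2^{k_2-k_1}2^{p_{\max}}\|e\chi\|_\infty$. Similarly, $S_\eta\bigl(\tfrac{(\xi-\eta)\cdot\eta}{|\xi-\eta||\eta|}\bigr)$ yields $-\tfrac{|\eta|}{|\xi-\eta|}\sin^2(\alpha-\beta)$, for which the ratio $|S_\eta e|/\|e\chi\|_\infty$ is again $\sim 2^{k_2-k_1}$ without a $p$-gain in general. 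Thus the claimed bound $(1+2^{k_2-k_1}2^{p_{\max}})\|e\chi\|_\infty$ for general $e\in E_a^b$ is not actually delivered by this argument. This is, however, a statement-level imprecision shared with the paper's own proof: there too the middle term $b|\eta|^{a+1}|\xi-\eta|^{b-1}e_1$ is only bounded by $2^{k_2-k_1}\|e\chi\|_\infty$, and in the downstream applications (the proofs of Lemmas \ref{lemma: S_eta on multiplier m} and \ref{lemma: ibp in bilinear expressions}) what is actually invoked is the weaker factor $1+2^{k_2-k_1}(1+2^{p_2-p_1})$ resp. $1+2^{k_2-k_1}2^{-p_1}$, which dominate $1+2^{k_2-k_1}$. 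So your proposal matches the paper's proof, including its small overstatement.
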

\begin{proof}
By symmetry and the product rule it suffices to consider $S_\eta e$, with $e\in E$ and prove \eqref{eqn: S_eta e bound}. 
We have four types of elementary multipliers in $E$. First observe that with $\sigma$ as in \eqref{eqn: symmetry of sigma} there holds
\begin{align}\label{eqn: Seta Lambda}
   & S_\eta \Lambda(\eta)=0,&& S_\eta \Lambda(\xi-\eta)=-\frac{\xi_2-\eta_2}{\absxieta^3}\sigma(\xi,\eta).
\end{align}
Thus $S_\eta \Lambda(\zeta)\in E_{1}^{-1}$ for $\zeta\in \{\xi,\eta,\xi-\eta\}$. Similarly with \eqref{eqn: Seta Lambda} there holds
\begin{align*}
&S_\eta\sqrt{1-\Lambda^2(\xi-\eta)})=\Lambda(\xi-\eta)\frac{(\xi-\eta)\cdot\eta^\perp}{\absxieta\abseta}\frac{\abseta}{\absxieta}, && S_\eta \sqrt{1-\Lambda^2(\eta)})=0,
\end{align*}
which are also elements of $ E_{1}^{-1}$. Thus the bounds hold:
\begin{align*}
  &  \abs{S_\eta\Lambda(\xi-\eta)}\chi(\xi,\eta)\lesssim 2^{k_2-k_1}2^{p_{\max}}, &&  \abs{S_\eta\sqrt{1-\Lambda^2(\xi-\eta)}}\chi\lesssim 2^{k_2-k_1}2^{p_{\max}}.
\end{align*}
Next we have the following computations
\begin{align}
     &S_\eta\abseta=\abseta, && S_\eta\absxieta=-\frac{\eta\cdot(\xi-\eta)}{\absxieta}, &&S_\eta\sigma =-\eta\cdot\xi^\perp, && S_\eta((\xi-\eta)\cdot\eta)=\eta\cdot(\xi-\eta)-\abseta^2,\label{eqn: S_eta on abs, sigma}
     \end{align}
With this we prove the claim for multipliers of the form  $\frac{\zeta\cdot \theta^\perp}{\abs{\zeta}\abs{\theta}}$ and $\frac{\zeta\cdot \theta}{\abs{\zeta}\abs{\theta}}$, where by symmetry it suffices to consider $\zeta=\xi-\eta, \, \theta=\eta$:
\begin{align*}
   & S_\eta\Big(\frac{(\xi-\eta)\cdot\eta^\perp}{\absxieta\abseta}\Big)
    =\frac{(\xi-\eta)\cdot\eta^\perp}{\absxieta\abseta}\frac{\eta\cdot(\xi-\eta)}{\abseta\absxieta}\frac{\abseta}{\absxieta},
   \;  S_{\eta}\Big(\frac{(\xi-\eta)\cdot\eta}{\absxieta\abseta} \Big)
    =\frac{-\abseta}{\absxieta}\left(1-\left(\frac{(\xi-\eta)\cdot\eta}{\absxieta\abseta}\right)^2\right).
\end{align*}
These are elements of $E_{1}^{-1}$ and satisfy the bounds:
\begin{align*}
  & \abs{ S_\eta\Big(\frac{(\xi-\eta)\cdot\eta^\perp}{\absxieta\abseta}\Big)}\lesssim 2^{k_2-k_1}2^{p_{\max}},
  &&\abs{ S_{\eta}\Big(\frac{(\xi-\eta)\cdot\eta}{\absxieta\abseta} \Big)}\lesssim 2^{k_2-k_1}.
\end{align*}
Since $1\in E$ we obtain altogether for $e\in E_0$:
\begin{align}\label{eqn: S^n of e_0}
   & \abs{S_\eta e}\chi\lesssim 1+2^{k_2-k_1}2^{p_{\max}}.
\end{align}
Finally let $e \in E_a^b$. Then $e=\abseta^a \absxieta^{b}e_0$ for an $e_0 \in E_0$, and with \eqref{eqn: S_eta on abs, sigma} and for a suitable $e_1\in E_0$ we have
\begin{align*}
    S_\eta e
    &=a\abseta^{a}\absxieta^be_0+b\abseta^{a+1}\absxieta^{b-1}e_1+\abseta^a\absxieta^bS_\eta e_0,
\end{align*}
which is an element of $E_a^b\cup E_{a+1}^{b-1}$. The bound follows directly using the claim \eqref{eqn: S^n of e_0}. An analogous computation gives the result for $S_{\xi-\eta}$.
\end{proof}
\begin{lemma}\label{lemma: W_xi on elementary multipliers}
    Let  $\tilde{E}_a^b:=\set{\absxi^{a}\absxieta^{b}e_0}{e_0 \in E_0}$ and $e\in E_0$. Then $W_\xi e \in \tilde{E}_1^{-1}$ and
    \begin{align}
        \abs{W_\xi e}\chi\lesssim 1+ 2^{k-k_1}2^{p_{\max}}.
    \end{align}
    Moreover, if $e_{ab}\in \tilde{E}_a^b$ then $W_\xi e_{ab}\in \tilde{E}_{a}^{b}\cup \tilde{E}_{a+1}^{b-1}$, and 
    \begin{align}
         \abs{W_\xi e_{ab}}\chi\lesssim (1+ 2^{k-k_1}2^{p_{\max}}) \inftynorm{e_{ab}\chi}.
    \end{align}
\end{lemma}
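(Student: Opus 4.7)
The plan is to mirror the structure of the proof of Lemma \ref{lemma: S on basic multipliers}, exploiting that $W_\xi = \xi^\perp\cdot\nabla_\xi$ annihilates any function depending only on $\eta$. First I would record the basic identities $W_\xi|\xi|=0$, $W_\xi\xi_1=-\xi_2$, $W_\xi\xi_2=\xi_1$, $W_\xi(\xi-\eta)=\xi^\perp$, and consequently, using the skew symmetry \eqref{eqn: symmetry of sigma},
\begin{equation*}
W_\xi|\xi-\eta|=\frac{(\xi-\eta)\cdot\xi^\perp}{\absxieta}=\frac{\sigma(\xi,\eta)}{\absxieta}.
\end{equation*}
These play the role of the identities \eqref{eqn: S_eta on abs, sigma} in the earlier lemma.

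Next I would compute $W_\xi e$ for each elementary multiplier $e\in E$. The $\eta$-only cases ($\Lambda(\eta)$, $\sqrt{1-\Lambda^2(\eta)}$, and $e\in E$ built from pairs in $\{\eta\}$) vanish. For $\Lambda(\xi)$ and $\sqrt{1-\Lambda^2(\xi)}$ a direct computation gives elements of $E_0$ bounded by $2^p$. The key case is
\begin{equation*}
W_\xi\Lambda(\xi-\eta)=\frac{-\xi_2}{\absxieta}-\Lambda(\xi-\eta)\frac{\sigma(\xi,\eta)}{\absxieta^2},
\end{equation*}
and similarly for $\sqrt{1-\Lambda^2(\xi-\eta)}$. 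The first summand lies in $\tilde{E}_1^{-1}$ and is bounded by $2^{k-k_1}2^p$ on the support of $\chi$. For the second, I would use the bound $\abs{\sigma(\xi,\eta)}\lesssim \absxieta\abseta\,2^{p_{\max}}$ (which follows from expanding $\sigma=-(\xi-\eta)_1\eta_2+(\xi-\eta)_2\eta_1$ and using the vertical-component localizations), yielding a bound by $2^{k_2-k_1}2^{p_{\max}}$, which in turn is $\lesssim 1+2^{k-k_1}2^{p_{\max}}$ (trivial when $k_2\leq k_1$, and using $\abs{\xi}\sim\abs{\eta}$ when $k_2>k_1$ forces $k\sim k_2$). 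The same scheme handles $W_\xi$ on multipliers of the form $\zeta\cdot\theta^\perp/(\abs{\zeta}\abs{\theta})$ and $\zeta\cdot\theta/(\abs{\zeta}\abs{\theta})$ via the product and quotient rules.

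Extending to general $e\in E_0$ is then a routine iteration of the product rule, since each application of $W_\xi$ produces at most one new factor in $\tilde{E}_1^{-1}$ and the remaining elements of $E$ contribute only bounded factors; this yields $W_\xi e\in \tilde{E}_0^0\cup \tilde{E}_1^{-1}$ with the claimed bound. Finally, for $e_{ab}=\absxi^a\absxieta^b e_0\in\tilde{E}_a^b$, the product rule together with $W_\xi\absxi=0$ and $W_\xi\absxieta=\sigma/\absxieta$ gives
\begin{equation*}
W_\xi e_{ab}=\absxi^a\absxieta^b\,W_\xi e_0+b\,\absxi^a\absxieta^{b-2}\sigma\,e_0,
\end{equation*}
where the first term is already handled by the previous step and the second is controlled using the same $\sigma$-bound, so that $W_\xi e_{ab}\in \tilde{E}_a^b\cup \tilde{E}_{a+1}^{b-1}$.

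The main obstacle is purely bookkeeping: keeping track of the fact that the natural factor arising from $W_\xi$ acting on objects depending on $\xi-\eta$ is $\abs{\eta}/\absxieta$ rather than $\absxi/\absxieta$. The resolution, as above, is the case split $k_2\lessgtr k_1$ together with the relation $\absxi\sim\abs{\eta}$ in the regime where $\absxieta$ is small, which converts any stray $\abs{\eta}$-factor into the desired $\absxi$-factor at the cost of the additive $1$ in the estimate.
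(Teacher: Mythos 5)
Your proof is correct and follows the same scheme as the paper's (the paper simply writes that the claim follows by computations analogous to those in Lemma~\ref{lemma: S on basic multipliers}). One small remark: the "obstacle" you flag at the end — the stray $\abseta/\absxieta$ factor coming from $W_\xi\absxieta=\sigma/\absxieta$ when you expand $\sigma=(\xi-\eta)\cdot\eta^\perp$ — is more cleanly handled not by the case split $k_2\lessgtr k_1$ but by switching to the other representation $\sigma=-(\xi-\eta)\cdot\xi^\perp$ from \eqref{eqn: symmetry of sigma}. Then
\begin{equation*}
\frac{\sigma}{\absxieta^2}=-\frac{(\xi-\eta)\cdot\xi^\perp}{\absxi\absxieta}\cdot\frac{\absxi}{\absxieta},
\end{equation*}
where the first factor is in $E_0$ with $\abs{\cdot}\lesssim 2^{p_{\max}}$ (by the same component-wise estimate you use for the $\eta$-representation) and the second is exactly the $\absxi/\absxieta\sim 2^{k-k_1}$ factor defining $\tilde{E}_1^{-1}$. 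This gives both the stated size bound \emph{and} the set-membership claim $W_\xi e\in\tilde{E}_1^{-1}$ in one step; your case split establishes the size bound but, since $\abseta/\absxieta$ is not of the form $\absxi^a\absxieta^b e_0$ with $e_0\in E_0$, it does not directly justify the membership claim that you then rely on when iterating. Finally, a minor slip: $W_\xi\sqrt{1-\Lambda^2(\xi)}=\pm\Lambda(\xi)$ is bounded only by $1$, not $2^p$, but this is absorbed by the additive $1$ in the stated estimate and is harmless.
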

\begin{proof}
    The claim follows by similar computations as in the lemma above.
\end{proof}

As discussed in the introduction of the paper, the null-structure of the nonlinearity is encoded in the multipliers \eqref{eqn: bq multipliers}, \eqref{multiplier} of the bilinear terms. More precisely, the relevant bounds for us are the following:
\begin{lemma}\label{lemma: multiplier bound}
    Let $\m\in \{\m_0,\m_\pm^{\mu\nu}\}$, where $\m_0$ is the multiplier in \eqref{multiplier}, $\m_\pm^{\mu\nu}$ one of the multipliers in \eqref{eqn: bq multipliers}. Then  $\m\in E_1^0\cup E_0^1$ and the following bound holds:
    \begin{align*}
        &\abs{\m(\xi,\eta)}\chi\lesssim 2^{k}2^{p_{\max}}, &&\abs{\m(\xi,\eta)}\tchi\lesssim 2^{k}2^{p_{\max}+q_{\max}} ,
    \end{align*}
     where $\chi,\, \tchi$ as in \eqref{eqn: chi localizations}.
\end{lemma}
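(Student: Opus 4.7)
The plan is to extract from each multiplier the skew quantity $\sigma=(\xi-\eta)\cdot\eta^\perp$ of \eqref{eqn: symmetry of sigma}, which carries the null structure, and then control the remaining scalar factors by $\absxi\sim 2^k$ via the triangle inequality and elementary decompositions.

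Since $\xi=(\xi-\eta)+\eta$ and $v\cdot v^\perp=0$, every cross-product factor appearing in the symbols \eqref{eqn: bq multipliers}--\eqref{multiplier} is, up to sign, of the form $\sigma/(\abs{\zeta_1}\abs{\zeta_2})$ for some pair $\zeta_i\in\{\xi,\xi-\eta,\eta\}$, hence lies in $E\subseteq E_0$. Polar coordinates give the three equivalent expressions
\[\sigma=\absxieta\abseta\sin(\tau_{\xi-\eta}-\tau_\eta)=-\absxi\absxieta\sin(\tau_\xi-\tau_{\xi-\eta})=\absxi\abseta\sin(\tau_\eta-\tau_\xi),\]
from each of which the basic bound $\abs{\sigma}/(\abs{\zeta_1}\abs{\zeta_2})\lesssim \sqrt{1-\Lambda^2(\zeta_1)}+\sqrt{1-\Lambda^2(\zeta_2)}\lesssim 2^{p_{\max}}$ on the support of $\chi$ follows; the $\tchi$ bound gains the extra $2^{q_{\max}}$ from $\abs{\sin}\leq \abs{\Lambda(\zeta_1)}\sqrt{1-\Lambda^2(\zeta_2)}+\abs{\Lambda(\zeta_2)}\sqrt{1-\Lambda^2(\zeta_1)}$. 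Applying this to the second and third expressions and taking the minimum yields the refined inequality
\[\frac{\abs{\sigma}}{\absxieta\abseta}\lesssim\frac{\absxi}{\max(\absxieta,\abseta)}\cdot 2^{p_{\max}},\]
which will be the crucial input in the harder frequency regimes.

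For the $E_1^0\cup E_0^1$ classification I will rewrite each multiplier, using $\xi\cdot(\xi-\eta)^\perp=-\sigma$ and the identity $\absxieta^2-\abseta^2=\absxi^2-2\xi\cdot\eta$, as a sum of terms of the shape $e_0\cdot\abs{\zeta}$ with $e_0\in E_0$ and $\zeta\in\{\xi-\eta,\eta\}$, with any residual $\absxi$ further decomposed via the projection $\absxi=\absxieta\tfrac{\xi\cdot(\xi-\eta)}{\absxi\absxieta}+\abseta\tfrac{\xi\cdot\eta}{\absxi\abseta}$ (obtained by dotting $\xi=(\xi-\eta)+\eta$ with $\xi/\absxi$). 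For instance the first summand of $\m_\pm^{\mu\mu}$ rearranges to $\tfrac{\sigma\,\xi\cdot\eta}{4\absxi\absxieta\abseta}-\tfrac{\sigma\absxi}{8\absxieta\abseta}$, the first piece of which sits in $E_0\cdot E_1^0\subseteq E_1^0$ (after extracting $\abseta$) and the second in $E_0\cdot(E_0^1+E_1^0)$; the remaining summands are handled analogously, directly $(\absxieta\pm\abseta)\in E_0^1+E_1^0$.

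The pointwise bound then follows by combining the null-structure bound with the triangle inequality $\abs{\absxieta-\abseta}\leq\absxi$. For $\m_0$ and the $(\absxieta-\abseta)$ summand of $\m_\pm^{\mu\mu}$, the basic bound $\abs{\sigma}/(\absxieta\abseta)\lesssim 2^{p_{\max}}$ gives $\lesssim 2^{p_{\max}}\cdot\absxi\sim 2^k 2^{p_{\max}}$ directly. For the more delicate second summand of $\m_\pm^{+-}$ and the first summand of $\m_\pm^{\mu\mu}$, where the remaining factor $\absxieta+\abseta$ (or $(\abseta^2-\absxieta^2)/\abseta$) is not a priori bounded by $\absxi$, the refined inequality above provides the missing gain: since $(\absxieta+\abseta)/\max(\absxieta,\abseta)\lesssim 1$,
\[\frac{\abs{\sigma}}{\absxieta\abseta}\cdot(\absxieta+\abseta)\lesssim \frac{\absxi(\absxieta+\abseta)}{\max(\absxieta,\abseta)}\cdot 2^{p_{\max}}\lesssim 2^k\cdot 2^{p_{\max}},\]
and the other case is entirely analogous. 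Identifying and applying this refined null-structure estimate is the main technical obstacle of the proof, since the naive use of $\abs{\sigma}/(\absxieta\abseta)\lesssim 2^{p_{\max}}$ alone only yields $2^{\max(k_1,k_2)}\cdot 2^{p_{\max}}$, which is strictly worse than $2^k\cdot 2^{p_{\max}}$ in the high-high$\to$low regime $k_1\sim k_2\gg k$.
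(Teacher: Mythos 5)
Your proof is correct, and it captures the essential mechanism — exploiting the three equivalent representations of $\sigma$ in terms of a pair of the three frequencies $\xi,\xi-\eta,\eta$, and choosing the representation that pairs $\xi$ with the \emph{larger} of $\xi-\eta,\eta$ so that no factor $2^{\max(k_1,k_2)-k}$ is lost in the high-high$\to$low regime. The paper achieves the same effect, but by a different organization: instead of isolating a single refined null-structure bound $\frac{\abs{\sigma}}{\absxieta\abseta}\lesssim\frac{\absxi}{\max(\absxieta,\abseta)}2^{p_{\max}}$ and applying it uniformly, it first proves a two-sided intermediate estimate
\[
\abs{\m^{\mu\nu}_\pm}\chi\lesssim 2^{p_{\max}}\min\bigl\{2^{k}\max\{1+2^{k_1-k_2},1+2^{k_2-k_1}\},\max\{2^{k_1},2^{k_2}\}\bigr\},
\]
obtained by combining two sets of term-by-term bounds: the direct estimates \eqref{eqn: bound each component multiplier} and the algebraically rewritten estimates \eqref{eqn: bound each component multiplier2}, which use the identities $\frac{\xi\cdot(\xi-\eta)^\perp}{\absxi\absxieta}\frac{\abseta^2-\absxieta^2}{\abseta}=\frac{\xi\cdot(\xi-\eta)^\perp}{\absxi\absxieta}\abseta+\frac{\xi\cdot\eta^\perp}{\absxi\abseta}\absxieta$ and $\frac{(\xi-\eta)^\perp\cdot\eta}{\absxieta\abseta}(\absxieta+\abseta)=\frac{\xi\cdot(\xi-\eta)^\perp}{\absxi\absxieta}\absxi\frac{\abseta+\absxieta}{\abseta}$; the claimed bound then follows by checking that the minimum is always $\lesssim 2^{k}$. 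Your polar-coordinate packaging is cleaner and arguably more transparent — it makes the min trivial and avoids the case distinction — at the cost of introducing the polar angles, which the paper's version sidesteps by staying algebraic. One small presentational caveat: you write $\m\in E_1^0\cup E_0^1$ for multipliers that, after your decomposition of $\absxi$ into the pair of projections, are \emph{sums} of an $E_1^0$ piece and an $E_0^1$ piece; the paper uses the same $\cup$ notation with the same loose intent (meaning the span of the two classes), so this is consistent with the paper's convention, but it is worth being aware that the classes are not individually closed.
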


\begin{proof}
We prove the first bound and note that the second one follows analogously when additionally localizing in $q,q_1,q_2\in \Z^-$. 
Since 
\begin{equation}
 \abs{\m_0}\chi=\frac{1}{2} \frac{\abs{(\xi-\eta)\cdot\eta^\perp}}{\absxieta \abseta}\abs{\absxieta-\abseta}\leq \frac12 (2^{p_1}+2^{p_2})2^k, 
\end{equation}
the claim is direct for $\m_0$.

For $\m^{\mu\nu}_\pm$ we will establish the following bound, which implies the claim:
 \begin{align}\label{eqn: bq multiplier first bound}
     \abs{\m^{\mu\nu}_\pm}\chi\lesssim 2^{p_{\max}}\min\{2^{k}\max\{1+2^{k_1-k_2},1+2^{k_2-k_1}\},\max\{2^{k_1},2^{k_2}\}\}.
 \end{align} 
 To see this, we bound the additional terms in \eqref{eqn: bq multipliers} separately. On the one hand, we have the direct bounds
    \begin{equation}\label{eqn: bound each component multiplier}
    \begin{split}
         \abs{\frac{\xi(\xi-\eta)^\perp}{\absxi\absxieta}\Big( \frac{\abseta^2-\absxieta^2}{\abseta} \Big)}\chi&\lesssim (2^{p}+2^{p_1})2^{k}(1+2^{k_1-k_2}),\\
        \abs{\frac{\xi(\xi-\eta)^\perp}{\absxi\absxieta}(\absxieta+\abseta)}\chi&\lesssim(2^{p_1}+2^{p_2})2^{k_1}+(2^{p_1}+2^{p_2})2^{k_2},
    \end{split}
    \end{equation}
    while we can alternatively use \eqref{eqn: symmetry of sigma} to obtain that
    \begin{equation}\label{eqn: bound each component multiplier2}
    \begin{split}
     \abs{\frac{\xi(\xi-\eta)^\perp}{\absxi\absxieta}\Big( \frac{\abseta^2-\absxieta^2}{\abseta} \Big)}\chi&=\abs{\frac{\xi(\xi-\eta)^\perp}{\absxi\absxieta}\abseta+\frac{\xi\eta^\perp}{\absxi\abseta}\absxieta}\chi\lesssim (2^{p}+2^{p_1})2^{k_2}+(2^p+2^{p_2})2^{k_1},\\
     \abs{\frac{(\xi-\eta)^\perp\eta}{\absxieta\abseta}\big(\absxieta +\abseta\big)}\chi&=\abs{\frac{\xi(\xi-\eta)^\perp}{\absxi\absxieta}\absxi\Big(\frac{\abseta+\absxieta}{\abseta}\Big)}\chi\lesssim (2^p+2^{p_1})2^{k}(1+2^{k_1-k_2}).
    \end{split}
    \end{equation}
\end{proof}
For repeated integration by parts we also want to understand how the vector fields act on a multiplier $\m$. For a set $A$, we let $\abseta A=\set{\abseta\cdot a}{a\in A}$.
\begin{lemma}\label{lemma: S_eta on multiplier m}
    Let $\m\in \{\m_0, \m^{\mu\nu}_\pm\}$ be one of the multiplies defined in \eqref{multiplier}, \eqref{eqn: bq multipliers}. Then  for $N\geq 1$ there holds $S_\eta^N\m\in\bigcup_{i=0}^{N}E_{i}^{1-i}\cup E_{1+i}^{-i}$ and $W_\xi^N\m \in \bigcup_{i=0}^{N} \abseta \tilde{E}_i^{-i}\cup \tilde{E}_i^{1-i}$. Moreover, the following bounds hold
    \begin{align*}
        &\sabs{S_\eta^N\m}\chi\lesssim 2^{k_{\max}}[1+2^{k_2-k_1}2^{p_{\max}}]^N\chi,
        &\sabs{W_{\xi}^N\m}\chi\lesssim 2^{k_{\max}}[1+2^{k-k_1}2^{p_{\max}}]^N\chi.
    \end{align*}
    Analogous bounds hold true when localizing in $\tchi$.
\end{lemma}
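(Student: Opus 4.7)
The plan is to prove both claims by induction on $N$, running the set-theoretic containment and the quantitative estimate simultaneously, and treating Lemmas \ref{lemma: multiplier bound}, \ref{lemma: S on basic multipliers}, \ref{lemma: W_xi on elementary multipliers} as essentially the only inputs. The base case is Lemma \ref{lemma: multiplier bound}: it gives $\m \in E_1^0 \cup E_0^1$ and $\abs{\m}\chi \lesssim 2^k 2^{p_{\max}} \lesssim 2^{k_{\max}}$, which matches the $N=0$ version of both statements. The induction then reduces everything to the one-derivative lemmas, with the main bookkeeping being how the shifts $(a,b) \mapsto (a+1,b-1)$ interact with the families $E_i^{1-i}$ and $E_{1+i}^{-i}$ (resp.\ $\abseta\tilde{E}_i^{-i}$ and $\tilde{E}_i^{1-i}$).

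For the $S_\eta$ claim, suppose $S_\eta^N\m \in \bigcup_{i=0}^{N} E_i^{1-i} \cup E_{1+i}^{-i}$. Applying Lemma \ref{lemma: S on basic multipliers}, which sends $E_a^b$ into $E_a^b \cup E_{a+1}^{b-1}$, one gets
\begin{align*}
    S_\eta E_i^{1-i} \subset E_i^{1-i} \cup E_{i+1}^{-i}, \qquad S_\eta E_{1+i}^{-i} \subset E_{1+i}^{-i} \cup E_{2+i}^{-(i+1)}.
\end{align*}
Taking the union for $i=0,\dots,N$ and reindexing (setting $j=i+1$ in the ``shifted'' terms), these four families collapse into $\bigcup_{j=0}^{N+1} E_j^{1-j} \cup E_{1+j}^{-j}$, proving the set containment at step $N+1$. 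The quantitative part is immediate from the bound in Lemma \ref{lemma: S on basic multipliers}: iterating
\[
\sabs{S_\eta^{N+1}\m}\chi \lesssim \bigl(1+2^{k_2-k_1}2^{p_{\max}}\bigr)\sinftynorm{S_\eta^N\m\cdot\chi} \lesssim 2^{k_{\max}}\bigl(1+2^{k_2-k_1}2^{p_{\max}}\bigr)^{N+1}.
\]

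For the $W_\xi$ claim the induction is structurally identical, using Lemma \ref{lemma: W_xi on elementary multipliers}, but the very first application of $W_\xi$ requires extra care because $\m$ lies in $E_a^b$ rather than $\tilde{E}_a^b$. Writing $\m = \abseta e_0$ or $\m = \absxieta e_0$ with $e_0\in E_0$ and using $W_\xi\abseta = 0$ together with the short computation
\[
W_\xi\absxieta = \frac{\xi^\perp\cdot(\xi-\eta)}{\absxieta} = -\frac{\xi^\perp\cdot\eta}{\absxieta} = \absxi\,\frac{\xi^\perp\cdot(\xi-\eta)}{\absxi\absxieta} \in \tilde{E}_1^0,
\]
one verifies that $W_\xi\m \in \abseta\tilde{E}_1^{-1} \cup \tilde{E}_1^0$, which is the $N=1$ claim. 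From there $W_\xi$ preserves $\tilde{E}$ up to the standard shift, so induction over $N$ combined with $W_\xi\abseta = 0$ closes the sets into $\bigcup_{i=0}^{N}\abseta\tilde{E}_i^{-i}\cup\tilde{E}_i^{1-i}$ and produces the bound with factor $(1+2^{k-k_1}2^{p_{\max}})^N$ by the same iteration as above. The $\tchi$ bounds follow verbatim from the induction, substituting the second (tilde) estimates of Lemmas \ref{lemma: S on basic multipliers} and \ref{lemma: W_xi on elementary multipliers}.

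The main (and essentially only) obstacle is this initial $W_\xi$ step, since the $\tilde{E}$ scale uses $\absxi$, not $\abseta$, and one must confirm that hitting the $\absxieta$ prefactor with $W_\xi$ produces something in the $\tilde{E}$ scale (not some genuinely new multiplier type). Once that is resolved, the proof is a purely mechanical iteration; there is no new analytic content, and no geometric constraint on frequencies beyond what is already encoded in Lemmas \ref{lemma: S on basic multipliers} and \ref{lemma: W_xi on elementary multipliers}.
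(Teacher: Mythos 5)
Your proof is correct and follows the same route as the paper's (very terse) argument: classify $\m$ via Lemma \ref{lemma: multiplier bound}, then iterate Lemmas \ref{lemma: S on basic multipliers} and \ref{lemma: W_xi on elementary multipliers}, using $W_\xi\abseta=0$ to handle the $E_1^0=\abseta E_0$ component in the first $W_\xi$ step. You supply considerably more detail on the induction bookkeeping and on the $\absxieta$-vs-$\absxi$ mismatch in the $W_\xi$ base case, which the paper glosses over by simply noting $\m\in\abseta E_0\cup\tilde E_0^1$.
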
 

\begin{proof}
   By Lemma \ref{lemma: multiplier bound}, there holds $\m \in E_0^1 \cup E_1^0$ and thus,  by Lemma \ref{lemma: S on basic multipliers}, we obtain by repeatedly applying $S_\eta$
\begin{align*}
    \abs{S_\eta^N\m\chi}=2^{k_{\max}}(1+2^{k_2-k_1}(1+2^{p_2-p_1}))^N\chi.
\end{align*}
The second claim follows similarly from Lemma \ref{lemma: W_xi on elementary multipliers} by noting that $\m \in \abseta E_0\cup \tilde{E}_0^1$.
\end{proof}

\subsection{Vector fields and the phases}\label{sec: vector fields and the phases}
Next we discuss how the vector field $S$ acts on the phase $\Phi^{\pm}_{\mu\nu}$. Recall from \eqref{eqn: bq phases} the definition
\begin{align}\label{def: Phase phi as Lambda}
\Phi^{\mu\nu}_\pm(\xi,\eta)=\pm\Lambda(\xi)-\mu\Lambda(\xi-\eta)-\nu\Lambda(\eta),
\end{align}
and note that by \eqref{eqn: Seta Lambda} we have that
\begin{equation}
    S_\eta \Phi^{\mu\nu}_\pm=\mu S_\eta\Lambda(\xi-\eta).
\end{equation}
Without loss of generality we will thus only consider $\Phi(\xi,\eta):=\Phi_{+}^{++}(\xi,\eta)$. 
\begin{lemma} \label{lemma: Seta Phi}
Let $\sigma$ as in \eqref{eqn: symmetry of sigma} and $\chi$ as in \eqref{eqn: chi localizations}. Then $S_\eta\Phi\in E_{1}^{-1}$ and $S_{\xi-\eta}\Phi\in E_{-1}^{1}$ on the support of $\chi$ there holds that
    \begin{align*}
           & S_\eta \Phi(\xi,\eta)= \frac{\xi_2-\eta_2}{\absxieta^3}\sigma(\xi,\eta), && S_{\xi-\eta}\Phi(\xi,\eta)=\frac{\eta_2}{\abseta^3}\sigma(\xi,\eta),
    \end{align*}
    and 
    \begin{align*}
        \abs{S_\eta\Phi}\chi\sim 2^{-2k_1}2^{p_1}\abs{\sigma(\xi,\eta)}\chi, \hspace{1cm} \abs{S_{\xi-\eta}\Phi}\chi\sim 2^{-2k_2}2^{p_2}\abs{\sigma(\xi,\eta)}\chi.
    \end{align*}
   The analogous bounds hold for the $\tchi$ localizations.
\end{lemma}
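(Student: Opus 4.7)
The plan is a direct computation exploiting the zero-homogeneity of $\Lambda$. First, since $\Lambda(\xi)$ does not depend on $\eta$ and Euler's relation gives $S_\eta\Lambda(\eta) = \eta\cdot\nabla\Lambda(\eta) = 0$, we reduce to
\[
S_\eta\Phi = -S_\eta\Lambda(\xi-\eta) = \eta\cdot(\nabla\Lambda)(\xi-\eta),
\]
where the last equality uses the chain rule $\nabla_\eta[\Lambda(\xi-\eta)] = -(\nabla\Lambda)(\xi-\eta)$. A short computation yields $(\nabla\Lambda)(\zeta) = \zeta_2(\zeta_2,-\zeta_1)\abs{\zeta}^{-3}$. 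Distributing the dot product against $\eta$, the cross terms $\eta_1\eta_2$ cancel and one is left with the combination $\eta_1\xi_2-\eta_2\xi_1=\sigma(\xi,\eta)$ (cf.\ \eqref{eqn: symmetry of sigma}), giving the stated identity for $S_\eta\Phi$. This parallels the computation already performed for $S_\eta\Lambda(\xi-\eta)$ in the proof of Lemma \ref{lemma: S on basic multipliers}.

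The formula for $S_{\xi-\eta}\Phi$ will follow analogously, applying $S_{\xi-\eta}=(\xi-\eta)\cdot\nabla_\eta$ directly to $\Phi$: the contribution $(\xi-\eta)\cdot(\nabla\Lambda)(\xi-\eta)$ vanishes by zero-homogeneity, leaving
\[
S_{\xi-\eta}\Phi = -(\xi-\eta)\cdot(\nabla\Lambda)(\eta) = \frac{\eta_2}{\abseta^3}\sigma(\xi,\eta)
\]
after the same algebraic cancellation. (Alternatively, one could invoke the $\eta\leftrightarrow\xi-\eta$ symmetry of $\Phi$ together with $\sigma(\xi,\xi-\eta)=-\sigma(\xi,\eta)$ and the sign flip coming from $\nabla_\eta\mapsto -\nabla_\eta$ under the substitution.)

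To place $S_\eta\Phi$ in $E_1^{-1}$, the natural factorization is
\[
\frac{\xi_2-\eta_2}{\absxieta^3}\sigma(\xi,\eta) = \frac{\xi_2-\eta_2}{\absxieta}\cdot\frac{(\xi-\eta)\cdot\eta^\perp}{\absxieta\abseta}\cdot\frac{\abseta}{\absxieta},
\]
where the first two factors are elementary multipliers in $E$ (the first being $\pm\sqrt{1-\Lambda^2(\xi-\eta)}$) and the third supplies the homogeneity weight $\abseta\absxieta^{-1}$; the symmetric factorization places $S_{\xi-\eta}\Phi$ in $E_{-1}^1$. The localization bounds are then immediate from the definitions of the projectors: on $\supp\chi$, $\sabs{\xi_2-\eta_2}\absxieta^{-1} \sim 2^{p_1}$ and $\absxieta\sim 2^{k_1}$, so $\sabs{\xi_2-\eta_2}\absxieta^{-3} \sim 2^{p_1-2k_1}$, giving $\sabs{S_\eta\Phi}\chi \sim 2^{-2k_1}2^{p_1}\sabs{\sigma(\xi,\eta)}\chi$, and analogously for $S_{\xi-\eta}\Phi$. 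The $\tchi$-case follows identically since the additional $q,q_i$ localizations do not enter. There is no real obstacle here; the only step needing care is the sign bookkeeping in the cross-product that isolates $\sigma(\xi,\eta)$ after dotting $(\nabla\Lambda)(\xi-\eta)$ with $\eta$ (respectively $(\nabla\Lambda)(\eta)$ with $\xi-\eta$).
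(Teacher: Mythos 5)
Your proposal is correct and follows essentially the same route as the paper: apply $S_\eta$ (resp.\ $S_{\xi-\eta}$) to $\Phi$, use Euler's relation for the zero-homogeneous $\Lambda$ to kill the self-terms, identify the surviving cross-product as $\sigma$, and read off the size estimate from the frequency localizations (with the $E_a^b$ membership following from the factorization you wrote). One small remark in your favor: the paper's proof deduces the $S_{\xi-\eta}$ identity ``by symmetry'' and, as printed, arrives at $S_{\xi-\eta}\Phi=-\frac{\eta_2}{\abseta^3}\sigma(\xi,\eta)$, which disagrees in sign with the lemma's own statement; your direct computation (dotting $(\nabla\Lambda)(\eta)$ against $\xi-\eta$ and tracking the cross terms) produces $+\frac{\eta_2}{\abseta^3}\sigma(\xi,\eta)$, which is the correct sign and matches the lemma, so your more explicit bookkeeping here is actually an improvement over the paper's shorthand.
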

\begin{proof}
    With \eqref{def: Phase phi as Lambda}, the definition \eqref{eqn: symmetry of sigma} of $\sigma$ and \eqref{eqn: Seta Lambda} there holds that
    \[S_\eta\Phi=-S_\eta \Lambda(\eta)-S_\eta\Lambda(\xi-\eta)=\frac{\xi_2-\eta_2}{\absxieta^3} \sigma(\xi,\eta). \]
    Therefore, on the support of $\chi$ we have
    \[\abs{S_\eta\Phi}\chi\sim 2^{-2k_1}2^{p_1}\abs{\sigma(\xi,\eta)}\chi.\]
    Similarly by symmetry of $S$ and $\sigma$, we have
    $S_{\xi-\eta}\Phi=-S_{\xi-\eta}\Lambda(\eta)=-\frac{\eta_2}{\abseta^3}\sigma(\xi,\eta)$
    and the size estimate follows directly.
\end{proof}
Together with \eqref{eqn: symmetry of sigma}, this lemma provides an important insight: whenever on the support of $\chi$ we have a size gap between any pair of the parameters $p,p_j$, $j=1,2$, we have a lower bound for $\sigma$ and thus for $S_\eta\Phi$ resp.\ $S_{\xi-\eta}\Phi$.

However, this can be further refined when taking also the size of the phase function (with respect to the localizations in $\Lambda$) into account. The following proposition is a key ingredient of our analysis and tells us roughly speaking that either we have a lower bound on $\abs{\sigma}$ (and thus integrate by parts along $S$ with Lemma \ref{lemma: ibp in bilinear expressions}) or the phase is large (and one can employ normal forms as in Section \ref{sec: Normal forms}).
\begin{proposition}\label{prop: lower bound on sigma} 
    Let $\Phi\in \set{\Phi_{\pm}^{\mu\nu}}{\mu,\nu\in\{+,-\}}$. Then either $\abs{\Phi}\tchi\geq 2^{q_{\max}-10}$ or $\sigma$ satisfies the lower bound
    \begin{align*}
    &  \abs{\sigma}\tchi\gtrsim 2^{k_{\min}+k_{\max}}2^{p_{\max}+q_{\max}}.
    \end{align*}
\end{proposition}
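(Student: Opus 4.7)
The plan is to prove the contrapositive: assume $|\Phi|<2^{q_{\max}-10}$, and derive the lower bound $|\sigma|\gtrsim 2^{k_{\min}+k_{\max}}2^{p_{\max}+q_{\max}}$. Since $\sigma$ admits three symmetric representations $\sigma=|\zeta_a||\zeta_b|\sin(\theta_a-\theta_b)$, one for each pair among $\{\xi,\xi-\eta,\eta\}$, and since the claim depends only on $|\Phi|$, by relabeling signs I reduce to the representative case $\Phi=\Phi^{++}_+=\Lambda(\xi)-\Lambda(\xi-\eta)-\Lambda(\eta)$; the remaining three inequivalent sign configurations of $\Phi^{\mu\nu}_\pm$ follow by analogous bookkeeping.

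The algebraic core is the identity from $\xi_1=(\xi-\eta)_1+\eta_1$, equivalently $|\xi|\Lambda(\xi)=|\xi-\eta|\Lambda(\xi-\eta)+|\eta|\Lambda(\eta)$. Substituted into $\Phi$ this yields
\[
|\xi|\,\Phi=(|\xi-\eta|-|\xi|)\Lambda(\xi-\eta)+(|\eta|-|\xi|)\Lambda(\eta),
\]
while the Heron-type relation $\sigma^2=|\xi-\eta|^2|\eta|^2-((\xi-\eta)\cdot\eta)^2$ paired with $2(\xi-\eta)\cdot\eta=|\xi|^2-|\xi-\eta|^2-|\eta|^2$ forces $|\xi|\to\bigl||\xi-\eta|\pm|\eta|\bigr|$ as $|\sigma|\to 0$, i.e., near-collinearity of the three vectors. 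I then parameterize them via a common approximate direction $e=(\cos\phi,\sin\phi)$, writing $\zeta_i=a_i e+\tau_i e^\perp$ with $a_1=a_2+a_3$, $\tau_1=\tau_2+\tau_3$, and $\sigma=a_b\tau_c-a_c\tau_b$ for any pair. A first-order Taylor expansion of $\Lambda_i=(a_i\cos\phi-\tau_i\sin\phi)/|\zeta_i|$ in the small parameter $\tau_i/|a_i|$ gives
\[
\Phi=C_0\cos\phi-D_0\sin\phi+O\bigl((\tau/a)^2\bigr),\qquad C_0=\sum_i\epsilon_i\,\mathrm{sgn}(a_i),\quad D_0=\sum_i\epsilon_i\tau_i/|a_i|,
\]
where $\epsilon_i\in\{\pm1\}$ are the signs of the three terms in $\Phi$. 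A short case check using $a_1=a_2+a_3$ gives $|C_0|\in\{1,3\}$ in every sign configuration; hence at exact collinearity $|\Phi|\sim|\cos\phi|\sim 2^{q_{\max}}$, safely above $2^{q_{\max}-10}$.

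Smallness $|\Phi|<2^{q_{\max}-10}$ therefore forces a near-cancellation $C_0\cos\phi\approx D_0\sin\phi$ in the perturbative regime. Using $|\tau_i/a_i|\lesssim 2^{p_i}$ once $e$ is aligned with the common direction, together with $|\cos\phi|\sim 2^{q_{\max}}$ and $|\sin\phi|\sim 2^{p_{\max}}$, one can quantify this cancellation and invert through $\sigma=a_b\tau_c-a_c\tau_b$ to recover $|\sigma|\gtrsim 2^{k_{\min}+k_{\max}+p_{\max}+q_{\max}}$. The main obstacle is the case-wise bookkeeping: depending on which of $|\xi|,|\xi-\eta|,|\eta|$ is smallest, the near-collinear limit corresponds to one of three geometrically distinct alignments (parallel sum, or one of two anti-parallel configurations), and each must be cross-checked against the four inequivalent sign choices of $\Phi^{\mu\nu}_\pm$; extracting the precise joint factor $2^{p_{\max}+q_{\max}}$ (rather than only $2^{p_{\max}}$ or only $2^{q_{\max}}$) requires carefully balancing the contributions of the $\cos\phi$ and $\sin\phi$ directions in the Taylor expansion.
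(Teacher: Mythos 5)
Your approach is genuinely different from the paper's, but it has a gap that you flag yourself and which I believe is fatal as written. The paper's proof is a short, direct dichotomy on the Littlewood--Paley parameters: writing $q_\alpha:=q_{\max}$, $p_\beta:=p_{\max}$ and using the representation $\abs{\sigma}\sim 2^{k_\alpha+k_\beta}\abs{\Lambda_\alpha\sqrt{1-\Lambda_\beta^2}-\sqrt{1-\Lambda_\alpha^2}\,\Lambda_\beta}$, either (Case~A) there is a gap in $p$, which forces $2^{q_{\max}}\sim 1$ and gives the bound immediately because the second term is dominated by the first; or (Case~B) there is no gap in $p$, and then the smallness of $\Phi$ --- a sum of three $\pm\Lambda$'s --- forces $\Lambda_\beta/\Lambda_\alpha<2/3$ (a three-term $\pm 1$ sum with all entries within a $3/2$-factor is $\geq 1/3$ of the max in absolute value), which again makes the first term dominant. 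No expansion is needed. Your route reaches for the same geometric intuition (collinearity detection) but via a near-collinear parameterization $\zeta_i=a_ie+\tau_ie^\perp$ and a Taylor expansion of $\Lambda_i$ in $\tau_i/a_i$.

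Two concrete issues. First, the bound $\abs{\tau_i/a_i}\lesssim 2^{p_i}$ is not correct in the regime where you need it. The localization fixes $\abs{\sin\theta_i}\sim 2^{p_i}$; with $\theta_i=\phi+\delta_i$ and $\delta_i$ small, $\sin\theta_i-\sin\phi\approx\delta_i\cos\phi$, and the Littlewood--Paley tolerance forces only $\abs{\delta_i}\abs{\cos\phi}\lesssim 2^{p_{\max}}$, i.e.\ $\abs{\tau_i/a_i}\approx\abs{\delta_i}\lesssim 2^{p_{\max}-q_{\max}}$. The extra $2^{-q_{\max}}$ matters precisely because the whole point of the proposition is to extract the combined factor $2^{p_{\max}+q_{\max}}$.

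Second, and more fundamentally, the step ``invert through $\sigma=a_b\tau_c-a_c\tau_b$'' is not justified and appears to fail. Your near-cancellation argument gives a lower bound on $\abs{D_0}=\bigl|\sum_i\epsilon_i\tau_i/\abs{a_i}\bigr|$, but $D_0$ and $\sigma$ are \emph{different} linear combinations of the $\tau_i$'s. Working out $\Phi^{++}_+$ with $a_i>0$ and using $a_1=a_2+a_3$, $\tau_1=\tau_2+\tau_3$:
\begin{align}
D_0=\frac{\tau_1}{a_1}-\frac{\tau_2}{a_2}-\frac{\tau_3}{a_3}=-\frac{1}{a_1}\Bigl(\frac{a_3}{a_2}\tau_2+\frac{a_2}{a_3}\tau_3\Bigr),\qquad
\frac{\sigma}{a_2a_3}=\frac{\tau_3}{a_3}-\frac{\tau_2}{a_2}.
\end{align}
These are a symmetric sum and an antisymmetric difference of $\tau_2/a_2$ and $\tau_3/a_3$: taking $\tau_2/a_2=\tau_3/a_3\neq 0$ makes $D_0$ large while $\sigma=0$. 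So a lower bound on $\abs{D_0}$ does not propagate to $\abs{\sigma}$, and the ``case-wise bookkeeping'' you mention must resolve exactly this obstruction --- which means you would need a second, independent piece of information (essentially the gap-in-$p$/no-gap dichotomy or the gap in $\Lambda$) that the paper's proof extracts directly without any expansion. In short: the intuition is right, but the proof as sketched does not close, and even if repaired it would be considerably longer than the paper's two-case algebraic argument.
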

\begin{proof} Let $q_\alpha=\max\{q,q_1,q_2\}$ and $q_\beta=\min\{q,q_1,q_2\}$, and denote correspondingly $p_{{\max}}=p_{\beta}$ and $p_{{\min}}=p_{\alpha}$. Moreover let
\begin{align*}
    \Lambda_\alpha=\max\set{ \abs{\Lambda(\zeta)}}{\zeta\in \{\xi,\xi-\eta,\eta\}}, \qquad \Lambda_\beta=\min\set{ \abs{\Lambda(\zeta)}}{\zeta\in \{\xi,\xi-\eta,\eta\}}.
\end{align*} Assume that $\abs{\Phi}\tchi<2^{q_\alpha-10}$. 
\par    \textbf{Case A:} Assume we have a gap in $p$, i.e.\ $\abs{p_{\alpha}-p_{\beta}}> 10$, then there holds $2^{p_{\alpha}}<2^{-10}$. Moreover, since $ \Lambda^2_\alpha+1-\Lambda^2_\alpha=1$ implies $ 2^{2q_\alpha}\gtrsim 1-2^{2p_\alpha}\gtrsim 1-2^{-20}$, it follows that $2^{q_{\max}}=2^{q_\alpha}\sim 1$.
Then it follows directly from \eqref{eqn: symmetry of sigma}
    \begin{align*}
        \abs{\sigma}\tchi&\gtrsim 2^{k_\alpha+k_\beta}\sabs{\Lambda_\alpha\sqrt{1-\Lambda_\beta^2}-\sqrt{1-\Lambda_\alpha^2}\Lambda_\beta}  \gtrsim 2^{k_\alpha+k_\beta} 2^{p_{\beta}}\sim2^{k_{\max}+k_{\min}}2^{p_{\max}}.
    \end{align*}
\par    \textbf{Case B:} $\abs{p_{\alpha}-p_{\beta}}\leq 10$. \\
    We claim that $\frac{\Lambda_\beta}{\Lambda_\alpha}<\frac{2}{3}$. Otherwise, if $\Lambda_\alpha\leq \frac{3}{2}\Lambda_\beta$, there holds
    \[\abs{\Phi}\tchi=\abs{\Lambda(\xi)\pm\Lambda(\xi-\eta)\pm\Lambda(\eta)}\geq \frac{1}{3}\Lambda_\alpha\geq 2^{q_\alpha-2},\]
    which contradicts the assumption $\abs{\Phi}\tchi\leq 2^{q_{\alpha}-10}$. Hence there holds $\frac{\Lambda_\beta}{\Lambda_\alpha}<\frac{2}{3}$, which implies
    \[\Lambda_\alpha \sqrt{1-\Lambda_\beta^2}>\frac{3}{2}\Lambda_\beta\sqrt{1-\Lambda_\beta^2}>\frac{3}{2}\Lambda_\beta\sqrt{1-\Lambda_\alpha^2}. \]
    From this it follows with \eqref{eqn: symmetry of sigma} that
    \begin{align*}
        \abs{\sigma}\tchi&\gtrsim 2^{k_\alpha+k_\beta}\abs{\Lambda_\alpha\sqrt{1-\Lambda_\beta^2}-\sqrt{1-\Lambda_\alpha^2}\Lambda_\beta}\gtrsim 2^{k_\alpha+k_\beta}\Lambda_\alpha\sqrt{1-\Lambda_\beta^2}\gtrsim 2^{k_{\min}+k_{\max}}2^{p_{\max}} 2^{q_{\max}}.
    \end{align*}
\end{proof}

We also record some basic bounds for the action of $W$ on the phases $\Phi$: 
\begin{lemma}\label{lemma: W_xi Phi}
    For the vector field $W_\xi=\xi^\perp\cdot\nabla_\xi$ and a phase $\Phi\in\set{\Phi_\pm^{\mu\nu}}{\mu,\nu\in \{+,-\}}$ as in \eqref{eqn: bq phases} there holds
    \begin{align*}
        &W_\xi\Phi_{\pm}^{\mu\nu}=\mp\frac{\xi_2}{\absxi}-\mu\frac{\xi_2-\eta_2}{\absxieta^3}\xi\cdot(\xi-\eta)=\mp\sqrt{1-\Lambda^2(\xi)}-\mu\sqrt{1-\Lambda^2(\xi-\eta)}\frac{\xi\cdot(\xi-\eta)}{\absxi\absxieta}\frac{\absxi}{\absxieta},\\
        &\abs{W_\xi\Phi_\pm^{\mu\nu}}\chi\lesssim  2^p+2^{p_1}2^{k-k_1}.
    \end{align*}
\end{lemma}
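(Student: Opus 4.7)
My plan is to verify both the formula and the bound for $W_\xi\Phi_\pm^{\mu\nu}$ via direct computation, since $W_\xi=-\xi_2\partial_{\xi_1}+\xi_1\partial_{\xi_2}$ acts linearly on the symbol $\Lambda$ term-by-term in the definition \eqref{def: Phase phi as Lambda}. The key observation is that $W_\xi\Lambda(\eta)=0$, so only $\Lambda(\xi)$ and $\Lambda(\xi-\eta)$ contribute.

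First I would compute $W_\xi\Lambda(\xi)$ directly. Using $\partial_{\xi_1}(\xi_1/|\xi|)=\xi_2^2/|\xi|^3$ and $\partial_{\xi_2}(\xi_1/|\xi|)=-\xi_1\xi_2/|\xi|^3$, the two contributions combine to give $W_\xi\Lambda(\xi)=-\xi_2(\xi_1^2+\xi_2^2)/|\xi|^3=-\xi_2/|\xi|$. Analogously, by the chain rule and the identity $|\xi-\eta|^2-(\xi_1-\eta_1)^2=(\xi_2-\eta_2)^2$, I would compute
\begin{equation*}
W_\xi\Lambda(\xi-\eta)=-\tfrac{\xi_2-\eta_2}{|\xi-\eta|^3}\bigl[\xi_2(\xi_2-\eta_2)+\xi_1(\xi_1-\eta_1)\bigr]=-\tfrac{\xi_2-\eta_2}{|\xi-\eta|^3}\,\xi\cdot(\xi-\eta).
\end{equation*}
Plugging these into $W_\xi\Phi_\pm^{\mu\nu}=\pm W_\xi\Lambda(\xi)-\mu W_\xi\Lambda(\xi-\eta)$ gives the first claimed equality. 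The second representation then follows by rewriting $\xi_2/|\xi|$ as $\pm\sqrt{1-\Lambda^2(\xi)}$ (up to sign) and factoring $(\xi_2-\eta_2)/|\xi-\eta|^3\cdot \xi\cdot(\xi-\eta)$ as $\sqrt{1-\Lambda^2(\xi-\eta)}\cdot\tfrac{\xi\cdot(\xi-\eta)}{|\xi||\xi-\eta|}\cdot\tfrac{|\xi|}{|\xi-\eta|}$.

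For the size estimate, on $\supp\chi$ I would bound each term separately: the first contribution satisfies $|\xi_2|/|\xi|\sim 2^p$ directly from the definition of the localization $\varphi_{k,p}$, while the second is controlled as
\begin{equation*}
\Big|\tfrac{\xi_2-\eta_2}{|\xi-\eta|^3}\,\xi\cdot(\xi-\eta)\Big|\leq \tfrac{|\xi_2-\eta_2|}{|\xi-\eta|}\cdot\tfrac{|\xi|}{|\xi-\eta|}\sim 2^{p_1}2^{k-k_1}
\end{equation*}
using $|\xi\cdot(\xi-\eta)|\leq |\xi||\xi-\eta|$. Summing these two gives the stated bound $2^p+2^{p_1}2^{k-k_1}$. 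There is no serious obstacle here; the only mild subtlety is keeping track of signs when converting to the $\sqrt{1-\Lambda^2}$ representation, which can be absorbed into the sign conventions already present in $\Phi_\pm^{\mu\nu}$.
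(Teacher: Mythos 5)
Your computation is correct and matches the paper's proof, which is an equally direct term-by-term application of $W_\xi$ to the summands of $\Phi_\pm^{\mu\nu}$ followed by the same elementary bound. The only cosmetic difference is a sign on the $\mu$-term (immaterial since $\mu\in\{+,-\}$ and the bound concerns the absolute value), which you tracked correctly, and you rightly note the sign subtlety in rewriting $\xi_2/|\xi|$ as $\sqrt{1-\Lambda^2(\xi)}$.
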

\begin{proof}
    We compute with $\mu,\nu\in\{+,-\}$
    \begin{align*}
        W_\xi \Phi_\pm^{\mu\nu}&=\pm W_\xi \Lambda(\xi)-\mu W_\xi\Lambda(\xi-\eta)=\mp\frac{\xi_2}{\absxi}-\mu \frac{(\xi_2-\eta_2)}{\absxieta}\frac{(\xi-\eta)\cdot\xi}{\absxieta\absxi}\frac{\absxi}{\absxieta}.
    \end{align*}
    Then the bound follows
    \begin{align*}
        \abs{W_\xi\Phi_\pm^{\mu\nu}}\chi&\lesssim 2^p+2^{p_1}2^{k-k_1}.
    \end{align*}
\end{proof}

\subsection{Integration by parts in bilinear expressions}\label{subsection: Integration by parts in bilinear expressions}
The main goal of this section is to establish bounds for repeated integration by parts along $S$ in bilinear terms of the form
\begin{align}\label{eqn: bilinear form Qm}
    \F (\mathcal{Q}_{\m}(f_1,f_2))(t,\xi)=\int_{\R^2}e^{it\Phi(\xi,\eta)}\m(\xi,\eta)\widehat{f_1}(t,\xi-\eta)\widehat{f_2}(t,\eta) d\eta,
\end{align}
where $\m\in\{\m_0,\m_\pm^{\mu\nu}; \mu,\nu\in\{+,-\}\}$ is one of the multipliers and $\Phi\in \{\Phi_\pm^{\mu\nu};\mu,\nu\in\{+,-\}\}$ is one of the phases associated with the Boussinesq resp.\ SQG equations, and $f_1,f_2$ are corresponding profiles.

\subsubsection{Integration by parts along $S$.}
We present next the main lemma for iterated integration by parts along $S_\eta$ and $S_{\xi-\eta}$. Let $f\in L^2$ and $N\in\N$, then 
     \begin{align*}
         \ltwonorm{(1,S)^Nf}:=\sum_{i=0}^N\ltwonorm{S^if}.
     \end{align*}

\begin{lemma} \label{lemma: ibp in bilinear expressions}
\begin{enumerate}
    \item\label{it1: lemma: ibp in bilinear expressions} Assume that $\abs{\sigma}\chi\gtrsim L\gtrsim 2^{k_{\max}+k_{\min}+p_{\max}}$. Then for $N\in \N$ there holds:
\begin{align*}
    \inftynorm{\F(\mathcal{Q}_{\m\chi}(R_{l_1}f_1,R_{l_2}f_2))}&\lesssim 2^{k_{\max}}[t^{-1}2^{2k_1}2^{-p_1}L^{-1}(1+2^{k_2-k_1}2^{l_1})]^N\\
    &\quad\cdot\ltwonorm{P_{k_1,p_1}R_{l_1}(1,S)^Nf_1}\ltwonorm{P_{k_2,p_2}R_{l_2}(1,S)^Nf_2}, \\
     \inftynorm{\F(\mathcal{Q}_{\m\chi}(R_{l_1}f_1,R_{l_2}f_2))}&\lesssim2^{k_{\max}} [t^{-1}2^{2k_2}2^{-p_2}L^{-1}(1+2^{k_1-k_2}2^{l_2})]^N\\
    &\quad\cdot\ltwonorm{P_{k_1,p_1}R_{l_1}(1,S)^Nf_1}\ltwonorm{P_{k_2,p_2}R_{l_2}(1,S)^Nf_2}.
\end{align*}
\item\label{it2: lemma: ibp in bilinear expressions}  Assume that $\abs{\sigma}\tchi\gtrsim \tilde{L}\gtrsim2^{k_{\max}+k_{\min}+p_{\max}+q_{\max}}$. Then for 
 $N\in \N$ there holds:
\begin{align*}
    \inftynorm{\F(\mathcal{Q}_{\m\tchi}(R_{l_1}f_1,R_{l_2}f_2))}&\lesssim 2^{k_{\max}}[t^{-1}(2^{k_2-k_1-p_1-q_1}+2^{2k_1}2^{-p_1}\tl^{-1}(1+2^{k_2-k_1}(2^{q_2-q_1}+2^{l_1})))]^N\\
    &\quad\cdot \ltwonorm{P_{k_1,p_1,q_1}R_{l_1}(1,S)^Nf_1}\ltwonorm{P_{k_2,p_2,q_2}R_{l_2}(1,S)^Nf_2}, \\
     \inftynorm{\F(\mathcal{Q}_{\m\tchi}(R_{l_1}f_1,R_{l_2}f_2))}&\lesssim 2^{k_{\max}}[t^{-1}(2^{k_1-k_2-p_2-q_2}+2^{2k_2}2^{-p_2}\tl^{-1}(1+2^{k_1-k_2}(2^{q_1-q_2}+2^{l_2})))]^N\\
    &\quad\cdot \ltwonorm{P_{k_1,p_1,q_1}R_{l_1}(1,S)^Nf_1}\ltwonorm{P_{k_2,p_2,q_2}R_{l_2}(1,S)^Nf_2}.
\end{align*}
\end{enumerate}
\end{lemma}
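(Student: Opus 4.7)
The plan is to proceed by iterated integration by parts along $S_\eta$ (for the first bound) resp.\ $S_{\xi-\eta}$ (for the second), leveraging the lower bound on $S_\eta\Phi$ guaranteed by Lemma \ref{lemma: Seta Phi} under the hypothesis $\abs{\sigma}\chi\gtrsim L$, which yields $\abs{S_\eta\Phi}\gtrsim 2^{-2k_1}2^{p_1}L$ on the support of $\chi$. The vector field $S_\eta$ satisfies the adjoint relation $\int (S_\eta f)g\,d\eta=-\int f(2+S_\eta)g\,d\eta$ (from the divergence theorem applied to $\eta\mapsto\eta\, fg$), so combining this with $e^{it\Phi}=(itS_\eta\Phi)^{-1}S_\eta(e^{it\Phi})$, one integration by parts gives
\[
\F(\mathcal{Q}_{\m\chi}(R_{l_1}f_1,R_{l_2}f_2))(\xi)=-\frac{1}{it}\int_{\R^2}e^{it\Phi}(2+S_\eta)\left[\frac{\m\chi\,\widehat{R_{l_1}f_1}(\xi-\eta)\widehat{R_{l_2}f_2}(\eta)}{S_\eta\Phi}\right]d\eta.
\]
Iterating this $N$ times and distributing via Leibniz produces a linear combination of $O(C^N)$ terms of the schematic form $(it)^{-N}\int e^{it\Phi}\widetilde{\m}_j\overline{\chi}(S_\eta\Phi)^{-N}H_1(\xi-\eta)H_2(\eta)\,d\eta$, where $\widetilde{\m}_j$ is a derived multiplier, $\overline{\chi}$ has similar support as $\chi$, and $H_i$ collects $\widehat{R_{l_i}f_i}$ together with any vector fields or $W$'s that have landed on it.

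The key step will be tracking how each $S_\eta$ acts on the $\widehat{f_i}$ factors. When $S_\eta$ hits $\widehat{f_2}(\eta)$, the Fourier-space identity $\F(Sf)=-(2+S)\widehat{f}$ gives $(2+S_\eta)\widehat{f_2}(\eta)=-\widehat{Sf_2}(\eta)$ directly. When $S_\eta$ hits $\widehat{f_1}(\xi-\eta)$, I would invoke Lemma \ref{lemma: S_eta= S_xi-eta W_xi-eta} to rewrite
\[
S_\eta[\widehat{f_1}(\xi-\eta)]=\frac{\eta\cdot(\xi-\eta)}{\absxieta^2}\bigl(2\widehat{f_1}+\widehat{Sf_1}\bigr)(\xi-\eta)+\frac{\eta\cdot(\xi-\eta)^\perp}{\absxieta^2}\widehat{Wf_1}(\xi-\eta),
\]
noting that the coefficients are uniformly $\lesssim 2^{k_2-k_1}$ on the support of $\chi$ by Cauchy--Schwarz. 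The decisive point is that the $\widehat{Wf_1}$ term, paired with the angular localization $R_{l_1}$, is controlled in $L^2$ by $2^{l_1}\ltwonorm{P_{k_1,p_1}R_{l_1}f_1}$ via Proposition \ref{prop: angular localization properties}\eqref{it3: prop: angular localization properties}, which is precisely the mechanism producing the factor $2^{l_1}$ in the bound. When $S_\eta$ hits $\m\chi$ or $(S_\eta\Phi)^{-1}$, Lemmas \ref{lemma: S_eta on multiplier m} and \ref{lemma: S on basic multipliers} (applied to $S_\eta\Phi\in E_1^{-1}$) ensure that differentiation produces only a bounded multiplicative loss, at most $(1+2^{k_2-k_1}2^{p_{\max}})$.

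Collecting contributions, each of the $N$ derivatives costs at most $(1+2^{k_2-k_1}2^{l_1})$ (worst case $W_{\xi-\eta}$ on $R_{l_1}f_1$, since $2^{p_{\max}}\leq 1\leq 2^{l_1}$), the $N$-fold inverse of the phase contributes $(2^{2k_1-p_1}L^{-1})^N$, and the prefactor $2^{k_{\max}}$ comes from Lemma \ref{lemma: multiplier bound} applied to $\m\chi$. Cauchy--Schwarz in $\eta$ (absorbing $\chi$ onto the profiles, which permits the insertion of $P_{k_i,p_i}$ on each $f_i$) and Plancherel then yield the first bound in \eqref{it1: lemma: ibp in bilinear expressions}; the second follows symmetrically via IBP along $S_{\xi-\eta}$ using $\abs{S_{\xi-\eta}\Phi}\gtrsim 2^{-2k_2}2^{p_2}L$. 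For part \eqref{it2: lemma: ibp in bilinear expressions} with the refined $\tchi$, the same strategy applies under $\abs{\sigma}\tchi\gtrsim\tl$. The extra contribution $t^{-1}2^{k_2-k_1-p_1-q_1}$ arises from $S_\eta$ acting on the horizontal bump $\varphi(2^{-q_1}\Lambda(\xi-\eta))$ in $\tchi$: the identity $S_\eta\Lambda(\xi-\eta)=-S_\eta\Phi$ leads to a cancellation with the $1/S_\eta\Phi$ prefactor, leaving a net size $2^{-q_1}$ times the $2^{k_2-k_1-p_1}$ coefficient from Lemma \ref{lemma: S_eta= S_xi-eta W_xi-eta}. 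The $2^{q_2-q_1}$ term arises analogously from $\varphi(2^{-q_2}\Lambda(\eta))$.

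The main obstacle is not conceptual but combinatorial: verifying that every one of the $O(C^N)$ terms produced by iterated Leibniz distribution satisfies the uniform bound, with the vector fields $S$ landing on $f_1,f_2$ at most $N$ times in total so as to be chargeable against the $\ltwonorm{P_{k_i,p_i}R_{l_i}(1,S)^Nf_i}$ norms. This bookkeeping is made tractable by the algebraic closure of the classes $E_a^b$ under $S_\eta$ (Lemma \ref{lemma: S on basic multipliers}), which ensures that derived multipliers remain uniformly controlled throughout the iteration, and by the fact that $W$-contributions are always absorbed into the angular localizations $R_{l_i}$ via Proposition \ref{prop: angular localization properties}\eqref{it3: prop: angular localization properties}.
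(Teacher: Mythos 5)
Your proposal follows the same route as the paper's proof: integrate by parts via $e^{it\Phi}=(itS_\eta\Phi)^{-1}S_\eta e^{it\Phi}$ (using the adjoint of $S_\eta$), rewrite $S_\eta$ acting on $\widehat{f_1}(\xi-\eta)$ in terms of $S_{\xi-\eta}$ and $W_{\xi-\eta}$ via Lemma~\ref{lemma: S_eta= S_xi-eta W_xi-eta}, absorb the $W$-factor via the Bernstein property in Proposition~\ref{prop: angular localization properties}, control symbol derivatives through the algebra lemmas, observe the cancellation $S_\eta\Lambda(\xi-\eta)=-S_\eta\Phi$ against the $(S_\eta\Phi)^{-1}$ factor for the $q_1$-bump in part \eqref{it2: lemma: ibp in bilinear expressions}, and finish with Cauchy--Schwarz. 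The argument is correct and matches the paper.
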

\begin{proof} We start by proving the first bound in \eqref{it1: lemma: ibp in bilinear expressions}, noting that the second one follows by symmetry and the analogous bounds for $S_{\xi-\eta}$. Let $F=R_{l_1}f_1$ and $G=R_{l_2}f_2.$ With $e^{it\Phi}=S_\eta e^{it\Phi}\frac{1}{itS_\eta\Phi}$ and Lemma \ref{lemma: S_eta= S_xi-eta W_xi-eta}, integrating by parts once in $S_\eta$ yields:
    \begin{equation}\label{eqn: decompostion Q1 Q2 Q3}
    \begin{split}
       &\F(\mathcal{Q}_{\m\chi}(F,G))\\    &=\int_{\R^2}e^{it\Phi(\xi,\eta)}\m(\xi,\eta)\chi(\xi,\eta)\widehat{F}(\xi-\eta)\widehat{G}(\eta) d\eta\\
        &=it^{-1}\int_{\R^2}e^{it\Phi}S_\eta\Big[\frac{1}{S_\eta\Phi}\m(\xi,\eta)\chi(\xi,\eta)\widehat{F}(\xi-\eta)\widehat{G}(\eta)\Big]d\eta\\
        &=it^{-1}\Big(\int_{\R^2}e^{it\Phi}S_{\eta}\Big(\frac{\m\chi}{\Setaphi}\Big)\widehat{F}(\xi-\eta)\widehat{G}(\eta)d\eta+\int_{\R^2}e^{it\Phi}\frac{\m\chi}{\Setaphi}\widehat{F}(\xi-\eta)S_\eta\widehat{G}(\eta)d\eta\\
        &\hspace{1cm}+\int_{\R^2}e^{it\Phi}\frac{\m\chi}{\Setaphi}\left(\frac{\eta(\xi-\eta)}{\absxieta^2}S_{\xi-\eta}\widehat{F}(\xi-\eta)-\frac{\eta(\xi-\eta)^\perp}{\absxieta^2}W_{\xi-\eta}\widehat{F}(\xi-\eta)\right)\widehat{G}(\eta)d\eta\Big)\\
        &=it^{-1}\left(\mathcal{Q}_{S_\eta(\m\chi(S_\eta\Phi)^{-1})}(F,G)+\mathcal{Q}_{\m_1\chi (S_\eta \Phi)^{-1}}(SF,G)+\mathcal{Q}_{\m_2\chi (S_\eta \Phi)^{-1}}(WF,G)+\mathcal{Q}_{\m\chi(S_\eta\Phi)^{-1}}(F,SG)\right),
    \end{split}
    \end{equation}
    where $\m_1,\m_2\in E_2^{-1}\cup E_1^0$. We demonstrate the first bound in (1) for $N=1$. Observe that since $\abs{\sigma}\chi\gtrsim L$, by Lemma \ref{lemma: Seta Phi} there holds 
    \begin{align*}
        {\abs{\Setaphi}}^{-1}\chi\lesssim 2^{2k_1}2^{-p_1}L^{-1}.
    \end{align*}
    With $\m\in E_0^1\cup E_1^0$ by Lemma \ref{lemma: multiplier bound} there holds
    \begin{align*}
    \abs{\mathcal{Q}_{\m\chi(S_\eta\Phi)^{-1}}(F,SG)}&\lesssim\int_{\R^2}\abs{\frac{\m\chi}{\Setaphi}\widehat{F}(\xi-\eta)S_\eta\widehat{G}(\eta)}d\eta\\
        &\lesssim 2^{2k_1}2^{-p_1}L^{-1}\int_{\R^2}\sabs{\m\chi(\xi,\eta)\widehat{F}(\xi-\eta)S_\eta\widehat{G}(\eta)}d\eta\\
        &\lesssim2^{2k_1}2^{-p_1}L^{-1}\norm{\m\chi}_{L^\infty}\int_{\R^2}\chi(\xi,\eta)\sabs{\widehat{F}(\xi-\eta)S_\eta\widehat{G}(\eta)}d\eta,
    \end{align*}
    which leads to
    \begin{align*}
        \inftynorm{\mathcal{Q}_{\m\chi(S_\eta\Phi)^{-1}}}\lesssim 2^{2k_1}2^{-p_1}L^{-1}\norm{\m\chi}_{L^\infty}\ltwonorm{P_{k_1,p_1}R_{l_1}f_1}\ltwonorm{P_{k_2,p_2}R_{l_2}Sf_2}.
    \end{align*}
    For the second term on the right-hand side of \eqref{eqn: decompostion Q1 Q2 Q3}, we note that with \begin{align*}
      \m_1=\m\frac{\eta(\xi-\eta)}{\abseta\absxieta}\frac{\abseta}{\absxieta}, \quad \m_2=-\m\frac{\eta(\xi-\eta)^\perp}{\abseta\absxieta}\frac{\abseta}{\absxieta},\quad \m_1,\m_2\in E_2^{-1}\cup E_1^0,  
    \end{align*}
    there holds
    \begin{align*}
      &\abs{\mathcal{Q}_{\m_1\chi (S_\eta \Phi)^{-1}}(SF,G)+\mathcal{Q}_{\m_2\chi (S_\eta \Phi)^{-1}}(WF,G)}\\
      &\quad\lesssim \int_{\R^2}\abs{\frac{1}{\Setaphi}\left[\m_1\chi S_{\xi-\eta}\widehat{F}(\xi-\eta)+\m_2\chi W_{\xi-\eta}\widehat{F}(\xi-\eta)\right]\widehat{G}(\eta)}d\eta\\
        &\quad\lesssim 2^{2k_1}2^{-p_1}L^{-1}2^{k_2-k_1}\int_{\R^2}\abs{\m\chi(\xi,\eta)\left[S_{\xi-\eta}\widehat{F}(\xi-\eta)+W_{\xi-\eta}\widehat{F}(\xi-\eta)\right]\widehat{G}(\eta) }d\eta.
    \end{align*}
 Altogether, using Lemma \ref{prop: angular localization properties}\eqref{it3: prop: angular localization properties} we have:
 \begin{align*}
     &\inftynorm{\mathcal{Q}_{\m_1\chi (S_\eta \Phi)^{-1}}(SF,G)}+\inftynorm{\mathcal{Q}_{\m_2\chi (S_\eta \Phi)^{-1}}(WF,G)}\\
     &\quad \lesssim 
      2^{2k_1}2^{-p_1}L^{-1}2^{k_2-k_1}\inftynorm{\m\chi}(\ltwonorm{P_{k_1,p_1}R_{l_1}Sf_1}+2^{l_1}\ltwonorm{P_{k_1,p_1}R_{l_1}f_1})\ltwonorm{P_{k_2,p_2}R_{l_2}f_2}.
 \end{align*}
 Finally we estimate the first term on the right-hand side of \eqref{eqn: decompostion Q1 Q2 Q3}, which can also be broken down in two parts:
 \begin{equation}\label{eqn: Q1 ibp}
 \begin{split}
      \mathcal{Q}_{S_\eta(\m\chi(S_\eta\Phi)^{-1})}(F,G)&=\int_{\R^2}e^{it\Phi}\frac{S_\eta(\m\chi)}{\Setaphi}\widehat{F}(\xi-\eta)\widehat{G}(\eta)d\eta-
     \int_{\R^2}e^{it\Phi}\frac{\m\chi S^{2}_\eta\Phi}{(\Setaphi)^2}\widehat{F}(\xi-\eta)\widehat{G}(\eta)d\eta\\
     &=:\mathcal{Q}_{1}(\xi,t)+\mathcal{Q}_{2}(\xi,t).
 \end{split}
 \end{equation}
 For the second term on the right-hand side above we obtain using Lemma \ref{lemma: S on basic multipliers} on $\Phi\in E_0$:
 \begin{align*}
\abs{\mathcal{Q}_{2}(\xi,t)}&\lesssim\int_{\R^2}\abs{\frac{\m\chi S^{2}_\eta\Phi}{(\Setaphi)^2}\widehat{F}(\xi-\eta)\widehat{G}(\eta)}d\eta\\&
     \lesssim 2^{k_{\max}}(1+2^{k_2-k_1}2^{p_{\max}})2^{2k_1}2^{-p_1}L^{-1}\int_{\R^2}\sabs{\chi(\xi,\eta)\widehat{F}(\xi-\eta)\widehat{G}(\eta)}d\eta.
 \end{align*}
 Now we handle $\mathcal{Q}_{1}(\xi,t)$.
     Recalling the definition \eqref{eqn: chi localizations} of $\chi$, we have
     \begin{align*}
         S_\eta \chi(\xi,\eta)=\varphi_{k,p}(\xi)[S_\eta(\varphi_{k_1,p_1}(\xi-\eta))\varphi_{k_2,p_2}(\eta)+\varphi_{k_1,p_1}(\xi-\eta)S_\eta(\varphi_{k_2,p_2}(\eta))].
     \end{align*}
     Using Lemma \ref{lemma: S on basic multipliers} we find
     \begin{align*}
         S_\eta(\varphi_{k_1,p_1}(\xi-\eta))&=-2^{-k_1}\frac{\eta(\xi-\eta)}{\absxieta}\overline{\varphi}^1_{k_1}(\xi-\eta)\varphi_{p_1}(\xi-\eta)\\
         &\quad+2^{-p_1}\frac{\eta(\xi-\eta)^\perp}{\absxieta^2}\Lambda(\xi-\eta){\varphi}_{k_1}(\xi-\eta)\overline{\varphi}^2_{p_1}(\xi-\eta),
     \end{align*}
     where $\overline{\varphi}^{i}$, $i=1,2$ are functions with similar support properties as $\varphi$ (see also Remark \ref{rk: notation similar supp properties}).
     By abusing the notation slightly, we obtain similarly that $ S_\eta\varphi_{k_2,p_2}(\eta)=2^{-k_2}\abseta\overline{\varphi}_{k_2,p_2}(\eta)$.
     Altogether this gives
     \begin{align*}
       \abs{S_\eta\chi}\lesssim (1+2^{k_2-k_1}(1+2^{-p_1}))\chi.
     \end{align*}
     This, together with Lemma \ref{lemma: S_eta on multiplier m}  implies the following bound on $\mathcal{Q}_{1}^{1}(\xi,t)$:
     \begin{align*}
         \abs{\mathcal{Q}_{1}(\xi,t)}&\lesssim \int_{\R^2} \sabs{S_\eta\Phi}^{-1}\sabs{(S_\eta\m \chi+\m S_\eta\chi)\widehat{F}(\xi-\eta)\widehat{G}(\eta)}d\eta\\
         &\lesssim 2^{k_{\max}}2^{2k_1}2^{-p_1}L^{-1}(1+2^{k_2-k_1}2^{-p_1})\int_{\R^2}\sabs{\chi\widehat{F}(\xi-\eta)\widehat{G}(\eta) }d\eta.
     \end{align*}    
   Hence with \eqref{eqn: Q1 ibp}, $\mathcal{Q}_{S_\eta(\m\chi(S_\eta\Phi)^{-1})}(F,G)$ satisfies the following bound
   \begin{align*}
       \inftynorm{\mathcal{Q}_{S_\eta(\m\chi(S_\eta\Phi)^{-1})}(F,G)}\lesssim 2^{k_{\max}}2^{2k_1}2^{-p_1}L^{-1}(1+2^{k_2-k_1}2^{-p_1})\ltwonorm{P_{k_1,p_1}R_{l_1}f_1}\ltwonorm{P_{k_2,p_2}R_{l_2}f_2}.
   \end{align*}
     Finally, since $l_1+p_1\geq 0$ and $l_1\geq 0$, and $\m\in E_0^1\cup E_1^0$ we obtain from \eqref{eqn: decompostion Q1 Q2 Q3}:
     \begin{align*}
         \inftynorm{\F(\mathcal{Q}_{\m\chi}(F,G))}& \lesssim 2^{k_{\max}}t^{-1}2^{2k_1-p_1}L^{-1}[1+2^{k_2-k_1}(1+2^{l_1})+2^{k_2-k_1}2^{-p_1}]\\
         &\quad\cdot\ltwonorm{P_{k_1,p_1}(1,S)F}\ltwonorm{P_{k_2,p_2}(1,S)G}\\
         &\lesssim 2^{k_{\max}}t^{-1}2^{2k_1-p_1}L^{-1}(1+2^{k_2-k_1}2^{l_1})\ltwonorm{P_{k_1,p_1}(1,S)F}\ltwonorm{P_{k_2,p_2}(1,S)G}.
     \end{align*}

   For $N\geq2$ we proceed iteratively from \eqref{eqn: decompostion Q1 Q2 Q3}, where we observe that the multipliers obtained due to  integration by parts are in the admissible classes defined in Section \ref{sec: multiplier mechanics}. Thus, Lemmas \ref{lemma: S on basic multipliers}, \ref{lemma: S_eta on multiplier m} and  \ref{lemma: Seta Phi} can be applied iteratively.
   \par As for the claim \eqref{it2: lemma: ibp in bilinear expressions}, the proof follows similarly using the bounds with the $\tchi$ localizations in Lemmas \ref{lemma: S on basic multipliers} and \ref{lemma: S_eta on multiplier m}. The only difference arises when the vector field $S$ falls on $\tchi(\xi,\eta)$ (see the term $\mathcal{Q}_1$ in \eqref{eqn: Q1 ibp} for the first iteration). Here, using the fact that $S_\eta\Lambda(\eta)=0$ and $S_\eta\Lambda(\xi-\eta)=-S_\eta\Phi$ we obtain:
   \begin{align*}
  \frac{ {S_\eta\tchi}}{{S_\eta\Phi}}=\frac{1}{S_\eta\Phi} \left(2^{-k_1}\frac{\eta(\xi-\eta)}{\abseta\absxieta}\abseta+2^{-p_1}\Lambda(\xi-\eta)\frac{(\xi-\eta)\eta^\perp}{\absxieta\abseta}\frac{\abseta}{\absxieta}+2^{-k_2}\abseta\right){\overline{\tchi}^1}+2^{-q_1} {\overline{\tchi}}^2,
   \end{align*}
   where $\overline{\tchi}^1,\overline{\tchi}^2$ have similar support properties as $\tchi$. The arising multipliers are again in the admissible class defined in Section \ref{sec: multiplier mechanics} and the iteration follows as above.
\end{proof}
\subsubsection{Integration by parts along $D$.}\label{sec: ibp along D} 
We also present a result on a zero-homogeneous horizontal derivative that will be useful in the proof of Proposition \ref{prop: X-norm bounds for l<(1+delta)m}, see \textbf{Case B.2(b)}.
Define 
\begin{align*}
    &D_{\eta}:=\abseta \partial_{\eta_1}=\Lambda(\eta)S_\eta-\sqrt{1-\Lambda^2(\eta)} W_\eta, && D_{\xi-\eta}:=\absxieta\partial_{\eta_1}.
\end{align*}
\begin{lemma} \label{lemma: ibp in D_eta}
     Assume that $\abs{D_\eta\Phi}\chi\gtrsim L$. Then for $N\in \N$ there holds:
\begin{align*}
    \inftynorm{\F(\mathcal{Q}_{\m\chi}(R_{l_1}f_1,R_{l_2}f_2))}&\lesssim 2^{k_{\max}}[t^{-1}L^{-1}(2^{l_1+p_1}+2^{l_2+p_2})]^N\\
    &\quad\cdot\ltwonorm{P_{k_1,p_1}R_{l_1}(1,S)^Nf_1}\ltwonorm{P_{k_2,p_2}R_{l_2}(1,S)^Nf_2}. 
    \end{align*}
\end{lemma}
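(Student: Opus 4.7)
My approach would mirror the proof of Lemma \ref{lemma: ibp in bilinear expressions}, replacing the role of $S_\eta$ by the zero-homogeneous operator $D_\eta = |\eta|\partial_{\eta_1}$. The key identity is $D_\eta e^{it\Phi} = it(D_\eta\Phi)e^{it\Phi}$, so on the support of $\chi$, where $|D_\eta\Phi|\gtrsim L$, I can write $e^{it\Phi} = \frac{D_\eta e^{it\Phi}}{itD_\eta\Phi}$ and integrate by parts in $\partial_{\eta_1}$ (boundary terms at infinity being killed by $\chi$). Using $|\eta|\partial_{\eta_1}=D_\eta$ to simplify the resulting expressions, this produces
\begin{align*}
    \mathcal{F}(\mathcal{Q}_{m\chi}(R_{l_1}f_1,R_{l_2}f_2))(\xi) = -\frac{1}{it}\int e^{it\Phi}\left[\frac{\Lambda(\eta) F}{D_\eta\Phi} - \frac{F\,D_\eta(D_\eta\Phi)}{(D_\eta\Phi)^2} + \frac{D_\eta F}{D_\eta\Phi}\right]d\eta,
\end{align*}
with $F=m\chi\,\hat f_1(\xi-\eta)\hat f_2(\eta)$.

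The first two terms are ``multiplier-type'': $\Lambda(\eta)$ is bounded by $1$, while $D_\eta(D_\eta\Phi)$ remains in the admissible elementary class $E_a^b$. Indeed, writing $D_\eta = \Lambda(\eta)S_\eta - \sqrt{1-\Lambda^2(\eta)}W_\eta$ and using Lemmas \ref{lemma: S on basic multipliers}--\ref{lemma: S_eta on multiplier m} (together with the analogous statements for $W_\eta$, which act similarly to $S_\eta$ on elementary symbols), both terms are controlled by $t^{-1}L^{-1}$ times a bilinear expression of the same form with a new admissible multiplier. For the third term, Leibniz distributes $D_\eta F$ into derivatives on $m\chi$, on $\hat f_2(\eta)$, and on $\hat f_1(\xi-\eta)$. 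Decomposing $D_\eta = \Lambda(\eta)S_\eta - \sqrt{1-\Lambda^2(\eta)}W_\eta$, the Bernstein property from Proposition \ref{prop: angular localization properties}\eqref{it3: prop: angular localization properties} combined with $\sqrt{1-\Lambda^2(\eta)}\sim 2^{p_2}$ on the support of $\chi$ yields $\|D_\eta \hat f_2\|_{L^2}\lesssim \|P_{k_2,p_2}R_{l_2}Sf_2\|_{L^2} + 2^{p_2+l_2}\|P_{k_2,p_2}R_{l_2}f_2\|_{L^2}$. For $\hat f_1(\xi-\eta)$, the identity $D_\eta[\hat f_1(\xi-\eta)] = \frac{|\eta|}{|\xi-\eta|}D_{\xi-\eta}[\hat f_1(\xi-\eta)]$ and the analogous decomposition of $D_{\xi-\eta}$ in $S_{\xi-\eta}, W_{\xi-\eta}$ yield the symmetric bound, with the frequency ratio $2^{k_2-k_1}$ absorbed into the single overall prefactor $2^{k_{\max}}$.

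The lemma then follows by iterating $N$ times. At each step, every intermediate multiplier remains in an admissible elementary class to which Lemmas \ref{lemma: S on basic multipliers}--\ref{lemma: S_eta on multiplier m} apply, each IBP contributes a factor $t^{-1}L^{-1}$, and each derivative on a profile yields either an additional copy of $S$ or a factor $2^{p_i+l_i}$. Using $l_i+p_i \geq 0$ to combine the $S$ and non-$S$ contributions assembles the claimed compounded bound $[t^{-1}L^{-1}(2^{l_1+p_1}+2^{l_2+p_2})]^N$, with the prefactor $2^{k_{\max}}$ arising from the initial $L^\infty$ bound on $m\chi$ via Lemma \ref{lemma: multiplier bound}. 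The principal technical obstacle is the symbol bookkeeping: verifying that iterated expressions of the form $D_\eta^j(D_\eta\Phi)/(D_\eta\Phi)^j$ and $D_\eta^j(m\chi)$ remain uniformly bounded on the support of $\chi$, and ensuring that the frequency ratio $|\eta|/|\xi-\eta|$ arising each time $D_\eta$ hits $\hat f_1(\xi-\eta)$ does not compound across iterations but remains a one-time cost subsumed into the single $2^{k_{\max}}$ prefactor.
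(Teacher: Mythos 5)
Your proposal is correct and takes essentially the same approach as the paper's proof, which explicitly states that it "follows the same scheme as the proof of Lemma \ref{lemma: ibp in bilinear expressions}" and then records precisely the identities you invoke: $D_\eta=\Lambda(\eta)S_\eta-\sqrt{1-\Lambda^2(\eta)}\,W_\eta$, $D_\eta=\frac{|\eta|}{|\xi-\eta|}D_{\xi-\eta}$, the action of $D_\eta$ on $\Lambda(\zeta)$, $\sqrt{1-\Lambda^2(\zeta)}$, $|\zeta|$ and the cutoffs $\chi$, and the resulting splitting of $D_\eta\hat f_i$ into an $S$-contribution and a $2^{l_i+p_i}$ term via the Bernstein property. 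The bookkeeping concern you flag at the end — whether the factor $|\eta|/|\xi-\eta|\sim 2^{k_2-k_1}$ picked up each time $D_\eta$ lands on $\hat f_1(\xi-\eta)$ compounds — is real: the paper's own estimate $|D_\eta^M\Lambda(\xi-\eta)|\lesssim (|\eta|/|\xi-\eta|)^M(1-\Lambda^2(\xi-\eta))$ shows these ratios do accumulate, and the lone $2^{k_{\max}}$ prefactor in the statement does not absorb them; however, the lemma is invoked only in Proposition \ref{prop: X-norm bounds for l<(1+delta)m}, Case B.2(b.1), where $k_1\sim k_2$, so $2^{k_2-k_1}\sim 1$ and the point is moot. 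You could close the gap cosmetically by adding the hypothesis $k_1\sim k_2$ or a $(1+2^{k_2-k_1})^N$ factor inside the bracket, but as written your proof has exactly the same level of precision as the paper's.
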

\begin{proof}
The proof follows the same scheme as the proof of Lemma \ref{lemma: ibp in bilinear expressions} with $e^{it\Phi}=(it)^{-1}\frac{D_\eta e^{it\Phi}}{D_\eta\Phi}$, hence we just record the necessary computations to proceed as above. There holds
\begin{align*}
    &D_\eta(\Lambda(\eta))=\frac{\eta_2^2}{\abseta^2}=1-\Lambda^2(\eta), \hspace{0.5cm} D_\eta(\sqrt{1-\Lambda^2(\eta)})=-\Lambda(\eta)\sqrt{1-\Lambda^2(\eta)},\\
    & D_\eta(\Lambda(\xi-\eta))=-\frac{\abseta}{\absxieta}(1-\Lambda^2(\xi-\eta)), \hspace{0.4cm} D_\eta(\sqrt{1-\Lambda^2(\xi-\eta)})=\frac{\abseta}{\absxieta}\Lambda(\xi-\eta)\sqrt{1-\Lambda^2(\xi-\eta)},\\
    &D_\eta\abseta=\abseta\Lambda(\eta), \hspace{0.5cm} D_\eta\absxieta=-\abseta\Lambda(\xi-\eta), \hspace{0.5cm} D_\eta=\frac{\abseta}{\absxieta}D_{\xi-\eta}.
\end{align*}
With these computations and $\overline{\varphi}^1,\overline{\varphi}^2$ functions with similar support properties as $\varphi$, we have
\begin{align*}
&D_\eta\chi(\xi,\eta)\\
&\quad=\left(-2^{-k_1}\abseta\Lambda(\xi-\eta)+2^{-p_1}\frac{\abseta}{\absxieta}\Lambda(\xi-\eta)\sqrt{1-\Lambda^2(\xi-\eta)}\right)\varphi_{k,p,q}(\xi)\overline{\varphi}^1_{k_1,p_1}(\xi-\eta)\varphi_{k_2,p_2}(\eta)\\
&\qquad+\left(2^{-k_2}\abseta\Lambda(\eta)-2^{-p_2}\Lambda(\eta)\sqrt{1-\Lambda^2(\eta)}\right)\varphi_{k,p}(\xi)\varphi_{k_1,p_1}(\xi-\eta)\overline{\varphi}^2_{k_2,p_2}(\eta).
\end{align*}
Together with the Bernstein property Proposition \ref{prop: angular localization properties}\eqref{it3: prop: angular localization properties}, this implies that:
\begin{align*}
    &D_\eta\F(P_{k_1,p_1}R_{l_1}F_1)(\xi-\eta)\\
    &\hspace{9em}\sim 2^{k_2-k_1}[\Lambda(\xi-\eta)\F(P_{k_1,p_1}R_{l_1}(1,S)F_1)(\xi-\eta)+2^{l_1+p_1}\F(P_{k_1,p_1}R_{l_1}F_1)(\xi-\eta)],\\
    &D_\eta \F(P_{k_2,p_2}R_{l_2}F_2)(\eta)\sim \Lambda(\eta)\F(P_{k_2,p_2}R_{l_2}(1,S)F_2)(\eta)+2^{l_2+p_2}\F(P_{k_2,p_2}R_{l_2}F_2)(\eta).
\end{align*}
Moreover, in order to control $D_\eta^M\Phi$ we compute
\begin{align*}
   \abs{D_\eta^M\Lambda(\xi)}=0,\hspace{0.5cm}  \abs{D_\eta^M\Lambda(\eta)}\lesssim 1-\Lambda^2(\eta),\hspace{0.5cm}  \abs{D_\eta^M\Lambda(\xi-\eta)}\lesssim \frac{\abseta^M}{\absxieta^M}(1-\Lambda^2(\xi-\eta)).
\end{align*}
\end{proof}

\subsubsection{Towards finite speed of propagation}\label{sec: finite speed of propagation}
In the proof of Proposition \ref{prop: X-norm bounds for l>(1+delta)m}, where we bound the $X$-norm in the case that the parameter $l$ is large, we also need to understand how the vector field $W_\xi$ acts on bilinear expressions \eqref{eqn: bilinear form Qm}.
\begin{lemma}\label{lemma: W_xi on bilinear expressions} Let $\mathcal{Q}_\m$ be a bilinear expression as in \eqref{eqn: bilinear form Qm}. Then for $N\in \N$ there holds:
        \begin{align*}
                \ltwonorm{R_l \mathcal{Q}_\m(f_1,f_2)}&\lesssim 2^{k_{\min}}2^{k+p_{\max}}2^{-Nl}[t(2^p+2^{k-k_1}2^{p_1})+2^{-p}+2^{k-k_1+l_1}]^N\ltwonorm{(1,S)^2f_1}\ltwonorm{f_2}\\
         &\quad+2^{k_{\min}}2^{k+p_{\max}}2^{-3l}\ltwonorm{S^3f_1}\ltwonorm{f_2}.
        \end{align*}
        The analogous bound holds with the roles of $f_1$ and $f_2$ (and their respective localizations) interchanged.
    \end{lemma}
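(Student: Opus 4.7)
The plan is to trade the angular localization $R_l$ for $N$ copies of the angular derivative $W_\xi$ and then track how $W_\xi$ propagates through the bilinear structure \eqref{eqn: bilinear form Qm}. First, combining the Bernstein property in Proposition \ref{prop: angular localization properties}\eqref{it3: prop: angular localization properties} with the fact that $W$ commutes both with $R_l$ and with the Fourier transform (on the $x$-side $W$ is the $x$-angular derivative; by a direct calculation in frequency it agrees with $W_\xi$), I would write
\[
\ltwonorm{R_l \mathcal{Q}_\m(f_1,f_2)} \lesssim 2^{-Nl}\ltwonorm{W^N \mathcal{Q}_\m(f_1,f_2)} = 2^{-Nl}\ltwonorm{W_\xi^N \F(\mathcal{Q}_\m)(\xi)}.
\]
It thus suffices to estimate $\ltwonorm{W_\xi^N \F(\mathcal{Q}_\m)}$ by expanding $W_\xi^N$ via the Leibniz rule on the $\xi$-dependent factors inside the integrand: the phase $e^{it\Phi(\xi,\eta)}$, the multiplier $\m(\xi,\eta)$, the profile factor $\widehat{f_1}(\xi-\eta)$, and the (implicit) output localization $\varphi_{k,p}(\xi)$.

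The various contributions can be catalogued as follows. Each derivative on the phase yields $itW_\xi\Phi$, bounded via Lemma \ref{lemma: W_xi Phi} by $t(2^p + 2^{k-k_1}2^{p_1})$. Derivatives on $\m$ are controlled by the $W_\xi$-version of Lemma \ref{lemma: S_eta on multiplier m} and produce subdominant factors of the form $2^{k_{\max}}[1+2^{k-k_1}2^{p_{\max}}]$ that can be absorbed into either the dominant $tW_\xi\Phi$ contribution or into the overall size $2^{k+p_{\max}}$ of $\m$. Derivatives on the output localization $\varphi_{k,p}(\xi)$ produce factors of at most $2^{-p}$, via the commutator estimate $\snorm{[W_\xi,P_{k,p}]}\lesssim 2^{-p}$ recorded in Section \ref{sec: Localizations}. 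Finally, derivatives on $\widehat{f_1}(\xi-\eta)$ are decomposed via the third identity of Lemma \ref{lemma: S_eta= S_xi-eta W_xi-eta}: the $W_{\xi-\eta}$-branch produces $-\widehat{Wf_1}(\xi-\eta)$ which, together with the coefficient $\frac{(\xi-\eta)\xi}{\absxieta^2}$ of size $\lesssim 2^{k-k_1}$ and Bernstein applied to $R_{l_1}f_1$, contributes the factor $\lesssim 2^{k-k_1+l_1}$, whereas the $S_{\xi-\eta}$-branch produces $S$-derivatives on $f_1$ with a bounded coefficient.

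Collecting all Leibniz terms in which the $S_{\xi-\eta}$-branch is used at most twice yields the first summand on the right-hand side: the $\eta$-integral is bounded by $\inftynorm{\m\chi}\ltwonorm{f_1}\inftynorm{f_2}$ (or a symmetric variant), with the multiplier bound $\abs{\m}\chi\lesssim 2^k2^{p_{\max}}$ from Lemma \ref{lemma: multiplier bound} and Bernstein on the input thrown into $L^\infty$ producing the factor $2^{k_{\min}}$. The terms in which the $S_{\xi-\eta}$-branch is iterated a full three times are collected into the second, error, summand: here the Bernstein gain $2^{-3l}$ already suffices for closure, and these contributions are bounded by $2^{k_{\min}}2^{k+p_{\max}}2^{-3l}\ltwonorm{S^3f_1}\ltwonorm{f_2}$.

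The main technical obstacle is the combinatorial bookkeeping of the Leibniz expansion: at each of the $N$ applications of $W_\xi$, one must choose one of several targets (phase, multiplier, localization, $W_{\xi-\eta}$-branch, $S_{\xi-\eta}$-branch) and track the evolving multiplier through the admissible class from Section \ref{sec: multiplier mechanics}. The multiplicative structure $[t(2^p+2^{k-k_1}2^{p_1})+2^{-p}+2^{k-k_1+l_1}]^N$ only emerges after carefully absorbing the subdominant multiplier-derivative factors into the dominant ones and isolating the ``$S$-heavy'' branch as a clean remainder controlled by $\ltwonorm{S^3f_1}$.
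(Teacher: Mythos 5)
Your approach coincides with the paper's: both apply the Bernstein property to trade the output angular localization $R_l$ for powers of $W_\xi$, then propagate $W_\xi$ through the integrand via the third identity of Lemma \ref{lemma: S_eta= S_xi-eta W_xi-eta} and track the three resulting branches (phase/multiplier, $W_{\xi-\eta}$ on $\widehat{f_1}$, $S_{\xi-\eta}$ on $\widehat{f_1}$) using the multiplier mechanics, isolating the $S$-heavy branch as the $2^{-3l}\ltwonorm{S^3f_1}$ remainder. The only cosmetic discrepancy is the source of the $2^{-p}$ in the bracket: you attribute it to the commutator $[W_\xi,P_{k,p}]$, whereas the paper derives it from $W_\xi$ acting directly on $\m$ together with the bound $\inftynorm{\m\chi}\lesssim 2^{k+p_{\max}}$; both mechanisms yield the same order, and the rest of the argument is identical.
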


\begin{proof} The core of the proof is the Bernstein property for the vector field $W$ from Proposition \ref{prop: angular localization properties}\eqref{it3: prop: angular localization properties}:
    \begin{align*}
        \ltwonorm{R_l \mathcal{Q}_\m(f_1,f_2)}\lesssim 2^{-l}\ltwonorm{W_{\xi}\mathcal{Q}_{\m}(f_1,f_2)}.
    \end{align*}
    By changing variables we can assume w.l.o.g.\ that $k_2\leq k_1$.
    We begin by proving that
     \begin{align}\label{eqn: W_xi first iteration}
        W_\xi R_l \F(\mathcal{Q}_{\m}(f_1,f_2))=\F(\mathcal{Q}_{\m_1^{(1)}}(f_1,f_2))-\F(\mathcal{Q}_{\m_2^{(1)}}(Sf_1,f_2))+\F(\mathcal{Q}_{\m_3^{(1)}}(Wf_1,f_2)),
    \end{align}
    where $\m_i^{(1)} \in \abseta \tilde{E}_a^b\cup\tilde{E}_c^d$, for some $a,b,c,d\in \Z$ for $i=1,2,3$. 
    We compute using Lemma \ref{lemma: S_eta= S_xi-eta W_xi-eta}:
    \begin{align*}
        W_\xi \F(\mathcal{Q}_{\m}(f_1,f_2))&=\int_{\R^2} W_\xi(e^{-it\Phi}\m)\widehat{f_1}(\xi-\eta)\widehat{f_2}(\eta)d\eta +\int_{\R^2} e^{-it\Phi}\m W_\xi \widehat{f_1}(\xi-\eta)\widehat{f_2}(\eta)d\eta\\
        &=\int_{\R^2}W_\xi(e^{-it\Phi}\m)\widehat{f_1}(\xi-\eta)\widehat{f_2}(\eta)d\eta +\int_{\R^2} e^{-it\Phi}\m \frac{(\xi-\eta)\xi}{\absxieta^2}W_{\xi-\eta}\widehat{ f_1}(\xi-\eta)\widehat{f_2}(\eta)d\eta\\
        &\quad-\int_{\R^2} e^{-it\Phi}\m \frac{(\xi-\eta)^\perp\xi}{\absxieta^2}S_{\xi-\eta} \widehat{f_1}(\xi-\eta)\widehat{f_2}(\eta)d\eta\\
        &=\F(\mathcal{Q}_{\m^{(1)}_1}(f_1,f_2))-\F(\mathcal{Q}_{\m^{(1)}_2}(Sf_1,f_2))+\F(\mathcal{Q}_{\m^{(1)}_3}(Wf_1,f_2)),
    \end{align*}
    where 
    \begin{align*}
        &\m_1^{(1)}=-itW_\xi\Phi \m+ W_\xi\m, &&\m_2^{(1)}=\frac{(\xi-\eta)^\perp\xi}{\absxieta^2}\m,
        &&\m_3^{(1)}=\frac{(\xi-\eta)\xi}{\absxieta^2}\m.
    \end{align*}
   Using Lemmas \ref{lemma: S_eta on multiplier m}, \ref{lemma: W_xi Phi}, the multipliers satisfy 
   
     \begin{equation}\label{eqn: m_is bounds Wxi ibp}
    \begin{split}
         \sinftynorm{\m_1^{(1)}\chi}&\lesssim 2^m(2^p+2^{k-k_1}2^{p_1})\abs{\m}+2^k(1+2^{k-k_1}(1+2^{p-p_1})\\
         &\lesssim (2^m(2^p+2^{k-k_1}2^{p_1})+2^{-p}+2^{k-k_1-p}(1+2^{p-p_1}))\inftynorm{\m\chi},\\
        \sinftynorm{\m_2^{(1)}\chi}&\lesssim 2^{k-k_1}\inftynorm{\m\chi},\\
        \sinftynorm{\m_3^{(1)}\chi}&\lesssim 2^{k-k_1}\inftynorm{\m\chi}.
    \end{split}
    \end{equation} Hence from \eqref{eqn: W_xi first iteration}, the following bound holds
    \begin{align*}
           \ltwonorm{W_\xi R_l\F(\mathcal{Q}_{\m}(f_1,f_2))}&\lesssim 2^{k_{\min}}2^{k+p_{\max}}[(t(2^p+2^{k-k_1}2^{p_1})+2^{-p}+ 2^{k-k_1}(2^{-p_1}+2^{l_1}))\ltwonorm{f_1}\ltwonorm{f_2} \\
        &\hspace{3.5cm}+\ltwonorm{Sf_1}\ltwonorm{f_2}],
    \end{align*}
       Iterating this process, we see that taking $W^j\mathcal{Q}_{\m}(f_1,f_2)$ generates $3^j$ bilinear expressions. Inductively it follows
    \begin{align*}
        W\mathcal{Q}_{\m^{(j)}}(f_1,f_2)= \mathcal{Q}_{\m^{(j+1)}_1}(f_1,f_2)+\mathcal{Q}_{\m^{(j+1)}_2}(Sf_1,f_2)+\mathcal{Q}_{\m^{(j+1)}_3}(Wf_1,f_2),
    \end{align*}
    where the multipliers are 
    \begin{align*}
        \m_1^{(j+1)}=-itW_\xi\Phi \m^{(j)}+W_\xi \m^{(j)}, && \m_2^{(j+1)}=\frac{(\xi-\eta)^\perp\xi}{\absxieta^2}\m^{(j)}, && \m_3^{(j+1)}=\frac{(\xi-\eta)\xi}{\absxieta^2}\m^{(j)}.
    \end{align*}
    Furthermore, with Lemmas \ref{lemma: S_eta on multiplier m}, \ref{lemma: W_xi Phi} and \ref{lemma: W_xi on bilinear expressions}, we see by induction that the multipliers satisfy the following bounds
    \begin{align*}
      &  \lVert{\m_1^{(j+1)}\chi}\rVert_{L^{\infty}}\lesssim (2^m(2^p+2^{k-k_1}2^{p_1})+2^{-p}+2^{k-k_1-p_1})^j2^{k+p_{\max}}, \\
      &  \lVert{\m_2^{(j+1)}\chi}\rVert_{L^{\infty}}\lesssim \lVert{\m^{(j)}\chi}\rVert_{L^{\infty}}, \qquad
        \lVert{\m_3^{(j+1)}\chi}\rVert_{L^{\infty}}\lesssim \lVert{\m^{(j)}\chi}\rVert_{L^{\infty}}.
    \end{align*}
    At each step we have a bound on the $L^2$ norm:
    \begin{equation}\label{eqn: L2 bound 2^-Kl}
      \begin{split}
           \ltwonorm{ W\mathcal{Q}_{\m^{(j)}}(f_1,f_2)}&\lesssim\sltwonorm{\mathcal{Q}_{\m^{(j+1)}_1}(f_1,f_2)}+\sltwonorm{\mathcal{Q}_{\m^{(j+1)}_2}(Sf_1,f_2)}+\sltwonorm{\mathcal{Q}_{\m^{(j+1)}_3}(Wf_1,f_2)}\\
        & \lesssim 2^{k+p_{\max}}(t(2^p+2^{k-k_1}2^{p_1})+2^{-p}+2^{k-k_1-p_1})^j\ltwonorm{f_1}\ltwonorm{f_2}\\
        &\quad+\lVert{\m^{(j)}\chi}\rVert_{L^{\infty}}\ltwonorm{Sf_1}\ltwonorm{f_2}+2^{l_1}\lVert{\m^{(j)}\chi}\rVert_{L^{\infty}}\ltwonorm{f_1}\ltwonorm{f_2}.
      \end{split} 
    \end{equation}
   Observe that when the vector field $W_\xi$ produces an $Sf_1$ term (multipliers of the type $\m_2^{(j+1)}$), we have no additional losses in $m,p,l_1$. Hence, for such terms we stop after three iterations, while for the rest we can continue the iteration as above. Altogether with the Bernstein property we obtain
    \begin{align*}
         \ltwonorm{R_l \mathcal{Q}_\m(f_1,f_2)}&\lesssim 2^{k_{\min}}2^{k+p_{\max}}2^{-Nl}[t(2^p+2^{k-k_1}2^{p_1})+2^{-p}+2^{k-k_1+l_1}]^N\ltwonorm{S^{\leq 2}f_1}\ltwonorm{f_2}\\
&\quad+2^{k_{\min}}2^{k+p_{\max}}2^{-3l}\ltwonorm{S^3f_1}\ltwonorm{f_2}.
    \end{align*}

\end{proof}  
\subsection{Case organisation and a reduction lemma}\label{sec: Case organization}
The following lemma gives an overview of the relation between different localisation parameters depending on their relative size to one another.

\begin{lemma}\label{lemma: case organisation}
    Assume $p_{\min}=p\leq p_{\max}-10$. Then on the support of $\chi$ the following configurations are possible:
    \begin{enumerate}
        \item $\abs{k_1-k_2}\leq 4$ then $p\leq \min\{p_1,p_2\}-3$ and $\abs{p_1-p_2}\leq 5$,
        \item $k_1<k_2-4$, then $\abs{k-k_2}\leq 2$ and $p_{\max}=p_1$; moreover there holds either $p\leq p_2-10\leq p_1-12$ and $p_2+k_2-2\leq p_1+k_1\leq p_2+k_2+2$, or $\abs{p-p_2}\leq 10$ and ${p_1+k_1}\leq p_2+k_2+3$,
        \item $k_2<k_1-4$, then $\abs{k-k_1}\leq 2$ and $p_{\max}=p_2$; moreover there holds either $p\leq p_1-10\leq p_2-12$ and $p_2+k_2-2\leq p_1+k_1\leq p_2+k_2+2$, or $\abs{p-p_1}\leq 2$ and $p_2+k_2\leq p_1+k_1+3$.
    \end{enumerate}
\end{lemma}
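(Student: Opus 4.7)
The key observation is that, on the support of $\chi$, the localization parameters control the absolute sizes of the vertical components:
\[
|\xi_2|\sim 2^{k+p},\qquad |\xi_2-\eta_2|\sim 2^{k_1+p_1},\qquad |\eta_2|\sim 2^{k_2+p_2},
\]
since $|\zeta_2|=|\zeta|\sqrt{1-\Lambda^2(\zeta)}$. The plan is to combine this with the triangle inequality for the full vectors (yielding the $k$-constraints) and the \emph{signed} linear identity $\xi_2=(\xi_2-\eta_2)+\eta_2$ (which gives all three "one-dimensional" triangle inequalities $|A|\leq |B|+|C|$ among $A,B,C\in\{\xi_2,\xi_2-\eta_2,\eta_2\}$) to extract the stated $p$-configurations.

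For part (1), the hypothesis $|k_1-k_2|\leq 4$ gives $|\xi-\eta|\sim|\eta|$ and no a priori constraint on $k$. Assuming WLOG $p_{\max}=p_1\geq p+10$, I apply $|\xi_2-\eta_2|\leq |\xi_2|+|\eta_2|$ to obtain $2^{k_1+p_1}\lesssim \max\{2^{k+p},2^{k_2+p_2}\}$. If the max were $2^{k+p}$, then $p_1\leq (k-k_1)+p\leq p+O(1)$ using $k\leq \max\{k_1,k_2\}+O(1)$, contradicting the gap $p_1-p\geq 10$. Hence $2^{k_1+p_1}\lesssim 2^{k_2+p_2}$, which with $k_1\sim k_2$ forces $p_1\leq p_2+O(1)$; since by assumption $p_2\leq p_1$, this gives $|p_1-p_2|\leq 5$ after tracking constants. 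Combined with $p_1\geq p+10$ this yields $p_2\geq p+3$, and hence $p\leq \min\{p_1,p_2\}-3$.

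For parts (2) and (3) (symmetric, so I focus on (2)), the assumption $k_1<k_2-4$ gives $|\xi-\eta|\ll|\eta|$, so the triangle inequality $|\eta|-|\xi-\eta|\leq|\xi|\leq |\eta|+|\xi-\eta|$ yields $|\xi|\sim|\eta|$, i.e.\ $|k-k_2|\leq 2$. For the $p$-structure, I compare $B_1:=\xi_2-\eta_2$ with $B_2:=\eta_2$ via their sizes $2^{k_1+p_1}$ and $2^{k_2+p_2}$, and analyze $\xi_2=B_1+B_2$ into three regimes: (a) $|B_2|\gg|B_1|$, i.e.\ $k_1+p_1\leq k_2+p_2-O(1)$, giving $|\xi_2|\sim|B_2|$, hence $k+p\sim k_2+p_2$ and (using $k\sim k_2$) $p\sim p_2$; this is the second alternative with $|p-p_2|\leq 10$. (b) $|B_1|\gg|B_2|$, giving $|\xi_2|\sim|B_1|$, hence $p=p_1+(k_1-k_2)>p_2$, which contradicts $p_{\min}=p$ and is therefore excluded. (c) $|B_1|\sim|B_2|$, i.e.\ $|p_1+k_1-p_2-k_2|\leq 2$: here cancellation in $B_1+B_2$ permits $|\xi_2|$ to be arbitrarily small, which is the first alternative; the condition $p\leq p_2-10$ follows from $p_{\max}=p_1$ and $p_1\geq p+10$ combined with $p_1\sim p_2+(k_2-k_1)\geq p_2+4$. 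In all admissible cases $p_{\max}=p_1$ because $p_2\leq p_1$ (forced by the exclusion of regime~(b)).

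The main obstacle will be bookkeeping of the numerical constants ($3, 5, 10$, the interval $[-2,+2]$), which requires being careful that the Littlewood-Paley cutoffs $\varphi(2^{-a}\cdot)$ with $\supp\varphi\subset[4/5,8/5]$ yield overlap factors of at most $2$, so that the various ``$\lesssim$'' can be tightened to the explicit inequalities in the statement. This is a routine but notation-heavy verification; the structural content is entirely captured by the triangle inequalities above.
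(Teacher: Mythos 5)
Your overall strategy is the same as the paper's: use the size relations $|\xi_2|\sim 2^{k+p}$, $|\xi_2-\eta_2|\sim 2^{k_1+p_1}$, $|\eta_2|\sim 2^{k_2+p_2}$ together with the vector triangle inequality (for the $k$-constraints) and the scalar identity $\xi_2=(\xi_2-\eta_2)+\eta_2$ (for the $p$-constraints). Part (1) is essentially equivalent to the paper's argument (the paper argues by contradiction where you argue directly, but the content is the same).

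In part (2), however, you organize the case analysis differently from the paper, and this is where a genuine gap appears. The paper splits directly on the dichotomy ``$p\leq p_2-10$'' versus ``$|p-p_2|\leq 10$'', which is exhaustive since $p=p_{\min}\leq p_2$, and then derives the remaining constraints in each branch. You instead split into three regimes according to the relative sizes of $|\xi_2-\eta_2|$ and $|\eta_2|$, exclude regime~(b), and then try to identify regime~(a) with the lemma's second alternative and regime~(c) with the first. The problem is in regime~(c): from $|p_1+k_1-p_2-k_2|\leq 2$ you claim that $p\leq p_2-10$ follows from ``$p_1\geq p+10$ combined with $p_1\sim p_2+(k_2-k_1)\geq p_2+4$.'' This implication is false. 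The two inequalities $p_1-p\geq 10$ and $p_1-p_2\geq 4$ give $p_2-p\geq 10-(p_1-p_2)$, and since $p_1-p_2$ is not bounded above by anything small, $p_2-p$ need not reach $10$. Concretely, $p_1=0$, $p_2=-4$, $p=-10$, $k_2-k_1=5$ satisfies all of the regime~(c) constraints (including $|p_1+k_1-p_2-k_2|=1$) and the gap hypothesis $p\leq p_{\max}-10$, yet $p_2-p=6<10$, so only the lemma's \emph{second} alternative holds there. The regimes therefore do not map one-to-one onto the lemma's two alternatives: regime~(c) can produce either. To repair the argument you should, within regime~(c), perform the same dichotomy the paper uses --- if $p\leq p_2-10$ you are in alternative~A with the stated constraint already in hand from the regime, and otherwise $|p-p_2|\leq 10$ and you are in alternative~B since $p_1+k_1\leq p_2+k_2+2\leq p_2+k_2+3$ is automatic in regime~(c). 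A related, secondary issue: your justification that ``$p_{\max}=p_1$ in all admissible cases because $p_2\leq p_1$ is forced by the exclusion of regime~(b)'' is circular/insufficient as phrased --- excluding regime~(b) does not by itself give $p_2\leq p_1$; one must separately check it in regime~(a) (where it follows from $p\sim p_2$ and $p_{\max}\geq p+10$) and in regime~(c) (where it follows from $p_1-p_2\gtrsim k_2-k_1\geq 5$).
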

\begin{remark}
\begin{enumerate}
    \item The analogous result holds with the roles of $p$, $p_i$ for $i=1,2$ interchanged.
    \item The analogous result holds for the localization parameters $q,q_i$ for $i=1,2$ in the ``gap in $q$" case, that is when $q_{\min}\leq q_{\max}-10$.
    \item In the following we will use the notation $\ll$, $\sim$, $\lesssim$ that includes both multiplicative bounds on the dyadic scale $2^n$ and additive constants at the level of the parameter $n\in \Z$. For example $2^p\ll 2^{p_1}$ implies there exist constants $C,C_1>0$ such that $2^{p}\leq C_12^{p_1-C}$. Similarly, $2^{p}\sim 1$ (equivalently $p\sim 0$) implies $-C<p\leq 0$ for a constant $C\in \N$.
\end{enumerate}
\end{remark}
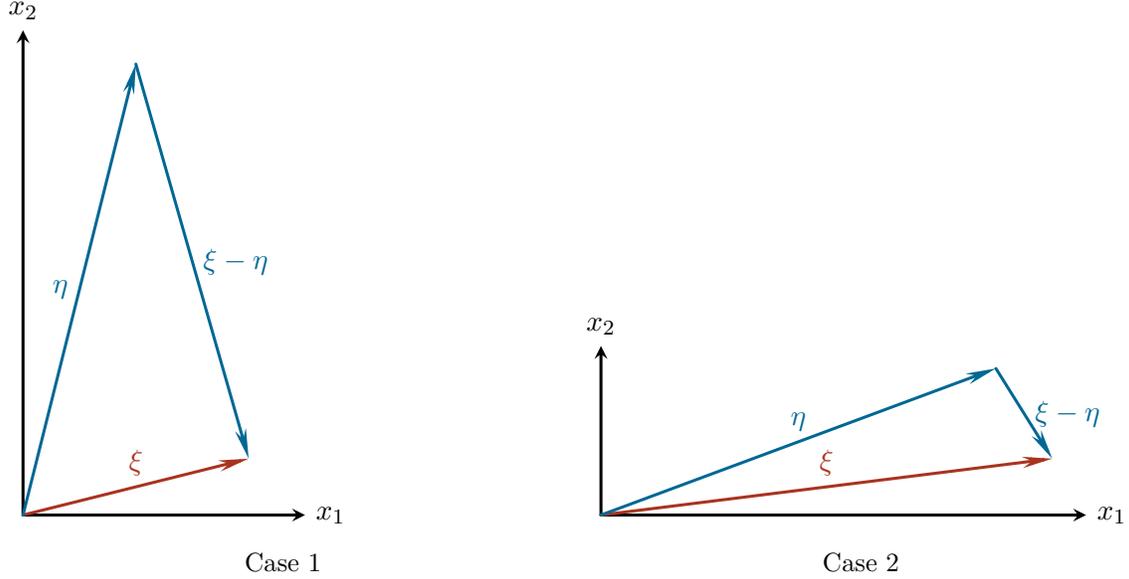
\begin{figure}[htbp]
    \centering
\begin{subfigure}[b]{0.45\textwidth}
    \begin{tikzpicture}[>=stealth,scale=1.5,line cap=round,
    bullet/.style={circle,inner sep=1.5pt,fill}]    
 \draw[line width=0.4mm, ->] (0,0) -- (2.5,0) node[right]{$x_1$};
 \draw[line width=0.4mm, ->] (0,0) -- (0,4.3) node[above]{$x_2$};
 \draw [line width=0.4mm, -{Stealth[length=4mm, width=1.5mm]}, Mahogany] (0,0) -- (2,0.5) node [midway, above] {$\xi$};
 \draw [line width=0.4mm,-{Stealth[length=4mm, width=1.5mm]}, MidnightBlue]  (0,0) -- (1,4) node [midway, left] {$\eta$};
 \draw[line width=0.4mm,-{Stealth[length=4mm, width=1.5mm]}, MidnightBlue] (1,4) --  (2,0.5) node [midway, right] {$\xi-\eta$};
\end{tikzpicture} 
\caption*{Case 1}
\end{subfigure}
  \begin{subfigure}[b]{0.45\textwidth}
    \begin{tikzpicture}[>=stealth,scale=1.5,line cap=round,
    bullet/.style={circle,inner sep=1.5pt,fill}]   
 \draw[line width=0.4mm, ->] (0,0) -- (4.3,0) node[right]{$x_1$};
 \draw[line width=0.4mm, ->] (0,0) -- (0,1.5) node[above]{$x_2$};
 \draw [line width=0.4mm, -{Stealth[length=4mm, width=1.5mm]}, Mahogany] (0,0) -- (4,0.5) node [midway, above] {$\xi$};
 \draw [line width=0.4mm,-{Stealth[length=4mm, width=1.5mm]}, MidnightBlue]  (0,0) -- (3.5,1.3) node [midway, above] {$\eta$};
 \draw[line width=0.4mm,-{Stealth[length=4mm, width=1.5mm]}, MidnightBlue] (3.5,1.3) -- (4,0.5) node [midway, right] {$\xi-\eta$};
\end{tikzpicture} 
\caption*{Case 2}
\end{subfigure}
\caption{Two possible scenarios from Lemma \ref{lemma: case organisation} in Cartesian coordinates}
    \label{fig:enter-label}
\end{figure}
\begin{proof}[Proof of Lemma \ref{lemma: case organisation}]
   We prove each case separately.
   \par\emph{(1)} Observe that by triangle inequality and $\xi=(\xi-\eta)+\eta$ there holds $2^{k}\leq 2^{k_1}+2^{k_2}\leq 2^{k_2+5}$. Assume w.l.o.g.\ that $p_2=p_{\max}$. Then if $p_1<p+3$ there holds that $p_1<p_2-7$, and since $\eta_2=\xi_2-(\xi_2-\eta_2)$ we obtain 
    \begin{align*}
        2^{p_2+k_2}\leq 2^{p+k}+2^{p_1+k_1}< 2^{p_2-10+k_2+5}+2^{p_2-7+4+k_2}\leq 2^{p_2+k_2}(2^{-5}+2^{-3}),
    \end{align*}
    which leads to a contradiction. Hence $p_1\geq p+3$. Moreover, since $\eta_2=\xi_2-(\xi_2-\eta_2)$ and thus 
    \begin{equation}
        2^{p_2+k_2}\leq 2^{p+k}+2^{p_1+k_1}\leq 2^{p_1-3+k_2+5}+2^{p_1+k_2+4}\leq 2^{p_1+k_2+5},
    \end{equation}
   and thus $p_1\leq p_2\leq p_1+5$.

     \par\emph{(2)} First observe that $2^{k}\leq 2^{k_1}+2^{k_2}\leq 2^{k_2+2}$. 
 If $p\leq p_2-10$ then 
    \begin{align*}
       & 2^{p_2+k_2}\leq 2^{p+k}+2^{p_1+k_1}\leq 2^{p_2-10+k_2+2}+2^{p_1+k_1}, &&   2^{p_1+k_1}\leq 2^{p+k}+2^{p_2+k_2}\leq 2^{p_2+k_2+2}.
    \end{align*}
      Moreover, there holds $p_{\max}=p_1$ and $p\leq p_2-10\leq p_1-12$. If on the other hand $\abs{p-p_2}\leq 10$ then only the following inequality holds $2^{p_1+k_1}\leq 2^{p+k}+2^{p_2+k_2}\leq 2^{p_2+k_2+3}$. 
By symmetry, the proof of \emph{(3)} is analogous to that of \emph{(2)}.
\end{proof}

\subsection{Set-size estimates}\label{sec: set-size estimate}
\par In this section we present a key ingredient in the proofs of bounds for bilinear estimates Lemma \ref{lemma: bounds on dtSf}, Propositions \ref{prop: bounds on B norm}, \ref{prop: X-norm bounds for l>(1+delta)m}, \ref{prop: X-norm bounds for l<(1+delta)m}. In particular, in bounding localized bilinear expressions in $L^2$, we can ``gain" the smallest of the parameters $\frac{p}{2},\frac{p_i}{2},\frac{q}{2}, \frac{q_i}{2}$, $i=1,2$.  
\begin{lemma}\label{lemma: set size estimate}
    Let $f,g\in L^2$ and $\m $ our multiplier. Then for a bilinear expression
    \[\widehat{\mathcal{Q}_{\m}(f,g)}(\xi)=\int_{\R^2} e^{-it\Phi}\tchi(\xi,\eta)\m(\xi,\eta)\hat{f}(\xi-\eta)\hat{g}(\eta)d\eta,\]
    where $\tchi(\xi,\eta)$ as in \eqref{eqn: chi localizations}
    there holds
    \begin{align*}
        \ltwonorm{\mathcal{Q}_\m(f,g)}\lesssim \abs{S}\norm{m}_{L^\infty_{\xi,\eta}}\ltwonorm{P_{k_1,p_1,q_1}f}\ltwonorm{P_{k_2,p_2,q_2}g}
    \end{align*}
    with 
    \begin{align*}
        \abs{S}:= \min \{ 2^{\frac{k}{2}+\frac{p}{2}},2^{\frac{k_1}{2}+\frac{p_1}{2}},2^{\frac{k_2}{2}+\frac{p_2}{2}} \}\cdot \min \{ 2^{\frac{k}{2}+\frac{q}{2}},2^{\frac{k_1}{2}+\frac{q_1}{2}},2^{\frac{k_2}{2}+\frac{q_2}{2}} \}.
    \end{align*}
\end{lemma}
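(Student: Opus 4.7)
Plan: My strategy is to prove, for each pair $(a,b)\in\{0,1,2\}^2$, the one-sided bound
\[
\ltwonorm{\mathcal{Q}_\m(f,g)}\lesssim 2^{\frac{k_a}{2}+\frac{p_a}{2}}\cdot 2^{\frac{k_b}{2}+\frac{q_b}{2}}\norm{\m}_{L^\infty_{\xi,\eta}}\ltwonorm{P_{k_1,p_1,q_1}f}\ltwonorm{P_{k_2,p_2,q_2}g},
\]
indexed by $a,b\in\{0,1,2\}$ (corresponding to the three vectors $\xi,\xi-\eta,\eta$), and then to conclude by taking the minimum over $(a,b)$, invoking that for non-negative quantities $\min_{a,b}(A_aB_b)=(\min_a A_a)(\min_b B_b)$ since $a$ and $b$ vary independently.

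The key geometric observation driving each individual bound is that on the support of $\varphi_{k_j,p_j,q_j}(\zeta)$ the Cartesian components of $\zeta$ satisfy $\abs{\zeta_2}\lesssim 2^{k_j+p_j}$ (since $\sqrt{1-\Lambda^2(\zeta)}=\abs{\zeta_2}/\abs{\zeta}$) and $\abs{\zeta_1}\lesssim 2^{k_j+q_j}$ (since $\Lambda(\zeta)=\zeta_1/\abs{\zeta}$), so the $p$-type and $q$-type localizations constrain two orthogonal Cartesian directions. Combining the $p_a$-constraint on $\zeta_a$ with the $q_b$-constraint on $\zeta_b$ therefore confines a two-dimensional slice of the integration domain to a Cartesian rectangle of area $\lesssim 2^{k_a+p_a}\cdot 2^{k_b+q_b}$, whose square root is precisely $2^{k_a/2+p_a/2}\cdot 2^{k_b/2+q_b/2}$.

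For the "direct" cases $(a,b)\in\{1,2\}^2$, I apply Cauchy–Schwarz to the inner $\eta$-integral pointwise in $\xi$, using that for fixed $\xi$ the support of $\tchi(\xi,\cdot)$ is contained in such a rectangle in $\eta$-space (the $p_a$-constraint directly bounds $\abs{\eta_2}$ or $\abs{\eta_2-\xi_2}$, and similarly for $q_b$); squaring, integrating in $\xi$, and using Fubini on the pointwise factor $\abs{\F f(\xi-\eta)}^2\abs{\F g(\eta)}^2$ then produces the required product of profile norms. For the remaining cases with $a=0$ or $b=0$, I pass to the dual trilinear form $\langle\mathcal{Q}_\m(f,g),h\rangle$ with $\F h$ supported in the output $\xi$-localization; the same rectangular Cauchy–Schwarz is then applied to the appropriate 2D slice of the 4D domain $(\xi,\eta)$ (e.g.\ the $(\xi_2,\eta_1)$-plane in the case $(a,b)=(0,2)$), and the remaining directions are absorbed via Fubini and the dual normalization $\ltwonorm{h}=1$.

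The main obstacle will be the careful bookkeeping of which Cartesian component each constraint controls across the nine sub-cases, and in particular the correct choice of orthogonal directions for the dualized cases involving the output localization, so that the Cauchy–Schwarz step cleanly produces the factor $2^{k/2+p/2}$ (or $2^{k/2+q/2}$) from the $\xi$-constraint rather than the weaker natural set-size factor $2^{k+p/2}$ (or $2^{k+q/2}$) that a naive estimate would yield.
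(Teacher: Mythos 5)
Your proposal is correct and takes essentially the same approach as the paper: dualize against a test function when the output localization is needed, apply Cauchy--Schwarz, and exploit the rectangular Cartesian set-size gain coming from the $p$-localization (controlling a vertical component $\zeta_2$) combined with the $q$-localization (controlling a horizontal component $\zeta_1$), finally minimizing over the three input/output slots. The paper proves one representative case $(a,b)=(0,1)$ via the dual trilinear form and Cauchy--Schwarz, with the remaining eight cases handled by "exchanging variables"; your nine-case enumeration with the $\min_{a,b}(A_aB_b)=(\min_a A_a)(\min_b B_b)$ observation and the direct/dualized split is just a more explicit bookkeeping of the same argument.
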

\begin{proof}
    Without loss of generality, we localize in the $\xi$ and $\xi-\eta$ variable and assume $2^{k+p}\lesssim 2^{k_1+p_1}$ and $2^{k_1+q_1}\lesssim 2^{k+q}$. For $h\in L^2$ we have
    \begin{align*}
        \abs{\langle \mathcal{Q}_\m(f,g),h\rangle}&\lesssim \norm{\m}_{L^\infty_{\xi,\eta}}\lVert\hat{h}(\xi)\hat{f}(\xi-\eta)\rVert_{L^2_{\xi,\eta}}\norm{\varphi_{k,p,q}(\xi)\varphi_{k_1,p_1,q_1}(\xi-\eta)\hat{g}(\eta)}_{L^2_{\xi,\eta}}\\
        &\lesssim \norm{\m}_{L^\infty_{\xi,\eta}} \ltwonorm{f}\ltwonorm{h}2^{\frac{k}{2}+\frac{p}{2}}2^{\frac{k_1}{2}+\frac{q_1}{2}}\ltwonorm{g},
    \end{align*}
    where we have used the support properties of $\varphi_{k,p,q}$.
    The claim follows by exchanging the variables.
    \end{proof}
 \begin{remark}
        We note that we use the $q$-localization only in once instance in the proof of Proposition \ref{prop: X-norm bounds for l<(1+delta)m}, Case D. Otherwise we will use the set-size estimate above with $\chi(\xi,\eta)=\varphi_{k,p}(\xi)\varphi_{k_1,p_1}(\xi-\eta)\varphi_{k_2,p_2}(\eta)$.
    \end{remark}

\subsection{Normal forms}\label{sec: Normal forms}
In this section we present how to use normal forms in in proving bounds on bilinear expression. In particular, we discuss a way to split the analysis into two regions depending on the size of the phase $\Phi$. In the so-called non-resonant part, where a positive lower bound on $\abs{\Phi}$ is available, one can integrate by parts in time and use the improved decay of the time derivative from Section \ref{sec: bounds on dtSf}, see \eqref{eqn: nonresonant multiplier bilinear estimate}. On the other hand, in the resonant part set-size estimates are available, see Lemma \ref{lemma: normal forms}\eqref{it3: lemma: normal forms}-\eqref{it4: lemma: normal forms}. 
\par Let $\Phi$ be one of the phases defined in \eqref{eqn: bq phases} and $\psi$ as in Section \ref{sec: Localizations}. For $\lambda>0$ to be appropriately chosen, we split the multiplier in a resonant and non-resonant part as follows: 
\begin{align*}
    \m(\xi,\eta)=\psi(\lambda^{-1}\Phi)\m(\xi,\eta)+(1-\psi(\lambda^{-1}\Phi))\m(\xi,\eta)=: \m^{res}(\xi,\eta)+\m^{nr}(\xi,\eta).
\end{align*}
This yields the following decomposition:
\begin{align*}
    \B_{\m}(f,g)=\B_{\m^{res}}(f,g)+\B_{\m^{nr}}(f,g).
\end{align*}
Furthermore, after integrating by parts, the non-resonant part can be written as follows: 
 \begin{align}\label{eqn: nonresonant multiplier bilinear estimate}
    \B_{\m^{nr}}(f,g)=\mathcal{Q}_{\m^{nr}\Phi^{-1}}(f,g)+\B_{\m^{nr}\Phi^{-1}}(\partial_tf,g)+\B_{\m^{nr}\Phi^{-1}}(f,\partial_tg).
\end{align}

The following lemma provides useful set-size estimates for the terms in the decomposition above.
\begin{lemma}\label{lemma: normal forms}
    Let $\lambda>0$, functions $\chi,\; \tchi$ as in \eqref{eqn: chi localizations} and $f_j$ be localized profiles for $j=1,2$. Assume we have a splitting as in \eqref{eqn: nonresonant multiplier bilinear estimate}, then there holds
    \begin{enumerate}
        \item \label{it1: lemma: normal forms} The boundary term satisfies
        \begin{align*}
            \ltwonorm{P_{k,p}\mathcal{Q}_{\m^{nr}\Phi^{-1}}(f_1,f_2)}\lesssim  2^{k+p_{\max}}\lambda^{-1}\abs{S}\ltwonorm{f_1}\ltwonorm{f_2}.
        \end{align*}
        \item  \label{it2: lemma: normal forms}  If $\lambda>0$ is chosen such that $\abs{\Phi\chi}\geq \lambda\gtrsim 1$ then $\m^{res}=0$ and $\m=\m^{nr}$ with the following bound
        \begin{align*}
            \ltwonorm{P_{k,p}\mathcal{Q}_{\m^{nr}\Phi^{-1}}(f_1,f_2)}\lesssim 2^{k+p_{\max}}\min \{\inftynorm{e^{it\Lambda}f_1}\ltwonorm{f_2},\ltwonorm{f_1}\inftynorm{e^{it\Lambda}f_2}\}.
        \end{align*}
        \item  \label{it3: lemma: normal forms} If $\abs{\partial_{\eta_1}\Phi\chi}\gtrsim K>0$ or $\abs{\partial_{\xi_1}\Phi\chi}\gtrsim K>0$, then there holds
        \begin{align*}
            &\ltwonorm{P_{k,p}\mathcal{Q}_{\m^{res}}(f_1,f_2)}\lesssim 2^{k+p_{\max}}\lambda^{\frac{1}{2}}K^{-\frac{1}{2}}\min \{2^{\frac{k_1+p_1}{2}},2^{\frac{k_2+p_2}{2}} \}\ltwonorm{f_1}\ltwonorm{f_2}.
        \end{align*}
         \item  \label{it4: lemma: normal forms}  If $\abs{\partial_{\eta_2}\Phi\tchi}\gtrsim K>0$ or $\abs{\partial_{\xi_2}\Phi\tchi}\gtrsim K>0$, then there holds
        \begin{align*}
            &\ltwonorm{P_{k,p,q}\mathcal{Q}_{\m^{res}}(f_1,f_2)}\lesssim 2^{k+p_{\max}+q_{\max}}\lambda^{\frac{1}{2}}K^{-\frac{1}{2}}\min \{2^{\frac{k_1+q_1}{2}},2^{\frac{k_2+q_2}{2}} \}\ltwonorm{f_1}\ltwonorm{f_2}.
        \end{align*}
    \item  \label{it5: lemma: normal forms} The analogous bounds \eqref{it1: lemma: normal forms}-\eqref{it3: lemma: normal forms} hold when additionally localizing in $q, q_i,$ $i=1,2,$ with $\tchi$ as in \eqref{eqn: chi localizations}.
    \end{enumerate}
\end{lemma}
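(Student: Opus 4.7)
Items (1)--(2) bound the boundary term, (3)--(4) bound the resonant contribution via set-size arguments, and (5) is the $\tchi$-analogue of (1)--(3). The splitting \eqref{eqn: nonresonant multiplier bilinear estimate} and the bounds $\|\m\chi\|_\infty \lesssim 2^{k+p_{\max}}$ resp.\ $\|\m\tchi\|_\infty \lesssim 2^{k+p_{\max}+q_{\max}}$ from Lemma \ref{lemma: multiplier bound} are the basic inputs.

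\textbf{Items (1) and (2).} Since $\psi$ is supported in $[-8/5,8/5]$ and $\equiv 1$ on $[-4/5,4/5]$, on $\supp \m^{nr}$ we have $|\Phi|\gtrsim \lambda$. Together with Lemma \ref{lemma: multiplier bound} this gives $\|\m^{nr}\Phi^{-1}\chi\|_{L^\infty}\lesssim \lambda^{-1}2^{k+p_{\max}}$. For (1), the set-size estimate Lemma \ref{lemma: set size estimate} applied to $\mathcal{Q}_{\m^{nr}\Phi^{-1}}$ yields the claim directly. For (2), the assumption $\lambda\gtrsim 1$ combined with $|\Phi\chi|\geq \lambda$ forces $\m^{res}\equiv 0$ and gives a symbol $(\m\Phi^{-1})\chi$ of $L^\infty$-size $\lesssim 2^{k+p_{\max}}$. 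Writing $\widehat{f_j}(\eta) = e^{\pm is\Lambda(\eta)}\widehat{g_j}(\eta)$ with $g_j = e^{\mp is\Lambda}f_j$, the oscillation $e^{is\Phi}$ combines with the profiles' phases into a pure $e^{\pm is\Lambda(\xi)}$ factor, which is an $L^2$-isometry. It thus suffices to prove the bilinear bound
\begin{equation*}
\|T_{(\m\Phi^{-1})\chi}(g_1,g_2)\|_{L^2}\lesssim 2^{k+p_{\max}}\min\{\|g_1\|_{L^\infty}\|g_2\|_{L^2},\|g_1\|_{L^2}\|g_2\|_{L^\infty}\}.
\end{equation*}
This follows from the kernel representation $T_m(g_1,g_2)(x)=\int \widetilde m(a,b)g_1(x-a)g_2(x-b)\,da\,db$ (where $\widetilde m$ is the inverse Fourier transform of $m(\xi'+\eta,\eta)$ in $(\xi',\eta)=(\xi-\eta,\eta)$) and Minkowski's inequality, once we verify $\|\widetilde m\|_{L^1}\lesssim \|m\|_{L^\infty}$. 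The latter is a consequence of the smoothness and compact support encoded in $\chi$: the $(\xi',\eta)$-volume of $\supp \chi$ exactly cancels the scale-weighted decay of $\widetilde m$ obtained by repeated integration by parts in the defining Fourier integral, the relevant scales being $2^{k_j}$ and $2^{k_j+p_j}$ (radial and angular).

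\textbf{Items (3)--(5).} On $\supp \m^{res}$ there holds $|\Phi|\lesssim \lambda$. If $|\partial_{\eta_1}\Phi|\gtrsim K$ (item (3)), then for each fixed $\xi$ the set $\{\eta : |\Phi(\xi,\eta)|\lesssim \lambda\}$ is contained in a slab of $\eta_1$-thickness $\lesssim \lambda/K$; intersected with $\supp\chi$ (which forces $|\eta_2|\lesssim 2^{k_2+p_2}$), this gives
\begin{equation*}
\sup_\xi\left|\{\eta:(\m^{res}\chi)(\xi,\eta)\neq 0\}\right|\lesssim \lambda K^{-1}\cdot 2^{k_2+p_2}.
\end{equation*}
Applying Cauchy--Schwarz in $\eta$ inside $\|\widehat{\mathcal{Q}_{\m^{res}}}\|_{L^2_\xi}^2$ followed by Fubini, together with $\|\m\chi\|_\infty\lesssim 2^{k+p_{\max}}$, yields the bound with factor $2^{(k_2+p_2)/2}$. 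The symmetric estimate (fixing $\xi-\eta$ instead of $\xi$, or invoking $|\partial_{\xi_1}\Phi|\gtrsim K$) gives the variant with $2^{(k_1+p_1)/2}$, and the minimum follows. Item (4) is entirely analogous under the $\tchi$-localization: $\|\m\tchi\|_\infty\lesssim 2^{k+p_{\max}+q_{\max}}$ replaces its $\chi$-counterpart, $|\partial_{\eta_2}\Phi|\gtrsim K$ controls the $\eta_2$-thickness of the resonant slab, while $\supp\tchi$ constrains $|\eta_1|\lesssim 2^{k_2+q_2}$. Item (5) is obtained by replaying the arguments of (1)--(3) with $\chi$ replaced by $\tchi$ throughout.

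\textbf{Main obstacle.} The most delicate step is the $L^\infty\times L^2\to L^2$ bilinear Fourier multiplier bound required in item (2): one must exploit the smoothness and compact support encoded in $\chi$ at all three scales $2^k, 2^{k_j}, 2^{p_j}$ to establish $\|\widetilde m\|_{L^1}\lesssim \|m\|_{L^\infty}$ \emph{without any loss} in localization parameters. The anisotropy of the $(k,p)$-localizations forces the scaling bookkeeping to treat radial and angular directions separately, but the volumes of $\supp\chi$ and the decay scales of $\widetilde m$ produced by integration by parts cancel exactly.
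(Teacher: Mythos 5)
Your approach for items (1), (3), (4), (5) matches the paper's and is correct. Item (1) is indeed a direct application of the set-size estimate Lemma~\ref{lemma: set size estimate} once one notes $\|\m^{nr}\Phi^{-1}\chi\|_{L^\infty}\lesssim \lambda^{-1}2^{k+p_{\max}}$. Items (3) and (4) use the same measure-theoretic slab argument the paper runs (via duality and the change of variables $\eta\mapsto(\Phi,\eta_2)$ resp.\ $(\eta_1,\Phi)$), and item (5) is just the $\tchi$ rerun.

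For item (2), however, there is a genuine gap in your treatment of the bilinear kernel bound. You correctly reduce the estimate to a Coifman--Meyer pseudo-product bound, and correctly identify that it comes down to controlling the $L^1$-norm of the kernel $\widetilde m$ (equivalently, $\|\m^{nr}\Phi^{-1}\chi\|_W$ in the paper's notation). But you then assert ``$\|\widetilde m\|_{L^1}\lesssim\|m\|_{L^\infty}$ ... is a consequence of the smoothness and compact support encoded in $\chi$.'' This reasoning is not adequate: the multiplier $m=\m^{nr}\Phi^{-1}\chi$ also contains $\m$, $\Phi^{-1}$, and the cutoff $1-\psi(\lambda^{-1}\Phi)$, and the repeated integration by parts you invoke hits those factors. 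One must therefore verify that the (anisotropic) derivatives of $\m$ and of $\Phi^{-1}(1-\psi(\lambda^{-1}\Phi))$ are under control at the radial and angular scales $2^{k_j}$, $2^{k_j+p_j}$ dictated by $\chi$ --- this is exactly what the paper isolates in Lemmas~\ref{lemma: multiplier bound W norm} and \ref{lemma: m phi^-1 multiplier bound}, which give $\|\m\|_W\lesssim 2^{k+p_{\max}}$ and $\|\Phi^{-1}(1-\psi(\lambda^{-1}\Phi))\chi\|_W\lesssim\lambda^{-1}$, and then combines them through the algebra property \eqref{eqn: algebra proeprty W}. Neither of these $W$-norm bounds is the ``automatic'' inequality $\|\widetilde m\|_{L^1}\lesssim\|m\|_{L^\infty}$ (which is false as a general statement, and even here the route is $\|m\|_W\lesssim 2^{k+p_{\max}}\lambda^{-1}$, only coincidentally matching the $L^\infty$ bound). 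You flag this as the main obstacle, but that does not discharge it: the claimed cancellation between $|\supp\chi|$ and the decay of $\widetilde m$ is precisely the content of the two auxiliary lemmas, and your sketch omits the derivative analysis of $\m$ and $\Phi^{-1}$ that makes it work.
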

\begin{proof}
    The claims \eqref{it1: lemma: normal forms} and \eqref{it2: lemma: normal forms} follow by the set-size estimate Lemma \ref{lemma: set size estimate} and multiplier bounds Lemmas \ref{lemma: multiplier bound W norm}, \ref{lemma: m phi^-1 multiplier bound}. We prove the third claim.  Assume  $\abs{\partial_{\eta_1}\Phi}\chi\gtrsim K$ and without loss of generality that $2^{k_2+p_2}\lesssim 2^{k_1+p_1}$ (otherwise exchange $h$ and $\hat{f}$). For any $h\in L^2$ there holds:
             \begin{align*}
                 \sabs{\langle{\mathcal{Q}}_{\m^{res}} (f,g),h\rangle}&\lesssim \int_{\R^2}\int_{\R^2} \abs{\m(\xi,\eta)}\chi(\xi,\eta)\varphi(\lambda^{-1}\Phi)\sabs{\hat{f}(\xi-\eta)\hat{g}(\eta)}\abs{h(\xi)}d\xi d\eta\\
                 &\lesssim \norm{\m}_{L^\infty_{\xi,\eta}}\snorm{\hat{f}(\xi-\eta)\hat{g}(\eta)}_{L^{2}_{\xi,\eta}}\norm{\chi(\xi,\eta)\varphi(\lambda^{-1}\Phi) h(\xi)}_{L^{2}_{\xi,\eta}}.
             \end{align*}
             It remains to bound the last term and the claim. Observe that
             \begin{align*}
                 \norm{\chi(\xi,\eta)\varphi(\lambda^{-1}\Phi) h(\xi)}_{L^{2}_{\xi,\eta}}^2\lesssim \sup_{\xi}\int_{\eta\in\R^2} \sabs{\chi(\xi,\eta)\varphi(\lambda^{-1}\Phi)}^2d\eta\norm{h}_{L^2_\xi}^2.
             \end{align*}
             It remains to prove
             \begin{align*}
                 \sup_{\xi} \int_{\eta\in\R^2} \sabs{\chi(\xi,\eta)\varphi(\lambda^{-1}\Phi)}^2d\eta\lesssim2^{k_2+p_2}\lambda K^{-1}. 
             \end{align*}
             To that end, we do a change of variables $\eta\mapsto (\Phi(\xi,\eta),\eta_2)=:\zeta$ and use the fact that $\sabs{\det {\frac{\partial \zeta}{\partial \eta}}}=\abs{\partial_{\eta_1}\Phi}$ to obtain for a fixed $\xi$:
             \begin{align*}
                 \int_{\R^2}\sabs{\chi(\xi,\eta)\varphi(\lambda^{-1}\Phi)}^2d \eta &\lesssim\int_{\R^2} \abs{\chi(\xi,\zeta)\varphi(\lambda^{-1}\zeta_1)}^2  \abs{\det {\frac{\partial \zeta}{\partial \eta}}}^{-1}d\zeta
                 \lesssim 2^{k_2+p_2} \lambda K^{-1}.
             \end{align*}
             In case $\abs{\partial_{\xi_1}\Phi}\gtrsim K$, use the change of variables $\xi\mapsto (\Phi(\xi,\eta),\xi_2)$ instead. The proof of the fourth statement is analogous using a change of variables $\eta\mapsto (\eta_1,\Phi(\xi,\eta))$ if $\abs{\partial_{\eta_2}\Phi}\gtrsim K$ and $\xi\mapsto (\xi_1, \Phi(\xi,\eta))$ if $\abs{\partial_{\xi_2}\Phi}\gtrsim K$.
\end{proof}

\section{Bounds on \texorpdfstring{$\partial_tS^NF$}{dtSnf} in \texorpdfstring{$L^2$}{Ltwo}} \label{sec: bounds on dtSf}
 In this section we prove time decay of the time derivative of profiles in $L^2$, which will be used in subsequent sections when performing normal forms. 
\begin{lemma}\label{lemma: bounds on dtSf}
    Let $F\in\{\zz_\pm,\Theta\}$ and assume the bootstrap assumption \eqref{eqn: bootstrap assumption} holds. Then for $\delta=2M^{-1}>0$,  $m\in\N$ and $t\in[2^m,2^{m+1}[\cap[0,T] $ there holds
    \begin{align*}
        \sltwonorm{\partial_tP_kS^bF(t)}\lesssim 2^{\frac{3}{4}k}2^{-2k^+}2^{(-\frac{3}{4}+2\delta) m}\eps^2.
    \end{align*}
\end{lemma}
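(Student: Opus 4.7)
The plan is to exploit the Duhamel form of the evolution: since $F\in\{\zz_\pm,\Theta\}$ is a profile, $\partial_t F$ is exactly the bilinear nonlinearity $\mathcal{Q}_\m(F_1,F_2)$ from \eqref{eqn: bilin_profiles}--\eqref{eqn: Duhamel for the profile}, so $\partial_t P_k S^b F = \sum_{b_1+b_2\leq b} c_{b_1b_2} P_k\mathcal{Q}_\m(S^{b_1}F_1,S^{b_2}F_2)$ by Lemma \ref{lemma: S^N on bilinear expression Qm}. The task thus reduces to bounding each such $P_k\mathcal{Q}_\m$ in $L^2$, which I attack after dyadic decomposition $f_i:=P_{k_i,p_i}R_{l_i}S^{b_i}F_i$, $b_1+b_2\leq N$. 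Throughout, the $B$- and $X$-norm bootstrap bounds \eqref{eqn: bootstrap assumption} control the localized pieces of $F_j$ with $0\leq b_j\leq N$, while the energy estimates of Corollary \ref{cor: energy estimates of Z+-}/\ref{cor: energy estimates SQG} provide uniform $L^2$ control up to $b_j\leq M$.

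First I would dispose of the \emph{simple cases} where some of $k,k_1,k_2$ lies outside a window $[-\alpha m,\alpha m]$ with $\alpha\ll 1$ chosen in terms of $N_0$. Here Lemma \ref{lemma: multiplier bound}, the high-regularity bound $\|F\|_{H^{N_0}}\lesssim\eps$, and a plain $L^\infty\cdot L^2$ pairing using Corollary \ref{cor: linear decay semigroup} produce more than enough decay, the huge $2^{-N_0 k^+}$ and $2^{k^-}$ factors eating any polynomial loss. This also subsumes the angular sums at high $l_i$, via $\|P_{k_i,p_i}R_{l_i}f_i\|_{L^2}\lesssim 2^{-(1+\beta)l_i}2^{-(\frac12+\beta)p_i}\eps$.

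For frequencies in the main window, I organise with Lemma \ref{lemma: case organisation} into \textbf{Case A} (gap in $p$) and \textbf{Case B} (no gap). In Case A, Proposition \ref{prop: lower bound on sigma} gives the dichotomy: either $|\Phi|\gtrsim 1$, allowing a normal form via \eqref{eqn: nonresonant multiplier bilinear estimate} whose boundary term is set-size controlled (Lemma \ref{lemma: normal forms}\eqref{it1: lemma: normal forms}) and whose time-derivative terms are handled inductively in $m$; or $|\sigma|\gtrsim 2^{k_{\min}+k_{\max}+p_{\max}}$, which unlocks repeated integration by parts along $S$ via Lemma \ref{lemma: ibp in bilinear expressions}. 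Each iteration of the IBP trades $t^{-1}$ against a loss $2^{2k_{\min}-p_{\min}}L^{-1}(1+2^{|k_1-k_2|+l_i})$, and iterating a bounded number of times beats any polynomial loss provided $m$ is large. When an iteration would be too expensive (very negative $p_{\min}$), I instead balance $\|P_{k_i,p_i}R_{l_i}f_i\|_{L^2}\lesssim\min\{2^{p_i/2}\|f_i\|_B, 2^{-(1+\beta)l_i-(\frac12+\beta)p_i}\|f_i\|_X\}$ and use the null factor $2^{p_{\max}}$ from Lemma \ref{lemma: multiplier bound} to absorb the $B$-norm's negative-$p$ weight. In Case B ($p\sim p_1\sim p_2$), the refined linear decay Proposition \ref{proposition: linear decay} is decisive: placing one factor in $L^\infty$ via the decomposition $I_{k,p}+II_{k,p}$ yields $L^\infty$ decay $\min\{2^p,2^{-p}t^{-1}\}$ resp.\ $L^2$ decay $2^{-(\frac12+\beta')p}t^{-(\frac12+\beta')}$, both strictly better than $t^{-1/2}$ away from the balance scale $2^p\sim t^{-1/2}$; comparability of the $p_i$ then lets the null factor $2^{p_{\max}}$ convert the $B$-norm losses into genuine gain, producing $t^{-3/4+\delta}\eps^2$ after summation over $p$ (which is logarithmically bounded by the simple-case cutoff). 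For vector field counts $b_i>N-2$, where the decay norm \eqref{norm: decay norm} is not bootstrapped, I substitute Lemma \ref{lemma: linear decay many vector fields}, whose loss $t^{\kappa}$ is absorbed by choosing $\delta\gg\kappa$.

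The main obstacle is the \textbf{Case B.1.1}-type scenario flagged in the outline: the degenerate regime where $p,p_1,p_2$ are all simultaneously very negative and the refined $L^\infty$ decay saturates at $t^{-1/2}$. The $B$-norm's factor $2^{-p/2}$ then accumulates across all three frequency legs and threatens to swamp the single $2^{p_{\max}}$ from the null structure. The resolution I expect is to split the semigroup using the $I_{k,p}$ vs.\ $II_{k,p}$ decomposition and to place $II_{k,p}$ (which decays in $L^2$, not $L^\infty$) on a distinct factor, pair with an $L^\infty$-decaying factor via Proposition \ref{proposition: linear decay}, and use the angular localization $l_i+p_i\geq 0$ (enforced by the $R_l$ projectors) to reconcile the weights. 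After summation over $p$ restricted to $2^p\gtrsim t^{-1/2}$ and invoking the gain $2^{p_{\max}}$ from $\m$, the remaining exponent closes at the desired rate $2^{(-3/4+2\delta)m}\eps^2$.
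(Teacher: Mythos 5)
Your overall architecture (Duhamel reduction, dyadic localization, simple cases via energy estimates, gap-in-$p$ versus no-gap dichotomy, refined linear decay with the $I_{k,p}+II_{k,p}$ split and Lemma~\ref{lemma: linear decay many vector fields} for high vector-field counts) matches the paper's proof closely. However, there is a genuine conceptual gap in your treatment of the gap-in-$p$ case. You propose a dichotomy in which, when $\abs{\Phi}\gtrsim 1$, you apply a normal form via \eqref{eqn: nonresonant multiplier bilinear estimate} and handle the resulting $\partial_t f_i$ terms ``inductively in $m$''. This step cannot be carried out here: the quantity being estimated is $\partial_t P_k S^b F = \sum c_{b_1b_2}\mathcal{Q}_\m(S^{b_1}F_1,S^{b_2}F_2)$, a single-time expression as in \eqref{eqn: bilin_form_Q} with no time integral. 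Normal forms are integration by parts in $s$ inside $\B_\m(f,g)=\int_0^t\mathcal{Q}_\m(f,g)\,ds$, and there is no such integral available in $\mathcal{Q}_\m$. Even setting this aside, your ``inductive in $m$'' closure would not terminate, because the time-derivative terms produced by a normal form live at the same dyadic time scale $s\sim 2^m$; this is precisely why, in the paper, normal forms appear only in Sections~\ref{sec: bounds on B norm}--\ref{sec: bounds on the X-norm}, where the present lemma is already available as an input.

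Fortunately the normal-form branch of your dichotomy is not needed: when $p_{\min}\ll p_{\max}$ and there are no $q$-localizations (so $q_{\max}\sim 0$), the first branch of Proposition~\ref{prop: lower bound on sigma} always gives $\abs{\sigma}\gtrsim 2^{k_{\min}+k_{\max}+p_{\max}}$, regardless of the size of $\Phi$ -- there is no alternative where only $\abs{\Phi}\gtrsim 1$ is available. So the only mechanism needed in the gap case is repeated integration by parts along $S$ via Lemma~\ref{lemma: ibp in bilinear expressions}, and, when that is too expensive (as you correctly anticipate), the set-size estimate of Lemma~\ref{lemma: set size estimate} combined with the balance of $B$- and $X$-norms and the null factor $2^{p_{\max}}$ from Lemma~\ref{lemma: multiplier bound}. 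That is exactly what the paper does (its Cases~B.1 and~B.2), and your own description of the balancing step is essentially the same; you should simply delete the normal-form branch and observe that the $\sigma$ lower bound always holds once a gap in $p$ is assumed. With that correction the argument aligns with the paper's.
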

The lemma is proved following the scheme discussed in Section \ref{sec: outline of the proof}:
Using the energy estimates in \textbf{Case A}, we can reduce to proving the claim \eqref{eqn: claim time derivative} for parameter-localized interactions.
\textbf{Case B} deals with the ``gap in $p$" setting when we can integrate by parts using Lemma \ref{lemma: ibp in bilinear expressions}. When this is not feasible, we take advantage of the smallness of the set over which we integrate to obtain the claim via Lemma \ref{lemma: set size estimate}. The ``no gap" setting is handled in \textbf{Case C} using the linear decay estimate from Section \ref{sec: Linear Decay}. Throughout the proof we employ the multiplier bound from Lemma \ref{lemma: multiplier bound}.

\begin{proof}
By the Duhamel formulations \eqref{eqn: Duhamel BQ Z+ Z-}, \eqref{eqn: Duhamel for the profile} and Lemma \ref{lemma: S^N on bilinear expression Qm}, it suffices to suitably bound the sums 
$\sum_{k_1,k_2\in \Z}\sltwonorm{P_k\mathcal{Q}_\m(P_{k_1}S^{b_1}F_1,P_{k_2}S^{b_2}F_2)}$, where, $b_1+b_2\leq N$ and $F_i\in \{\mathcal{Z}_\pm,\Theta\},\, i=1,2$.

  \par\textbf{Case A: Simple cases.}  The set size estimate Lemma \ref{lemma: set size estimate} and the multiplier bound Lemma \ref{lemma: multiplier bound} yield
    \begin{align*}\sltwonorm{P_k\mathcal{Q}_\m(P_{k_1}S^{b_1}F_1,P_{k_2}S^{b_2}F_2)}&\lesssim \abs{S}\norm{\m}_{L^\infty_{\xi,\eta}}\sltwonorm{P_{k_1}S^{b_1}F_1}\sltwonorm{P_{k_2}S^{b_2}F_2}\\
        &\lesssim 2^{k_{\min}}2^{\frac{p_{\min}}{2}}2^{k}2^{p_{\max}}2^{-N_0(k_1^++k_2^+)}\snorm{P_{k_1}S^{b_1}F_1}_{H^{N_0}}\snorm{P_{k_2}S^{b_2}F_2}_{H^{N_0}}.
    \end{align*}
    Hence if $k_{\max}\geq\delta_0m:= 2N^{-1}_0m$ or $k_{\min}\leq -2m$ and since $N_0>4$, we obtain with the bootstrap assumption \eqref{eqn: bootstrap assumption} that
    \begin{equation}\label{eqn: sum in k_i}
    \begin{split}
        \sum_{\substack{k_1,k_2\in \Z\\ k_{\max}\geq\delta_0m \text{ or } k_{\min}\leq -2m} }\sltwonorm{P_k\mathcal{Q}_\m(P_{k_1}S^{b_1}F_1,P_{k_2}S^{b_2}F_2)}\lesssim 2^k2^{-2k^+}2^{-m}\eps^2,
    \end{split}
    \end{equation}
    and it remains to bound
    \begin{equation}
        \sum_{\substack{k_1,k_2 \in \Z\\ -2m<k_1,k_2<\delta_0m}}\sltwonorm{P_k\mathcal{Q}_\m(P_{k_1}S^{b_1}F_1,P_{k_2}S^{b_2}F_2)}.
    \end{equation}
Localizing further in $p,\,p_i$ and $l_i$, $i=1,2$, and writing $f_i=P_{k_i,p_i}R_{l_i}S^{b_i}F_i$ we have
 \begin{align}\label{eqn: sum in pi,li}
\sltwonorm{P_k\mathcal{Q}_\m(P_{k_1}S^{b_1}F_1,P_{k_2}S^{b_2}F_2)}=\sum_{p\in \Z^-}\sum_{\substack{p_1\in \Z^-, l_1\in \Z^+\\ p_1+l_1\geq 0}}\sum_{\substack{p_2\in \Z^-, l_2\in \Z^+\\ p_2+l_2\geq 0}} \ltwonorm{P_{k,p}\mathcal{Q}_{\m}(f_1,f_2)}.
 \end{align}
Observe that for $\max\{l_1,l_2\}\geq 2m$
    \begin{align*}
       \sum_{\substack{p_i\in \Z^-, l_i\in \Z^+\\ \max\{l_1,l_2\}\geq 2m}}\ltwonorm{P_{k,p}\mathcal{Q}_{\m}(f_1,f_2)}&\lesssim  \hspace*{-9pt}\sum_{\substack{p_i\in \Z^-, l_i\in \Z^+\\ \max\{l_1,l_2\}\geq 2m}}\hspace*{-9pt}2^{k_{\min}}2^{\frac{p_{\min}}{2}}2^k2^{-4k_1^+}2^{-l_1}2^{-\frac{p_1}{2}}\norm{f_1}_X2^{-4k_2^+}2^{-l_2}2^{-\frac{p_2}{2}}\norm{f_2}_X\\
        &\lesssim 2^{k}2^{-2k^+}2^{(-1+\delta)m}\eps^2.
    \end{align*}
   Therefore, it remains to bound bilinear terms for the following localization parameters:
    \begin{align}\label{eqn: localization paramters reduction dtSNf}
     &  -2m<k,\;k_1,\;k_2<\delta_0m, && -2m< p_1,\;p_2\leq 0, && 0\leq l_1,\;l_2< 2m.
   \end{align}
   Observe that each sum on the right-hand side of \eqref{eqn: sum in k_i}, \eqref{eqn: sum in pi,li} ranges over an interval of order $m\lesssim 2^{\gamma m}$ for all $\gamma>0$. Thus, it suffices to prove
   \begin{align}\label{eqn: claim time derivative}
        \ltwonorm{P_{k,p}\mathcal{Q}_{\m}(f_1,f_2)}\lesssim 2^{\frac{3}{4}k}2^{-2k^+}2^{(-\frac{3}{4}+\delta)m}\eps^2,
   \end{align}
  for the localization parameters as in \eqref{eqn: localization paramters reduction dtSNf} and $\delta=2 M^{-1}$.
    \par \textbf{Case B: Gap in $p$ with $p_{\min}\ll p_{\max}$.} In this part we assume without loss of generality that $k_1\leq k_2$. Then $k_{\min}\in \{k,k_1\}$ and $k_{\max}\in \{k,k_2\}$ and so in particular $2^{k_{\max}+k_{\min}}\sim 2^{k+k_1}.$ In this case by Proposition \ref{prop: lower bound on sigma} there holds $\abs{\sigma}\gtrsim 2^{p_{\max}}2^{k_1+k}$. With the condition  $k_1\leq k_2$, we have two further subcases to cover, namely $k_{\min}=k_1$ and $k_{\min}=k$.  
    \par \textbf{Case B.1:} $k_{\min}=k_1$, then there holds $2^{k}\sim 2^{k_2}$. If 
    \begin{align}\label{eqn: dtS^nf A.1 l_1}
        -p_{\max}+2l_1\leq (1-\delta)m,
    \end{align} 
    we obtain the claim by integrating by parts $M\gg N$ times along $S_{\eta}$. Indeed, by Lemma \ref{lemma: ibp in bilinear expressions}\eqref{it1: lemma: ibp in bilinear expressions} and the energy estimates \eqref{eqn:bootstrap-prop-bded-energy} there holds
    \begin{align*}
        \ltwonorm{P_{k,p}\mathcal{Q}_{\m}(f_1,f_2)}&\lesssim 2^k  \inftynorm{\F(\mathcal{Q}_\m(f_1,f_2))}\\
        &\lesssim 2^k2^{k_{\max}} [2^{-m}2^{-p_1}2^{k_{1}-k-p_{\max}}(1+2^{k_2-k_1}2^{l_1})]^M \\
        &\hspace{1cm} \cdot \sltwonorm{P_{k_1,p_1}R_{l_1}(1,S)^Mf_1}\sltwonorm{P_{k_2,p_2}R_{l_2}(1,S)^Mf_2}\\
        &\lesssim 2^{k}2^{-{3}k^+}[2^{-m}2^{-p_1-p_{\max}}2^{l_1}]^M\eps^2\\
        &\lesssim 2^{k}2^{-{3}k^+}2^{-2m}\eps^2,
    \end{align*}
    where $\delta:=2M^{-1}\ll 1$. Similarly, integration by parts along $S_{\xi-\eta}$ as per Lemma \ref{lemma: ibp in bilinear expressions}\eqref{it1: lemma: ibp in bilinear expressions} yields the claim \eqref{eqn: claim time derivative} if 
    \begin{align}\label{eqn: dtS^nf A.1 l_2}
        \max \{ k-k_1-p_{\max}+l_2, 2l_2-p_{\max} \}<(1-\delta)m.
    \end{align}
    Indeed, from Lemma \ref{lemma: ibp in bilinear expressions}\eqref{it1: lemma: ibp in bilinear expressions} with $2^{k_1-k_2}\lesssim 1$, $2^{k_2}\sim 2^k$ and $2^{-p_2}\lesssim 2^{l_2}$, the contribution at each iteration is
    \begin{align*}
        2^{-m}2^{2k_2}2^{-p_2}2^{-p_{\max}-{k_1}-{k}}(1+2^{k_1-k_2}2^{l_2})\lesssim 2^{-m}2^{k-k_1}2^{-p_{\max}}2^{l_2}+2^{-m}2^{-p_{\max}}2^{2l_2}.
    \end{align*}
    Assume now that \eqref{eqn: dtS^nf A.1 l_1} and \eqref{eqn: dtS^nf A.1 l_2} don't hold, then we consider two cases depending on which term on the right-hand side of \eqref{eqn: dtS^nf A.1 l_2} is larger.
    \par\textbf{1.} Assume first that  $-2l_1<-(1-\delta)m-p_{\max}$ and $-l_2<-(1-\delta)m-p_{\max}+k-k_1$. By the set size estimate Lemma \ref{lemma: set size estimate} with $\abs{S}\lesssim 2^{k_1+\frac{p_1}{2}}$ we obtain:
    \begin{align*}
        \ltwonorm{P_{k,p}\mathcal{Q}_{\m}(f_1,f_2)}&\lesssim 2^{k}2^{p_{\max}} 2^{k_{\min}+\frac{p_{\min}}{2}}2^{-4k_1^+}2^{-{l_1}}2^{-\frac{p_1}{2}}\norm{f_1}_X2^{-4k^+}2^{-\frac{l_2}{2}}\norm{f_2}_X\\
        &\lesssim 2^{k+p_{\max}}2^{k_1}2^{-4k_1^+-4k^+}2^{(-1+\delta)m}2^{-p_{\max}}2^{\frac{k-k_1}{2}}\eps^2\\
        &\lesssim 2^k2^{-3k^+}2^{(-1+\delta)m}\eps^2.
    \end{align*}
   \par \textbf{2.} Assume  $\max\{-2l_1,-2l_2\}<-(1-\delta)m-p_{\max}$. Again using the size set estimate Lemma \ref{lemma: set size estimate} we obtain:
    \begin{align*}
        \ltwonorm{P_{k,p}\mathcal{Q}_{\m}(f_1,f_2)}&\lesssim 2^{k}2^{p_{\max}} 2^{k_{\min}+\frac{p_{\min}}{2}}2^{-4k_1^+}2^{-l_1-\frac{p_1}{2}}\norm{f_1}_X2^{-4k^+}2^{-\frac{l_2}{2}}\norm{f_2}_X\\
        &\lesssim 2^k2^{-2k^+}2^{p_{\max}}2^{(-\frac{3}{4}+\frac{3}{4}\delta)m}2^{-\frac{3}{4}p_{\max}}\eps^2\\
        &\lesssim 2^k2^{-2k^+}2^{(-\frac{3}{4}+\delta)m}\eps^2.
    \end{align*}
    \par \textbf{Case B.2:} $k_{\min}=k$ and therefore $2^{k_2}\sim 2^{k_1}$. We obtain the claim \eqref{eqn: claim time derivative} through integration by parts along $S_{\eta}$ if $ k_1-k-p_{\max}+2l_1\leq (1-\delta)m.$
    Indeed, there holds
    \begin{align*}
         \ltwonorm{P_{k,p}\mathcal{Q}_{\m}(f_1,f_2)}&\lesssim 2^k \inftynorm{\F(\mathcal{Q}_\m(f_1,f_2))}\\
        &\lesssim 2^{k+k_{\max}} [2^{-m}2^{-p_1}2^{k_1-k-p_{\max}}(1+2^{k_2-k_1}2^{l_1})]^M \\
        &\hspace{1cm} \cdot \ltwonorm{P_{k_1,p_1}R_{l_1}(1,S)^Mf_1}\ltwonorm{P_{k_2,p_2}R_{l_2}(1,S)^Mf_2}\\
        &\lesssim 2^{k}2^{-3k^+}[2^{-m}2^{k_1-k}2^{2l_1}2^{-p_{\max}}]^M\eps^2\\
        &\lesssim 2^{k}2^{-3k^+}2^{-2m}\eps^2.
    \end{align*}
    Similarly, by integrating by parts in $S_{\xi-\eta}$, we obtain the claim \eqref{eqn: claim time derivative} if $k_1-k-p_{\max}+2l_2\leq (1-\delta)m$. 
    Otherwise if $\max\{-2l_1,-2l_2\}< -(1-\delta)m+k_1-k-p_{\max}$, we estimate as in \textbf{Case B.1}:
   \begin{align*}
        \ltwonorm{P_{k,p}\mathcal{Q}_{\m}(f_1,f_2)}&\lesssim 2^{k}2^{p_{\max}} 2^{k_{\min}+\frac{p_{\min}}{2}}2^{-8k_1^+}2^{-l_1-\frac{p_1}{2}}\norm{f_1}_X2^{-\frac{l_2}{2}}\norm{f_2}_X\\
        &\lesssim 2^{k+p_{\max}}2^{\frac{k_1+k}{2}}2^{-8k_1^+}2^{(-\frac{3}{4}+\delta)m}2^{-\frac{3}{4}p_{\max}}2^{\frac{3}{4}(k_1-k)}\eps^2\\
        &\lesssim 2^{\frac{3}{4}k}2^{-2k^+}2^{(-\frac{3}{4}+\delta)m}\eps^2.
   \end{align*} 
   \par \textbf{Case C: No gaps with $p\sim{p_1}\sim{p_2}$.} Assume without loss of generality that $f_1$ has fewer vector fields than $f_2$, i.e. $b_1\leq b_2$. Then we can use Proposition \ref{proposition: linear decay} on $f_1$ such that for $0<\beta'<\beta$ we have the decomposition
   \begin{align*}
       P_{k_1,p_1}e^{it\Lambda}f_1=I_{k_1,p_1}(f_1)+II_{k_1,p_1}(f_1).
   \end{align*}
Moreover, we can apply Lemma \ref{lemma: linear decay many vector fields} with $\kappa\ll \beta'$ on $f_2$, so that the following bounds hold
   \begin{align*}
      & \inftynorm{I_{k_1,p_1}(f_1)}\lesssim 2^{\frac{3}{4}k_1}2^{-\frac{15}{4}k_1^+}2^{-p}2^{(-1+\delta)m}\norm{f_1}_D,&&
        \ltwonorm{II_{k_1,p_1}(f_1)}\lesssim  2^{-4k_1^+}2^{-\frac{p}{2}}2^{-\frac{m}{2}} \norm{f_1}_D,\\
        &\sinftynorm{P_{k_2,p_2}e^{it\Lambda}f_2}\lesssim 2^{\frac{3}{4}k_2}2^{-3k_2^+}2^{(-\frac{1}{2}+\kappa)m}\eps^2.
   \end{align*}
   With these, we obtain the claim \eqref{eqn: claim time derivative}:
   \begin{align*}
         \ltwonorm{P_{k,p}\mathcal{Q}_{\m}(f_1,f_2)}&\lesssim  \ltwonorm{\m I_{k_1,p_1}(f_1)e^{it\Lambda}P_{k_2,p_2}f_2}+\ltwonorm{\m II_{k_1,p_1}(f_1)e^{it\Lambda}P_{k_2,p_2}f_2}\\
         &\lesssim 2^{k+p}(\inftynorm{I_{k_1,p_1}(f_1)}\ltwonorm{P_{k_2,p_2}e^{it\Lambda}f_2}+\ltwonorm{II_{k_1,p_1}(f_1)}\inftynorm{P_{k_2,p_2}e^{it\Lambda}f_2})\\
         &\lesssim 2^{k+p}[2^{\frac{3}{4}k_1}2^{-\frac{15}{4}k_1^+}2^{-p}2^{(-1+\delta)m}\norm{f_1}_D2^{-4k_2^+}2^{\frac{p}{2}}\norm{f_2}_B\\
         &\hspace{1cm}+ 2^{-4k_1^+}2^{-(\frac{1}{2}+2\beta')p}2^{(-\frac{1}{2}-\beta')m}\norm{f_1}_D2^{\frac{3}{4}k_2-3k_2^+}2^{(-\frac{1}{2}+\kappa)m}\eps]\\
         &\lesssim 2^k 2^{-2k_1^+-2k_2^+}[2^{(-1+\delta)m}+2^{(-\frac{1}{2}-\beta')m}2^{(-\frac{1}{2}+\kappa)m}]\eps^2\\
         &\lesssim 2^k2^{-2k^+}2^{(-1+\delta)m}\eps^2.
   \end{align*}
   This concludes the proof of the lemma.
\end{proof}

\section{Bounds on the \texorpdfstring{$B$}{B}-norm}\label{sec: bounds on B norm} In this section we prove bounds on the $B$-norm of bilinear terms needed in the proof of Proposition \ref{prop: bootstrap argument}. As explained in \eqref{eqn: bootstrap bilinear estimate reduced}, \eqref{eqn: bootstrap B_m time decomposition} it suffices to suitably bound $\F\B_\m(F_1,F_2)(t,\xi)=\int_0^t \mathcal{Q}_\m(F_1,F_2)(s,\xi)ds, $
where $\mathcal{Q}_\m(F_1,F_2)$ is a bilinear expression as in \eqref{eqn: bilinear form Qm} and  $\B_\m$ is localized on a time interval $t\in [2^m,2^{m+1}[$, $m\in \N$.
\begin{proposition}\label{prop: bounds on B norm}
In the setting of Proposition \ref{prop: bootstrap argument}, and in particular under the bootstrap assumptions \eqref{eqn: bootstrap assumption}, the following holds true: For $\m\in \set{\m_0,\m_\pm^{\mu\nu}}{\mu,\nu\in\{+,-\}}$, $t\in [2^m,2^{m+1}[\cap [0,T]$ and $\delta=2M^{-\frac{1}{2}}$, there holds that
   \begin{align}
        \norm{\B_\m(F_1,F_2)}_B\lesssim  2^{(\frac{1}{6}+2\delta) m}\eps^2 +2^{(\frac{1}{4}+3\delta) m}\eps^3,
    \end{align}
    where $F_i\in \{S^{b_i}\mathcal{Z}_{\pm},S^{b_i}\Theta\}$, $0\leq b_1+b_2\leq N$, $i=1,2$.
\end{proposition}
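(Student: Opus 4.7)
The plan is to establish the stated $B$-norm bound by reducing to a bound of the form
\[
\sltwonorm{P_{k,p}\B_\m(F_1,F_2)} \lesssim 2^{-4k^+}2^{k^-/2}2^{p/2}\bigl(2^{(1/6+2\delta)m}\eps^2+2^{(1/4+3\delta)m}\eps^3\bigr)
\]
for all $(k,p)$, after first introducing full dyadic localizations $f_i := P_{k_i,p_i}R_{l_i}S^{b_i}F_i$. Paralleling the scheme of Lemma \ref{lemma: bounds on dtSf}, I first dispose of \emph{simple cases}. When $k^+ \gtrsim \delta_0 m = 2N_0^{-1}m$ or $k_{\min} \leq -2m$, a crude combination of Lemma \ref{lemma: set size estimate} with the $H^{N_0}$ energy estimate (Corollary \ref{cor: energy estimates of Z+-}) absorbs a factor $2^{-N_0 k^+}$ or a factor $2^{k_{\min}}$ with huge power, which easily beats the target after time integration. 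Similarly, $l_1$ or $l_2 \geq 2m$ is killed by two copies of the $X$-norm and the weight $2^{-(1+\beta)l_i}$. Therefore one reduces to the compact range $-2m<k,k_i<\delta_0 m$, $-2m<p,p_i\leq 0$, $0\leq l_i<2m$, within which a polynomial-in-$m$ loss from summing over the $O(m^5)$ choices is harmless and it suffices to bound a single dyadic piece by the same target, up to $2^{\gamma m}$ for arbitrarily small $\gamma$.

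On this reduced range, Lemma \ref{lemma: case organisation} splits the analysis into a \emph{gap-in-$p$} regime (WLOG $p=p_{\min}\leq p_{\max}-10$) and a \emph{no-gap} regime ($p\sim p_1 \sim p_2$). In the gap case I further distinguish two subcases according to the size of $p_{\max}$. If $p_{\max}\sim 0$ one has the lower bound $|\sigma|\gtrsim 2^{k_{\max}+k_{\min}}$ from the definition \eqref{eqn: symmetry of sigma}, so Lemma \ref{lemma: ibp in bilinear expressions} provides iterated IBP along $S_\eta$ or $S_{\xi-\eta}$. Each iteration gains $2^{-m}$ multiplied by a cost $2^{2k_{\min}-p_{\min}-k_{\max}-k_{\min}}(1+2^{k_{\max}-k_{\min}}2^{l_j})$; after $M$ iterations this is summable provided a favorable combination of $-p_i,l_j,k-k_i$ holds. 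In the remaining configurations of $(l_i,p_i)$ one instead applies the set-size estimate Lemma \ref{lemma: set size estimate} together with the $X$-norm, extracting $2^{k_{\min}+p_{\min}/2}$ and $2^{-(1+\beta)l_i-(1/2+\beta)p_i}$; the surplus $2^{p_{\min}/2}$ is precisely what produces the weight $2^{p/2}$ demanded by the $B$-norm, and the $2^{m/6+2\delta m}$ factor arises from balancing the two $X$-norms against the one power of $2^m$ coming from time integration, tracking the worst possible combination of $l_i,p_i$.

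The more delicate subcase is $p_{\max}\ll 0$: here Proposition \ref{prop: lower bound on sigma}, summed over a $q$-localization, yields $|\Phi|\gtrsim 1$, and I perform a normal form as in \eqref{eqn: nonresonant multiplier bilinear estimate}. The boundary term $\mathcal{Q}_{\m\Phi^{-1}}$ is estimated via Lemma \ref{lemma: normal forms}\eqref{it1: lemma: normal forms}-\eqref{it2: lemma: normal forms} using Corollary \ref{cor: linear decay semigroup} on one input, producing the $t^{-1/2}$ decay at the only time-boundary and contributing $2^{m/2}$ after multiplying by the $B$-weight $2^{-p/2}\sim 2^{-p_{\max}/2}\lesssim 2^{m/2}$; careful book-keeping shows this is compatible with the target. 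The time-integral terms $\B_{\m\Phi^{-1}}(\partial_tf_i,\cdot)$ use the improved $L^2$-decay $2^{(-3/4+2\delta)m}$ of $\partial_t S^bF$ from Lemma \ref{lemma: bounds on dtSf}, paired with a linear-decay bound on the other factor; this is the source of the cubic term $2^{(1/4+3\delta)m}\eps^3$ in the stated bound.

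Finally, in the no-gap regime $p\sim p_1\sim p_2$, I apply Proposition \ref{proposition: linear decay} to decompose $P_{k_i,p_i}e^{it\Lambda}f_i = I_{k_i,p_i}(f_i)+II_{k_i,p_i}(f_i)$ on the factor carrying fewer vector fields, and Lemma \ref{lemma: linear decay many vector fields} on the other, exactly as in Case C of Lemma \ref{lemma: bounds on dtSf}. The crucial point is that the split trades $L^\infty$ decay (from $I$) for $L^2$ decay (from $II$) at a rate $2^{-(1/2+\beta')m}2^{-(1/2+2\beta')p}$, and it is the combined factor $2^{p}\cdot 2^{-(1/2+2\beta')p}\sim 2^{(1/2-2\beta')p}$ paired with the $B$-weight $2^{-p/2}$ that closes the estimate in the common-$p$ configuration, yielding the $2^{m/6}$ rather than the naive $2^{m/2}$. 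The main obstacle throughout is the subcase $p_{\max}\ll 0$ in the gap regime: the normal form is only effective because Proposition \ref{prop: lower bound on sigma} dichotomizes between largeness of $\sigma$ (permitting IBP along $S$) and largeness of $\Phi$ (permitting a normal form), and because the $\partial_t f_i$ terms benefit from the non-trivial $t^{-3/4+}$ gain of Lemma \ref{lemma: bounds on dtSf}; without either input one cannot recover the $2^{p/2}$ weight demanded by the $B$-norm.
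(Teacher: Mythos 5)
Your proposal follows essentially the same structure as the paper's proof: dispose of simple cases by energy estimates, $X$-norm weights and set-size, split into gap-in-$p$ (with $p_{\max}\sim 0$ vs.\ $p_{\max}\ll 0$) and no-gap regimes, use $S$-integration by parts via Lemma~\ref{lemma: ibp in bilinear expressions} when $|\sigma|\gtrsim 2^{k_{\max}+k_{\min}}$, a normal form via \eqref{eqn: nonresonant multiplier bilinear estimate} when $|\Phi|\gtrsim 1$, and the linear decay decomposition $I+II$ of Proposition~\ref{proposition: linear decay} in the no-gap regime. However, two of your annotations do not match what actually drives the estimate. First, the factor $2^{(1/6+\delta)m}$ does \emph{not} arise from the no-gap regime --- the paper's Case D yields only $2^{\beta m}\eps^2$, much smaller. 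The $2^{m/6}$ comes from the gap-in-$p$ regime with $p_{\max}\sim 0$ in the geometry $2^k\ll 2^{k_1}\sim 2^{k_2}$, $p_1\le p_2\ll p\sim 0$ (Case~B.2(b)), where neither $S$-IBP nor a simple $B$-norm bound is available and an interpolation using a fractional power of the $X$-norm on the $l_2$-localized factor is needed. If you design your argument around the belief that the no-gap case is worst, you are likely to under-engineer B.2(b).

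Second, your handling of the boundary term $\mathcal{Q}_{\m\Phi^{-1}}$ in the $p_{\max}\ll 0$ regime would not close as written. The estimate $2^{-p/2}\sim 2^{-p_{\max}/2}\lesssim 2^{m/2}$ is not generally valid, since the output parameter $p$ can equal $p_{\min}\ll p_{\max}$ and $2^{-p/2}$ can be as large as $2^{2m}$ in the reduced range; pairing linear $t^{-1/2}$ decay with this would lose $2^{m/2}$, exceeding the target $2^{(1/6+\delta)m}$. The paper instead bounds the boundary term with \emph{no} decay input at all: it applies the set-size estimate Lemma~\ref{lemma: set size estimate} with $|S|\lesssim 2^{k+p/2}$, whose $2^{p/2}$ exactly cancels the $B$-weight $2^{-p/2}$, and then puts $B$-norms on both inputs, giving $2^{\delta m}\eps^2$. (Also a minor point: $|\Phi|\gtrsim 1$ when $p_{\max}\ll 0$ is a direct consequence of all three $|\Lambda|$'s being close to $1$; Proposition~\ref{prop: lower bound on sigma} is not needed and does not yield this by ``summing over a $q$-localization.'') The $\partial_t$-terms you handle correctly via Lemma~\ref{lemma: bounds on dtSf}, which is indeed the source of the cubic contribution $2^{(1/4+3\delta)m}\eps^3$.
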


 The proof follows the outline presented in Section \ref{sec: outline of the proof} and expands on the arguments already employed in Section \ref{sec: bounds on dtSf}. \blue{The \emph{simple cases} and the \emph{no gaps} cases (\textbf{Cases A, D} below) follow along similar lines to the proof in Section \ref{sec: bounds on dtSf}. The \emph{gap in $p$} case is split into two parts: if $\max\{p,p_1,p_2\}\sim 0$ (\textbf{Case B} below), the claim follows using integration by parts or else the $B$- and $X$-norms and set-size estimates. On the other hand, if (\textbf{Case C} below) $\max\{p,p_1,p_2\}\ll 0$ and thus $\abs{\Phi}\gtrsim 1$, we can employ normal forms, a new and central feature compared to the proof of Lemma \ref{lemma: bounds on dtSf}. This allows for the localized bilinear expressions to be bounded by (see \eqref{eqn: nonresonant multiplier bilinear estimate}):
    \begin{align*}
           \ltwonorm{P_{k,p}\B_{\m}(f_1,f_2)}\lesssim \ltwonorm{P_{k,p}\mathcal{Q}_{\m\Phi^{-1}}(f_1,f_2)}+\ltwonorm{P_{k,p}\B_{\m\Phi^{-1}}(\partial_tf_1,f_2)}+\ltwonorm{P_{k,p}\B_{\m\Phi^{-1}}(f_1,\partial_tf_2)},
    \end{align*} where $f_i,$ $i=1,2$ are profiles localized both in frequency and  space. The first boundary term is relatively easy to estimate as we have one time parameter less to ``gain". The other two terms are handled using the bound on the time derivative Lemma \ref{lemma: bounds on dtSf}.} 
\begin{proof}
    By the definition of the $B$-norm in \eqref{B norm} we have after localizing in $k_i$, $i=1,2$:
    \begin{align*}
        \norm{\B_\m(F_1,F_2)}_B=\sup_{k\in \Z,p\in\Z^-}2^{4k^+}2^{-\frac{k^-}{2}}2^{-\frac{p}{2}}\sum_{k_1,k_2\in\Z}\ltwonorm{P_{k,p}\B_\m(P_{k_1}F_1,P_{k_2}F_2)}.
    \end{align*}
   \par \textbf{Case A: Simple cases.}  
   If for $\delta_0:=2N_0^{-1}$ there holds that $k_{\max}\geq\delta_0m$ or $k_{\min}\leq -4m$, then the claim is obtained using Lemma \ref{lemma: set size estimate}, the multiplier bound Lemma \ref{lemma: multiplier bound} and the bootstrap assumption \eqref{eqn: bootstrap assumption} together with the energy estimates \eqref{eqn:bootstrap-prop-bded-energy}, by summing the following bound over $k_1,\;k_2$ within this range:
    \begin{align*}
        2^{4k^+}2^{-\frac{k^-}{2}}2^{-\frac{p}{2}}\ltwonorm{P_{k,p}\B_\m(P_{k_1}F_1,P_{k_2}F_2)}&\lesssim 2^m2^{\frac{9}{2}k^+}2^{-\frac{k}{2}}2^{-\frac{p}{2}}\abs{S}\inftynorm{\m\chi}\sltwonorm{P_{k_1}F_1}\sltwonorm{P_{k_2}F_2}\\
        &\lesssim 2^m2^{\frac{k_{\min}}{2}+k+\frac{9}{2}k^+}2^{-N_0k_1^+}2^{-N_0k_2^+}\snorm{P_{k_1}F_1}_{H^{N_0}}\snorm{P_{k_2}F_2}_{H^{N_0}}\\
        &\lesssim 2^m2^{\frac{k_{\min}}{2}}2^{-(N_0-6)k_1^+}2^{-(N_0-6)k_2^+}\eps^2.
    \end{align*}
   So from now on we can assume $-4m<k,k_1,k_2<\delta_0m$. We localize further in $p_i,\,l_i$ with $p_i+l_i\geq0$, $i=1,2$ and let $f_i=P_{k_i,p_i}R_{l_i}F_i$. If $\max\{l_1,l_2\}\geq 4m$ we obtain with the set size estimate Lemma \ref{lemma: set size estimate} and the bootstrap assumption:
    \begin{align*}
        {2^{4k^+}}2^{-\frac{k^-}{2}}2^{-\frac{p}{2}}\ltwonorm{P_{k,p}\B_\m(f_1,f_2)}
        &\lesssim 2^{m+2\delta_0m}2^{-l_1-l_2}2^{-\frac{p_1}{2}}2^{-\frac{p_2}{2}}\norm{f_1}_X\norm{f_2}_X\\
        &\lesssim 2^{(-1+2\delta_0)m}2^{-\frac{l_1+p_1}{2}}2^{-\frac{l_2+p_2}{2}}\eps^2.
    \end{align*}
 Thus, as explained in \textbf{Case A} in the proof of Lemma \ref{lemma: bounds on dtSf}, see also \eqref{eqn: sum in k_i}-\eqref{eqn: sum in pi,li}, it suffices to establish the claim
\begin{align}\label{eqn: B norm claim}
    \sup_{k,p}2^{4k^+}2^{-\frac{k^-}{2}}2^{-\frac{p}{2}}\ltwonorm{P_{k,p}\B_{\m}(f_1,f_2)}\lesssim 2^{(\frac{1}{6}+\delta) m}\eps^2 +2^{(\frac{1}{4}+\frac{5}{2}\delta) m}\eps^3
\end{align}for the following localization parameters:
\begin{align} \label{eqn: localisation parameter assumptions B norm}
 &   -4m<k,k_i<\delta_0 m , && -p_i\leq l_i\leq 4m, && -4m\leq p_i\leq 0, &&i=1,2.
\end{align}
\par\textbf{Case B: Gap in $p$ with $p_{\min}\ll p_{\max}\sim 0$.} We assume w.l.o.g.\ that $p_1\leq p_2$. Observe that by Proposition \ref{prop: lower bound on sigma} there holds $\abs{\sigma}\sim 2^{k_{\min}+k_{\max}}$. 
Moreover the multiplier bound $\inftynorm{\m \chi}\lesssim 2^{k}$ holds by Lemma \ref{lemma: multiplier bound}. Repeated integration by parts in $S_\eta$ or $S_{\xi-\eta}$ as per Lemma \ref{lemma: ibp in bilinear expressions}\eqref{it1: lemma: ibp in bilinear expressions} yields the claim if
\begin{equation}\label{eqn: ibp is working}
    \begin{split}
      S_{\eta}:\hspace{1cm}  2^{2k_1}2^{-p_1}2^{-k_{\max}-k_{\min}}(1+2^{k_2-k_1}2^{l_1})&\leq 2^{(1-\delta)m}, \\
      S_{\xi-\eta}:\hspace{1cm}   2^{2k_2}2^{-p_2}2^{-k_{\max}-k_{\min}}(1+2^{k_1-k_2}2^{l_2})&\leq 2^{(1-\delta)m},
    \end{split}
\end{equation} 
where $\delta:=2M^{-\frac{1}{2}}\gg 2M^{-1}\gg 2N_0^{-1}=\delta_0$. Indeed, if the first condition above holds, integration by parts in $S_\eta$ with $\ltwonorm{\varphi_{k,p}}\lesssim 2^{k}2^{\frac{p}{2}}$ and Lemma \ref{lemma: ibp in bilinear expressions}\eqref{it1: lemma: ibp in bilinear expressions} give 
\begin{equation}\label{eqn: claim for B norm after ibp}
\begin{split}
     2^{4k^+-\frac{k^-}{2}}2^{-\frac{p}{2}}\ltwonorm{P_{k,p}\B_\m(f_1,f_2)}&\lesssim 2^{\frac{9}{2}k^+}2^{-\frac{k}{2}}2^{-\frac{p}{2}}\ltwonorm{\varphi_{k,p}}\sinftynorm{\widehat{\B_\m(f_1,f_2)}}\\
   & \lesssim 2^{5k^+}2^{k_{\max}}2^m [2^{-m}2^{2k_1}2^{-p_1}2^{-k_{\max}-k_{\min}}(1+2^{k_2-k_1}2^{l_1})]^M\\
&\quad \cdot \ltwonorm{P_{k_1,p_1}R_{l_1}(1,S)^Mf_1}\ltwonorm{P_{k_2,p_2}R_{l_2}(1,S)^Mf_2}\\
&\lesssim  2^{-m}\eps^2.
\end{split}
\end{equation}
With a similar computation, we obtain the claim when integrating by parts along $S_{\xi-\eta}$. 
\par Otherwise, if neither condition in \eqref{eqn: ibp is working} holds, we consider several cases that are organized according to Lemma \ref{lemma: case organisation}.
\par\textbf{Case B.1: $p_{\min}= p\ll p_{\max}$.} From Lemma \ref{lemma: case organisation} and under the constraint $p_1\leq p_2$, there are two further geometrical settings to consider.
\par\textbf{Case B.1(a): $2^{k_1}\sim 2^{k_2}$}. Then $p\ll p_1\sim p_2$, $k_{\max}+k_{\min}\sim k+k_1$ and moreover  $\min\{l_1,l_2\}>(1-\delta)m+k-k_1$. From Lemma \ref{lemma: set size estimate} with $\abs{S}\lesssim 2^{k+\frac{p}{2}}$ and the bootstrap assumption \eqref{eqn: bootstrap assumption} we obtain
\begin{align*}
    2^{4k^+}2^{-\frac{k^-}{2}}2^{-\frac{p}{2}}\ltwonorm{P_{k,p}\B_\m(f_1,f_2)}&\lesssim 2^{\frac{9}{2}k^+}2^m 2^{\frac{3}{2}k}2^{-8k_1^+}2^{-l_1}2^{-\frac{l_2}{2}}\norm{f_1}_X\norm{f_2}_X
    \lesssim 2^{(-\frac{1}{2}+4\delta)m}\eps^2.
    \end{align*}
    \par\textbf{Case B.1(b): $2^{k_2}\ll 2^{k_1}\sim 2^{k}$.} Then $p\leq p_1\ll p_2=p_{\max}\sim 0$ and $k_{\max}+k_{\min}\sim k_2+k$. In this setting \eqref{eqn: ibp is working} for $S_{\xi-\eta}$ doesn't hold if
   $l_2>(1-\delta)m$. The claim follows from Lemma \ref{lemma: set size estimate} with $\abs{S}\lesssim 2^{k+\frac{p}{2}}$ and \eqref{eqn: bootstrap assumption}:
    \begin{align*}
        2^{4k^+-\frac{k^-}{2}}2^{-\frac{p}{2}}\ltwonorm{P_{k,p}\B_\m(f_1,f_2)}
        &\lesssim 2^{4k^+-\frac{k^-}{2}}2^m2^{2k}2^{-4k_1^++\frac{k_1^-}{2}}\norm{f_1}_B 
        2^{-(1+\beta)l_2}\norm{f_2}_X
        \lesssim 2^{-\frac{\beta}{2}m}\eps^2.
    \end{align*}
     The last estimate follows since $\beta\gg \delta= 2M^{-1/2}$.
    \par \textbf{Case B.2: $p_{\min}= p_1\ll p_{\max}$.} By Lemma \ref{lemma: case organisation}, we have three subcases to consider.
    \par \textbf{Case B.2(a): $2^{k_1}\lesssim 2^{k}\sim 2^{k_2}$.} Then $p_1\ll p\sim p_2\sim 0$ and $k_{\max}+k_{\min}\sim k_1+k$. If neither condition in \eqref{eqn: ibp is working} holds, we can assume  
    \begin{align*}
     &   l_1-p_1>(1-\delta)m && \text{and}&& \max\{k_2-k_1,l_2\}> (1-\delta)m.
    \end{align*}      
     First let $\max\{k_2-k_1,l_2\}=l_2> (1-\delta)m$. Then it follows from Lemma \ref{lemma: set size estimate} with $\abs{S}\lesssim 2^{k_1+\frac{p_1}{2}}$:
    \begin{align*}
        2^{4k^+}2^{-\frac{k^-}{2}}2^{-\frac{p}{2}}\ltwonorm{P_{k,p}\B_\m(f_1,f_2)}&\lesssim 2^{4k^+}2^{-\frac{k^-}{2}}2^m2^{k_1+\frac{p_1}{2}}2^k2^{-4k_1^+}2^{-\frac{l_1}{2} } \norm{f_1}_X 2^{-4k_2^+}2^{-(1+\beta)l_2}\norm{f_2}_X\\
        &\lesssim 2^{k^+}2^m2^{-(\frac{3}{2}+\beta)(1-\delta)m}\eps^2\\
       &\lesssim 2^{(-\frac{1}{2}-\frac{\beta}{2})m}\eps^2,
    \end{align*}
    since $\beta\gg \delta$. Now assume $\max\{k_2-k_1,l_2\}=k_2-k_1> (1-\delta)m$ and $l_1-p_1>(1-\delta)m$, then the claim \eqref{eqn: B norm claim} follows from Lemma \ref{lemma: set size estimate} with $\abs{S}\lesssim 2^{k_1+\frac{p_1}{2}}$:
    \begin{align*}
        2^{4k^+}2^{-\frac{k^-}{2}}2^{-\frac{p}{2}}\ltwonorm{P_{k,p}\B_\m(f_1,f_2)}&\lesssim 2^{4k^+}2^{-\frac{k^-}{2}}2^m 2^{k_1+\frac{p_1}{2}}2^{k_2}2^{-\frac{l_1}{2}}\norm{f_1}_X  2^{-4k_2^+}2^{\frac{k_2^-}{2}}\norm{f_2}_B\\
        &\lesssim2^{m}2^{-\frac{3}{2}(1-\delta)m}\eps^2\\
        &\lesssim 2^{(-\frac{1}{2}+2\delta)m}\eps^2.
    \end{align*}
        \par\textbf{Case B.2(b): $2^k\ll 2^{k_2}\sim2^{k_1}.$} Then there holds $p_1\leq p_2 \ll p\sim 0$, $k_{\max}+k_{\min}\sim k+k_2$. Moreover, by Lemma \ref{lemma: case organisation} there holds $2^{k}\lesssim 2^{k_2+p_2}$. Assume the second condition in \eqref{eqn: ibp is working} doesn't hold, that is $-l_2<-(1-\delta)m -p_2+k_2-k.$  Then we obtain the claim \eqref{eqn: B norm claim} from Lemma \ref{lemma: set size estimate} with $\abs{S}\lesssim 2^{\frac{k}{2}+\frac{k_2+p_2}{2}}$ and $2^{k}\lesssim 2^{k_2+p_2}$:
    \begin{align*}
         2^{4k^+}2^{-\frac{k^-}{2}}2^{-\frac{p}{2}}\ltwonorm{P_{k,p}\B_\m(f_1,f_2)}&\lesssim 2^{\frac{9}{2}k^+}2^m 2^{k}2^{\frac{k_2+p_2}{2}}2^{-8k_2^+}2^\frac{p_{1}}{2}\norm{f_1}_X2^{-\frac{5}{6}l_2}2^{-\frac{1}{3}p_2}\norm{f_2}_X\\
         &\lesssim 2^{-3k_2^+}2^m2^{k}2^{\frac{2}{3}p_2}2^{-\frac{5}{6}(1-\delta)m}2^{-\frac{5}{6}(p_2+k-k_2)}\eps^2\\
         &\lesssim 2^{(\frac{1}{6}+\delta)m}\eps^2.
    \end{align*}
    \par\textbf{Case B.2(c): $2^{k_2}\ll 2^{k}\sim 2^{k_1}$.} Then $p_1\leq p\ll p_2\sim 0$ and $k_{\max}+k_{\min}\sim k+k_2$.
     If $l_2>(1-\delta)m$ (cf. \eqref{eqn: ibp is working}) it follows from Lemma \ref{lemma: set size estimate} with $\abs{S}\lesssim 2^{k+\frac{p}{2}}$:
      \begin{align*}
         2^{4k^+}2^{-\frac{k^-}{2}}2^{-\frac{p}{2}}\ltwonorm{P_{k,p}\B_\m(f_1,f_2)}&\lesssim2^{-\frac{p}{2}}2^m 2^{2k+\frac{p}{2}}\norm{f_1}_B2^{-(1+\beta)l_2}\norm{f_2}_X\lesssim 2^{-\frac{\beta}{2}m}\eps^2.
    \end{align*}
This concludes \textbf{Case B}.
    \par \textbf{Case C: Gap in $p$ with $p_{\min}\ll p_{\max}\ll 0$}. Then $\abs{\Phi}\geq \frac{1}{10}$ and we can do a decomposition $\m=\m^{res}+\m^{nr}$ as presented in Section \ref{sec: Normal forms} with  $\lambda=\frac{1}{100}$. In this case, $\m^{res}=0$, and thus $\m=\m^{nr}$ with
      \begin{align*}
        \ltwonorm{P_{k,p}\B_\m(f_1,f_2)}\lesssim \ltwonorm{P_{k,p}\mathcal{Q}_{\m\Phi^{-1}}(f_1,f_2)}+\ltwonorm{P_{k,p}\B_{\m\Phi^{-1}}(\partial_tf_1,f_2)}+\ltwonorm{P_{k,p}\B_{\m\Phi^{-1}}(f_1,\partial_tf_2)}.
    \end{align*}
    We prove the claim \eqref{eqn: B norm claim} for the last two terms with Lemma \ref{lemma: set size estimate} with $\abs{S}\lesssim 2^{k+\frac{p}{2}}$ and Lemmas \ref{lemma: m phi^-1 multiplier bound} and \ref{lemma: bounds on dtSf}:
    \begin{align*}
        2^{4k^+} 2^{-\frac{k^-}{2}}2^{-\frac{p}{2}}\ltwonorm{P_{k,p}\B_{\m\Phi^{-1}}(\partial_tf_1,f_2)}&\lesssim 2^{\frac{9}{2}k^+}2^{-\frac{k}{2}}2^{-\frac{p}{2}}2^{k+\frac{p}{2}}2^{k}2^m\ltwonorm{\partial_tf_1}\ltwonorm{f_2}\\
        &\lesssim 2^{\frac{9}{2}k^+}2^{\frac{3}{2}k}2^m2^{(-\frac{3}{4}+2\delta) m}\eps^3\\
        &\lesssim 2^{(\frac{1}{4}+\frac{5}{2}\delta)m}\eps^3.
    \end{align*}
  The term containing $\partial_tf_2$ is bounded analogously. For the boundary term, by Lemma \ref{lemma: set size estimate} with $\abs{S}\lesssim 2^{k+\frac{p}{2}}$ we obtain
   \begin{align*}
        2^{4k^+}2^{-\frac{k^-}{2}}2^{-\frac{p}{2}}\ltwonorm{P_{k,p}\mathcal{Q}_{\m\Phi^{-1}}(f_1,f_2)}&\lesssim2^{\frac{9}{2}k^+}2^{-\frac{k}{2}}2^{-\frac{p}{2}}2^{k+\frac{p}{2}}2^{-4k_1^+}\norm{f_1}_B 2^{-4k_2^+}\norm{f_2}_B\\
        &\lesssim 2^{\delta m}\eps^2.
    \end{align*}
\par \textbf{Case D: No gaps with $p_1\sim p_2\sim p$.} Assume without loss of generality that $b_1\leq b_2$, that is $f_2$ has more vector fields and we can apply Proposition \ref{proposition: linear decay} on $f_1$ and Lemma \ref{lemma: linear decay many vector fields} on $f_2$. With the decomposition $P_{k_1,p_1}e^{it\Lambda}f_1=I_{k_1,p_1}(f_1)+II_{k_1,p_1}(f_1)$ the following decay bounds hold
\begin{align*}
    &\inftynorm{I_{k_1,p_1}(f_1)}\lesssim 2^{\frac{3}{4}k_1}2^{-\frac{15}{4}k_1^+}2^{-p}2^{(-1+\delta)m}\norm{f_1}_D, && \ltwonorm{II_{k_1,p_1}(f_1)}\lesssim 2^{-4k_1^+}2^{-\frac{p}{2}}2^{-\frac{m}{2}}\norm{f}_D,\\
    &\sinftynorm{P_{k_2,p_2}e^{it\Lambda}f_2}\lesssim 2^{\frac{3}{4}k_2}2^{-3k_2^+}2^{(-\frac{1}{2}+\kappa)m}\eps.
\end{align*}
Then for the $B$-norm there holds
\begin{align*}
    &2^{4k^+}2^{-\frac{k^-}{2}}2^{-\frac{p}{2}}\ltwonorm{P_{k,p}\B_\m(f_1,f_2)}\\
    &\hspace{6em}\lesssim 2^{\frac{9}{2}k^{+}}2^m2^{\frac{p}{2}+\frac{k}{2}}\big[\inftynorm{I_{k_1,p_1}(f_1)}\ltwonorm{P_{k_2,p_2}f_2}+\ltwonorm{II_{k_1,p_1}(f_1)}\inftynorm{e^{it\Lambda}P_{k_2,p_2}f_2}\big]\\
    &\hspace{6em}\lesssim 2^{5k^{+}}2^{\frac{p}{2}}2^m[2^{-3k_1^+}2^{-p}2^{(-1+\delta)m}2^{-4k_2^+}2^{\frac{k_2^-}{2}}2^{\frac{p}{2}} +2^{-4k_1^+}2^{-\frac{p}{2}}2^{-\frac{m}{2}}2^{\frac{3}{4}k_2}2^{-3k_2^+}2^{(-\frac{1}{2}+\kappa) m}]\eps^2\\
    &\hspace{6em}\lesssim 2^{4k^+}(2^{\delta m}+2^{\kappa m})\eps^2\\
    &\hspace{6em}\lesssim 2^{\beta m}\eps^2,
\end{align*}
where $\kappa\ll \beta$ in Lemma \ref{lemma: linear decay many vector fields}. This is an admissible contribution and finishes the proof of the proposition.

\end{proof}
\section{Bounds on the \texorpdfstring{$X$}{X}-norm}\label{sec: bounds on the X-norm}
We prove the $X$-norm bounds in two steps depending on the size of $l$ relative to $m$.
\subsection{\texorpdfstring{$X$}{X}-norm bounds for \texorpdfstring{$l>(1+\delta)m$}{l-bigger-than-m}}\label{sec:X-norm-1} 
\begin{proposition}\label{prop: X-norm bounds for l>(1+delta)m}
   In the setting of Proposition \ref{prop: bootstrap argument}, and in particular under the bootstrap assumptions \eqref{eqn: bootstrap assumption}, the following holds true: For $\m\in \set{\m_0,\m_\pm^{\mu\nu}}{\mu,\nu\in\{+,-\}}$, $t\in [2^m,2^{m+1}[\cap [0,T]$ and $\delta=2M^{-\frac{1}{2}}$, there holds that
    \begin{align*}
        \sup_{\substack{k,l,p\\ l+p\geq 0, l>(1+\delta)m}}2^{4k^+}2^{(1+\beta)l}2^{\beta p}2^{\frac{p}{2}}\ltwonorm{P_{k,p}R_l\B_\m(F_1,F_2)}\lesssim 2^{(\frac{1}{2}-\frac{\delta}{10}) m }\eps^2+2^{(\frac{1}{4}+7\beta)m}\eps^3,
    \end{align*}
     where $F_i\in \{S^{b_i}\mathcal{Z}_{\pm},S^{b_i}\Theta\}$, $0\leq b_1+b_2\leq N$, $i=1,2$.
\end{proposition}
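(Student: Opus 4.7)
The plan is to follow the same overall scheme as the proof of Proposition \ref{prop: bounds on B norm}, but to exploit the large power $2^{(1+\beta)l}$ in the weight by converting it, via a dichotomy on $2^{l+p}$, either into an effective $B$-type weight $2^{-p/2}$ or into a genuine ``finite speed of propagation'' gain coming from repeated application of the vector field $W_\xi$ to the bilinear form.

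\textbf{Step 1: Reduction to the main regime.} As in \textbf{Case A} of Proposition \ref{prop: bounds on B norm}, a combination of the set-size estimate (Lemma \ref{lemma: set size estimate}), the multiplier bound (Lemma \ref{lemma: multiplier bound}) and the energy estimates \eqref{eqn:bootstrap-prop-bded-energy} disposes of the cases $k_{\max}\geq \delta_0 m$, $k_{\min}\leq -4m$ and $\max\{l_1,l_2\}\geq 4m$, after which the analysis reduces to parameter ranges as in \eqref{eqn: localisation parameter assumptions B norm} together with the standing assumption $l>(1+\delta)m$ and $l+p\geq 0$.

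\textbf{Step 2: The easy regime $2^{l+p}\leq 2^{\delta m}$.} We rewrite the $X$-norm weight as
\[
2^{(1+\beta)l}2^{(\tfrac12+\beta)p} = 2^{(1+\beta)(l+p)}\cdot 2^{-p/2} \lesssim 2^{(1+\beta)\delta m}\cdot 2^{-p/2}.
\]
The surviving $2^{-p/2}$ is precisely the weight appearing in the $B$-norm, so the argument of Proposition \ref{prop: bounds on B norm} applies verbatim to each of its subcases (\textbf{Cases B.1, B.2, C, D} there), yielding an estimate of the form $2^{(\frac16+3\delta)m}\varepsilon^2+2^{(\frac14+\frac{5}{2}\delta)m}\varepsilon^3$ multiplied by $2^{(1+\beta)\delta m}$. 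Since $\beta\gg \delta$ and $\delta\ll 1$, this is absorbed into the stated bound. The key observation is that nothing in the $B$-norm proof genuinely uses the absence of angular localization.

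\textbf{Step 3: The hard regime $2^{l+p}> 2^{\delta m}$.} Here, the weight cannot be converted and one must genuinely use the angular structure of $\mathcal{B}_\m$. The crucial input is Lemma \ref{lemma: W_xi on bilinear expressions}, which bounds $\ltwonorm{R_l\mathcal{Q}_\m(f_1,f_2)}$ by $2^{k_{\min}}2^{k+p_{\max}}$ times
\[
2^{-Nl}\bigl[\,2^m(2^p+2^{k-k_1}2^{p_1})+2^{-p}+2^{k-k_1+l_1}\bigr]^N \ltwonorm{(1,S)^2f_1}\ltwonorm{f_2}+2^{-3l}\ltwonorm{S^3f_1}\ltwonorm{f_2},
\]
with an analogous bound after exchanging the roles of $f_1$ and $f_2$. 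Choosing $N$ sufficiently large (and using whichever of the two asymmetric bounds is more favorable), the gain $2^{-Nl}$ beats the large weight $2^{(1+\beta)l}$ \emph{unless} one of the four inequalities
\[
2^l\lesssim 2^{m+p},\qquad 2^l\lesssim 2^{m+k-k_1+p_1},\qquad 2^l\lesssim 2^{-p},\qquad 2^l\lesssim 2^{k-k_1+l_1}
\]
(or their symmetric counterparts with indices $1\leftrightarrow 2$) holds. In each such case the inequality provides a quantitative relation between $l$ and the other parameters that allows one to control $2^{(1+\beta)l}$ by factors of $m,k,k_i,p,p_i,l_i$, and the resulting estimate is then treated by the same toolbox as in Proposition \ref{prop: bounds on B norm}: integration by parts in $S$ via Lemma \ref{lemma: ibp in bilinear expressions} (using Proposition \ref{prop: lower bound on sigma} to guarantee a lower bound on $\sigma$ when $p_{\min}\ll p_{\max}$), normal forms \eqref{eqn: nonresonant multiplier bilinear estimate} combined with the improved time derivative bound of Lemma \ref{lemma: bounds on dtSf} when $\abs{\Phi}\gtrsim 1$, refined linear decay (Proposition \ref{proposition: linear decay}, Lemma \ref{lemma: linear decay many vector fields}) in the no-gap situation $p\sim p_1\sim p_2$, and set-size estimates together with the $B/X$ bootstrap.

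\textbf{Main obstacle.} The delicate point is the term $t\cdot 2^{k-k_1}2^{p_1}$ inside the bracket in Lemma \ref{lemma: W_xi on bilinear expressions}: when $2^{k-k_1+p_1}$ is comparable to $1$, one iteration of $W_\xi$ already costs a factor $t\sim 2^m$, so the gain $2^{-Nl}$ must beat $2^{Nm}$, which is possible only for $l$ well above $m$. This is the quantitative content of the approximate finite speed of propagation at the bilinear level, and it is precisely why the threshold $(1+\delta)m$ is chosen: the extra $\delta m$ buys enough room to absorb these borderline losses together with the logarithmic/subpolynomial factors that arise from the summations in $k_i, p_i, l_i$.
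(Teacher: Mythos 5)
Your overall scheme — dichotomy on $2^{l+p}$, converting the $X$-weight into a $B$-like weight in the small regime, and invoking Lemma \ref{lemma: W_xi on bilinear expressions} in the large regime — matches the paper's Part 1 / Part 2 split, and your Step 2 is correct in spirit (though the paper's Part 1 is in fact simpler than the full $B$-norm proof: since $l>(1+\delta)m$ and $l+p<\delta m$ force $2^p\ll 1$, the paper collapses all sub-regimes into just two cases, governed by whether $\max\{2^{p_1},2^{p_2}\}\sim 1$ or not, and gets a sharper intermediate bound $2^{3\delta m}\eps^2+2^{(1/4+4\delta)m}\eps^3$; your multiplicative argument still lands inside the stated bound).

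Your Step 3, however, has a genuine gap, and your diagnosis of the ``main obstacle'' is off. The borderline term in the bracket of Lemma \ref{lemma: W_xi on bilinear expressions} is \emph{not} $t\,2^{k-k_i+p_i}$: that contribution costs $2^{m+k-k_i+p_i}$, and since $l>(1+\delta)m$ the $\delta m$ slack absorbs a bare factor of $t$. The real difficulty comes from the terms $2^{-p}$ and $2^{k-k_i+l_i}$: since $-p\leq l$ and $l_i$ can be as large as $(4+4\beta)l$, these can defeat the $2^{-Nl}$ Bernstein gain entirely, irrespective of how much $l$ exceeds $m$. The threshold $(1+\delta)m$ is not ``precisely why'' the iteration closes; rather, it is what makes the \emph{residual} regime tractable. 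In the paper's Part 2, after assuming w.l.o.g.\ $k_2\leq k_1$, failure of the $W$-iteration is pinned down by $l_1>(1-\delta^2)l$ together with $k-k_2+\max\{m+p_2,l_2\}>(1-\delta^2)l$, and the two branches ($m+p_2\leq l_2$, giving Case B.1, and $l_2<m+p_2$, giving Case B.2) each require their own involved subcase analysis: integration by parts along $S_\eta/S_{\xi-\eta}$ and along $D_\eta$, fractional interpolation between the $X$- and $B$-norms of the inputs, two-scale normal form splittings $\m=\m^{res,res}+\m^{res,nr}$, and anisotropic set-size estimates as in Lemma \ref{lemma: normal forms}\eqref{it3: lemma: normal forms}. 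None of this is visible in your sketch, and ``treated by the same toolbox as in Proposition \ref{prop: bounds on B norm}'' does not give a reason to believe the exponents close — indeed the delicate balance in, e.g., Case B.2(b) hinges on the inequality $l<(1+2\delta^2)(k-k_2+m+p_2)$, which is specific to this regime and has no analogue in the $B$-norm proof.
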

\blue{This result corresponds to an approximate ``finite speed of propagation'' and implies that the time parameter $m$ is the largest one in the problem. The proof of Proposition \ref{prop: X-norm bounds for l>(1+delta)m} is structured in two parts. In \textbf{Part 1}, we assume additionally that $l+p<\delta m$. Here we note that the weight in the norm is bounded by a factor $2^{-p/2}$, which is similar to the $B$-norm. Thus, the claim \eqref{eqn: X norm claim} follows via energy estimates, integration by parts along $S$ and normal forms. In \textbf{Part 2}, where $l+p>\delta m$, to overcome the ``large'' parameter $l$ we invoke the properties of $W$ on bilinear terms, as established in Lemmas \ref{lemma: W_xi on elementary multipliers} and \ref{lemma: W_xi Phi}. Combined with the iterated action of $W$ on bilinear terms as developed in Lemma \ref{lemma: W_xi on bilinear expressions} to ``gain'' negative $l$ parameters in the bilinear terms $\mathcal{B}_{\m}$, we can restrict the possible size of $l$ in terms of other parameters. In the remaining cases we can rely on the well-established tools: integration by parts, normal forms and linear decay.}

\begin{proof} We split the proof in two main parts.
\par    \textbf{Part 1: $l+p<\delta m$.} Similarly to the proof of Proposition \ref{prop: bounds on B norm} we want to show that the energy estimates that we get from the bootstrap assumption allow us to restrict the range of the localisation parameters.
    \par \textbf{Case A: Simple cases.} Using the set size estimate Lemma \ref{lemma: set size estimate} with $\abs{S}\lesssim 2^{\frac{k_{\min}+k+p}{2}}$ and the bootstrap assumption \eqref{eqn: bootstrap assumption} we have
 \begin{align*}
        2^{4k^+}2^{(1+\beta)l}2^{\beta p}2^{\frac{p}{2}}\ltwonorm{P_{k,p}R_l\B_{\m}(F_1,F_2)}&\lesssim 2^{4k^+}2^{(1+\beta)l}2^{\beta p}2^{\frac{p}{2}}2^m\abs{S}\inftynorm{\m\chi}\ltwonorm{P_{k_1,p_1}F_1}\ltwonorm{P_{k_2,p_2}F_2}\\
        &\lesssim 2^{(1+2\delta )m}2^{\frac{k_{\min}}{2}}2^{-(N_0-5)(k_1^++k_2^+)}\norm{P_{k_1}F_1}_{H^{N_0}}\norm{P_{k_2}F_2}_{H^{N_0}}
    \end{align*}
Therefore, we can assume $-4m\leq k,k_1,k_2\leq \delta_0 m$, with $\delta_0:=2N_0^{-1}\ll \delta$. Localizing further in $p_i,\; l_i$ and letting $f_i=P_{k_i,p_i}R_{l_i}F_i$, we can restrict the $l_i,\; p_i$ parameters using Lemma \ref{lemma: set size estimate} with $\abs{S}\lesssim 2^{k+\frac{p}{2}}$, and the bootstrap assumption \eqref{eqn: bootstrap assumption}:
    \begin{align*}
        2^{4k^+}2^{(1+\beta)l}2^{\beta p}2^{\frac{p}{2}}\ltwonorm{P_{k,p}R_l\B_\m(F_1,F_2)}
        &\lesssim 2^{(1+(1+\beta)\delta+5\delta_0)m}2^{-\frac{l_1}{2}-\frac{l_2}{2}}2^{-\frac{l_1+p_1}{2}}2^{-\frac{l_2+p_2}{2}}\norm{f_1}_X\norm{f_2}_X\\
       &\lesssim 2^{(1+2\delta)m}2^{-\frac{l_1}{2}-\frac{l_2}{2}}2^{-\frac{l_1+p_1}{2}}2^{-\frac{l_2+p_2}{2}}\eps^2.
    \end{align*}
    Hence, the $X$-norm remains bounded if $\max\{l_1,l_2\}\geq 4m$. Thus, analogous to \textbf{Case A} in the proof of Lemma \ref{prop: bounds on B norm} it suffices to prove 
    \begin{align}\label{eqn: X norm claim}
        \sup_{k,l+p<\delta m,l>(1+\delta)m} 2^{4k^+}2^{(1+\beta)l}2^{\beta p}2^{\frac{p}{2}}\ltwonorm{P_{k,p}R_l\B_{\m}(f_1,f_2)}\lesssim 2^{3\delta m}\eps^2+ 2^{(\frac{1}{4}+4\delta)m}\eps^3,
    \end{align}
    for the following localisation parameters
    \begin{align}\label{eqn: localisation parameter assumptions X norm}
 &   -4m<k,k_i<\delta_0 m , && -p_i\leq l_i\leq 4m, && -4m\leq p_i\leq 0, && i=1,2. 
    \end{align} 
    Observe that $2^p\ll 1$ since $l+p<\delta m$ and $l>(1+\delta)m$. Hence we have the following two cases to consider. 
    \par \textbf{Case B: $2^{p_1}+2^{p_2}\ll 1$. }
        Then $\abs{\Phi}> \frac{1}{10}$ and we can do a splitting of the multiplier in the resonant and non-resonant parts as in Section \ref{sec: Normal forms} with $\lambda=\frac{1}{100}$. Observe that $\m^{res}=0$ and so $\m=\m^{nr}$ with 
       \begin{align*}
              \ltwonorm{P_{k,p}R_l\B_{\m}(f_1,f_2)}&\lesssim \ltwonorm{P_{k,p}R_l\mathcal{Q}_{\m\Phi^{-1}}(f_1,f_2)}+\ltwonorm{P_{k,p}R_l\B_{\m\Phi^{-1}}(\partial_tf_1,f_2)}\\
              &\quad+ \ltwonorm{P_{k,p}R_l\B_{\m\Phi^{-1}}(f_1,\partial_tf_2)}.
       \end{align*}
We bound each term in the decomposition with Lemmas \ref{lemma: multiplier bound}, \ref{lemma: bounds on dtSf}, \ref{lemma: m phi^-1 multiplier bound} and \ref{lemma: set size estimate} with $\abs{S}\lesssim 2^{k+\frac{p}{2}}$:
    \begin{align*}
        2^{4k^+}2^{(1+\beta)l}2^{\beta p}2^{\frac{p}{2}}\ltwonorm{P_{k,p}R_l\B_{\m\Phi^{-1}}(\partial_tf_1,f_2)}&\lesssim 2^{4k^+}2^{(1+(1+\beta)\delta) m}2^{2k}\ltwonorm{\partial_tf_1}\ltwonorm{f_2}\\
        &\lesssim 2^{(1+(1+\beta)\delta+6\delta_0)m}2^{(-\frac{3}{4}+2\delta)m}\eps^3\\
        &\lesssim 2^{(\frac{1}{4}+4\delta) m}\eps^3.
    \end{align*}
      The same holds by symmetry for the last term in the splitting above. Now it remains to estimate the boundary term. From Lemma \ref{lemma: set size estimate} with $\abs{S}\lesssim 2^{k+\frac{p}{2}}$ and \eqref{eqn: bootstrap assumption} we obtain
    \begin{align*}
        2^{4k^+}2^{(1+\beta)l}2^{\beta p}2^{\frac{p}{2}}\ltwonorm{P_{k,p}\mathcal{Q}_{\m\Phi^{-1}}(f_1,f_2)}&\lesssim2^{4k^+}2^{(1+\beta)(l+p)}2^{2k}2^{-4k_1^+-4k_2^+}\norm{f_1}_B\norm{f_2}_B\lesssim 2^{3\delta m}\eps^2,
    \end{align*}
    which gives the claim and closes \textbf{Case B}.
    \par \textbf{Case C: $\max\{2^{p_1},2^{p_2}\}\sim 1$.} Assume w.l.o.g.\ that $p_2\leq p_1$, so that $p_1=p_{\max}$ and then $k_{\max}\in \{k_2,k\}$. Thus by Proposition \ref{prop: lower bound on sigma} there holds $\abs{\sigma}\sim 2^{k_1+k}$. Integration by parts along $S_\eta$ gives the claim analogously to \eqref{eqn: ibp is working}-\eqref{eqn: claim for B norm after ibp} if
   $l_1+k_2-k\leq (1-\delta)m$ and $\delta=2M^{-\frac{1}{2}}$. Otherwise, if
    \begin{align}\label{eqn: Case A.3 X norm}
        -l_1-k_2+k<-(1-\delta)m,
    \end{align}
    by Lemma \ref{lemma: set size estimate} with $\abs{S}\lesssim 2^{k+\frac{p}{2}}$ and the bootstrap assumption \eqref{eqn: bootstrap assumption} there holds:
    \begin{align*}
        2^{4k^+}2^{(1+\beta)l}2^{\beta p}2^{\frac{p}{2}}\ltwonorm{P_{k,p}R_l\B_{\m}(f_1,f_2)}&\lesssim 2^{4k^+}2^{(1+\beta)(l+p)}2^m2^{2k}2^{-4k_1^+}2^{-(1+\beta)l_1}\norm{f_1}_X2^{-4k_2^+}\norm{f_2}_B\\
        &\lesssim 2^{(-\beta+(1+\beta)\delta )m}2^{2k}2^{4(k^+-k_1^+-k_2^+)}2^{(1+\beta)(k_2-k)}\eps^2\\
        &\lesssim 2^{-\frac{\beta}{2}m}\eps^2,
    \end{align*}
   since $\delta_0\ll\delta\ll \beta$. This finishes the proof of \textbf{Part 1}.
    \par \textbf{Part 2: $l+p>\delta m$.} 
    \par \textbf{Case A: Simple cases.} As in \textbf{Part 1}, we can restrict the localisation parameters.
        Using Lemma \ref{lemma: set size estimate}, the energy estimates and $l>(1+\delta)m$, we can bound
        \begin{align*}
          2^{4k^+}2^{(1+\beta)l}2^{\beta p}2^{\frac{p}{2}}\ltwonorm{P_{k,p}R_l\B_\m (f_1,f_2)}&\lesssim 2^{5\max\{k_1^+,k_2^+\}}2^{(2+\beta)l}2^{-\delta m}2^{k_{\min}}2^{-N_0k_1^+-N_0k_2^+}\eps^2.
        \end{align*} 
        Hence we obtain the claim if $k_{\min}\leq -3l$ or if $k_{\max}\geq \delta_0 l $, with $\delta_0=3N_0^{-1}$. Localising further in $p_i,l_i$, $i=1,2$ and estimating using the $X-$norm, we obtain the claim if $\max\{l_1,l_2\}\geq (4+4\beta)l$:
        \begin{align*}
            2^{4k^+}2^{(1+\beta)l}2^{\beta p}2^{\frac{p}{2}}\ltwonorm{P_{k,p}R_l\B_\m (f_1,f_2)}&\lesssim 2^{(2+\beta)l}2^{-\delta m}2^{k+k_{\min}}2^{-\frac{l_1+l_2}{2}}\norm{f_1}_X\norm{f_2}_X\lesssim 2^{-\delta m}2^{(-\beta+2\delta_0) l}\eps^2.
        \end{align*}
        In the following, we will prove 
        \begin{align*}
            \sup_{k,l+p>\delta m,l>(1+\delta)m} 2^{4k^+}2^{(1+\beta)l}2^{\beta p}2^{\frac{p}{2}}\ltwonorm{P_{k,p}R_l\B_\m (f_1,f_2)}\lesssim 2^{(\frac{1}{2}-\frac{\delta}{8}) m}\eps^2+2^{(\frac{1}{4}+6\beta)m}\eps^3,
        \end{align*}
   for the parameters
    \begin{align*}
        -3l\leq k,k_i\leq \delta_0l, && -(4+4\beta)l\leq p_i \leq 0, && -p_i\leq l_i\leq (4+4\beta)l, && i=1,2.
    \end{align*}
    \par \textbf{Case B.} 
    We employ Lemma \ref{lemma: W_xi on bilinear expressions} with $N\in \N$ such that $N\delta^2>(2+\beta)$. From this we see that if
        \begin{align}\label{eqn: condition so that W_xi on bilinear expression works}
        2^m(2^p+2^{k-k_i}2^{p_i})+2^{k-k_i+l_i}< 2^{(1-\delta^2)l}, \qquad i=1 \text{ or }i=2,
    \end{align}
 we obtain an acceptable bound on the $X$-norm:
    \begin{align*}
        2^{4k^+}2^{(1+\beta)l}2^{\beta p}2^{\frac{p}{2}}\ltwonorm{P_{k,p}R_l\B_\m (f_1,f_2)}&\lesssim 2^{6\delta_0m}2^{(2+\beta)l}[ 2^{-N\delta^2l}2^{-N\delta m} +2^{-3l}]\eps^2\lesssim 2^{-\delta^2l}2^{5\delta m}\eps^2.
    \end{align*}
     Assume w.l.o.g.\ that $k_2\leq k_1$ and recall $2^{m}\lesssim 2^{l-\delta m}$, then condition \eqref{eqn: condition so that W_xi on bilinear expression works} (and thus the claim) holds if 
    \begin{align}\label{eqn: Bernestein works}
        l_1\leq (1-\delta^2)l \quad \text{or} \quad k-k_2 +\max\{m+p_2,l_2\}\leq (1-\delta^2)l.
    \end{align}
    Otherwise if \eqref{eqn: Bernestein works} doesn't hold, we distinguish two further cases: $m+p_2\leq l_2$ or $m+p_2>l_2$.
    \par \textbf{B.1: $m+p_2\leq l_2$, $l_1>(1-\delta^2)l$ and $k-k_2+l_2>(1-\delta^2)l$.} We proceed with the by now standard scheme of proof.
    \par \textbf{B.1(a): Gap in $p$: $p_{\min}\ll p_{\max}$.} Based on Lemma \ref{lemma: case organisation} and with the constraint $k_2\leq k_1$, we have the following cases:
    \par\textbf{B.1(a.1): $p_{\min}= p\ll p_{\max}$.} From Lemma \ref{lemma: case organisation} we have two settings for the $k,k_1,k_2$ parameters. 
    If $2^{k_1}\sim 2^{k_2}$, then $p\ll p_1\sim p_2=p_{\max}$ and $2^{k}\lesssim 2^{k_2}$. From Lemma \ref{lemma: set size estimate} with $\abs{S}\lesssim 2^{k_2+\frac{p_2}{2}}$ we bound
 \begin{align*}
      2^{4k^+}2^{(1+\beta)l}2^{\beta p}2^{\frac{p}{2}}\ltwonorm{P_{k,p}R_l\B_\m (f_1,f_2)}&\lesssim 2^{-\delta m}2^{(2+\beta)l}2^{(2+\beta)p_2}2^{2k_2}2^{-(1+\beta)(l_1+l_2)}2^{-(1+2\beta)p_2}\norm{f_1}_X\norm{f_2}_X\\
      &\lesssim 2^{-\delta m}2^{(-\beta+3\delta^2)l}2^{(1-\beta)k_2}2^{(1+\beta)k}\eps^2\\
      &\lesssim 2^{-\frac{\delta}{2}m}2^{-\frac{\beta}{2}l}\eps^2.
 \end{align*}
 If on the other hand $2^{k_2}\ll 2^{k_1}\sim 2^{k}$, then $p\leq p_1\ll p_2$ and moreover $2^{m}\lesssim2^{\frac{l_2-p_2}{2}+\frac{l-\delta m}{2}}$. Thus from Lemma \ref{lemma: set size estimate} with $\abs{S}\lesssim 2^{k_2+\frac{p_2}{2}}$ and using the $X$-norms on $f_1,\,f_2$ we have 
    \begin{align*}
        2^{4k^+}2^{(1+\beta)l}2^{\beta p}2^{\frac{p}{2}}\ltwonorm{P_{k,p}R_l\B_\m (f_1,f_2)}&\lesssim 2^{-\frac{\delta}{2}m}2^{(\frac{3}{2}+\beta)l}2^{k+k_2}2^{\frac{3}{2}p_{2}} 2^{-(1+\beta)l_1}2^{-4k_2^+}2^{-(\frac{1}{2}+\beta)l_2}2^{-(1+\beta) p_2}\eps^2\\
        &\lesssim 2^{-\frac{\delta}{2}m}2^{(-\beta+2\delta^2)l}2^{(\frac{1}{2}-\beta)k_2}2^{(\frac{1}{2}+\beta)k}\eps^2\\
        &\lesssim 2^{-\frac{\delta}{4}m}2^{-\frac{\beta}{2}l}\eps^2,
    \end{align*}
    since $\delta_0\ll \delta\ll \beta$.
 
    \par \textbf{B.1(a.2): $p_{\min}\sim p_1 \ll p_{\max}$.} If
    $2^{k}\sim 2^{k_2}$, then $2^p\sim 2^{p_2}$ and so $p_1\ll p \sim p_2$. Moreover, ${-l_2}< {-(1-\delta^2)l}$ and by Lemma \ref{lemma: set size estimate} with $\abs{S}\lesssim 2^{k_1+\frac{p_1}{2}}$ we obtain :
 \begin{align*}
         &2^{4k^+}2^{(1+\beta)l}2^{\beta p}2^{\frac{p}{2}}\ltwonorm{P_{k,p}R_l\B_\m (f_1,f_2)}\\
         &\hspace{10em}\lesssim2^m 2^{(1+\beta)l}2^{(\frac{3}{2}+\beta)p_2}2^{k+k_1+\frac{p_1}{2}}2^{-4k_1^+}2^{-l_1-\frac{p_1}{2}}\norm{f_1}_X  2^{-(1+\beta)l_2}2^{-(\frac{1}{2}+\beta)p_2}\norm{f_2}_X\\
         &\hspace{10em}\lesssim 2^{(\beta+\delta_0) m}2^{\beta p_2}2^{(-\beta+2\delta^2)l}\eps^2.
    \end{align*}
      Next, if $2^{k}\ll 2^{k_2}\sim 2^{k_1}$, then $p_1\leq p_2\ll p$ and $2^{p+k}\lesssim 2^{p_2+k_2}$. Moreover, ${-l_2}<{-(1-\delta^2)l}$ and we estimate with $\abs{S}\lesssim 2^{k_2+\frac{p_1}{2}}$:
    \begin{align*}
         &2^{4k^+}2^{(1+\beta)l}2^{\beta p}2^{\frac{p}{2}}\ltwonorm{P_{k,p}R_l\B_\m (f_1,f_2)}\\
         &\hspace{10em}\lesssim 2^{ m}2^{(1+\beta)l}2^{(\frac{3}{2}+\beta)p_2}2^{k}2^{k_2+\frac{p_1}{2}} 2^{-4k_2^+}2^{-l_1}2^{-\frac{p_1}{2}} \norm{f_1}_X2^{-(1+\beta)l_2}2^{-(\frac{1}{2}+\beta)p_2}\norm{f_2}_X\\
         &\hspace{10em}\lesssim  2^{m}2^{(-1+3\delta^2)l}2^{p_2}\eps^2\\
         &\hspace{10em}\lesssim 2^{(\beta-\frac{\delta}{2}) m}2^{(-\beta+3\delta^2)l}\eps^2.
         \end{align*}
 Finally, if $2^{k_2}\ll 2^{k}\sim 2^{k_1}$, then  $p_1\leq p\ll p_2$ and we obtain with $\abs{S}\lesssim 2^{\frac{k+k_2}{2}+\frac{p_1}{2}}$:
    \begin{align*}
        &2^{4k^+}2^{(1+\beta)l}2^{\beta p}2^{\frac{p}{2}}\ltwonorm{P_{k,p}R_l\B_\m (f_1,f_2)}\\
        &\hspace{10em}\lesssim 2^{(\frac{1}{2}-\frac{\delta}{2})m}2^{(\frac{3}{2}+\beta)l}2^{\frac{3}{2}k}2^{k_{2}+\frac{p_{1}}{2}}2^{-l_1}2^{-\frac{p_1}{2}}\eps2^{-\frac{2}{3}l_2}2^{-\frac{1}{3}p_2}\norm{f_2}_X^{\frac{2}{3}} 2^{\frac{k_2^-}{6}}2^{\frac{p_2}{6}}\norm{f_2}_B^{\frac{1}{3}}\\
        &\hspace{10em}\lesssim 2^{(\frac{1}{2}-\frac{\delta}{2})m}2^{(-\frac{1}{6}+\beta+2\delta^2)l}2^{\frac{3}{2}k}2^{\frac{2}{3}k_2}2^{-\frac{2}{3}(k_2-k)}\eps^2\\
        &\hspace{10em}\lesssim 2^{(\frac{1}{2}-\frac{\delta}{4})m}2^{(-\frac{1}{6}+2\beta)l}\eps^2,
    \end{align*}
    which is an acceptable contribution.
         \par\textbf{B.1(a.3): $p_{\min}= p_2\ll p_{\max}$.}
         Then by Lemma \ref{lemma: case organisation} we have two possibilities for the parameters $k,k_1,k_2$. First, if $2^k\sim 2^{k_1}$, then  $p_2\ll p\sim p_1$ and $2^{-l_2}\lesssim 2^{-(1-\delta^2)l}$. Using Lemma \ref{lemma: set size estimate} with $\abs{S}\lesssim 2^{k_2+\frac{p_2}{2}}$ and the $X$-norms of $f_1,f_2$, we have
         \begin{align*}
              2^{4k^+}2^{(1+\beta)l}2^{\beta p}2^{\frac{p}{2}}\ltwonorm{P_{k,p}R_l\B_\m (f_1,f_2)}&\lesssim 2^{(\beta -\frac{\delta}{2})m}2^{2l}2^{(\frac{1}{2}+\beta)p_1}2^{\frac{p_{2}}{2}} 2^{-(1+\beta)l_1}2^{-(\frac{1}{2}+\beta)p_1}2^{-l_2}2^{-\frac{p_2}{2}}\eps^2\\
              &\lesssim 2^{(\beta-\frac{\delta}{2}) m}2^{(-\beta+3\delta^2)l}\eps^2.
         \end{align*}
          If on the other hand $2^k\ll 2^{k_1}\sim 2^{k_2}$, then $2^{p_1}\ll 2^p$ and hence $p_2\leq p_1\ll p$ and $2^{p+k}\lesssim 2^{p_1+k_2}$. With $\abs{S}\lesssim 2^{k_2+\frac{p_2}{2}}$ there holds
          \begin{align*}
              2^{4k^+}2^{(1+\beta)l}2^{\beta p}2^{\frac{p}{2}}\ltwonorm{P_{k,p}R_l\B_\m (f_1,f_2)}&\lesssim 2^{(\beta-\frac{\delta}{2})m}2^{2l}2^{(\frac{1}{2}+\beta)p}2^{k+k_2}2^{-(1+\beta)l_1}2^{-(\frac{1}{2}+\beta)p_1}2^{-4k_2^+}2^{-l_2}\eps^2\\
              &\lesssim 2^{(\beta-\frac{\delta}{2})m}2^{(-\beta+3\delta^2)l}2^{(\frac{1}{2}+\beta)(k_2-k)}2^{2k}2^{-4k_2^+}\\
              &\lesssim 2^{(\beta -\frac{\delta}{2}) m}2^{(-\beta+3\delta^2)l}\eps^2.
          \end{align*}
            \par \textbf{B.1(b): No gap in $p$: $p\sim p_1\sim p_2$.} With Lemma \ref{lemma: set size estimate} and $m\leq l_2-p$ we obtain
          \begin{align*}
              2^{4k^+}2^{(1+\beta)l}2^{\beta p}2^{\frac{p}{2}}\ltwonorm{P_{k,p}R_l\B_\m (f_1,f_2)}&\lesssim 2^{m}2^{(1+\beta)l}2^{(2+\beta)p}2^{k+k_{\min}} 2^{-(1+\beta)l_1}2^{-(1+\beta)l_2}2^{-(1+2\beta)p}\eps^2\\
              &\lesssim 2^{\beta m}2^{(1-\beta)p}2^{2\delta^2l}2^{(1-\beta)p}2^{k+k_{\min}}2^{-2\beta l_2}\eps^2\\
              &\lesssim 2^{2\beta m}2^{-\beta l }\eps^2.
          \end{align*}
          This finishes \textbf{Case B.1}.
          \par \textbf{B.2: $l_2<m+p_2$, $l_1>(1-\delta^2)l$ and $k-k_2+m+p_2>(1-\delta^2)l$.} This is the other possibility if \eqref{eqn: Bernestein works} doesn't hold. Here we have
          \begin{align}\label{eqn: l+p/2>delta m conditions on l2 l1 in the last case}
              (1+\delta)m<l<(1+2\delta^2)(k-k_2+m+p_2),\; -l_1<-(1-\delta^2)l, \; -p_2-k+k_2<-\frac{\delta}{2}m.
          \end{align}
         \par \textbf{B.2(a): No gaps with $p\sim p_1\sim p_2$.}
          An $L^2-L^\infty$ estimate using Lemma \ref{lemma: linear decay many vector fields} on $f_2$ and \eqref{eqn: l+p/2>delta m conditions on l2 l1 in the last case} gives:
        \begin{align*}
            2^{4k^+}2^{(1+\beta)l}2^{\beta p}2^{\frac{p}{2}}\ltwonorm{P_{k,p}R_l\B_\m (f_1,f_2)}&\lesssim 2^{m}2^{(1+\beta)l}2^{(\frac{3}{2}+\beta)p}2^{k}2^{-(1+\beta)l_1}2^{-(\frac{1}{2}+\beta)p}2^{\frac{3}{4}k_2}2^{(-\frac{1}{2}+\kappa)m}\eps^2\\
            &\lesssim 2^{(\frac{1}{2}+\kappa)m}2^{2\delta^2(1+2\delta^2)(k-k_2+m+p)}2^{p}2^{k+\frac{3}{4}k_2}\eps^2\\
            &\lesssim 2^{(\frac{1}{2}+\kappa+3\delta^2)m}2^{(\frac{3}{4}-3\delta^2)k_2}2^{(1+3\delta^2)k}\eps^2\\
            &\lesssim 2^{(\frac{1}{2}-\frac{\delta}{8})m}\eps^2,
        \end{align*}
     since $\delta_0\ll \delta^2\ll \delta$ and we can choose $\kappa\ll \delta^2\ll \beta$. Note that in the last step we used the third condition in \eqref{eqn: l+p/2>delta m conditions on l2 l1 in the last case} on $k_2$.
        \par\textbf{B.2(b): Gap in $p$: $p_{\min}\ll p_{\max}$.}
         \par We can integrate by parts along $S_{\xi-\eta}$ via Lemma \ref{lemma: ibp in bilinear expressions}\eqref{it1: lemma: ibp in bilinear expressions} and using \eqref{eqn: l+p/2>delta m conditions on l2 l1 in the last case}, obtain the claim if  
          \begin{align}\label{eqn: ibp last step l+p>delta m}
              \max\{k_2-k-p_2-p_{\max} ,-p_2-p_{\max}+l_2\}\leq (1-\delta)m.
          \end{align}
        Assume now that \eqref{eqn: ibp last step l+p>delta m} doesn't hold. Still \eqref{eqn: l+p/2>delta m conditions on l2 l1 in the last case} holds and we proceed with two cases depending on which term on the right-hand side of \eqref{eqn: ibp last step l+p>delta m} is the largest.
       If  $k_2-k-p_2-p_{\max}> (1-\delta)m$, then we obtain with $\abs{S}\lesssim 2^{\frac{k+k_2}{2}+\frac{p_2}{2}}$ in Lemma \ref{lemma: set size estimate}:
        \begin{align*}
            2^{4k^+}2^{(1+\beta)l}2^{\beta p}2^{\frac{p}{2}}\ltwonorm{P_{k,p}R_l\B_\m (f_1,f_2)}&\lesssim 2^{m}2^{(1+\beta)l}2^{(\frac{3}{2}+\beta)p_{\max}}2^{\frac{3}{2}k}2^{\frac{k_2}{2}+\frac{p_{2}}{2}}2^{-\frac{l_1}{2}}\norm{f_1}_X2^{\frac{k_2}{2}}2^{\frac{p_2}{2}}\norm{f_2}_B\\
            &\lesssim 2^{m}2^{(\frac{1}{2}+\beta+\frac{\delta^2}{2})(1+2\delta^2)(k-k_2+m+p_2)}2^{\frac{3}{2}k+k_2}2^{\frac{3}{2}p_{\max}+p_2}\eps^2\\
            &\lesssim 2^{(\frac{3}{2}+\beta+2\delta^2)m}2^{(2+\beta+2\delta^2)k}2^{(\frac{1}{2}-\beta-2\delta^2)k_2}2^{(\frac{3}{2}+\beta)(p_2+p_{\max})}\eps^2\\
            &\lesssim 2^{\delta m}\eps^2.
        \end{align*}
        If on the other hand there holds $-p_2-p_{\max}+l_2> (1-\delta)m$, we have two settings.
       
         \par \textbf{B.2(b.1): Gap in $p$ with $p_{\max}\sim 0$.}
 If $2^{p_1}\sim 1$, then by Lemma \ref{lemma: set size estimate} with $\abs{S}\lesssim 2^{k_2+\frac{p_2}{2}}$ and \eqref{eqn: l+p/2>delta m conditions on l2 l1 in the last case} we obtain:
        \begin{align*}
             2^{4k^+}2^{(1+\beta)l}2^{\beta p}2^{\frac{p}{2}}\ltwonorm{P_{k,p}R_l\B_{\m} (f_1,f_2)}&\lesssim 2^m2^{(1+\beta)l}2^{k}2^{k_2+\frac{p_{2}}{2}} 2^{-(1+\beta)l_1}\norm{f_1}_X2^{-l_2}2^{-\frac{p_2}{2}}\norm{f_2}_X\\
             &\lesssim 2^{\delta m} 2^{3\delta^2(k-k_2+m+p_2)}2^{k+k_2}2^{-p_2}\eps^2\\
             &\lesssim2^{(\delta+3\delta^2)m}2^{(1+3\delta^2)k}2^{(1-3\delta^2)k_2}2^{-(1-3\delta^2)p_2}\eps^2\\
             &\lesssim 2^{2\delta m}\eps^2.
        \end{align*}
     Now let $2^{p_2}\sim 1$, then from \eqref{eqn: l+p/2>delta m conditions on l2 l1 in the last case} we have $-l_2<-(1-\delta)m$. Using Lemma \ref{lemma: set size estimate} with $\abs{S}\lesssim 2^{\frac{k_2+k_1+p_1}{2}}$ we obtain:
        \begin{align*}
             2^{4k^+}2^{(1+\beta)l}2^{\beta p}2^{\frac{p}{2}}\ltwonorm{P_{k,p}R_l\B_{\m} (f_1,f_2)}&\lesssim 2^m2^{(1+\beta)l}2^{k}2^{\frac{k_2+k_1+p_{1}}{2}} 2^{-l_1}2^{-\frac{p_1}{2}}2^{-(1+\beta)l_2}\eps^2\\
             &\lesssim 2^{(-\beta+2\delta)m}2^{(\beta+2\delta^2)(k-k_2+m)}2^{k+\frac{k_1+k_2}{2} } \eps^2\\
             &\lesssim 2^{5\delta m}\eps^2.
        \end{align*}
        Finally, if $2^{p}\sim 1$, with Lemma \ref{lemma: case organisation} and with the constraint $k_2\leq k_1$ we have just two possibilities: either $2^{k_2}\ll 2^{k}\sim 2^{k_1}$ and $p_2\ll p\sim p_1\sim 0$ which was handled above, or $2^{k}\ll 2^{k_2}\sim 2^{k_1}$ which implies $p_1\leq p_2\ll p\sim 0$ and in particular $2^{k}\lesssim 2^{k_2+p_2}$. From this the claim follows using Lemma \ref{lemma: set size estimate} with $\abs{S}\lesssim 2^{k_2+\frac{p_1}{2}}$ and \eqref{eqn: l+p/2>delta m conditions on l2 l1 in the last case}:
        \begin{align*}
            2^{4k^+}2^{(1+\beta)l}2^{\beta p}2^{\frac{p}{2}}\ltwonorm{P_{k,p}R_l\B_{\m} (f_1,f_2)}&\lesssim 2^m2^{(1+\beta)l}2^{k}2^{k_2+\frac{p_1}{2}}2^{-l_1}2^{-\frac{p_1}{2}}\norm{f_1}_X2^{-l_2} 2^{-\frac{p_2}{2}}\norm{f_2}_X\\
            &\lesssim 2^m2^{(\beta+3\delta^2)(k-k_2+m+p_2)}2^{2k_2+p_2}2^{-(1-\delta)m}2^{-\frac{3}{2}p_2}\eps^2\\
            &\lesssim 2^{(\beta+\delta+3\delta^2)m}2^{(\beta+2\delta^2)k}2^{(2-\beta-2\delta^2)k_2}2^{-\frac{p_2}{2}}\eps^2\\
            &\lesssim 2^{2\beta m}\eps^2.
        \end{align*}
        \par \textbf{B.2(b.2): Gap in $p$ with $p_{\min}\ll p_{\max}\ll 0$.} Then $\abs{\Phi}>\frac{1}{10}$ and we split the analysis in the resonant and non-resonant parts as presented in Section \ref{sec: Normal forms}. By choosing $\lambda=\frac{1}{100}$, we have $\m^{res}=0$ and so we can do a normal form as in Lemma \ref{lemma: normal forms} with $\m^{nr}=\m$. For the boundary term on the right-hand side of \eqref{eqn: nonresonant multiplier bilinear estimate} there holds using \eqref{eqn: l+p/2>delta m conditions on l2 l1 in the last case} and $\abs{S}\lesssim 2^{\frac{k_2+k_1+p_1}{2}}$
         \begin{align*}
             2^{4k^+}2^{(1+\beta)l}2^{\beta p}2^{\frac{p}{2}}\ltwonorm{P_{k,p}R_l\mathcal{Q}_{\m} (f_1,f_2)}&\lesssim 2^{(1+\beta)l}2^{k}2^{\frac{k_1+k_2+p_{1}}{2}}2^{-l_1}2^{-\frac{p_1}{2} }\norm{f_1}_X\norm{f_2}_B\\
            &\lesssim 2^{(\beta+2\delta^2)(k-k_2+m+p_2)}2^{k+\frac{k_1+k_2}{2}}\eps^2\\
            &\lesssim 2^{2\beta m}\eps^2,
         \end{align*}
    since $\delta_0\ll \delta^2\ll \beta$. Next we estimate the remaining terms using Lemma \ref{lemma: bounds on dtSf}, $\abs{S}\lesssim 2^{k_2+\frac{p_2}{2}}$, condition \eqref{eqn: l+p/2>delta m conditions on l2 l1 in the last case} and the setting that \eqref{eqn: ibp last step l+p>delta m} doesn't hold. We note that here we balance the $X-$ and $B-$norms on $f_2$ to overcome the loss in $k_2$ and obtain a bounded $X-$norm:
    \begin{align*}
        &2^{4k^+}2^{(1+\beta)l}2^{\beta p}2^{\frac{p}{2}}\ltwonorm{P_{k,p}R_l\B_{\m} (\partial_tf_1,f_2)}\\
         &\qquad\lesssim 2^m2^{(1+\beta)l}2^{k+p_{\max}}2^{k_{2}+\frac{p_{2}}{2}}\ltwonorm{\partial_t f_1}\ltwonorm{f_2}\\
         &\qquad\lesssim 2^{(\frac{1}{4}+2\delta)m}2^{(1+\beta+3\delta^2)(k-k_2+m+p_2)}
        2^{k+p_{\max}}2^{k_2+\frac{p_2}{2}}\eps^2 2^{-(1-4\beta)l_2}2^{-(1-4\beta)\frac{p_2}{2}}\norm{f_2}_{X}^{1-4\beta}2^{2k_2}2^{2\beta p_2}\norm{f_2}_B^{4\beta}\\
         &\qquad\lesssim 2^{(\frac{1}{4}+5\beta+3\delta)m}2^{9\beta p_2}2^{4\beta p_{\max}} 2^{(2+\beta+3\delta^2)k}2^{(\beta-3\delta^2)k_2}\eps^3\\
         &\qquad\lesssim 2^{(\frac{1}{4}+6\beta)m}\eps^3.
    \end{align*}
    And the last term:
    \begin{align*}
        2^{4k^+}2^{(1+\beta)l}2^{\beta p}2^{\frac{p}{2}}\ltwonorm{P_{k,p}R_l\B_{\m} (f_1,\partial_tf_2)}&\lesssim 2^m2^{(1+\beta)l}2^{k+\frac{k_2+k_1+p_1}{2}}\ltwonorm{f_1}\ltwonorm{\partial_t f_2}\\
        &\lesssim 2^{(\frac{1}{4}+2\delta)m}2^{(1+\beta)l}2^{k+\frac{k_2+k_1+p_1}{2}}2^{-l_1}2^{-\frac{p_1}{2}}\norm{f_1}_X\eps^2\\
        &\lesssim 2^{(\frac{1}{4}+2\beta)m}\eps^3.
    \end{align*}  
 This concludes all the cases and the proof of \textbf{Part 2}, and thus the proof of the proposition.
\end{proof}

\subsection{\texorpdfstring{$X$}{X}-norm bounds for \texorpdfstring{$l<(1+\delta)m$}{l-smaller-than-m}}\label{sec:X-norm-2}
\begin{proposition}\label{prop: X-norm bounds for l<(1+delta)m}
  In the setting of Proposition \ref{prop: bootstrap argument}, and in particular under the bootstrap assumptions \eqref{eqn: bootstrap assumption}, the following holds true: For $\m\in \set{\m_0,\m_\pm^{\mu\nu}}{\mu,\nu\in\{+,-\}}$, $t\in [2^m,2^{m+1}[\cap [0,T]$ and $\delta=2M^{-\frac{1}{2}}$, there holds that
    \begin{align*}
        \sup_{\substack{k,l,p\\ l+p\geq 0, l<(1+\delta)m}}2^{4k^+}2^{(1+\beta)l}2^{\beta p}2^{\frac{p}{2}}\ltwonorm{P_{k,p}R_l\B_\m(F_1,F_2)}\lesssim 2^{(\frac{1}{2}-\frac{3}{4}\delta) m }\eps^2+2^{(1-\beta)m}\eps^3,
    \end{align*}
     where $F_i\in \{S^{b_i}\mathcal{Z}_{\pm},S^{b_i}\Theta\}$, $0\leq b_1+b_2\leq N$, $i=1,2$.
\end{proposition}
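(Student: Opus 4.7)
The plan is to follow the now-established three-step scheme from Propositions \ref{prop: bounds on B norm} and \ref{prop: X-norm bounds for l>(1+delta)m}: first a reduction via simple cases, then a Gap in $p$ case, then a No gap case. For the simple cases, applying Lemma \ref{lemma: set size estimate} with $\abs{S}\lesssim 2^{k+p/2}$ and the energy estimates \eqref{eqn:bootstrap-prop-bded-energy} confines the analysis to $-4m\leq k,k_i\leq \delta_0 m$, $-4m\leq p_i\leq 0$ and $-p_i\leq l_i\leq 4m$, with $\delta_0=2N_0^{-1}\ll\delta$. Localizing $f_i=P_{k_i,p_i}R_{l_i}F_i$, all summations then range over intervals of size $O(m^3)\lesssim 2^{\delta^2 m}$, so it suffices to prove the parameter-free bound
\[
2^{4k^+}2^{(1+\beta)l}2^{\beta p}2^{\frac{p}{2}}\ltwonorm{P_{k,p}R_l\B_\m(f_1,f_2)}\lesssim 2^{(\frac{1}{2}-\delta)m}\eps^2+2^{(1-\frac{3}{2}\beta)m}\eps^3,
\]
and to split cases according to Lemma \ref{lemma: case organisation}.

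In the Gap in $p$ case $p_{\min}\ll p_{\max}$, Proposition \ref{prop: lower bound on sigma} produces either a large lower bound on $\sigma$ -- in which case iterated integration by parts along $S_\eta$ or $S_{\xi-\eta}$ via Lemma \ref{lemma: ibp in bilinear expressions}\eqref{it1: lemma: ibp in bilinear expressions} closes the bound whenever the admissible conditions analogous to \eqref{eqn: ibp is working} hold -- or a lower bound $\abs{\Phi}\gtrsim 1$, in which case the normal form \eqref{eqn: nonresonant multiplier bilinear estimate} combined with the improved $L^2$ bound on $\partial_tf_i$ from Lemma \ref{lemma: bounds on dtSf} delivers an $\eps^3$ term of the expected size. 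When neither of these works, I will balance the $X$- and $B$-norms against the set-size bound of Lemma \ref{lemma: set size estimate}, paralleling Cases B.1(a.2) and B.2(a.2) of Proposition \ref{prop: bounds on B norm}. The critical new configuration, and the main obstacle, will be Case B.2(b), where $p_1\leq p_2\ll p\sim 0$ and $2^k\ll 2^{k_1}\sim 2^{k_2}$: here the $X$-norm weight $2^{(1+\beta)l}$ on the output, together with potentially large $l_2$, prevents a direct trade of $l_2$ for $-p_2$. The plan is to exploit the geometric constraint $2^k\lesssim 2^{k_2+p_2}$ from Lemma \ref{lemma: case organisation} and to integrate by parts along the horizontal derivative $D_\eta$ using Lemma \ref{lemma: ibp in D_eta}, which trades $2^{l_i+p_i}$ per iteration (rather than the unfavourable $2^{l_i}$), combined with a normal form on the residual region where $\abs{\partial_{\eta_1}\Phi}$ is small, handled via Lemma \ref{lemma: normal forms}\eqref{it3: lemma: normal forms} using the vertical set-size gain $2^{(k_i+p_i)/2}$.

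In the No gap case $p\sim p_1\sim p_2$, the $L^\infty$--$L^2$ argument of Proposition \ref{prop: bounds on B norm} Case D no longer suffices, since the refined linear decay of Proposition \ref{proposition: linear decay} cannot absorb the extra factor $2^{(1+\beta)l}$ in the $X$-norm weight. The remedy is to further localize via $P_{k,p,q}$ (with $q_1,q_2$ for $\xi-\eta,\eta$), quantifying $\Lambda\sim 2^q$, and to run a second gap/no-gap dichotomy in the horizontal parameters $q,q_i$. In the $q$-gap subcase, the enhanced lower bound $\abs{\sigma}\widetilde\chi\gtrsim 2^{k_{\min}+k_{\max}+p_{\max}+q_{\max}}$ from Proposition \ref{prop: lower bound on sigma} activates Lemma \ref{lemma: ibp in bilinear expressions}\eqref{it2: lemma: ibp in bilinear expressions}, enabling enough iterated $S$-integration by parts to close the bound. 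In the no-$q$-gap subcase, either $\abs{\Phi}\gtrsim 2^{q_{\max}}$ triggers a normal form whose boundary and $\partial_t$ pieces are estimated via Lemma \ref{lemma: normal forms}\eqref{it4: lemma: normal forms} and Lemma \ref{lemma: bounds on dtSf}, using the horizontal set-size gain $2^{(k_i+q_i)/2}$, or the geometric configuration is so constrained that the plain set-size bound already suffices. Polynomial summations over $q,q_i$ are absorbed into $2^{\delta^2 m}$ slack. The technical balance in Case B.2(b) -- simultaneously managing $D_\eta$-integration by parts, a normal form, and the geometric constraint $2^k\lesssim 2^{k_2+p_2}$ to produce the borderline $2^{(1/2-3\delta/4)m}$ rate -- is where I expect the delicate book-keeping, and is the main obstacle of the proof.
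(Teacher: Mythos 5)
Your plan correctly reproduces the paper's case structure, identifies the critical configuration (Case B.2(b): $p_1\leq p_2\ll p\sim 0$ with $2^{k}\ll 2^{k_1}\sim 2^{k_2}$), and names the right toolbox for it: $D_\eta$--integration by parts from Lemma~\ref{lemma: ibp in D_eta}, normal forms with the set--size estimate of Lemma~\ref{lemma: normal forms}\eqref{it3: lemma: normal forms}, and $P_{k,p,q}$--localizations together with Lemma~\ref{lemma: ibp in bilinear expressions}\eqref{it2: lemma: ibp in bilinear expressions} in the no--gap case. The reduction via simple cases and the gap--vs--no--gap organization also match.

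The gap is in the execution of Case B.2(b). Lemma~\ref{lemma: ibp in D_eta} only applies once one has a lower bound on $\abs{D_\eta\Phi}$, and such a bound is not available across the whole of B.2(b): the paper must first split on the sign pair $(\mu,\nu)$ in $\Phi_\pm^{\mu\nu}$ and, within $\mu=\nu$, on the sign of $\Lambda(\xi-\eta)\Lambda(\eta)$. When $\mu=\nu$ and $\Lambda(\xi-\eta)\Lambda(\eta)>0$ (with $p_1\sim p_2$), one has $\abs{\Phi}\geq\frac12$ outright and a pure normal form closes the estimate with no $D_\eta$ argument at all; it is only when $\Lambda(\xi-\eta)\Lambda(\eta)<0$, or when $\mu\neq\nu$ or $p_1\ll p_2$, that the mean--value/difference arguments yield $\abs{\partial_{\eta_1}\Phi}\gtrsim 2^{p_1-2k_1+k}$ (using $2^k\lesssim 2^{k_1+p_1}$) resp.\ $\gtrsim 2^{2p_2-k_2}$, which activate the $D_\eta$--IBP. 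Applying the $D_\eta$--IBP unconditionally, as your proposal does, fails in the aligned--sign subcase. A second omission concerns the resonant remainder: there $\partial_{\eta_1}\Phi$ is precisely small, so Lemma~\ref{lemma: normal forms}\eqref{it3: lemma: normal forms} must be invoked via $\abs{\partial_{\xi_1}\Phi}\gtrsim 2^{-k}=:K$ (which follows from $p_1\ll p\sim 0$), not via $\partial_{\eta_1}\Phi$, and the normal--form scale has to be tuned far below $O(1)$ -- e.g.\ $\lambda\sim 2^{(\frac32-6\beta)p_2}$, resp.\ $\lambda\sim 2^{4\beta k}2^{(1+8\beta)p_1}$ -- so that the combined factor $\lambda^{1/2}K^{-1/2}2^{(k_1+p_1)/2}$ together with the $X$--$B$ balance lands exactly on the borderline rate $2^{(\frac12-\frac34\delta)m}$. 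Without the sign split and these calibrated scales, the plan as stated does not close B.2(b).
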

This is the most challenging result of our article. Similarly to the proofs of Lemma \ref{lemma: bounds on dtSf} and Propositions \ref{prop: bounds on B norm}, \ref{prop: X-norm bounds for l>(1+delta)m}, we can use the energy estimates to treat very large or small frequencies. Otherwise, alongside previously used tools such as integration by parts along $S$, set-size estimates and normal forms, we need a more refined analysis in certain settings. The most delicate part of the proof concerns \textbf{Case B.2} when there holds that $p_1\leq p_2\ll p$. This leads to large losses for integration by parts along $S$, while normal forms are not generally beneficial since $\abs{\Phi}$ may be very small. To handle this, we use refined versions of the aforementioned tools adapted to the precise geometry of frequency interactions at hand, in particular also through set-size estimates as in Lemma \ref{lemma: normal forms}\eqref{it3: lemma: normal forms}. Moreover, in the ``no gap'' case (\textbf{Case D} below), the linear decay estimates alone do not suffice to obtain the claim, and instead we need to introduce additional localizations $q,q_1,q_2$ in the horizontal direction \blue{$\xi_1\absxi^{-1}$ (see Section \ref{sec: Localizations}). This allows for integration by parts in the vector field $S$ and the use of normal forms that include the aforementioned parameters to conclude the proof.}

\begin{proof}
    \par \textbf{Case A: Simple cases.} As in the \textbf{Cases A} in the proofs of  Propositions \ref{prop: bounds on B norm}, \ref{prop: X-norm bounds for l>(1+delta)m}, and with additional localizations $f_i=P_{k_i,p_i}R_{l_i}F_i$, we can treat most of the frequencies using the energy bounds obtained from the bootstrap assumption \eqref{eqn: bootstrap assumption}.
   Thus it suffices to prove
    \begin{align*}
       \sup_{k,l<(1+\delta)m, l+p\geq 0} 2^{4k^+}2^{(1+\beta+2\delta)m}2^{\beta p}2^{\frac{p}{2}}\ltwonorm{P_{k,p}R_l\B_\m(f_1,f_2)}\lesssim 2^{(\frac{1}{2}-\frac{4}{5}\delta) m} \eps^2+2^{(1-\frac{\beta}{2})m}\eps^3,
    \end{align*}
    for the following parameters:
    \begin{align*}
    &    -4m<k,k_i<\delta_0m, && -p_i \leq l_i\leq 4m, && -4m\leq p_i\leq 0, && i=1,2.
    \end{align*}
    Note that in this setting there holds $2^{(1+\beta)l}\lesssim 2^{(1+\beta+2\delta)m}$. We proceed with several cases.
    \par \textbf{Case B: Gap in $p$ with $p_{\max}\sim 0$ and $p_{\min}\ll p_{\max}\sim 0$.} Here there holds $\abs{\sigma}\sim 2^{k_{\max}+k_{\min}}$. 
    Integration by parts along $S$ via Lemma \ref{lemma: ibp in bilinear expressions}\eqref{it1: lemma: ibp in bilinear expressions} yields the claim if
     \begin{equation}\label{eqn: ibp is working X norm} 
    \begin{split}
       S_\eta: \hspace{1cm}  2^{2k_1}2^{-p_1}2^{-k_{\max}-k_{\min}}(1+2^{k_2-k_1}2^{l_1})&\leq 2^{(1-\delta)m}, \\
       S_{\xi-\eta}: \hspace{1cm} 2^{2k_2}2^{-p_2}2^{-k_{\max}-k_{\min}}(1+2^{k_1-k_2}2^{l_2})&\leq 2^{(1-\delta)m},
    \end{split}
\end{equation}
 where $\delta=2M^{-\frac{1}{2}}$. Assume now that \eqref{eqn: ibp is working X norm} doesn't hold and that w.l.o.g. $p_1\leq p_2$ and treat several cases based on Lemma \ref{lemma: case organisation}.
\par \textbf{Case B.1: $p_{\min}\sim p\ll p_{\max}\sim 0$.} By Lemma \ref{lemma: multiplier bound} the multiplier bound reads $\sinftynorm{\m\chi}\lesssim 2^k$. By Lemma \ref{lemma: case organisation} and under the constraint $p_1\leq p_2$, we have two further cases to consider.
\par \textbf{Case B.1(a): $2^{k_1}\sim 2^{k_2}$}. Then $p\ll p_1\sim p_2\sim 0$ and condition \eqref{eqn: ibp is working X norm} doesn't hold if $\max\{-l_1,-l_2\}< -(1-\delta)m-k+k_1$. We use the set size estimate Lemma \ref{lemma: set size estimate} with $\abs{S}\lesssim 2^{k}$ and the bootstrap assumption \eqref{eqn: bootstrap assumption}:
\begin{align*}
    2^{4k^+}2^{(1+\beta+2\delta)m}2^{\beta p}2^{\frac{p}{2}}\ltwonorm{P_{k,p}R_l\B_\m(f_1,f_2)}&\lesssim  2^{4k^+}2^{(2+\beta+2\delta)m}2^{2k}\ltwonorm{P_{k_1,p_1}f_1}\ltwonorm{P_{k_2,p_2}f_2}\\
    &\lesssim 2^{-4k_1^+}2^{(2+\beta+2\delta)m}2^{2k}2^{-(l_1+l_2)}\norm{f_1}_X\norm{f_2}_X\\
    &\lesssim 2^{(\beta+4\delta)m}2^{2k}2^{-3k_1^+}2^{2(-k+k_1)}\eps^2\\
    &\lesssim 2^{2\beta m}\eps^2.
\end{align*}
\par \textbf{Case B.1(b): $2^{k_2}\ll2^{k_1}\sim 2^k $}. Then $p \leq p_1\ll p_2\sim 0$, ${k_{\max}+k_{\min}}\sim {k_1+k_2}$ and $2^{k_2}\lesssim 2^{p_1+k}$. In this case we have $-l_1<-(1-\delta)m-p_1+k-k_2$ and $-l_2<-(1-\delta)m$ (cf.\ \eqref{eqn: ibp is working X norm}). We obtain the claim from Lemma \ref{lemma: set size estimate} with $\abs{S}\lesssim 2^{k_2+\frac{p_2}{2}}\lesssim 2^{k_2}$:
\begin{align*}
    2^{4k^+}2^{(1+\beta+2\delta)m}2^{\beta p}2^{\frac{p}{2}}\ltwonorm{P_{k,p}R_l\B_\m(f_1,f_2)}&\lesssim  2^{(2+\beta+2\delta)m}2^{(\frac{1}{2}+\beta) p_1}2^{k+k_2}\ltwonorm{P_{k_1,p_1}f_1}\ltwonorm{P_{k_2,p_2}f_2}\\
    &\lesssim 2^{(2+\beta+2\delta)m}2^{(\frac{1}{2}+\beta) p_1}2^{k+k_2}2^{-\frac{5}{8}l_1}2^{-\frac{p_1}{8} }2^{-(1+\beta)l_2}\eps^2\\
    &\lesssim 2^{(\frac{3}{8}+3\delta)m}2^{\frac{13}{8}k+\frac{3}{8}k_2}2^{(-\frac{1}{4}+\beta) p_1} \eps^2\\
    &\lesssim 2^{(\frac{3}{8}+4\delta)m}\eps^2.
\end{align*}
\par \textbf{Case B.2: $p_{\min}\sim p_1\ll p_{\max}\sim 0$.} By Lemma \ref{lemma: case organisation} we have the following three cases to consider.
\par \textbf{Case B.2(a): $2^{k}\sim 2^{k_2}$.} Then $p_1\ll p\sim p_2\sim 0$ and $k_{\max}+k_{\min}\sim {k_1+k_2}$. Condition \eqref{eqn: ibp is working X norm} doesn't hold if
\begin{align*}
   & l_1-p_1>(1-\delta)m &&\text{ and }&& \max \{ k_2-k_1,l_2\}>(1-\delta)m.
\end{align*}
\par\textbf{1.} If $\max \{ k_2-k_1,l_2\}=k_2-k_1>(1-\delta)m$, we obtain an admissible bound from Lemma \ref{lemma: set size estimate} with $\abs{S}\lesssim 2^{k_1+\frac{p_1}{2}}$:
\begin{align*}
     2^{4k^+}2^{(1+\beta+2\delta)m}2^{\beta p}2^{\frac{p}{2}}\ltwonorm{P_{k,p}R_l\B_\m(f_1,f_2)}&\lesssim  2^{(2+\beta+2\delta)m}2^{k_1+\frac{p_1}{2}}2^{k}2^{-\frac{l_1}{2}} 2^{\frac{k_1^-}{4}}\norm{f_1}_X^{\frac{1}{2}}\norm{f_1}_{B}^{\frac{1}{2}}\norm{f_2}_B\\
     &\lesssim 2^{(\frac{3}{2}+\beta+3\delta)m}2^{\frac{5}{4}k_1}2^{k}\eps^2\\
     &\lesssim 2^{(\frac{1}{4}+2\beta)m}\eps^2.
\end{align*}
\par \textbf{2.} If on the other hand $\max \{ k_2-k_1,l_2\}=l_2>(1-\delta)m$ we compute with  $\abs{S}\lesssim 2^{k_1+\frac{p_1}{2}}$
\begin{align*}
    2^{4k^+}2^{(1+\beta+2\delta)m}2^{\beta p}2^{\frac{p}{2}}\ltwonorm{P_{k,p}R_l\B_\m(f_1,f_2)}&\lesssim  2^{(2+\beta+2\delta)m}2^{k+k_1+\frac{p_1}{2}}2^{-(1+\beta)(l_1+l_2)} 2^{-(\frac{1}{2}+\beta) p_1}\norm{f_1}_X\norm{f_2}_X\\
    &\lesssim 2^{-4k_1^+}2^{(-\beta+4\delta)m}2^{k_1+k}2^{-2(\frac{1}{2}+\beta)p_1}\eps^2\\
    &\lesssim 2^{-4k_1^+}2^{(-\beta+5\delta)m}2^{(\frac{1}{2}+\beta)(k_1-k-2p_1)}\eps^2.
\end{align*}
The claim follows if $k_1-k-2p_1\leq (1-2\beta)m $. Otherwise, if \begin{align}\label{eqn: case B.2(c) condition normal form}
    k\leq -(1-2\beta)m-2p_1+k_1,
\end{align} we do a splitting as presented in Section \ref{sec: Normal forms} with $\lambda=2^{-4\beta m}$. Thus, we have the following decomposition 
\begin{align*}
    \ltwonorm{P_{k,p}R_l\B_{\m}(f_1,f_2)}\lesssim     \ltwonorm{P_{k,p}R_l\B_{\m^{res}}(f_1,f_2)}+    \ltwonorm{P_{k,p}R_l\B_{\m^{nr}}(f_1,f_2)}.
\end{align*}
Observe that with \eqref{eqn: case B.2(c) condition normal form} and the definition of the phase \eqref{eqn: bq phases} with $\mu,\nu\in \{+,-\}$, we have:
\begin{align*}
    \abs{\partial_{\eta_1}\Phi_{\pm}^{\mu\nu}(\xi,\eta)}&=\abs{\mu\frac{(\xi_2-\eta_2)^2}{\abs{\xi-\eta}^3}+\nu\frac{\eta_2^2}{\abs{\eta}^3}}\gtrsim \abs{2^{2p_2}2^{-k_2}-2^{2p_1}2^{-k_1}}\gtrsim 2^{-k}=:K.
\end{align*}
The resonant term can be treated via Lemma \ref{lemma: normal forms}\eqref{it3: lemma: normal forms} with $\lambda=2^{-4\beta m}$ and $K=2^{-k}$:
\begin{align*}
    2^{4k^+}2^{(1+\beta+2\delta)m}2^{\beta p}2^{\frac{p}{2}}\ltwonorm{P_{k,p}R_l\B_{\m^{res}}(f_1,f_2)}&\lesssim  2^{(2+\beta+2\delta)m}(\lambda K^{-1})^{\frac{1}{2}}2^{k+\frac{p_1}{2}}2^{\frac{p_1}{2}}2^{-(1+\beta)l_2}\norm{f_1}_B\norm{f_2}_X\\
    &\lesssim 2^{(\frac{1}{2}-\frac{\beta}{2})m}\eps^2.
\end{align*}
On the non-resonant part, we can do a normal form as \eqref{eqn: nonresonant multiplier bilinear estimate} and bound the $L^2$-norm of each term using Lemma \ref{lemma: normal forms}.
For the boundary term in \eqref{eqn: nonresonant multiplier bilinear estimate} we have:
\begin{align*}
    2^{4k^+}2^{(1+\beta+2\delta)m}2^{\beta p}2^{\frac{p}{2}}\ltwonorm{P_{k,p}R_l\mathcal{Q}_{\m^{nr}\Phi^{-1}}(f_1,f_2)}
    &\lesssim 2^{(1+\beta+2\delta)m}\lambda^{-1}2^{2k+\frac{p_1}{2}}2^{\frac{p_1}{2}}\norm{f_1}_B2^{-(1+\beta)l_2}\norm{f_2}_X\\
    &\lesssim 2^{(-\frac{1}{2}+2\beta)m }\eps^2.
\end{align*}
 For the second term in the splitting \eqref{eqn: nonresonant multiplier bilinear estimate}, we use Lemma \ref{lemma: bounds on dtSf} and $\abs{S}\lesssim 2^{k_1+\frac{p_1}{2}}$:
\begin{align*}
     &2^{4k^+}2^{(1+\beta+2\delta)m}2^{\beta p}2^{\frac{p}{2}}\ltwonorm{P_{k,p}R_l\B_{\m^{nr}\Phi^{-1}}(\partial_tf_1,f_2)}\\
      &\hspace{15em}\lesssim   2^{4k^+}2^{(2+\beta+2\delta)m}2^{k}\lambda^{-1}\abs{S}\ltwonorm{\partial_tf_1}\ltwonorm{f_2}\\
      &\hspace{15em}\lesssim 2^{(2+\beta+2\delta)m}2^{k+k_1}\lambda^{-1}2^{\frac{p_1}{2}}2^{(-\frac{3}{4}+2\delta)m}\eps^22^{-(1+\beta)(1-\delta)m}\norm{f_2}_X\\
      &\hspace{15em}\lesssim 2^{5\beta m}\eps^3.
\end{align*}
Similarly, for the last term in \eqref{eqn: nonresonant multiplier bilinear estimate} with Lemma \ref{lemma: bounds on dtSf} and $\abs{S}\lesssim 2^{k_1+\frac{p_1}{2}}$ we have
\begin{align*}
     2^{4k^+}2^{(1+\beta+2\delta)m}2^{\beta p}2^{\frac{p}{2}}\ltwonorm{P_{k,p}R_l\B_{\m^{nr}\Phi^{-1}}(f_1,\partial_tf_2)}&\lesssim   2^{4k^+}2^{(2+\beta+2\delta)m}2^{k}\lambda^{-1}\abs{S}\ltwonorm{f_1}\ltwonorm{\partial_tf_2}\\
     &\lesssim 2^{(2+\beta+2\delta)m}2^{2k}\lambda^{-1}2^{{p_1}}\norm{f_1}_B2^{(-\frac{3}{4}+2\delta)m}\eps^2\\
     &\lesssim 2^{(\frac{3}{4}+6\beta)m}\eps^3.
\end{align*}

\par\textbf{Case B.2(b): $2^{k}\ll 2^{k_1}\sim 2^{k_2}$ and $p_1\leq p_2\ll p=p_{\max}\sim 0.$} By Lemma \ref{lemma: case organisation} there holds $2^{k}\lesssim 2^{p_2+k_2}\sim 2^{p_2+k_1}$. We obtain the claim via integration by parts if $l_1\leq (1-\delta)m+p_1+k-k_1$ or $l_2\leq (1-\delta)m+p_2+k-k_1$, see \eqref{eqn: ibp is working X norm}. Hence we may assume
\begin{align}\label{eqn: l_i bound X norm p1<=p_2}
   & -l_1<-(1-\delta)m -p_1-k+k_1 &&\text{ and }&& -l_2<-(1-\delta)m -p_2-k+k_1.
\end{align}
In this setting, we treat two different parts based on the signs in the phase and on the relative size of $p_1$ to $p_2$. 
Recall the definition of the phases \eqref{eqn: bq phases}, i.e.\
\begin{align*}
    \Phi_\pm^{\mu\nu}=\pm\Lambda(\xi)-\mu\Lambda(\xi-\eta)-\nu\Lambda(\eta), &\hspace{1.5cm} \mu,\nu \in \{+,-\}.
\end{align*}
\par\textbf{Case B.2(b.1):} Assume $\mu=\nu$ and $p_1\sim p_2\ll p\sim 0$. 
\par\textbf{1.} If $\Lambda(\xi-\eta)\Lambda(\eta)>0$, since $2^{p_1}\sim 2^{p_2}\ll 1$ there holds:
\begin{align*}
    \abs{\Lambda(\xi-\eta)+\Lambda(\eta)}\geq \frac{3}{2}.
\end{align*}
This implies in particular that the phase is large:
\begin{align*}
    \abs{\Phi_\pm^{\mu\mu}}=\abs{\pm \Lambda(\xi)-\mu\Lambda(\xi-\eta)-\mu\Lambda(\eta)}\geq
    \abs{\abs{\Lambda(\xi-\eta)+\Lambda(\eta)}-\abs{\Lambda(\xi)}}\geq \frac{1}{2}.
\end{align*}
With this observation and $\lambda=10^{-2}$, we note that a splitting as in Section \ref{sec: Normal forms} contains only the non-resonant part. That is $\B_{\m}(f_1,f_2)=\B_{\m^{nr}}(f_1,f_2)$ and we can apply Lemma \ref{lemma: normal forms} to bound each term in \eqref{eqn: nonresonant multiplier bilinear estimate} in $L^2$.
We proceed with the boundary term and with Lemmas \ref{lemma: linear decay many vector fields}, \ref{lemma: normal forms}\eqref{it2: lemma: normal forms} and \eqref{eqn: l_i bound X norm p1<=p_2} obtain that
\begin{align*}
    2^{4k^+}2^{(1+\beta+2\delta)m}\ltwonorm{P_{k,p}R_l\mathcal{Q}_{\m^{nr}\Phi^{-1}}(f_1,f_2)}&\lesssim 2^{4k^+}2^{(1+\beta+2\delta)m}2^{k}\inftynorm{e^{it\Lambda}f_1}\ltwonorm{f_2}\\
    &\lesssim 2^{(\frac{1}{2}+\beta+\kappa+2\delta)m}2^k2^{\frac{l_2}{2}}\eps^2\\
    &\lesssim 2^{(\beta+\kappa+3\delta)m}2^{\frac{k_1}{2}}\eps^2.
\end{align*}
For the terms in \eqref{eqn: nonresonant multiplier bilinear estimate} containing the time derivative we use Lemmas \ref{lemma: normal forms}\eqref{it1: lemma: normal forms}, \ref{lemma: bounds on dtSf} and with $\abs{S}\lesssim 2^{\frac{k+k_1+p_2}{2}}$ obtain that
\begin{align*}
    2^{4k^+}2^{(1+\beta+2\delta)m} \ltwonorm{P_{k,p}R_l\B_{\m^{nr}\Phi^{-1}}(\partial_tf_1,f_2)}&\lesssim  2^{4k^+}2^{(2+\beta+2\delta)m}2^{\frac{3k+k_1+p_2}{2}}\ltwonorm{\partial_tf_1}\ltwonorm{f_2}\\
    &\lesssim 2^{(\frac{5}{4}+\beta+2\delta)m}2^{\frac{3k+k_1+p_2}{2}}2^{-\frac{l_2}{2}}\eps^3\\
    &\lesssim 2^{(\frac{3}{4}+2\beta)m}\eps^3.
\end{align*}
The third term is bounded similarly by symmetry using \eqref{eqn: l_i bound X norm p1<=p_2} on $l_1$ instead. 
\par \textbf{2.} Assume $\Lambda(\xi-\eta)\Lambda(\eta)<0$. We assume w.l.o.g.\ that $\Lambda(\xi-\eta)<0$ and $\Lambda(\eta)>0$ (see Figure \ref{fig:X} for illustration).
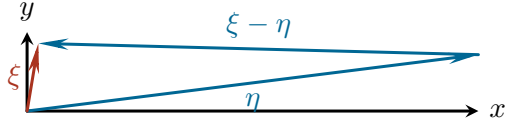
\begin{figure}[h]
    \centering
    \begin{tikzpicture}[>=stealth,scale=1.5,line cap=round,
    bullet/.style={circle,inner sep=1.5pt,fill}]    
 \draw[line width=0.4mm, ->] (0,0) -- (4,0) node[right]{$x$};
 \draw[line width=0.4mm, ->] (0,0) -- (0,0.7) node[above]{$y$};
 \draw [line width=0.4mm, -{Stealth[length=4mm, width=1.5mm]}, MidnightBlue] (0,0) -- (4,0.5) node [midway, below] {$\eta$};
 \draw [line width=0.4mm,-{Stealth[length=4mm, width=1.5mm]}, Mahogany]  (0,0) -- (0.1,0.6) node [midway, left] {$\xi$};
 \draw[line width=0.4mm,-{Stealth[length=4mm, width=1.5mm]}, MidnightBlue] (4,0.5) --  (0.1,0.6) node [midway, above] {$\xi-\eta$};
\end{tikzpicture} 
\caption{A sample setting of Case 2.}\label{fig:X}
\end{figure}\\
First we observe that
\begin{align*}
    \abs{\partial_{\eta_1}\Phi_{\pm}^{\mu\mu}}\chi=\abs{\Pi(\xi-\eta)-\Pi(\eta)}\chi,\qquad \Pi(\zeta):={\zeta_2^2}/{\abs{\zeta}^3}, \qquad \zeta \in \R^2.
\end{align*} 
Moreover, there holds that
\begin{align*}
    \nabla_{\zeta}\Pi(\zeta)=\frac{\zeta_2}{\abs{\zeta}^3}\Big(-\frac{3\zeta_1\zeta_2}{\abs{\zeta}^2}, \frac{-\zeta_2^2+2\zeta_1^2}{\abs{\zeta}^2} \Big)^T,
\end{align*}
and by the mean value theorem together with the condition $2^{k}\lesssim 2^{p_1+k_1}$ we obtain
\begin{align*}
   \abs{\partial_{\eta_1}\Phi_{\pm}^{\mu\mu}}\chi=  \abs{\Pi(\xi-\eta)-\Pi(\eta)}\chi(\xi,\eta)\geq \inf_{(\zeta,\upsilon) \in \supp \chi}\abs{(\nabla\Pi)(\zeta)\cdot \upsilon}\gtrsim 2^{p_1-2k_1+k}.
\end{align*}
 Then we can integrate by parts in $D_{\eta}=\abseta\partial_{\eta_1}$ using Lemma \ref{lemma: ibp in D_eta} with $L=2^{p_1-2k_1+k}$ and obtain the claim if
\begin{align*}
    2^{-p_1-k+k_1}(2^{l_1+p_1}+2^{l_2+p_2})<2^{(1-\delta)m}.
\end{align*}
That is, we may assume 
\begin{align*}
    \max\{2^{l_1-k+k_1},2^{l_2-k+k_1}\}>2^{(1-\delta)m}.
\end{align*}
Without loss of generality, we assume $2^{l_1-k+k_1}>2^{(1-\delta)m}$ and that \eqref{eqn: l_i bound X norm p1<=p_2} holds for $l_2$. The other case is treated by symmetry. To handle this case, we do a splitting $\m=\m^{nr}+\m^{res}$ as in Section \ref{sec: Normal forms} with $\lambda=2^{4\beta k}2^{(1+8\beta)p_1}$. On the non-resonant part we bound each term in \eqref{eqn: nonresonant multiplier bilinear estimate} and using $2^{k}\lesssim 2^{p_1+k_1}$ and the $X$-norm on $f_1$, $B$-norm on $f_2$, we obtain 
\begin{align*}
     2^{4k^+}2^{(1+\beta+2\delta)m}\ltwonorm{P_{k,p}R_l\mathcal{Q}_{\m^{nr}\Phi^{-1}}(f_1,f_2)}&\lesssim 2^{4k^+}2^{(1+\beta+2\delta)m}2^{\frac{3k+k_1+p_1}{2}}\lambda^{-1}\ltwonorm{f_1}\ltwonorm{f_2}\\
     &\lesssim 2^{(1+\beta+2\delta)m}2^{\frac{3k+k_1+p_1}{2}}\lambda^{-1} 2^{-(\frac{1}{2}+2\beta)l_1}2^{-2\beta p_1}2^{-4k_1^+}\eps^2\\
     &\lesssim 2^{(\frac{1}{2}-\beta+3\delta)m}2^{(\frac{3}{2}-\frac{1}{2}-6\beta)k}2^{(1+2\beta)k_1}2^{(-\frac{1}{2}-10\beta)p_1}2^{-4k_1^+}\eps^2\\
     &\lesssim 2^{(\frac{1}{2}-\frac{\beta}{2})m}\eps^2,
\end{align*}
where we have used $2^{k}\lesssim 2^{p_1+k_1}$. Next, we bound using Lemma \ref{lemma: bounds on dtSf}, $\abs{S}\lesssim 2^{\frac{k+k_1+p_1}{2}}$ and the $X$-norm :
\begin{align*}
     2^{4k^+}2^{(1+\beta+2\delta)m}\ltwonorm{P_{k,p}R_l\B_{\m^{nr}\Phi^{-1}}(\partial_tf_1,f_2)}&\lesssim 2^{4k^+}2^{(2+\beta+2\delta)m}2^{\frac{3k+k_1+p_1}{2}}\lambda^{-1}\ltwonorm{\partial_tf_1}\ltwonorm{f_2}\\
     &\lesssim 2^{(\frac{5}{4}+\beta+3\delta)m}2^{(\frac{3}{2}-4\beta)k}2^{(-\frac{1}{2}-8\beta)p_1}2^{-(\frac{1}{4}+2\beta)l_2}2^{(\frac{1}{4}-2\beta)p_1}\eps^3\\
     &\lesssim 2^{(1-\beta+3\delta)m}2^{(\frac{5}{4}-6\beta)k}2^{(-\frac{1}{2}-12\beta)p_1}\eps^3\\
     &\lesssim 2^{(1-\frac{\beta}{2})m}\eps^3.
\end{align*}
And finally for the third term, with Lemma \ref{lemma: bounds on dtSf} and $\abs{S}\lesssim 2^{\frac{k+k_1+p_1}{2}}$ there holds:
\begin{align*}
      2^{4k^+}2^{(1+\beta+2\delta)m}\ltwonorm{P_{k,p}R_l\B_{\m^{nr}\Phi^{-1}}(f_1,\partial_tf_2)}&\lesssim 2^{4k^+}2^{(2+\beta+2\delta)m}2^{\frac{3k+k_1+p_1}{2}}\lambda^{-1}\ltwonorm{f_1}\ltwonorm{\partial_tf_2}\\
      &\lesssim 2^{(\frac{5}{4}+\beta+3\delta)m}2^{(\frac{3}{2}-4\beta)k}2^{\frac{k_1}{2}}2^{(-\frac{1}{2}-8\beta)p_1}2^{-\frac{l_1}{2}}\eps^3\\
      &\lesssim 2^{(\frac{3}{4}+\beta+3\delta )m}2^{(1-4\beta)k}2^{k_1}2^{(-\frac{1}{2}-8\beta)p_1}\eps^3\\
      &\lesssim 2^{(\frac{3}{4}+2\beta)m}\eps^3.
\end{align*}
On the resonant set, we observe:
\begin{align*}
    \abs{\partial_{\xi_1}\Phi_{\pm}^{\mu\nu}}=\abs{\pm\frac{\xi_2^2}{\absxi^3}-\mu\frac{(\xi_2-\eta_2)^2}{\absxieta^3}}\gtrsim\abs{2^{2p-k}-2^{2p_1-k_1}}\gtrsim 2^{-k}=:K.
\end{align*}
Therefore, we can employ Lemma \ref{lemma: normal forms}\eqref{it3: lemma: normal forms} with $\lambda$ and $K=2^{-k}$ and \eqref{eqn: l_1,l_2 condition Case B.2(c)} on $l_2$ to obtain
\begin{align*}
    &2^{4k^+}2^{(1+\beta+2\delta)m}\ltwonorm{P_{k,p}R_l\B_{\m^{res}}(f_1,f_2)}\\
     &\hspace{10em}\lesssim 2^{4k^+}2^{(2+\beta+2\delta)m}2^{k} (\lambda K^{-1})^{\frac{1}{2}}2^{\frac{k_1+p_1}{2}}\ltwonorm{f_1}\ltwonorm{f_2}\\ 
     &\hspace{10em}\lesssim 2^{(2+\beta+2\delta)m} 2^{(\frac{3}{2}+2\beta)k}2^{(1+4\beta)p_1}2^{\frac{k_1}{2}}2^{-(1+\beta)l_1}2^{-(\frac{1}{2}+\beta)p_1} 2^{-4k_1^+}2^{-(\frac{1}{2}+\beta)l_2}2^{-2\beta p_1}\eps^2\\
     &\hspace{10em}\lesssim 2^{(\frac{1}{2}-\beta +4\delta)m}\eps^2.
\end{align*}
This concludes the proof of  \textbf{Case B.2(b.1)}.

\par\textbf{Case B.2(b.2):} Assume $\mu=-\nu$ or $p_1\ll p_2$. In this setting, we can split the analysis in the resonant and non-resonant parts as explained in Section \ref{sec: Normal forms} with $\lambda=2^{(\frac{3}{2}-6\beta)p_2}$. On the non-resonant part we have the three terms in \eqref{eqn: nonresonant multiplier bilinear estimate}.
With Lemma \ref{lemma: normal forms}\eqref{it1: lemma: normal forms}, Lemma \ref{lemma: set size estimate} with $\abs{S}\lesssim 2^{\frac{k+k_1+p_1}{2}}$ and \eqref{eqn: l_i bound X norm p1<=p_2} for the boundary term we obtain using the $X$-norms on $f_i$:
\begin{align*}
    2^{4k^+}2^{(1+\beta+2\delta)m}\ltwonorm{P_{k,p}R_l\mathcal{Q}_{\m^{nr}\Phi^{-1}}(f_1,f_2)}&\lesssim 2^{4k^+}2^{(1+\beta+2\delta)m}2^{\frac{3k+k_1+p_1}{2}}\lambda^{-1}\ltwonorm{f_1}\ltwonorm{f_2}\\
    &\lesssim 2^{(1+\beta+2\delta)m}2^{\frac{3k+k_1+p_1}{2}}\lambda^{-1} 2^{-\frac{l_1}{2}}2^{-4k_1^+}2^{-2\beta l_2}2^{(\frac{1}{2}-2\beta)p_2}\eps^2\\
    &\lesssim 2^{(\frac{1}{2}-\beta+3\delta)m}2^{(1-2\beta)k}2^{(1+2\beta)k_1}\lambda^{-1}2^{(\frac{1}{2}-4\beta)p_2}2^{-4k_1^+}\eps^2\\
    &\lesssim  2^{(\frac{1}{2}-\beta+3\delta)m} 2^{(\frac{3}{2}-6\beta)p_2}2^{2k_1}\lambda^{-1}2^{-4k^+_1}\eps^2.
\end{align*}
Note that we have used $2^{-\frac{l_2}{2}}\leq 2^{-2\beta l_2}2^{(\frac{1}{2}-2\beta)p_2}$ since $l_2+p_2\geq 0$. For the other two terms in \eqref{eqn: nonresonant multiplier bilinear estimate} we obtain with Lemma \ref{lemma: bounds on dtSf}, $\abs{S}\lesssim 2^{\frac{k+k_1+p_2}{2}}$ and \eqref{eqn: l_i bound X norm p1<=p_2}:
\begin{align*}
   2^{4k^+}2^{(1+\beta+2\delta)m}\ltwonorm{P_{k,p}R_l\B_{\m^{nr}\Phi^{-1}}(\partial_tf_1,f_2)}&\lesssim 2^{4k^+}2^{(2+\beta+2\delta)m}2^{\frac{3k+k_1+p_2}{2}}\lambda^{-1}\ltwonorm{\partial_tf_1}\ltwonorm{f_2}\\
   &\lesssim 2^{(\frac{5}{4}+\beta+3\delta)m}2^{\frac{3k+k_1+p_2}{2}}\lambda^{-1}2^{-(\frac{1}{4}+2\beta)l_2}2^{(\frac{1}{4}-2\beta)p_2}2^{-4k^+_1}\eps^3\\
   &\lesssim 2^{(1-\beta+3\delta)m}2^{\frac{3k+k_1+p_2}{2}}\lambda^{-1}2^{-4\beta p_2}2^{-(\frac{1}{4}+2\beta)k_1}2^{(\frac{1}{4}+2\beta)k_1}\eps^3\\
   &\lesssim 2^{(1-\beta+3\delta)m}2^{(\frac{5}{4}-2\beta)k}2^{(\frac{3}{4}+2\beta)k_1}2^{(-1+2\beta)p_2}\eps^3\\
   &\lesssim 2^{(1-\frac{\beta}{2})m}\eps^3,
\end{align*}
since $\delta_0\ll \delta\ll \beta$. The third term in \eqref{eqn: nonresonant multiplier bilinear estimate} is bounded analogously by using Lemma \ref{lemma: bounds on dtSf} on $f_2$ and the $X-$norm on $f_1$. 
\par On the resonant part, we first observe
\begin{align}\label{eqn: partial_xi1 L defn X norm}
    \abs{\partial_{\xi_1}\Phi_{\pm}^{\mu\nu}}=\abs{\pm\frac{\xi_2^2}{\absxi^3}-\mu\frac{(\xi_2-\eta_2)^2}{\absxieta^3}}\gtrsim\abs{2^{2p-k}-2^{2p_1-k_1}}\gtrsim 2^{-k}=:K.
\end{align}
Next recall
\begin{align*}
    \abs{\partial_{\eta_1}\Phi_\pm^{\mu\nu}}=\abs{\mu \frac{(\xi_2-\eta_2)^2}{\absxieta^3}-\nu \frac{\eta_2^2}{\abseta^3}},
\end{align*}
and observe that if $\mu=-\nu$ or $p_1\ll p_2$ there holds
\begin{align*}
    \abs{\partial_{\eta_1}\Phi^{\mu\nu}_\pm}\gtrsim 2^{2p_2-k_2}.
\end{align*}
In this case we can integrate by parts using Lemma \ref{lemma: ibp in D_eta} with  $L=2^{2p_2}$ and obtain the claim if $\max \{{l_1+p_1-2p_2},{l_2-p_2}\}<{(1-\delta)m}$.
\par Now we assume that 
\begin{align}\label{eqn: ibp in Deta doesn't work}
    \max \{{l_1+p_1-2p_2},{l_2-p_2}\}>{(1-\delta)m},
\end{align}
and note that this is an improvement compared to \eqref{eqn: l_i bound X norm p1<=p_2} since we do not have losses in $k$. We consider two cases based on which term in \eqref{eqn: ibp in Deta doesn't work} is the largest.
\par \textbf{1.} Assume $l_1>(1-\delta)m-p_1+2p_2$ and $l_2$ satisfies \eqref{eqn: l_i bound X norm p1<=p_2}. Therefore, with \eqref{eqn: partial_xi1 L defn X norm} we can bound the resonant part using Lemma \ref{lemma: normal forms}\eqref{it3: lemma: normal forms} with $\lambda=2^{(\frac{3}{2}-6\beta)p_2}$ and $K=2^{-k}$ :
\begin{align*}
    &2^{4k^+}2^{(1+\beta+2\delta)m}\ltwonorm{P_{k,p}R_l\B_{\m^{res}}(f_1,f_2)}\\
     &\hspace{8em}\lesssim 2^{4k^+}2^{(2+\beta+2\delta)m}2^{k}(\lambda K^{-1})^{\frac{1}{2}}2^{\frac{k_1+p_1}{2}}\ltwonorm{f_1}\ltwonorm{f_2}\\
     &\hspace{8em}\lesssim 2^{(2+\beta+2\delta)m}2^{\frac{3}{2}k}2^{(\frac{3}{4}-3\beta)p_2}2^{\frac{k_1+p_2}{2}}2^{-(1+\beta)l_1}2^{(-\frac{1}{2}-\beta)p_1} 2^{-(\frac{1}{2}+\frac{\beta}{2})l_2}2^{-\frac{\beta}{2}p_2}2^{-4k_1^+}\eps^2\\
     &\hspace{8em}\lesssim2^{(\frac{1}{2}-\frac{\beta}{2}+4\delta)m}2^{\frac{3k+k_1}{2}}2^{(\frac{5}{4}-3\beta)p_2}2^{\frac{p_1}{2}}2^{-2(1+\beta)p_2}2^{-(\frac{1}{2}+\beta)p_2}2^{-(\frac{1}{2}+\frac{\beta}{2})k}2^{(\frac{1}{2}+\frac{\beta}{2})k_1}2^{-4k_1^+}\eps^2\\
     &\hspace{8em}\lesssim2^{(\frac{1}{2}-\frac{\beta}{2}+4\delta)m}2^{(1-\frac{\beta}{2})k}2^{(1+\frac{\beta}{2})k_1}2^{(-\frac{3}{4}-6\beta)p_2}2^{4k_1^+}\eps^2\\
      &\hspace{8em}\lesssim2^{(\frac{1}{2}-\frac{\beta}{4})m}\eps^2.
\end{align*}
\par\textbf{2.} Assume $l_2-p_2>(1-\delta)m$ and $l_1$ satisfies \eqref{eqn: l_i bound X norm p1<=p_2}. In this case we do another splitting 
\begin{align*}
    \m^{res}(\xi,\eta)=\psi(\lambda_1^{-1}\Phi)\m^{res}(\xi,\eta)+(1-\psi(\lambda_1^{-1}\Phi))\m^{res}(\xi,\eta)=: \m^{res,res}(\xi,\eta)+\m^{res,nr}(\xi,\eta)
\end{align*}
with $\lambda_1:=\lambda 2^{-20\delta m}<\lambda$ and obtain a decomposition of the bilinear term
\begin{align*}
    \B_{\m^{res}}(f_1,f_2)=\B_{\m^{res,res}}(f_1,f_2)+\B_{\m^{res,nr}}(f_1,f_2).
\end{align*}
We can estimate the first term as follows using Lemma \ref{lemma: normal forms}\eqref{it3: lemma: normal forms} with $K=2^{-k}$, see  \eqref{eqn: partial_xi1 L defn X norm}, and $\lambda_1$:
\begin{align*}
    &2^{4k^+}2^{(1+\beta+2\delta)m}\ltwonorm{P_{k,p}R_l\B_{\m^{res,res}}(f_1,f_2)}\\
     &\hspace{12em}\lesssim 2^{4k^+}2^{(2+\beta+2\delta)m}2^{k}(\lambda_1 K^{-1})^{\frac{1}{2}}2^{\frac{k_1+p_1}{2}}\ltwonorm{f_1}\ltwonorm{f_2}\\
     &\hspace{12em}\lesssim 2^{(2+\beta+2\delta)m}2^{\frac{3k+k_1+p_1}{2}}2^{(\frac{3}{4}-3\beta)p_2}2^{-10\delta m}2^{-\frac{l_1}{2}} 2^{-4k_1^+}2^{-(1+\beta)l_2}2^{-(\frac{1}{2}+\beta)p_2}\eps^2\\
     &\hspace{12em}\lesssim 2^{(\frac{1}{2}-6\delta)m}2^{k+k_1}2^{(\frac{3}{4}-3\beta)p_2}2^{(-\frac{3}{2}-2\beta)p_2}2^{-4k_1^+}\eps^2\\
     &\hspace{12em}\lesssim 2^{(\frac{1}{2}-6\delta)m}2^{(\frac{1}{4}-5\beta)k}2^{(\frac{7}{4}+5\beta)k_1}2^{-4k_1^+}\eps^2\\
      &\hspace{12em}\lesssim 2^{(\frac{1}{2}-6\delta)m}\eps^2.
\end{align*}
We bound the terms arising in the non-resonant part as in \eqref{eqn: nonresonant multiplier bilinear estimate}. Using Lemma \ref{lemma: normal forms}\eqref{it1: lemma: normal forms} with $\lambda_1$ and \ref{lemma: m phi^-1 multiplier bound} we obtain using the $X$-norm on $f_1$ and $f_2$:
\begin{align*}
    &2^{4k^+}2^{(1+\beta+2\delta)m}\ltwonorm{P_{k,p}R_l\mathcal{Q}_{\Phi^{-1}\m^{res,nr}}(f_1,f_2)}\\
    &\hspace{15em}\lesssim 2^{4k^+}2^{(1+\beta+2\delta)m}2^{\frac{3k+k_1+p_1}{2}}\lambda_1^{-1}\ltwonorm{f_1}\ltwonorm{f_2}\\
     &\hspace{15em}\lesssim 2^{(1+\beta+22\delta)m}2^{\frac{3k+k_1+p_1}{2}}2^{-(\frac{3}{2}-6\beta)p_2}2^{-\frac{l_1+l_2}{2}}2^{-4k_1^+}\eps^2\\
     &\hspace{15em}\lesssim 2^{(\frac{1}{2}+\beta+23\delta)m}2^{\frac{3k+k_1+p_1}{2}}2^{-(2-6\beta)p_2} 2^{-4k_1^+}2^{-2\beta l_1}2^{(\frac{1}{2}-2\beta)p_1}\eps^2\\
      &\hspace{15em}\lesssim 2^{(\frac{1}{2}-\beta+24\delta)m}2^{(\frac{3}{2}-2\beta)k}2^{(\frac{1}{2}+2\beta)k_1} 2^{(1-4\beta )p_1}2^{(-2+6\beta)p_2}2^{-4k_1^+}\eps^2\\
    &\hspace{15em}\lesssim2^{(\frac{1}{2}-\beta+24\delta)m}2^{(\frac{3}{2}-2\beta)k}2^{(\frac{1}{2}+2\beta)k_1} 2^{(-1+2\beta)p_2}2^{-4k_1^+}\eps^2\\
    &\hspace{15em}\lesssim 2^{(\frac{1}{2}-\frac{\beta}{2})m}\eps^2.
\end{align*}
For the terms containing the time derivative we obtain with Lemma \ref{lemma: bounds on dtSf}:
\begin{align*}
    2^{4k^+}2^{(1+\beta+2\delta)m}\ltwonorm{P_{k,p}R_l\B_{\Phi^{-1}\m^{res,nr}}(\partial_tf_1,f_2)}&\lesssim 2^{4k^+}2^{(2+\beta+2\delta)m}2^{\frac{3k+k_1+p_1}{2}}\lambda_1^{-1}\ltwonorm{\partial_tf_1}\ltwonorm{f_2}\\
    &\lesssim 2^{(\frac{5}{4}+\beta+23\delta)m}2^{\frac{3k+k_1+p_1}{2}}2^{-(\frac{3}{2}-6\beta)p_2}2^{-\frac{l_2}{2}}\eps^3\\
    &\lesssim 2^{(\frac{3}{4}+\beta+24\delta)m}2^{\frac{3k+k_1}{2}}2^{-(\frac{3}{2}-6\beta)p_2}\eps^3\\
    &\lesssim 2^{(\frac{3}{4}+2\beta)m}\eps^3.
\end{align*}
 And finally there holds that
\begin{align*}
     &2^{4k^+}2^{(1+\beta+2\delta)m}\ltwonorm{P_{k,p}R_l\B_{\Phi^{-1}\m^{res,nr}}(f_1,\partial_tf_2)}\\
    &\hspace{15em}\lesssim 2^{4k^+}2^{(2+\beta+2\delta)m}2^{\frac{3k+k_1+p_1}{2}}\lambda_1^{-1}\ltwonorm{f_1}\ltwonorm{\partial_tf_2}\\
      &\hspace{15em}\lesssim 2^{(\frac{5}{4}+\beta+23\delta)m}2^{\frac{3k+k_1+p_1}{2}}2^{-(\frac{3}{2}-6\beta)p_2}2^{-\frac{l_1}{2}}\eps^3\\
      &\hspace{15em}\lesssim 2^{(\frac{5}{4}+\beta+23\delta)m}2^{\frac{3k+k_1+p_1}{2}}2^{-(\frac{3}{2}-6\beta)p_2}2^{-(\frac{1}{4}+2\beta)l_1}2^{(\frac{1}{4}-2\beta)p_1}\eps^3\\
      &\hspace{15em}\lesssim 2^{(1-\beta+24\delta)m}2^{(\frac{5}{4}-2\beta)k}2^{(\frac{3}{4}+2\beta)k_1}2^{-(\frac{3}{2}-6\beta)p_2}2^{(\frac{1}{2}-4\beta)p_1}\eps^3\\
      &\hspace{15em}\lesssim 2^{(1-\beta+24\delta)m}2^{\frac{p_2}{4}}2^{2k_1}\eps^3,
\end{align*}
which gives an acceptable contribution as $\delta_0\ll\delta\ll\beta\ll 1$.

\par \textbf{Case B.2(c): $2^{k_2}\ll 2^k\sim 2^{k_1}$ and $p_1\ll p\ll p_2\sim 0$.} By Lemma \ref{lemma: case organisation} we have  $2^{p_2+k_2}\sim 2^{p+k}$ and $k_{\max}+k_{\min}\sim k_2+k_1$. We obtain the claim if $l_1\leq (1-\delta)m +p_1+k_2-k$, or $l_2\leq (1-\delta)m$, see \eqref{eqn: ibp is working X norm}.
 Hence we may assume
\begin{align}\label{eqn: l_1,l_2 condition Case B.2(c)}
    -l_1<-(1-\delta)m-p_1+k_1-k_2 && \text{ and } && -l_2<-(1-\delta)m.
\end{align}
Using the set size estimate Lemma \ref{lemma: set size estimate} with $\abs{S}\lesssim 2^{\frac{k+p_1+k_2}{2}}$ we bound:
\begin{align*}
    \ltwonorm{P_{k,p}R_l\B_{\m}(f_1,f_2)}&\lesssim 2^m 2^{\frac{3}{2}k+\frac{p_1}{2}}2^{\frac{k_2}{2}}2^{-4k^+}2^{-\frac{l_1}{2}}2^{-4k_2^+}2^{-(1+\beta)l_2}2^{-\beta p_2-\frac{p_2}{2}}\eps^2\\
    &\lesssim 2^m2^{\frac{3}{2}k+\frac{k_2}{2}+\frac{p_1}{2}}2^{-\frac{(1-\delta)m}{2}-\frac{p_1}{2}+\frac{k-k_2}{2}}2^{-4k^+-4k_2^+}2^{-(1+\beta)(1-\delta)m}\eps^2\\
    &\lesssim 2^{(-\frac{1}{2}-\beta+2\delta)m}2^{2k}2^{-4k^+}\eps^2.
\end{align*}
Thus for the $X$-norm and with $2^p\sim 2^{k_2-k}$ there holds that
\begin{align*}
    2^{4k^+}2^{(1+\beta+2\delta)m}2^{\beta p} 2^{\frac{p}{2}}\ltwonorm{P_{k,p}R_l\B_{\m}(f_1,f_2)}&\lesssim 2^{(\frac{1}{2}+4\delta)m}2^{2k}2^{(\frac{1}{2}+\beta)(k_2-k)}\eps^2\lesssim 2^{(\frac{1}{2}+6\delta)m}2^{(\frac{1}{2}+\beta) k_2}\eps^2,
\end{align*}
and the claim follows if $k_2\leq-20\delta m$. Assume now that $k_2>-20\delta m$. We decompose the multiplier into the resonant and non-resonant part as in Section \ref{sec: Normal forms} with $\lambda=2^{-100\delta m}$. For $\Phi_{\pm}^{\mu\nu}$ on the support of the resonant set there holds:
\begin{align*}
    \abs{\partial_{\eta_1}\Phi_{\pm}^{\mu\nu}}&= \abs{\mu2^{2p_2-k_2}-\nu2^{2p_1-k_1}}\gtrsim \abs{2^{2p_2-k_2}-2^{2p_2+2k_2-3k}}\gtrsim2^{-k_2}=:K>0.
\end{align*}
Using Lemma \ref{lemma: normal forms}\eqref{it3: lemma: normal forms} with $K=2^{-k_2}$ and $\lambda$ we estimate
\begin{align*}
  2^{4k^+}2^{(1+\beta+2\delta)m}2^{\beta p} 2^{\frac{p}{2}}   \ltwonorm{P_{k,p}R_l\B_{\m^{res}}(f_1,f_2)}&\lesssim 2^{4k^+}2^{(2+\beta+2\delta)m}2^{\frac{3}{2}k}2^{\frac{p_1}{2}}(\lambda K^{-1})^{\frac{1}{2}}\ltwonorm{f_1}\ltwonorm{f_2}\\
    &\lesssim 2^{(2+\beta+2\delta)m}2^{\frac{3}{2}k}2^{\frac{p_1}{2}}(\lambda K^{-1})^{\frac{1}{2}} 2^{-\frac{l_1}{2}}2^{-(1+\beta)l_2}\eps^2\\
    &\lesssim 2^{(\frac{1}{2}-40\delta) m}\eps^2. 
\end{align*}
 Now we turn to the non-resonant term and do a normal form as in \eqref{eqn: nonresonant multiplier bilinear estimate}. For the boundary term we obtain with Lemma \ref{lemma: normal forms}\eqref{it1: lemma: normal forms} and \eqref{eqn: l_1,l_2 condition Case B.2(c)}:
\begin{align*}
     &2^{4k^+}2^{(1+\beta+2\delta)m}2^{\beta p} 2^{\frac{p}{2}}\ltwonorm{P_{k,p}R_l\mathcal{Q}_{\Phi^{-1}\m^{nr}}(f_1,f_2)}\\
     &\hspace{15em}\lesssim  2^{4k^+}2^{(1+\beta+2\delta)m}2^{\frac{3}{2}k}\lambda^{-1}2^{\frac{p_1+k_2}{2}}\ltwonorm{f_1}\ltwonorm{f_2}\\
    &\hspace{15em}\lesssim 2^{(1+\beta+2\delta)m}2^{\frac{3}{2}k}\lambda^{-1}2^{\frac{p_1+k_2}{2}}2^{-\frac{l_1}{2}}\norm{f_1}_Xt2^{-(1+\beta)l_2}2^{-(\frac{1}{2}+\beta)p_2}\norm{f_2}_X\\
    &\hspace{15em}\lesssim 2^{(-\frac{1}{2}+110\delta)m}\eps^2.
\end{align*}
 For the other terms in the non-resonant decomposition \eqref{eqn: nonresonant multiplier bilinear estimate} we estimate using Lemma \ref{lemma: bounds on dtSf}:
\begin{align*}
2^{4k^+}2^{(1+\beta+2\delta)m}2^{\beta p} 2^{\frac{p}{2}}\ltwonorm{P_{k,p}R_l\B_{\m^{nr}\Phi^{-1}}(f_1,\partial_tf_2)}&\lesssim2^{4k^+}2^{(2+\beta+2\delta)m} 2^{\frac{3}{2}k} \lambda^{-1}2^{\frac{p_1+k_2}{2}}\ltwonorm{f_1}\ltwonorm{\partial_tf_2}\\
    &\lesssim 2^{(2+\beta+2\delta)m} 2^{\frac{3}{2}k}\lambda^{-1}2^{\frac{p_1+k_2}{2}}2^{-\frac{l_1}{2}}\norm{f_1}_X2^{(-\frac{3}{4}+2\delta)m}\eps^2\\
    &\lesssim 2^{(\frac{3}{4}+2\beta)m}\eps^3.
\end{align*}
And finally with $\abs{S}\lesssim 2^{\frac{k+k_2}{2}}$ and the $X$-norm on $f_2$ we obtain that
\begin{align*}
  2^{4k^+}2^{(1+\beta+2\delta)m}  2^{\beta p} 2^{\frac{p}{2}} \ltwonorm{P_{k,p}R_l\B_{\m^{nr}\Phi^{-1}}(\partial_tf_1,f_2)}&\lesssim 2^{4k^+}2^{(2+\beta+2\delta)m} 2^{\frac{3}{2}k} 2^{\frac{k_2}{2}}\lambda^{-1}\ltwonorm{\partial_tf_1}\ltwonorm{f_2}\\
    &\lesssim 2^{4k^+}2^{(2+\beta+2\delta)m} 2^{\frac{3}{2}k}\lambda^{-1}2^{(-\frac{3}{4}+2\delta)m} 2^{-(1+\beta)l_2}\eps^3\\
    &\lesssim 2^{(\frac{1}{4}+110\delta)m}\eps^3,
\end{align*}
which is more than enough for the claim of the proposition.

\par \textbf{Case C: $p_{\max}\ll 0$.} In this case we have that the phase $\Phi$ is large $\abs{\Phi}>\frac{1}{10}$ and we can do a splitting as per Section \ref{sec: Normal forms} with $\lambda=\frac{1}{100}$ and thus $\m^{res}=0$. So we have $\B_{\m}(f_1,f_2)=\B_{\m^{nr}}(f_1,f_2)$ and we can split the bilinear term as in \eqref{eqn: nonresonant multiplier bilinear estimate}. For the last two terms, using Lemmas \ref{lemma: normal forms}\eqref{it2: lemma: normal forms} and \ref{lemma: linear decay many vector fields}, we have that
\begin{align*}
    2^{4k^+}2^{(1+\beta+2\delta)m}2^{\beta p}2^{\frac{p}{2}} \ltwonorm{P_{k,p}R_l\B_{\m^{nr}\Phi^{-1}}(\partial_tf_1,f_2)}&\lesssim 2^{(2+\beta+3\delta)m}2^{\beta p}2^{\frac{p}{2}}2^{k+p_{\max}}\ltwonorm{\partial_tf_1}\inftynorm{e^{it\Lambda}f_2}\\
    &\lesssim 2^{(\frac{3}{4}+2\beta)m}\eps^3,
\end{align*}
assuming $\kappa\ll \beta$ in Lemma \ref{lemma: linear decay many vector fields}. The other term is symmetric in this estimate and is bounded analogously. As for the boundary term, assume w.l.o.g. $p_1\leq p_2$, then we have the multiplier bound from Lemma \ref{lemma: multiplier bound}. We distinguish two cases:
\par \textbf{Case C.1:} If $f_2$ has fewer vector fields than $f_1$, then we can decompose $f_2$ according to Proposition \ref{proposition: linear decay} $P_{k_2,p_2}e^{it\Lambda}f_2=I_{k_2,p_2}(f_2)+II_{k_2,p_2}(f_2)$ with bounds as in \eqref{eqn: linfty decay of the profile}-\eqref{eqn: l2 decay of the profile} and use Lemma \ref{lemma: linear decay many vector fields} on $f_1$. With $\log(t)\lesssim 2^{\delta m}$ and using Lemmas \ref{lemma: linear decay many vector fields} and \ref{lemma: normal forms}\eqref{it2: lemma: normal forms} we obtain:
\begin{align*}
    &\ltwonorm{P_{k,p}R_l\mathcal{Q}_{\m\Phi^{-1}}(f_1,f_2)}\\
    &\hspace{5em}\lesssim 2^{k}\big[\ltwonorm{f_1}\inftynorm{I_{k_2,p_2}(f_2)}+\sinftynorm{e^{it\Lambda}f_1}\ltwonorm{II_{k_2,p_2}(f_2)} \big]\\
     &\hspace{5em}\lesssim 2^{k}\big[ 2^{-4k_1^+}2^{\frac{p_1}{2}}\norm{f_1}_B 2^{\frac{3}{4}k_2}2^{-\frac{15}{4}k_2^+}\min\{2^{-p_2}2^{-m},2^{p_2}\}2^{\delta m}\eps+2^{-2k_1^+}2^{(-\frac{1}{2}+\kappa)m}\eps 2^{-4k_2^+}2^{-\frac{m}{2}}\eps \big]\\
     &\hspace{5em}\lesssim 2^{k}[2^{-3k_2^+-4k_1^+}2^{(-\frac{3}{4}+\delta)m}+2^{-4k_2^+-2k_1^+}2^{(-1+\kappa) m}]\eps^2\\
     &\hspace{5em}\lesssim 2^{(-\frac{3}{4}+\delta)m} \eps^2.
\end{align*}
For the $X$-norm we then obtain that
\begin{align*}
     2^{4k^+}2^{(1+\beta+2\delta)m}2^{\beta p}2^{\frac{p}{2}}\ltwonorm{P_{k,p}R_l\mathcal{Q}_{\m\Phi^{-1}}(f_1,f_2)}&\lesssim 2^{(\frac{1}{4}+\beta+6\delta)m}\eps^2,
\end{align*}
which gives the claim.
\par \textbf{Case C.2} If $f_2$ has more vector fields than $f_1$, then by Proposition \ref{proposition: linear decay} there holds $\sinftynorm{e^{it\Lambda}f_1}\lesssim2^{\frac{3}{4}k_1-3k_1^+}2^{-\frac{m}{2}}\eps$. Moreover, notice that since $\abs{\Phi}>\frac{1}{10}$, there holds:
    \begin{align*}
        \abs{\frac{\m \chi S_{\xi-\eta}(\Phi^{-1})}{S_{\xi-\eta} \Phi}}=\abs{\frac{\m \chi \Phi^{-2}S_{\xi-\eta} \Phi}{S_{\xi-\eta} \Phi}}\lesssim \abs{\m \chi}.
    \end{align*}
Therefore, we can integrate by parts along $S_{\xi-\eta}$, see proof of Lemma \ref{lemma: ibp in bilinear expressions}. This gives the claim if
\begin{align}\label{eqn: ibp in Seta Case C X norm}
    2^{-p_2-p_{\max}}2^{2k_2-k_{\max}-k_{\min}}(1+2^{k_1-k_2}2^{l_2})<2^{(1-\delta)m}.
\end{align}
Otherwise we distinguish different cases and do an $L^2-L^{\infty}$ estimate:
\par \textbf{Case C.2(a): $k_{\min}+k_{\max}\sim k_1+k_2$.} Assume $k_2-k_1<l_2$, then \eqref{eqn: ibp in Seta Case C X norm} doesn't hold if $-l_2<-(1-\delta)m-p_2-p_{\max}$, then we have for the boundary term:
\begin{align*}
  2^{4k^+}2^{(1+\beta+2\delta)m}2^{\beta p}2^{\frac{p}{2}}   \ltwonorm{P_{k,p}R_l\mathcal{Q}_{\m\Phi^{-1}}(f_1,f_2)}&\lesssim  2^{4k^+}2^{(1+\beta+2\delta)m}2^{k+p_{\max}}2^{-3k_1^+}2^{-\frac{m}{2}}\eps\ltwonorm{f_2}\\
    &\lesssim 2^{(\frac{1}{2}+\beta+3\delta)m} 2^{k+p_{\max}}\eps2^{-\frac{l_2}{4}}2^{\frac{p_2}{4}}\norm{f_2}_X\\
    &\lesssim 2^{(\frac{1}{4}+\beta+6\delta)m}\eps^2.
\end{align*}
Otherwise, if $k_1-k_2<-(1-\delta)m-p_2-p_{\max}$ there holds:
\begin{align*}
     2^{4k^+}2^{(1+\beta+2\delta)m}2^{\beta p}2^{\frac{p}{2}}   \ltwonorm{P_{k,p}R_l\mathcal{Q}_{\m\Phi^{-1}}(f_1,f_2)}&\lesssim  2^{4k^+}2^{(1+\beta+2\delta)m}2^{k+p_{\max}}2^{\frac{3k_1}{4}}2^{-3k_1^+}2^{-\frac{m}{2}}\eps 2^{\frac{p_2}{2}}\norm{f_2}_B\\
     &\lesssim 2^{2\beta m}\eps^2.
\end{align*}
\par \textbf{Case C.2(b):  $k_{\min}\sim k$.} Then \eqref{eqn: ibp in Seta Case C X norm} doesn't holds if $-l_2<-(1-\delta)m-p_2-p_{\max}-k+k_1$. Otherwise:
\begin{align*}
    2^{4k^+}2^{(1+\beta+2\delta)m}2^{\beta p}2^{\frac{p}{2}}  \ltwonorm{P_{k,p}R_l\mathcal{Q}_{\m\Phi^{-1}}(f_1,f_2)}& \lesssim 2^{4k^+}2^{(1+\beta+2\delta)m}2^{k+p_{\max}}2^{-\frac{m}{2}}\eps2^{-6k_2^+}2^{-\frac{l_2}{4}}2^{\frac{p_2}{4}}\norm{f_2}_X\\
    &\lesssim 2^{(\frac{1}{2}+\beta+3\delta)m}2^{k+p_{\max}}2^{-\frac{(1-\delta)m}{4}-\frac{p_2+p_{\max}}{4}+\frac{k_2-k}{4}}2^{\frac{p_2}{4}}\eps^2\\
    &\lesssim 2^{(\frac{1}{4}+2\beta)m}\eps^2.
\end{align*}
This finished \textbf{Case C}.
\par \textbf{Case D: No gaps with $p\sim p_1\sim p_2\sim 0$.} As the linear decay is not enough to obtain the claim, in this instance we introduce the $q,q_1,q_2$ localizations:
\begin{align*}
    P_{k,p}R_lB_{\m}(f_1,f_2)=\sum_{q,q_1,q_2\in \Z^-}P_{k,p,q}R_lB_{\m}(P_{k_1,p_1,q_1}f_1,P_{k_2,p_2,q_2}f_2),
\end{align*}
where by abuse of notation we let $f_i=P_{k_i,p_i,q_i}R_{l_i}f_i$ for $i=1,2$. Moreover, recall the notation $\tchi$ from \eqref{eqn: chi localizations} and note that from Lemma \ref{lemma: multiplier bound} there holds $\inftynorm{\m\tchi}\lesssim 2^{k+p_{\max}+q_{\max}}$. 
\par Using the set size estimate \ref{lemma: set size estimate}, it suffices to bound $\norm{P_{k,p,q}B_{\m}(P_{k_1,p_1,q_1}f_1,P_{k_2,p_2,q_2}f_2)}_X$ for finitely many $q,\;q_1,\;q_2\in \Z^-$. Indeed there holds that
\begin{align*}
    \ltwonorm{ P_{k,p}R_lB_{\m}(f_1,f_2)}&\lesssim 2^m2^k \sum_{q,q_1,q_2\in \Z^-}\ltwonorm{P_{k,p,q}R_lB_{\m}(f_1,f_2)}\\
    &\lesssim 2^m2^{2k_{\max}} \sum_{q,q_1,q_2\in \Z^-} 2^{\frac{q_{\min}}{2}}2^{-N_0k_1^+}2^{-N_0k_2^+}\norm{f_1}_{H^{N_0}}\norm{f_2}_{H^{N_0}}.
\end{align*}
Therefore with the energy estimates obtained from the bootstrap assumption \eqref{eqn: bootstrap assumption} we obtain the claim if $q_{\min}<-12m$.
So in the following we assume $q, q_1, q_2>-12m$ and prove that
\begin{align*}
   \ltwonorm{ P_{k,p,q}R_lB_{\m}(f_1,f_2)}\lesssim 2^{(-\frac{1}{2}-\frac{3}{2}\beta +3\delta)m}\eps^2.
\end{align*}
Therefore, since $\log(t)\sim m\lesssim 2^{\delta m}$, we obtain the claim of the proposition:
\begin{align*}
     \ltwonorm{ P_{k,p}R_lB_{\m}(f_1,f_2)}&\lesssim \sum_{\substack{q,q_1,q_2\in \Z^-\\ q_{\min}<-12m}}  \ltwonorm{ P_{k,p,q}R_lB_{\m}(f_1,f_2)}+\sum_{\substack{q,q_1,q_2\in \Z^-\\ q_{\min}\geq-12m}} \ltwonorm{ P_{k,p,q}R_lB_{\m}(f_1,f_2)}\\
     &\lesssim 2^{-2m}\eps^2+ \sum_{q,q_1,q_2\geq-12m}2^{(-\frac{1}{2}-\frac{3}{2}\beta+3\delta)m}\eps^2\\
     &\lesssim 2^{(-\frac{1}{2}-\frac{5}{4}\beta)m}\eps^2.
\end{align*}
\par To begin with, we split the analysis $\m=\m^{res}+\m^{nr}$ as described in Section \ref{sec: Normal forms} with $\lambda= 2^{q_{\max}-10}$. On the non-resonant part we do a normal transform as in \eqref{eqn: nonresonant multiplier bilinear estimate} and treat each term separately. Observe that by Lemma \ref{lemma: m phi^-1 multiplier bound}, there holds:
\begin{align*}
    \abs{\m^{nr}\Phi^{-1}}\lesssim \norm{\m^{nr}}_W\snorm{\Phi^{-1}}_W\lesssim 2^{k+q_{\max}}2^{-q_{\max}}\lesssim 2^k.
\end{align*} Moreover, assume w.l.o.g. that $f_1$ has fewer vector fields than $f_2$. Then from Proposition \ref{proposition: linear decay} since $p\sim p_i\sim 0$, we have  $f_1=P_{q_1}I_{k_1,p_1}(f_1)+P_{q_1}II_{k_1,p_1}(f_1)$ with the following estimates:
\begin{align}\label{eqn: X norm Case D linear decay}
     \inftynorm{I_{k_1,p_1}(f_1)}\lesssim 2^{\frac{3k_1}{4}}2^{-\frac{15}{4}k_1^+}2^{(-1+\delta)m}\eps, && \ltwonorm{II_{k_1,p_1}(f_1)}\lesssim 2^{-4k_1^+}2^{-(\frac{1}{2}+\frac{\beta}{2})m}\eps.
\end{align}
Hence for the boundary term in the normal form \eqref{eqn: nonresonant multiplier bilinear estimate} by Lemma \ref{lemma: Hoelder with multiplier} and Lemma \ref{lemma: linear decay many vector fields} with $\kappa\ll \beta/2$ on $f_2$, we obtain:
\begin{equation}\label{eqn: nonresonant bdry term no gap X norm}
\begin{split}
     \ltwonorm{P_{k,p,q}R_l\mathcal{Q}_{\m^{nr}\Phi^{-1}}(f_1,f_2)}&\lesssim 2^k[\inftynorm{I_{k_1,p_1}(f_1)}\ltwonorm{f_2}+\ltwonorm{II_{k_1,p_1}(f_1)}\sinftynorm{e^{it\Lambda}f_2}]\\
    &\lesssim2^{k}[ 2^{-3k_1^+}2^{(-1+\delta)m}2^{-4k_2^+}\eps^2+ 2^{-4k_1^+}2^{-(\frac{1}{2}+\frac{\beta}{2})m} 2^{-2k_2^+}2^{(-\frac{1}{2}+\kappa)m}\eps^2]\\
    &\lesssim 2^{(-1+\delta)m}\eps^2.
\end{split}
\end{equation}
Thus for the $X$-norm we obtain:
\begin{align*}
    2^{4k^+}2^{(1+\beta+2\delta)m}\ltwonorm{P_{k,p,q}R_l\mathcal{Q}_{\m^{nr}\Phi^{-1}}(f_1,f_2)}&\lesssim 2^{2\beta m}\eps^2,
\end{align*}
which is an acceptable bound. Next using Lemma \ref{lemma: bounds on dtSf} we compute that
\begin{equation}\label{eqn: nonresonant dtSnf term no gap X norm}
\begin{split}
     2^{4k^+}2^{(1+\beta+2\delta)m}\ltwonorm{P_{k,p,q}R_l\B_{\m^{nr}\Phi^{-1}}(\partial_tf_1,f_2)}&\lesssim 2^{4k^+}2^{(2+\beta+2\delta)m}2^k\ltwonorm{\partial_tf_1}\sinftynorm{e^{it\Lambda}f_2}\\
    &\lesssim 2^{(\frac{3}{4}+2\beta)m}\eps^3.
\end{split}
\end{equation} 
 Similarly we obtain the claim for the other term in \eqref{eqn: nonresonant multiplier bilinear estimate}, where we have even a better bound using the decomposition \eqref{eqn: X norm Case D linear decay} on $f_1$.
This concludes the non-resonant part.
\par As for the resonant case, we observe that if $\abs{\Phi}<2^{q_{\max}-10}$ then by Proposition \ref{prop: lower bound on sigma} there holds $\abs{\sigma}>2^{k_{\min}+k_{\max}+q_{\max}}$. Here we consider several cases based on the sizes of $q,\; q_i$.
\par\textbf{Case D.1: $q_1\geq q_{\max}-50$ or $q_2\geq q_{\max}-50$.} We integrate by parts along $S$ using Lemma \ref{lemma: ibp in bilinear expressions}\eqref{it2: lemma: ibp in bilinear expressions} when feasible. Observe that 
\begin{align}\label{eqn: ibp no gap case S_eta on lambda}
    \abs{\frac{\m\tchi S_\eta\psi(\lambda^{-1}\Phi)}{s S_\eta\Phi}}\lesssim \abs{\m\tchi\psi'(\lambda^{-1}\Phi)\lambda^{-1}s^{-1}}.
\end{align}
Therefore, we can integrate by parts using Lemma \ref{lemma: ibp in bilinear expressions}\eqref{it2: lemma: ibp in bilinear expressions} and obtain the claim if 
\begin{align}
   S_\eta:\hspace{0.3em} \max\{2^{k_2-k_1-q_1},2^{2k_1}2^{-k_{\min}-k_{\max}-q_{\max}}(1+2^{k_2-k_1}(2^{q_2-q_1}+2^{l_1})),2^{-q_{\max}}\}<2^{(1-\delta)m}, \label{eqn: Case D ibp  S_eta res part}\\
   S_{\xi-\eta}: \hspace{0.3em} \max\{2^{k_1-k_2-q_2},2^{2k_2}2^{-k_{\min}-k_{\max}-q_{\max}}(1+2^{k_1-k_2}(2^{q_1-q_2}+2^{l_2})),2^{-q_{\max}}\}<2^{(1-\delta)m}.\label{eqn: ibp Sxi-eta q_1=max}
\end{align}
Otherwise we proceed with several cases \textbf{D.1(a)-(d)} below. 
In the cases \textbf{D.1(a)-(c)} we can assume w.l.o.g. that $q_1\geq q_{\max}-50$ and use \emph{only} the integration by parts in $S_\eta$ \eqref{eqn: Case D ibp  S_eta res part}. The claim for $q_2\geq q_{\max}-50$ follows analogously by integrating by parts in $S_{\xi-\eta}$ using \eqref{eqn: ibp Sxi-eta q_1=max} and the symmetric estimates are obtained with the roles of $q_1,q_2$ and $k_1,k_2$ interchanged. The \textbf{Case D.1(d)} is treated separately depending whether $q_1\geq q_{\max}-50$ or $q_2\geq q_{\max}-50$.
\par Let $q_1\geq q_{\max}-50$ and assume that \eqref{eqn: Case D ibp  S_eta res part} doesn't hold.
\par\textbf{Case D.1(a):} If $2^{q_1}<2^{-(1-\delta)m}$, then there holds:
\begin{align*}
    \ltwonorm{P_{k,p,q}R_l\B_{\m^{res}}(f_1,f_2)}&\lesssim 2^m2^{k+q_{1}}[\inftynorm{I_{k_1,p_1}(f_1)}\ltwonorm{f_2}+\abs{S}\ltwonorm{II_{k_1,p_1}(f_1)}\ltwonorm{f_2}]\\
    &\lesssim 2^{\delta m}[2^{(-1+\delta)m}+2^{k_{1}+\frac{q_{1}}{2}} 2^{(-\frac{1}{2}-\frac{\beta}{2})m}]\eps^2\\
    &\lesssim 2^{(-1+2\delta)m}\eps^2.
\end{align*}
This yields an admissible bound for the $X-$norm.
\par\textbf{Case D.1(b):} If $2^{k_1}<2^{-(1-\delta)m}2^{k_2-q_1}$ and $k_1\ll k_2\sim k$.
Then we do another splitting $\m^{res}=\m^{res,nr}+\m^{res,res}$ as per Section \ref{sec: Normal forms} with $\lambda_1=\lambda 2^{-4\beta m}<\lambda $. On the non-resonant part using Lemma \ref{lemma: m phi^-1 multiplier bound} we obtain that
\begin{align*}
    \abs{\m^{res,nr}\Phi^{-1}}\lesssim \norm{\m^{res,nr}}_W\norm{\Phi^{-1}}_W\lesssim 2^{k+4\beta m}.
\end{align*}
Then we can treat the terms arising from the normal form in \eqref{eqn: nonresonant multiplier bilinear estimate} via Lemma \ref{lemma: normal forms}\eqref{it1: lemma: normal forms} as in \eqref{eqn: nonresonant bdry term no gap X norm}-\eqref{eqn: nonresonant dtSnf term no gap X norm}. The boundary term is estimated with Lemmas \ref{lemma: Hoelder with multiplier}, \ref{lemma: linear decay many vector fields} and \eqref{eqn: X norm Case D linear decay} as follows:
\begin{equation}\label{eqn: nonresonant bdry term D.1(b)}
\begin{split}
      \ltwonorm{P_{k,p,q}R_l\mathcal{Q}_{\m^{res,nr}\Phi^{-1}}(f_1,f_2)}&\lesssim 2^{k+4\beta m}[\inftynorm{I_{k_1,p_1}(f_1)}\ltwonorm{f_2}+\ltwonorm{II_{k_1,p_1}(f_1)}\sinftynorm{e^{it\Lambda}f_2}]\\
    &\lesssim2^{k+4\beta m}[ 2^{-3k_1^+}2^{(-1+\delta)m}2^{-4k_2^+}\eps^2+ 2^{-4k_1^+}2^{-(\frac{1}{2}+\frac{\beta}{2})m} 2^{-2k_2^+}2^{(-\frac{1}{2}+\kappa)m}\eps^2]\\
    &\lesssim 2^{(-1+4\beta+\delta)m}\eps^2.
\end{split}
\end{equation}
This yields an admissible bound on the $X-$norm. For the terms in \eqref{eqn: nonresonant multiplier bilinear estimate} involving the time derivative we apply Lemma \ref{lemma: bounds on dtSf} to obtain an admissible bound as follows:
\begin{equation}\label{eqn: nonresontant dtf_1 term D.1(b)}
\begin{split}2^{4k^+}2^{(1+\beta+2\delta)m}\ltwonorm{P_{k,p,q}R_l\B_{\m^{res,nr}\Phi^{-1}}(\partial_tf_1,f_2)}&\lesssim 2^{4k^+}2^{(2+5\beta+2\delta)m}2^k\ltwonorm{\partial_tf_1}\sinftynorm{e^{it\Lambda}f_2}\\
    &\lesssim 2^{(\frac{3}{4}+6\beta)m}\eps^3.
\end{split}
\end{equation}
And similarly there holds:
\begin{equation}\label{eqn: nonresontant dtf_2 term D.1(b)}
\begin{split}
2^{4k^+}2^{(1+\beta+2\delta)m}\ltwonorm{P_{k,p,q}R_l\B_{\m^{res,nr}\Phi^{-1}}(f_1,\partial_tf_2)}&\lesssim 2^{4k^+}2^{(2+5\beta+2\delta)m}2^k\sinftynorm{e^{it\Lambda}f_1}\ltwonorm{\partial_tf_2}\\
    &\lesssim 2^{(\frac{3}{4}+6\beta)m}\eps^3.
\end{split}
\end{equation}
 On the resonant part we observe that since $k_1\ll k_2$, there holds:
\begin{align*}
    \sabs{\partial_{\eta_1}\Phi_\pm^{\mu\nu}}=\sabs{\mu 2^{2p_2-k_2}-\nu 2^{2p_1-k_1}}\gtrsim 2^{-k_1}=: K,
\end{align*}
and we can use Lemma \ref{lemma: normal forms}\eqref{it3: lemma: normal forms},\eqref{it5: lemma: normal forms} with $K=2^{-k_1}$ and $\lambda_1$ to obtain that
\begin{align*}
     &\ltwonorm{P_{k,p,q}R_l\B_{\m^{res,res}}(f_1,f_2)}\\
      &\hspace{10em}\lesssim 2^m2^{k+q_{1}}[\inftynorm{I_{k_1,p_1}(f_1)}\ltwonorm{f_2}+(\lambda_1 K^{-1})^{\frac{1}{2}}2^{\frac{k_1+q_1}{2}}\ltwonorm{II_{k_1,p_1}(f_1)}\ltwonorm{f_2}]\\
   &\hspace{10em}\lesssim 2^{(-\frac{1}{2}-2\beta)m}\eps^2.
\end{align*}
Observe that if $ q_2\geq q_{\max}-50$, then we can use Lemma \ref{lemma: normal forms}\eqref{it3: lemma: normal forms},\eqref{it5: lemma: normal forms} instead of the $L^\infty$ decay in the first term of the sum on the right-hand side above to obtain the claim.
\par\textbf{Case D.1(c):} Assume $2^{-k_1}<2^{-(1-\delta)m}2^{-k_{\min}-q_1}$ and $2^{k_2-k_1+l_1}\ll 1$. In particular, there holds $k_{\min}\sim k_2\ll k_1$. We additionally split the analysis in a resonant and non-resonant part with $\lambda_1=2^{-4\beta m}\lambda$. Indeed, the non-resonant part where $\abs{\Phi_\pm^{\mu\nu}}\gtrsim \lambda_1$ is handled as in \eqref{eqn: nonresonant bdry term D.1(b)}-\eqref{eqn: nonresontant dtf_2 term D.1(b)} using the linear decay \eqref{eqn: X norm Case D linear decay} and Lemma \ref{lemma: normal forms}\eqref{it1: lemma: normal forms}, while on the resonant part we have $\abs{\partial_{\eta_1}\Phi_\pm^{\mu\nu}}\gtrsim 2^{-k_2}$ and we can use Lemma \ref{lemma: normal forms}\eqref{it3: lemma: normal forms},\eqref{it5: lemma: normal forms} with $L=2^{-k_2}$ and $\lambda_1$ as we have done in \textbf{Case D.1(b)}.

\par\textbf{Case D.1(d):} First let $ q_2\geq q_{\max}-50$. The remaining case to treat if \eqref{eqn: ibp Sxi-eta q_1=max} doesn't hold is when $2^{-l_2}<2^{-(1-\delta)m}2^{k_1+k_2-k_{\min}-k_{\max}-q_2}$. Then we obtain the claim using \eqref{eqn: X norm Case D linear decay} and Lemma \ref{lemma: set size estimate}:
\begin{align*}
    \ltwonorm{P_{k,p,q}R_l\B_{\m^{res}}(f_1,f_2)}&\lesssim 2^m2^{k+q_{2}}[\inftynorm{I_{k_1,p_1}(f_1)}\ltwonorm{f_2}+\abs{S}\ltwonorm{II_{k_1,p_1}(f_1)}\ltwonorm{f_2}]\\
    &\lesssim 2^{m}2^{k+q_2}[2^{-(1-\delta)m}2^{-l_2}+2^{\frac{k_{\min}}{2}}2^{\frac{k_2+q_2}{2}}2^{(-\frac{1}{2}-\beta+\delta)m}2^{-(1+\beta)l_2}]\eps\norm{f_2}_X\\
    &\lesssim 2^{(-\frac{1}{2}-2\beta+4\delta)m}\eps^2.
\end{align*}
\par Let now $q_1\geq q_{\max}-50$ and assume $2^{-l_1}<2^{-(1-\delta)m}2^{k_1+k_2-k_{\min}-k_{\max}-q_1}$. Here we obtain the claim by additionally integrating by parts in $S_{\xi-\eta}$ using \eqref{eqn: ibp Sxi-eta q_1=max}. Observe that here in each of the terms in \eqref{eqn: ibp Sxi-eta q_1=max} we either have a ``loss" in the parameter $q_1$ \emph{or} $q_2$, cf.Lemma \ref{lemma: ibp in bilinear expressions}\eqref{it2: lemma: ibp in bilinear expressions}. 
\par Assume now that \eqref{eqn: ibp Sxi-eta q_1=max} doesn't hold.  We consider several cases based on the relative size of the parameters $k,k_1,k_2$.
\par\textbf{Case D.1(d.1): $2^{k_2}\ll 2^{k_1}$ or  $2^{k_1}\ll 2^{k_2}$.} In particular, there holds $2^{-l_1}<2^{-(1-\delta)m-q_1}$. Moreover:
\begin{align*}
\abs{\partial_{\eta_1}\Phi_\pm^{\mu\nu}}=\abs{\nu\partial_{\eta_1}\Lambda(\xi-\eta)\pm \nu \partial_{\eta_1}\Lambda(\eta)}\gtrsim 2^{-\min\{k_1,k_2\}}=:K.
\end{align*}
We split the analysis further into the resonant and non-resonant part with $\lambda_1=\lambda2^{-4\beta m}$. The non-resonant part is treated as in \eqref{eqn: nonresonant bdry term D.1(b)}-\eqref{eqn: nonresontant dtf_2 term D.1(b)} using the linear decay \eqref{eqn: X norm Case D linear decay} and Lemmas \ref{lemma: normal forms}\eqref{it1: lemma: normal forms}, \ref{lemma: bounds on dtSf} which are independent of the relative size of $k_1$ to $k_2$. On the resonant set, where $\abs{\Phi}\lesssim \lambda_1$, we can use Lemma \ref{lemma: normal forms}\eqref{it3: lemma: normal forms},\eqref{it5: lemma: normal forms} with $K=2^{-\min\{k_1,k_2\}}$, $\lambda_1$ and $q$-localizations to obtain the bound
\begin{equation}\label{eqn: resonant part q_1=max}
\begin{split}
      \ltwonorm{P_{k,p,q}R_l\B_{\m^{res,res}}(f_1,f_2)}&\lesssim 2^m2^{k+q_{1}}(\lambda_1K^{-1})^{\frac{1}{2}}2^{\frac{\min\{k_1+q_1,k_2+q_2\}}{2}}2^{-l_1}\norm{f_1}_X2^{-l_2}\norm{f_2}_X\\
    &\lesssim 2^{(1-2\beta)m}2^{k+\frac{3}{2}q_1}2^{\frac{\min\{k_1,k_2\}}{2}}2^{\frac{\min\{k_1+q_1,k_2+q_2\}}{2}}2^{-l_1-l_2}\eps^2.
\end{split}
\end{equation}
Now if $2^{q_1}<2^{-(1-\delta)m}$ the claim follows directly. If $2^{k_1-k_2-q_2}>2^{(1-\delta)m}$, then there holds $2^{\frac{k_2+q_2}{2}}\lesssim 2^{-\frac{1}{2}(1-\delta)m+\frac{k_1}{2}}$ and we can continue the estimate in \eqref{eqn: resonant part q_1=max} to obtain an admissible bound:
\begin{equation}\label{Case D.1(d.1) res res}
\begin{split}
    \ltwonorm{P_{k,p,q}R_l\B_{\m^{res,res}}(f_1,f_2)}&\lesssim 2^{(1-2\beta)m}2^{\frac{3}{2}k_{\max}+\frac{3}{2}q_{1}}2^{\frac{k_2+q_2}{2}}2^{-l_1}\norm{f_1}_X\norm{f_2}_X\\
    &\lesssim 2^{(-\frac{1}{2}-2\beta+3\delta)m}\eps^2.
\end{split}
\end{equation}
Similarly, if 
 $2^{k_1+k_2-k_{\min}-k_{\max}-q_{2}}>2^{(1-\delta)m}$ then $2^{\frac{q_2}{2}}<2^{-\frac{1}{2}(1-\delta)m}$ and the claim follows from \eqref{Case D.1(d.1) res res}. Lastly, if $2^{2k_2-k_{\min}-k_{\max}-q_{1}+l_2}>2^{(1-\delta)m}$, then continuing the estimate \eqref{eqn: resonant part q_1=max} we obtain:
\begin{align*}
      \ltwonorm{P_{k,p,q}R_l\B_{\m^{res,res}}(f_1,f_2)}&\lesssim2^{(1-2\beta)m}2^{k_{\max}+2q_{1}}2^{k_{\min}}2^{-l_1}2^{-l_2}\eps^2\\
      &\lesssim 2^{(-1-2\beta+3\delta)m}\eps^2.
\end{align*}
\par\textbf{Case D.1(d.2): $2^{k_2}\sim 2^{k_1}$.} Observe that $q_1\sim q_2\sim q_{\max}$ was treated at the beginning of \textbf{Case D.1(d)}. Thus we may assume $q_2< q_1-100$. The following bound holds for the resonant bilinear term:
\begin{align*}
     \ltwonorm{P_{k,p,q}R_l\B_{\m^{res}}(f_1,f_2)}&\lesssim 2^m2^{k+q_{1}}[\inftynorm{I_{k_1,p_1}(f_1)}\ltwonorm{f_2}\\
     &\qquad+\min\{\abs{S}\ltwonorm{II_{k_1,p_1}(f_1)}\ltwonorm{f_2},\ltwonorm{II_{k_1,p_1}(f_1)}\inftynorm{e^{it\Lambda}f_2}\}].
\end{align*}
By considering each possible maximum on the left-hand side of \eqref{eqn: ibp Sxi-eta q_1=max} with $k_{\min}\sim k$ and $q_2\ll q_1$ we observe that we have two possibilities. Either
$2^{-l_2}<2^{-(1-\delta)m}2^{-q_1-k+k_2}$ and we use \eqref{eqn: X norm Case D linear decay} on $f_1$, the set size estimate with $\abs{S}\lesssim 2^{\frac{k+k_1+q_1}{2}}$ and the $X-$norm on $f_2$ to obtain an admissible bound:
\begin{align*}
     \ltwonorm{P_{k,p,q}R_l\B_{\m^{res}}(f_1,f_2)}&\lesssim 2^{m}2^{k+q_1}[2^{(-1+\delta)m} 2^{-l_2}2^{-4k_2^+}+2^{\frac{k+k_1+q_1}{2}}2^{(-\frac{1}{2}-\frac{\beta}{2})m}2^{-(1+\beta)l_2}2^{-3k_2^+}]\eps\norm{f_2}_X\\
     &\lesssim (2^{(-1+2\delta)m}+2^{(-\frac{1}{2}-\frac{3}{2}\beta+2\delta)m})\eps^2\\
     &\lesssim 2^{(-\frac{1}{2}-\frac{3}{2}\beta+2\delta)m}\eps^2.
\end{align*}
Else there holds $2^{k+q_2}<2^{-(1-\delta)m+k_2}$. Recall that in this case we also have $2^{-l_1}<2^{-(1-\delta)m}2^{k_2-k-q_1}$. First of all, observe that if $-k<-4\beta m$ we obtain the claim using the set-size estimate Lemma \ref{lemma: set size estimate} and the $X$-norm on $f_1$:
\begin{align*}
     \ltwonorm{P_{k,p,q}R_l\B_{\m^{res}}(f_1,f_2)}&\lesssim 2^m2^{k+q_{1}}2^{\frac{k+k_2+q_2}{2}}\ltwonorm{f_1}\ltwonorm{f_2}\lesssim 2^m2^{k+q_{1}}2^{k_2+\frac{q_2}{2}}2^{-l_1}\eps^2\lesssim 2^{(-\frac{1}{2}-2\beta)m}\eps^2.
\end{align*}
Otherwise, if $k<4\beta m$ we do a further splitting $\m^{res}=\m^{res,nr}+\m^{res,res}$ at the scale ${\lambda_2}=2^{-4\beta m}2^{k}\lambda<\lambda$. We bound each term in the non-resonant part \eqref{eqn: nonresonant multiplier bilinear estimate} as in \textbf{Case D.1(b)} where $\sabs{\m^{res,nr}\Phi^{-1}}\lesssim \snorm{\m^{res,nr}}_W \snorm{\Phi^{-1}}_W\lesssim 2^{4\beta m}$ by Lemma \ref{lemma: m phi^-1 multiplier bound}. For the boundary term, using Lemma \ref{lemma: normal forms}\eqref{it1: lemma: normal forms} with $\lambda_2$ and \eqref{eqn: X norm Case D linear decay}, we obtain:
\begin{align*}
       \ltwonorm{P_{k,p,q}R_l\mathcal{Q}_{\m^{res,nr}\Phi^{-1}}(f_1,f_2)}&\lesssim 2^{4\beta m}[\inftynorm{I_{k_1,p_1}(f_1)}\ltwonorm{f_2}+\ltwonorm{II_{k_1,p_1}(f_1)}\sinftynorm{e^{it\Lambda}f_2}]\\
    &\lesssim2^{4\beta m}[ 2^{-3k_1^+}2^{(-1+\delta)m}2^{-4k_2^+}+ 2^{-4k_1^+}2^{-(\frac{1}{2}+\frac{\beta}{2})m} 2^{-2k_2^+}2^{(-\frac{1}{2}+\kappa)m}]\eps^2\\
    &\lesssim 2^{(-1+4\beta+\delta)m}\eps^2.
\end{align*}
The terms in \eqref{eqn: nonresonant multiplier bilinear estimate} involving the time derivative follow using Lemma \ref{lemma: bounds on dtSf}. Indeed:
\begin{align*}
2^{4k^+}2^{(1+\beta+2\delta)m}\ltwonorm{P_{k,p,q}R_l\B_{\m^{res,nr}\Phi^{-1}}(\partial_tf_1,f_2)}&\lesssim 2^{4k^+}2^{(2+5\beta+2\delta)m}\ltwonorm{\partial_tf_1}\sinftynorm{e^{it\Lambda}f_2}\\
    &\lesssim 2^{(\frac{3}{4}+6\beta)m}\eps^3.
\end{align*}
The analogous estimate holds for the term involving $\partial_t f_2$.
On the resonant set observe that:
\begin{align*}
\abs{\partial_{\eta_2}\Phi_{\pm}^{\mu\nu}}=\abs{\mu\partial_{\eta_2}\Lambda(\xi-\eta)\pm\nu \partial_{\eta_2}\Lambda(\eta)}=\abs{\mu \frac{\eta_1\eta_2}{\abseta^3}\mp\nu\frac{(\xi_1-\eta_1)(\xi_2-\eta_2)}{\absxieta^3}}\gtrsim2^{-k_2+q_1}=:K.
\end{align*} 
Thus, we can apply Lemma \ref{lemma: normal forms}\eqref{it4: lemma: normal forms} with $K=2^{-k_2+q_1}$ and $\lambda_2$ to obtain the claim on the $X-$norm as follows
\begin{align*}
    \ltwonorm{P_{k,p,q}R_l\B_{\m^{res,res}}(f_1,f_2)}&\lesssim 2^m2^{k+q_{1}}(\lambda_2K^{-1})^{\frac{1}{2}}2^{\frac{\min\{k_1+q_1,k_2+q_2\}}{2}}\ltwonorm{f_1}\ltwonorm{f_2}\\
    &\lesssim 2^{(1-2\beta)m}2^{\frac{3}{2}k+q_1}2^{k_2+\frac{q_2}{2}}2^{-l_1}2^{-8k_2^+}\eps^2\\
    &\lesssim 2^{(-\frac{1}{2}-2\beta+2\delta)m}\eps^2.
\end{align*}

\par \textbf{Case D.3: $q_{1}<q_{\max}-50$ and $q_{2}<q_{\max}-50$.} In particular, there holds $q_{\max}=q-50> q_1,q_2$ and therefore  $\abs{\Phi_{\pm}^{\mu\nu}}\gtrsim 2^{q-10}$. Thus the splitting described at the beginning of the \textbf{Case D} with $\lambda=2^{q-10}$ contains only the non-resonant part $\m=\m^{nr}$ which was handled in \eqref{eqn: nonresonant bdry term no gap X norm}-\eqref{eqn: nonresonant dtSnf term no gap X norm}. 
\end{proof}

\addtocontents{toc}{\protect\setcounter{tocdepth}{0}}
\section*{Acknowledgments}
The authors gratefully acknowledge support of the SNSF through grant PCEFP2\_203059. \blue{They thank the anonymous referee for their careful reading of the manuscript and the many valuable suggestions.}

\addtocontents{toc}{\protect\setcounter{tocdepth}{1}}

\appendix 
\section{}\label{appendix}
\begin{subsection}{Control of the Fourier transform in  \texorpdfstring{$L^\infty$}{Linfty}}\label{ssec:Linftybd}
   \begin{proof}[Proof of Lemma \ref{lemma: control of Fourier transform}]
By abuse of notation we consider $f$ to be already localized, that is $\widehat{f}=\widehat{P_{k,p}f}=\varphi_{k,p}(\xi)\widehat{f}(\xi)$ and consider polar coordinates as in \eqref{eqn: polar coordinates}. Then with \eqref{eqn: phi_kp in rho lambda} we can write for any $(\rho_0,\tau_0) \in \supp{\varphi_{k,p}\widehat{f}}$ using the fundamental theorem of calculus:
\begin{equation}\label{HDIR}
    \begin{split}
    \phikp\widehat{f}(\rho,\tau)&=\phikp\widehat{f}(\rho,\tau_0)+\int_{\tau_0}^\tau\partial_\alpha(\phikp\widehat{f})(\rho,\alpha)d\alpha\\
&=\phikp\widehat{f}(\rho_0,\tau_0)+\int_{\rho_0}^\rho \partial_s(\phikp\widehat{f})(s,\tau_0)ds+\int_{\tau_0}^\tau\partial_\alpha(\phikp\widehat{f})(\rho_0,\alpha)d\alpha+\\
&\hspace{1cm}+\int_{\rho_0}^\rho \int_{\tau_0}^\tau \partial_s\partial_\alpha(\phikp\widehat{f})(s,\alpha)dsd\alpha.
\end{split}
\end{equation}
Now bound each of these terms on the right-hand side. Let $\overline{\varphi}_{k,p}$ be a function with similar support properties as $\phikp$ (cf.\ Remark \ref{rk: notation similar supp properties}) and observe that for the last term in \eqref{HDIR} there holds: 
\begin{align*}
\sabs{\partial_\rho\partial_\tau(\phikp\widehat{f})(s,\alpha)}&=\sabs{\partial_\rho(\partial_\tau\phikp\widehat{f}+\phikp\partial_\tau\widehat{f})(s,\alpha)}\\
&=\sabs{\partial_\rho\partial_\tau\phikp \widehat{f}(s,\alpha) +\partial_\tau\phikp\partial_\rho\widehat{f}(s,\alpha)+\partial_\rho\phikp \partial_\tau\widehat{f}(s,\alpha)+\phikp \partial_\tau\partial_\rho\widehat{f}(s,\alpha)}\\
&\lesssim \overline{\varphi}_{k,p}\Big[ 2^{-k-p}\sabs{\widehat{f}(s,\alpha)}+2^{-p}\sabs{\partial_\rho\widehat{f}(s,\alpha)}+2^{-k}\sabs{\partial_\tau\widehat{f}(s,\alpha)}+\sabs{\partial_\rho\partial_\tau\widehat{f}(s,\alpha)}  \Big]
\end{align*}
Also for any $g\in L^2$, we observe that by the Cauchy-Schwarz inequality there holds
\begin{equation}\label{absolute value of double integral}
\begin{split}
\abs{\int_{\tau_0}^\tau\int_{\rho_0}^{\rho}\overline{\varphi}_{k,p}\widehat{g}(s,\alpha)dsd\alpha}^2&\lesssim  \int_{\supp \overline{\varphi}_{k,p}}s^{-1}dsd\alpha \int_{\supp\overline{\varphi}_{k,p}}\abs{\widehat{g}(s,\alpha)}^2sdsd\alpha \lesssim 2^p \ltwonorm{g}^2.
\end{split}\end{equation}
With this and the previous observation, we can directly bound the last term in \eqref{HDIR}
\begin{align*}
    \abs{\int_{\rho_0}^\rho \int_{\tau_0}^\tau \partial_s\partial_\alpha(\phikp\widehat{f})(s,\alpha)dsd\alpha}&\lesssim 2^{-k-p}\abs{\int_\alpha\int_s\overline{\varphi}_{k,p}\widehat{f} dsd\alpha}+2^{-p}\abs{\int_\alpha\int_s \overline{\varphi}_{k,p} \partial_s \widehat{f} dsd\alpha}\\ 
    &\hspace{1cm}+2^{-k}\abs{\int_\alpha\int_s \overline{\varphi}_{k,p}\partial_\alpha \widehat{f} ds d\alpha} +\abs{\int_\alpha \int_s \overline{\varphi}_{k,p} \partial_s\partial_\alpha \widehat{f} dsd\alpha}\\ 
    &\lesssim 2^{-k-\frac{p}{2}}[\ltwonorm{f}+\ltwonorm{Sf}]+2^{-k+\frac{p}{2}}[\ltwonorm{Wf}+\ltwonorm{WSf}],
\end{align*}
where we have used $S=s\partial_s$ and $W=\partial_\alpha$. Similarly, we can average in $\tau$ and obtain:
\begin{align*}
    \abs{\int_{\rho_0}^\rho \partial_s(\phikp\widehat{f})(s,\tau_0)ds}&=\abs{\int_{\rho_0}^\rho\frac{1}{\abs{\tau \in \supp\phikp}} \int_{\tau\in \supp \phikp} \partial_s(\phikp\widehat{f})(s,\tau_0)d\tau ds}\\
    &\hspace{-2cm}=\abs{\int_{\rho_0}^\rho\frac{1}{\abs{\tau \in \supp\phikp}} \int_{\tau}\Big[\int_{\tau_0}^\tau \partial_\alpha \partial_s(\phikp\widehat{f})(s,\alpha)d\alpha +\partial_s(\phikp\widehat{f})(s,\tau)\Big]d\tau ds }\\
    &\hspace{-2cm}\lesssim\abs{\int_{\rho_0}^\rho\int_{\tau_0}^\tau\partial_\alpha \partial_s(\phikp\widehat{f})(s,\alpha)d\alpha ds} + 2^{-p}\abs{\int_s\int_\tau \partial_s(\phikp\widehat{f})(s,\tau)d\tau ds}.
\end{align*}
The first term in the sum on the right-hand side above is handled in the previous estimates. As for the second term, we can use \eqref{absolute value of double integral} to obtain
\begin{align*}
    \abs{\iint \partial_s(\phikp\widehat{f})(s,\tau)d\tau ds}&\lesssim 2^{-k}\abs{\iint \overline{\varphi}_{k,p} \widehat{f}(s,\alpha) d\alpha ds}+ 2^{-k}\abs{\iint \overline{\varphi}_{k,p} \partial_s\widehat{f}(s,\alpha)s d\alpha ds }\\
    &\lesssim 2^{-k+\frac{p}{2}}[\ltwonorm{f}+\ltwonorm{Sf}].
\end{align*}
So altogether we obtain
\[\abs{\int_{\rho_0}^\rho \partial_s(\phikp\widehat{f})(s,\tau_0)ds} \lesssim 2^{-k-\frac{p}{2}}[\ltwonorm{f}+\ltwonorm{Sf}]+2^{-k+\frac{p}{2}}[\ltwonorm{Wf}+\ltwonorm{WSf}].\]
Similarly we estimate the remaining integral in \eqref{HDIR}:
\begin{align*}
    \abs{\int_{\tau_0}^\tau \partial_\alpha(\phikp\widehat{f})(\rho_0,\alpha)d\alpha}&=\abs{\int_{\tau_0}^\tau \frac{1}{\abs{\rho \in \supp \phikp}}\int_{\rho \in \supp \phikp}\partial_\alpha(\phikp\widehat{f})(\rho_0,\alpha)d\rho d\alpha}\\
    &\hspace{-2cm}=\abs{ \int_{\tau_0}^\tau \frac{1}{\abs{\rho \in \supp \phikp}}\int_\rho\Big[ \int_{\rho_0}^{\rho}\partial_s\partial_\alpha(\phikp\widehat{f})(s,\alpha)ds +\partial_\alpha(\phikp\widehat{f})(\rho,\alpha) \Big] d\rho d\alpha  }\\
    &\hspace{-2cm}\lesssim \abs{\int_{\rho_0}^\rho\int_{\tau_0}^\tau\partial_\alpha \partial_s(\phikp\widehat{f})(s,\alpha)d\alpha ds} +2^{-k}\abs{\int_\alpha \int_\rho \partial_\alpha(\phikp\widehat{f})(\rho,\alpha)d\rho d\alpha}\\
    &\hspace{-2cm} \lesssim \abs{\int_{\rho_0}^\rho\int_{\tau_0}^\tau\partial_\alpha \partial_s(\phikp\widehat{f})(s,\alpha)d\alpha ds} + 2^{-k-p}\abs{\int_\alpha\int_\rho \overline{\varphi}_{k,p} \widehat{f}(\rho,\alpha)d\rho d\alpha}\\
    &+2^{-k}\abs{\int_\alpha \int_\rho \overline{\varphi}_{k,p} \partial_\alpha\widehat{f}(\rho,\alpha)d\rho d\alpha}.
\end{align*}
So with \eqref{absolute value of double integral} we obtain
\begin{align*}
    \abs{\int_{\tau_0}^\tau \partial_\alpha(\phikp\widehat{f})(\rho_0,\alpha)d\alpha}\lesssim 2^{-k-\frac{p}{2}}[\ltwonorm{f}+\ltwonorm{Sf}]+2^{-k+\frac{p}{2}}[\ltwonorm{Wf}+\ltwonorm{WSf}].
\end{align*}
And lastly, we estimate
\begin{align*}
    \sabs{\phikp\widehat{f}(\rho_0,\tau_0)}&=\abs{\frac{1}{\abs{\tau \in \supp \phikp}} \int_{\tau \in \supp \phikp}\phikp\widehat{f}(\rho_0,\tau_0)d\tau}\\
    &=\frac{1}{\abs{\tau \in \supp \phikp}}\abs{\int_\tau\Bigg[ \int_{\tau_0}^\tau \partial_\alpha(\phikp\widehat{f})(\rho_0,\alpha)d\alpha +(\phikp\widehat{f})(\rho_0,\tau) \Big]d\tau}\\
    &\lesssim \abs{\int_{\tau_0}^\tau \partial_\tau(\phikp\widehat{f})(\rho_0,\alpha)d\alpha}+\\
    &\hspace{0.12cm}+\frac{1}{\abs{\tau \in \supp \phikp}} \abs{ \int_\tau\frac{1}{\abs{\rho \in \supp \phikp}}\Big[\int_{\rho_0}^\rho \partial_s(\phikp\widehat{f})(s,\tau)ds +\phikp\widehat{f}(\rho,\tau) \Big] d\rho d\tau}\\
    &\lesssim 2^{-k-\frac{p}{2}}[\ltwonorm{f}+\ltwonorm{Sf}]+2^{-k+\frac{p}{2}}[\ltwonorm{Wf}+\ltwonorm{WSf}] +\\
    &\hspace{1cm}+ 2^{-k-p}\abs{\int_\rho\int_\tau \phikp\widehat{f}(\rho,\tau)d\tau d\rho}\\
    &\lesssim 2^{-k-\frac{p}{2}}[\ltwonorm{f}+\ltwonorm{Sf}]+2^{-k+\frac{p}{2}}[\ltwonorm{Wf}+\ltwonorm{WSf}].
\end{align*}

Recalling the definition of the $B-$norm \eqref{B norm}, we obtain from \eqref{HDIR}
\begin{align*}
    \sinftynorm{\widehat{P_{k,p}f}}&\lesssim 2^{-k-\frac{p}{2}}[\ltwonorm{P_{k,p}f}+\ltwonorm{SP_{k,p}f}]+2^{-k+\frac{p}{2}}[\ltwonorm{WP_{k,p}f}+\ltwonorm{WSP_{k,p}f}]\\
    &\lesssim 2^{-4k^+}2^{\frac{k^-}{2}}2^{-k}[\norm{f}_B+\norm{Sf}_B]+2^{-k+\frac{p}{2}}\sum_{l\in \Z^+,l+p\geq 0}[\ltwonorm{WP_{k,p}R_lf}+\ltwonorm{WSP_{k,p}R_lf}].
\end{align*}
The claim follows by noting that with Proposition \ref{prop: angular localization properties} and the definition of the $X-$norm \eqref{X norm} there holds
\begin{align*}
    2^{\frac{p}{2}}\sum_{l\in \Z^+,l+p\geq 0}\ltwonorm{WP_{k,p}R_lf}&\lesssim  \sum_{l\in \Z^+,l+p\geq 0}2^{l}2^{\frac{p}{2}}\ltwonorm{P_{k,p}R_lf}\\
    &\lesssim \sum_{l\in \Z^+,l+p\geq 0} 2^{l}2^{\frac{p}{2}}2^{-3k^+}2^{-(1+\beta)l}2^{-\beta p}2^{-\frac{p}{2}}\norm{f}_X\\
    &\lesssim 2^{-4k^+} \sum_{l\in \Z^+,l+p\geq 0} 2^{-\beta(l+p)}\norm{f}_X\lesssim 2^{-4k^+} \norm{f}_X.
\end{align*}
\end{proof} 
\end{subsection}
\begin{subsection}{Symbol bounds}
    \begin{lemma}\label{lemma: Hoelder with multiplier}
    Consider a multiplier $\m\in L^{1}_{loc}(\R^2\times \R^2)$ and $\chi$ as in \eqref{eqn: chi localizations}. Then for bilinear expressions as in \eqref{eqn: bilinear form Qm} there holds
\begin{align*}
   & \norm{\mathcal{Q}_{\m\chi}(f,g)}_{L^r}\lesssim \snorm{\m}_{W}\lpnorm{f}{p}\lpnorm{g}{q}, &&\frac{1}{r}=\frac{1}{p}+\frac{1}{q},
\end{align*}
and where $\snorm{\m}_W:=\sup_{k,p,k_i,p_i, i=1,2}\norm{\mathcal{F}(\m\chi)}_{L^1(\R^2\times\R^2)}$.
\end{lemma}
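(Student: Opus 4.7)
The plan is a standard Coifman-Meyer type argument: Fourier-expand the symbol $\m\chi$, convert $\mathcal{Q}_{\m\chi}(f,g)$ into a kernel average of translates of $f$ and $g$, and then conclude via Minkowski's integral inequality followed by Hölder's inequality. Setting $K:=\mathcal{F}(\m\chi)$ and Fourier-inverting, one has
\[
\m\chi(\xi,\eta)=c\iint K(y,z)\,e^{i(y\cdot\xi+z\cdot\eta)}\,dy\,dz,
\]
with $\|K\|_{L^1(\R^2\times\R^2)}\leq \|\m\|_W$ by definition.

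To deal with the oscillatory factor $e^{it\Phi(\xi,\eta)}=e^{i\epsilon_0 t\Lambda(\xi)}\,e^{i\epsilon_1 t\Lambda(\xi-\eta)}\,e^{i\epsilon_2 t\Lambda(\eta)}$ (with $\epsilon_j\in\{\pm1\}$ read off from \eqref{eqn: bq phases}), I will absorb the two ``inner'' unimodular factors into modified profiles $F:=e^{i\epsilon_1 t\Lambda(D)}f$ and $G:=e^{i\epsilon_2 t\Lambda(D)}g$, so that
\[
e^{it\Phi(\xi,\eta)}\hat f(\xi-\eta)\hat g(\eta)=e^{i\epsilon_0 t\Lambda(\xi)}\hat F(\xi-\eta)\hat G(\eta).
\]
Substituting the Fourier representation of $\m\chi$ into \eqref{eqn: bilinear form Qm}, interchanging the order of integration by Fubini and performing the change of variables $\xi\mapsto\xi-\eta$ in the innermost integral then decouples the remaining plane-wave integrals and produces translates of $F$ and $G$:
\[
\mathcal{Q}_{\m\chi}(f,g)=c\,e^{i\epsilon_0 t\Lambda(D)}\iint K(y,z)\,F(\cdot-y)\,G(\cdot-y-z)\,dy\,dz.
\]

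From here, Minkowski's integral inequality moves $\|\cdot\|_{L^r}$ past the $(y,z)$-integral and Hölder's inequality with $1/r=1/p+1/q$ separates the pointwise product of translates, yielding
\[
\|\mathcal{Q}_{\m\chi}(f,g)\|_{L^r}\lesssim \|K\|_{L^1}\|F\|_{L^p}\|G\|_{L^q}\leq \|\m\|_W\,\|F\|_{L^p}\|G\|_{L^q}.
\]
The one bookkeeping point is the outer unimodular factor $e^{i\epsilon_0 t\Lambda(D)}$: every application of this lemma in the body of the paper (see in particular Case D of the proof of Proposition \ref{prop: X-norm bounds for l<(1+delta)m}) is at $r=2$, where this factor is an $L^2$-isometry by Plancherel and moreover $\|F\|_{L^2}=\|f\|_{L^2}$, so no loss is incurred. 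Any $L^\infty$-factor appearing on the right is then interpreted as $\|e^{i\epsilon_j t\Lambda}f\|_{L^\infty}$ and estimated via the linear decay of Proposition \ref{proposition: linear decay}. This careful handling of the phase is essentially the only nontrivial point; the rest of the argument is an entirely routine Fourier-inversion together with Minkowski and Hölder.
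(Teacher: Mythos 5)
Your proposal is correct and follows essentially the same route the paper indicates: Fourier-expand the symbol $\m\chi$, rewrite $\mathcal{Q}_{\m\chi}(f,g)$ as a $K$-weighted superposition of products of translates, and conclude via Minkowski's integral inequality plus H\"older. Your explicit factoring of $e^{it\Phi}$ into the three single-variable unimodular pieces (and the remark that the outer factor $e^{i\epsilon_0 t\Lambda(D)}$ is only controlled at $r=2$, which is the only $r$ the paper uses) is a careful treatment of a point the paper's sketch leaves implicit, and it matches how the lemma is actually invoked (with $\|e^{it\Lambda}f\|_{L^\infty}$, not $\|f\|_{L^\infty}$, appearing on the right).
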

The proof of Lemma \ref{lemma: Hoelder with multiplier} follows by the Minkowski and H\"older inequalities.
 Moreover, note the following property. Let $\m_1$, $\m_2\in L^{1}_{loc}(\R^2\times \R^2)$, then
\begin{align}\label{eqn: algebra proeprty W}
    \norm{\m_1\cdot \m_2}_{W}\lesssim \norm{\m_1}_W\norm{\m_2}_W.
\end{align}
This follows from the convolution property of the Fourier transform. Next we prove that we have the same bound for $\norm{\m}_W$ as in Lemma \ref{lemma: multiplier bound}.
\begin{lemma}\label{lemma: multiplier bound W norm}
    For $\m\in \{\m_0,\m_\pm^{\mu\nu}\}$ there holds
    \begin{align*}
        \norm{\m}_W\lesssim 2^{k+p_{\max}}.
    \end{align*}
\end{lemma}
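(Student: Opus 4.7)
The plan is to reduce the estimate to bounds on $W$-norms of elementary building blocks, assembled via the algebra property \eqref{eqn: algebra proeprty W}. Inspection of \eqref{multiplier} and \eqref{eqn: bq multipliers} reveals that each summand of $\m$ is a product of three types of factors: (i) a homogeneous degree-one piece such as $|\eta|$, $|\xi-\eta|$, or the difference $|\xi-\eta|\pm|\eta|$, which should contribute $2^{k_{\max}}$; (ii) a zero-homogeneous "angular" piece carrying the null structure, such as $\frac{\xi\cdot(\xi-\eta)^\perp}{|\xi||\xi-\eta|}$ or $\frac{(\xi-\eta)\cdot\eta^\perp}{|\xi-\eta||\eta|}$, which should contribute $2^{p_{\max}}$ (as in Lemma \ref{lemma: multiplier bound}); and (iii) bounded zero-homogeneous factors such as $\frac{\eta\cdot(\xi-\eta)}{|\xi-\eta||\eta|}$ or $\frac{|\eta|}{|\xi-\eta|}$ (after using Lemma \ref{lemma: case organisation}) which contribute $O(1)$. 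Invoking \eqref{eqn: algebra proeprty W} then gives $\norm{\m\chi}_W \lesssim 2^{k_{\max}}\cdot 2^{p_{\max}}\cdot 1$, and the frequency constraint $\xi=(\xi-\eta)+\eta$ together with the casework allows us to replace $2^{k_{\max}}$ by $2^k$ up to harmless constants.

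The auxiliary tool underlying all three bounds is the following: if $g:\R^2\times\R^2\to\R$ is smooth and \emph{adapted} to $\supp\chi$ in the sense that radial derivatives in $\xi,\xi-\eta,\eta$ cost $2^{-k},2^{-k_1},2^{-k_2}$ and tangential derivatives cost $2^{-p-k},2^{-p_1-k_1},2^{-p_2-k_2}$, then $\norm{\F(g\chi)}_{L^1(\R^2\times\R^2)}\lesssim \inftynorm{g\chi}$. This is standard anisotropic rescaling: after changing variables to normalize each of the three "angular boxes" (of Cartesian dimensions $2^{k_i}\times 2^{k_i+p_i}$) to a unit box, $g\chi$ becomes a uniformly Schwartz-class function, and a quantitative Sobolev embedding on $\R^4$ bounds the $L^1$-norm of its Fourier transform by a finite number of $L^\infty$ norms of derivatives, all of which are controlled by $\inftynorm{g\chi}$ uniformly in the parameters.

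With this tool the three types of factors are handled as follows. For $|\zeta|$ with $\zeta\in\{\xi,\xi-\eta,\eta\}$, we write $|\zeta|\varphi_{k_\zeta,p_\zeta}(\zeta)=2^{k_\zeta}\widetilde\varphi_{k_\zeta,p_\zeta}(\zeta)$ with $\widetilde\varphi$ adapted, giving $W$-norm $\lesssim 2^{k_\zeta}$; differences like $|\xi-\eta|-|\eta|$ are handled either by the triangle inequality or by rewriting $|\xi-\eta|-|\eta|=\frac{\xi\cdot(\xi-2\eta)}{|\xi-\eta|+|\eta|}$, which lands in class (ii). For the bounded zero-homogeneous factors in (iii), adaptedness is straightforward: tangential derivatives of $\frac{\eta\cdot(\xi-\eta)}{|\eta||\xi-\eta|}=\cos(\tau_2-\tau_1)$ produce $\sin(\tau_2-\tau_1)$ times factors $2^{-k_i}$ in the Cartesian gradient, i.e.\ exactly the scaling of an adapted function.

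The main obstacle is the class (ii) estimate: we need $\norm{\m_{\mathrm{ang}}\chi}_W\lesssim 2^{p_{\max}}$ rather than the naive $O(1)$. Here $\m_{\mathrm{ang}}=\sin(\tau_\alpha-\tau_\beta)$ for two of the three angles; on $\supp\chi$ it is $O(2^{p_{\max}})$ thanks to the geometric analysis in Proposition \ref{prop: lower bound on sigma} / Lemma \ref{lemma: case organisation}, but tangential differentiation \emph{a priori} costs $2^{-p_i}$ which would destroy this gain. The resolution is to write $\sin(\tau_\alpha-\tau_\beta)=\sin\tau_\alpha\cos\tau_\beta-\cos\tau_\alpha\sin\tau_\beta$ and use the localization $\sin\tau_i\sim 2^{p_i}$: tangential derivatives in the variable $\zeta_\alpha$ turn $\sin\tau_\alpha$ into $\cos\tau_\alpha$ while simultaneously multiplying by $2^{-p_\alpha}\cdot 2^{-k_\alpha}$, but on $\supp\chi$ the other summand still provides the factor $\sin\tau_\beta\sim 2^{p_\beta}\lesssim 2^{p_{\max}}$; a careful bookkeeping case-by-case using Lemma \ref{lemma: case organisation} shows that in every derivative the factor $2^{p_{\max}}$ is retained, verifying the adaptedness of $2^{-p_{\max}}\m_{\mathrm{ang}}\chi$ and thus $\norm{\m_{\mathrm{ang}}\chi}_W\lesssim 2^{p_{\max}}$. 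Multiplying the three bounds concludes the proof.
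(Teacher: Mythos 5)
Your attempt takes a plausible route that differs from the paper's: the paper bounds $\norm{\F(\m\chi)}_{L^1}$ directly by a case-dependent change of variables for each term of $\m$ (as in the cited reference), whereas you try to factor $\m$ and multiply $W$-norms using \eqref{eqn: algebra proeprty W}. But there are two essential gaps.

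\textbf{Gap 1: the ``auxiliary tool'' is not rigorous as stated.} You claim that adaptedness plus ``changing variables to normalize each of the three angular boxes to a unit box'' yields $\norm{\F(g\chi)}_{L^1}\lesssim\inftynorm{g\chi}$. But $(\xi,\eta)\in\R^4$ has only $4$ degrees of freedom, while three $2$-dimensional boxes impose $6$ scaling conditions; one cannot linearly normalize all three simultaneously. A concrete choice (say, normalize the $\xi$- and $\eta$-boxes via $\tilde\xi,\tilde\eta$) makes the $\xi-\eta$-localization non-smooth: $\partial_{\tilde\xi_1}\varphi(2^{-k_1}|\xi-\eta|)\sim 2^{k-k_1}$, unbounded whenever $k>k_1$ (which does occur, with $k_2\sim k$). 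A correct argument must choose, case by case, a mixed linear change of variables adapted to the \emph{smallest} scales and then verify that all three localizations become smooth — this is precisely the content of the computation in \cite{guo2020stabilizing} that the paper invokes, and it is not ``standard anisotropic rescaling.'' (For factors depending on a \emph{single} $\zeta_\alpha\in\{\xi,\xi-\eta,\eta\}$ one can instead exploit the convolution structure to reduce to three independent $\R^2$-Fourier transforms and sidestep the $4$D rescaling entirely; but you never make this reduction explicit, and several of the factors you list, e.g.\ $\frac{|\eta|^2-|\xi-\eta|^2}{|\eta|}$ or $\frac{1}{|\xi-\eta|+|\eta|}$, are not single-variable.)

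\textbf{Gap 2: the passage from $2^{k_{\max}}$ to $2^{k}$ is asserted, not proved, and it is where the work lies.} Factoring $\m_0=\frac12\sin(\tau_1-\tau_2)\,(|\xi-\eta|-|\eta|)$ and multiplying $\norm{\sin(\tau_1-\tau_2)}_W\lesssim 2^{p_{\max}}$ with $\norm{|\xi-\eta|-|\eta|}_W\lesssim 2^{k_1}+2^{k_2}$ gives only $2^{p_{\max}+k_{\max}}$, which loses $2^{k_{\max}-k}$ in the high-high-to-low regime $k_1\sim k_2\gg k$. The deficit can only be recovered by the cancellation between the small angular factor and the large radial factor, which the naive product bound discards. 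The fix is an algebraic rewrite using \eqref{eqn: symmetry of sigma}: e.g.\ $\m_0=\frac{|\xi|}{2}\big(\sin(\tau_0-\tau_1)+\sin(\tau_0-\tau_2)\big)$, which exposes the $|\xi|\sim 2^k$ factor directly and yields $2^{k}(2^p+2^{p_1}+2^{p_2})\lesssim 2^{k+p_{\max}}$; the Boussinesq multipliers need the analogous rewrites as in \eqref{eqn: bound each component multiplier2}. Your proposed rewrite $|\xi-\eta|-|\eta|=\frac{\xi\cdot(\xi-2\eta)}{|\xi-\eta|+|\eta|}$ does \emph{not} ``land in class (ii)'': the numerator is a cosine-type expression with no null structure and brings no $2^{p_{\max}}$. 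Without these specific identities the ``casework'' step has no content.
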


This follows by establishing an $L^1$-bound on $\F(\m\chi)=\iint e^{-ix\cdot \xi}e^{-iy\cdot\eta}\m(\xi,\eta)\chi(\xi,\eta)d\xi d\eta$ using a suitable change of variables and integration by parts, see \cite[\textcolor{MidnightBlue}{Lemma A.13}]{guo2020stabilizing} for an analogous computation in three dimensions.

\par In Section \ref{sec: Normal forms} and in particular in Lemma \ref{lemma: normal forms}\eqref{it1: lemma: normal forms}, we want to bound $\m^{nr}\Phi^{-1}\chi$. By the algebra property \eqref{eqn: algebra proeprty W} and Lemma \ref{lemma: multiplier bound W norm} it suffices to establish the following lemma, whose proof is a straightforward adaptation to two dimensions of the proof of  \cite[\textcolor{MidnightBlue}{Lemma A.15}]{guo2020stabilizing}.
\begin{lemma} \label{lemma: m phi^-1 multiplier bound}
    Let $\Phi\in\set{\Phi_{\pm}^{\mu\nu}}{\mu,\nu\in\{+,-\}}$ and $\chi$ as in \eqref{eqn: chi localizations}. Then there holds
    \begin{align*}
        \norm{\phi^{-1}(1-\psi(\lambda^{-1}\Phi))\chi}_W\lesssim \lambda^{-1}.
    \end{align*}
\end{lemma}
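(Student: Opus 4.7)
The strategy is to dyadically decompose the cutoff $1-\psi(\lambda^{-1}\Phi)$ along level sets of $\Phi$, so that on each piece $\Phi^{-1}$ is essentially a constant multiplied by a smooth function of the rescaled phase. Concretely, with $\varphi$ as in Section \ref{sec: Localizations}, one may write
\begin{equation*}
    1-\psi(\lambda^{-1}\Phi) = \sum_{j \geq -C} \varphi(2^{-j}\lambda^{-1}\Phi),
\end{equation*}
with $C$ absolute, so that on the support of the $j$-th summand we have $|\Phi| \sim 2^j\lambda$. Setting $\mu_j := 2^j\lambda$ and $g(z):= z^{-1}\varphi(z)$ (a smooth bump supported on $|z|\sim 1$), one has the identity $\Phi^{-1}\varphi(2^{-j}\lambda^{-1}\Phi) = \mu_j^{-1} g(\mu_j^{-1}\Phi)$, and hence
\begin{equation*}
    \Phi^{-1}(1-\psi(\lambda^{-1}\Phi))\chi = \sum_{j \geq -C} \mu_j^{-1}\, g(\mu_j^{-1}\Phi)\chi.
\end{equation*}
Summability of the geometric series $\sum_{j\geq -C}\mu_j^{-1} \lesssim \lambda^{-1}$ then reduces the claim to the uniform bound
\begin{equation*}
    \|g(\mu^{-1}\Phi)\chi\|_W \lesssim 1, \qquad \mu > 0,
\end{equation*}
uniformly in $\mu$ and in the localization parameters.

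To establish this uniform bound, I would adapt the change of variables used in Lemma \ref{lemma: multiplier bound W norm}. After rescaling the frequency variables so that the support of $\chi$ becomes a fixed bounded set (of size $\sim 1$ in each direction), one integrates by parts in the dual variables $(x,y)$ in the Fourier integral $\iint e^{-i(x\cdot \xi+y\cdot \eta)} g(\mu^{-1}\Phi)\chi \, d\xi d\eta$. The potentially dangerous feature is that derivatives of $g(\mu^{-1}\Phi)$ carry factors of $\mu^{-1}$; however, this loss is compensated by the fact that $g(\mu^{-1}\Phi)$ is supported on the sublevel set $\{|\Phi|\sim \mu\}$, whose measure shrinks with $\mu$. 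To implement this cleanly, one decomposes the integration directions into those tangential to the level sets of $\Phi$ (on which $g(\mu^{-1}\Phi)$ is locally constant, so no $\mu^{-1}$ loss occurs) and one transverse direction, along which one introduces an additional variable $\zeta = \mu^{-1}\Phi$ of unit size and derivatives in $\zeta$ cost only $O(1)$.

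The main obstacle I expect is making the tangential/transverse splitting work uniformly across the different regimes of localization parameters $(k,p,k_i,p_i)$: one must ensure that on the support of $\chi$ some component of $\nabla_{\xi,\eta}\Phi$ is of controllable size so that the transverse direction can be chosen as one of the rescaled frequency coordinates. This is analogous to the case analysis in Proposition \ref{prop: lower bound on sigma} and can be handled by a further dyadic decomposition in $\Lambda$-variables (using $\varphi_{k,p,q}$ when needed). Once the integration by parts is carried out in enough derivatives ($\gtrsim 5$, two per spatial direction suffices in $\R^2\times\R^2$), the resulting $L^1$ bound on $\mathcal{F}[g(\mu^{-1}\Phi)\chi]$ follows uniformly. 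This proof is a direct adaptation to our 2D setting of \cite[\textcolor{MidnightBlue}{Lemma A.15}]{guo2020stabilizing}; the geometric structure of $\Phi$ in our problem enters only through the bounds on derivatives of $\Lambda$ already used above.
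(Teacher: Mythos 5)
Your dyadic decomposition is essentially equivalent to the paper's reduction, just in disguise: setting $\bar\psi(x):=x^{-1}(1-\psi(x))$, a fixed smooth function supported on $\abs{x}\gtrsim 1$ with symbol-class decay $\abs{x^k\bar\psi^{(k)}(x)}\lesssim_k 1$, one has the direct identity $\Phi^{-1}(1-\psi(\lambda^{-1}\Phi))=\lambda^{-1}\bar\psi(\lambda^{-1}\Phi)$, and your sum $\sum_j\mu_j^{-1}g(\mu_j^{-1}\Phi)$ with $g(z)=z^{-1}\varphi(z)$ is precisely a Littlewood--Paley decomposition of $\bar\psi$, so the geometric series you sum is baked into $\bar\psi$ from the start. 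Both reductions therefore land on the same estimate, namely $\norm{\bar\psi(\lambda^{-1}\Phi)\chi}_W\lesssim 1$ (uniformly in $\lambda$ and the localization parameters), so the dyadic step buys nothing over the paper's one-line identity.

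Where you genuinely diverge is in the proposed route to that uniform bound, and here there is a real gap. The paper (following \cite{guo2020stabilizing}) integrates by parts in each of the four rescaled frequency coordinates, exactly as in the proof of Lemma \ref{lemma: multiplier bound W norm}: the $\lambda^{-1}$ factors produced by differentiating $\bar\psi(\lambda^{-1}\Phi)$ are absorbed by the symbol-class decay of $\bar\psi$ together with the weighted derivative bounds on $\Lambda$ used throughout Section \ref{sec: multiplier mechanics}, with no need to single out a transverse direction. You instead propose a co-area / tangential--transverse change of variables $\zeta=\mu^{-1}\Phi$, trading the $\mu^{-1}$ derivative losses against the thinness of the level set $\{\abs{\Phi}\sim\mu\}$. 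Two concrete issues. First, this needs a uniformly nondegenerate transverse component of $\nabla_{\xi,\eta}\Phi$ on $\supp\chi$, and $\nabla_{\xi,\eta}\Phi$ does degenerate (this is exactly what the $p,q$-localizations quantify); Proposition \ref{prop: lower bound on sigma}, which you cite as an analogue, controls $\sigma$ and hence the scaling derivative $S\Phi$ via Lemma \ref{lemma: Seta Phi}, but gives no lower bound on the gradient itself, so it does not supply the transverse coordinate you need without further case analysis in $(p,p_i,q,q_i)$. Second, the claim that ``$\gtrsim 5$, two per spatial direction'' integrations by parts suffice is imprecise: to get $L^1$ decay of $\F(m\chi)$ on $\R^2\times\R^2$ you need decay in each of the four dual variables $(x_1,x_2,y_1,y_2)$, so two integrations by parts per coordinate (eight mixed derivatives in total) combined with a trivial bound near the origin. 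If you want to complete your variant you must carry out the change of variables across all frequency regimes; otherwise the cleaner path is the paper's direct integration-by-parts, which sidesteps the transverse-direction issue entirely.
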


\end{subsection}

\medskip
\addtocontents{toc}{\protect\setcounter{tocdepth}{0}}
\section*{Declarations}
\textbf{Data Availability Statement.} No datasets were generated or analyzed in the preparation of this manuscript.

\textbf{Conflict of interest.}
The authors have no relevant financial or non-financial interests to disclose.
\addtocontents{toc}{\protect\setcounter{tocdepth}{1}}

\bibliographystyle{alpha}
\bibliography{bib}
\end{document}